\newtheorem{corollary}{Corollary}
\newtheorem{theorem}{Theorem}
\newtheorem{lemma}{Lemma}
\newtheorem*{theorem*}{Theorem}
\newtheorem*{lemma*}{Lemma}
\newtheorem{prop}{Proposition}
\renewcommand{\implies}{\Rightarrow}
\newcommand{\ii}{\mathrm{i}} 
\newcommand{\re}{{\rm Re } }
\newcommand{\E}{\mathbb E} 
\newcommand{\PP}{\mathbb P}
\newcommand{\1}{{\mathbf 1}}
\newcommand{\OO}{{\rm O}}
\newcommand{\oo}{{\rm o}}
\newcommand{\rd}{{\rm d}}
\newcommand{\e}{\varepsilon}
\newcommand{\ee}{\epsilon}
\newcommand{\ttL}{n_{\mathcal L}}
\def\@tocline#1#2#3#4#5#6#7{\relax
  \ifnum #1>\c@tocdepth 
  \else
    \par \addpenalty\@secpenalty\addvspace{#2}%
    \begingroup \hyphenpenalty\@M
    \@ifempty{#4}{%
      \@tempdima\csname r@tocindent\number#1\endcsname\relax
    }{%
      \@tempdima#4\relax
    }%
    \parindent\z@ \leftskip#3\relax \advance\leftskip\@tempdima\relax
    \rightskip\@pnumwidth plus4em \parfillskip-\@pnumwidth
    #5\leavevmode\hskip-\@tempdima
      \ifcase #1
       \or\or \hskip 2em \or \hskip 3em \else \hskip 4em \fi%
      #6\nobreak\relax
    \dotfill\hbox to\@pnumwidth{\@tocpagenum{#7}}\par
    \nobreak
    \endgroup
  \fi}
\begin{document}

\begin{abstract}
We prove a lower bound on the maximum of the Riemann zeta function in a typical short interval on the critical line. Together with the upper bound from \cite{ArgBouRad2020}, this implies
tightness of
$$
\max_{|h|\leq 1}|\zeta(\tfrac 12+\ii \tau+\ii h)|\cdot \frac{(\log\log T)^{3/4}}{\log T},
$$
for large $T$, where $\tau$ is uniformly distributed on $[T,2T]$.
The techniques are also applied to bound the right tail of the maximum, proving the distributional decay $\asymp y e^{-2y}$ for $y$ positive. This confirms the Fyodorov-Hiary-Keating conjecture, which states that the maximum of $\zeta$ in short intervals lies in the universality class of logarithmically correlated fields. 
\end{abstract}

\title{The Fyodorov-Hiary-Keating Conjecture. II.}

\author{Louis-Pierre Arguin}
\address{Department of Mathematics, Baruch College and Graduate Center, City University of New York, USA}
\email{louis-pierre.arguin@baruch.cuny.edu}
\author{Paul Bourgade}
\address{Courant Institute, New York University, USA}
\email{bourgade@cims.nyu.edu}
\author{Maksym Radziwi\l\l}
\address{Department of Mathematics, Caltech,  Department of Mathematics, U.T. Austin, USA}
\email{maksym.radziwill@gmail.com}

\begingroup
\def\uppercasenonmath#1{} 
\let\MakeUppercase\relax 
~\vspace{-0.1cm}
\maketitle
\endgroup

~\vspace{-0.9cm}

\setcounter{tocdepth}{1}
\tableofcontents

\section{Introduction}
\fancyhead{}
\pagestyle{fancy}
\renewcommand{\headrulewidth}{0pt}
\fancyhead[LE,RO]{\thepage}
\fancyhead[CO]{L.-P. Arguin, P. Bourgade and M. Radziwi\l\l}
\fancyhead[CE]{The Fyodorov-Hiary-Keating Conjecture. II.}
 \fancyfoot{}

\noindent 

\noindent The distribution of the Riemann zeta function on the critical line is conjecturally related to random matrices,  a fact discovered  by Montgomery \cite{Mon1972} for local statistics of the zeros.
It was extended in many directions including to distributions for families of $L$-functions \cite{KatSar1999} and their moments \cite{KeaSna2000}.
Fyodorov, Hiary \& Keating \cite{FyoHiaKea2012} and Fyodorov \& Keating \cite{FyoKea2014} proposed to further expand the scope of this analogy at the level of extreme values. Based on a similar conjecture for random unitary matrices,  they 
put forward the very precise asymptotics   $$
   \frac{1}{T} \cdot \text{\rm meas} \Big \{ T \leq t \leq 2T : \max_{|h| \leq 1} |\zeta(\tfrac 12 + \ii  t + \ii  h)| > e^y \cdot \frac{\log T}{(\log\log T)^{3/4}} \Big \} \rightarrow F(y), 
   $$
   as $T \rightarrow \infty$, where the  limiting distribution function $F$ satisfies $F(y)\sim C ye^{-2y}$ for large $y$.
While the explicit form of $F$ is not expected to be universal,  the exponent $3/4$ and the tail asymptotics $ ye^{-2y}$ characterize the universality class of logarithmically correlated fields. 

\noindent In the first part of this series \cite{ArgBouRad2020},  we showed the upper bound of this conjecture,  $F(y) \ll y e^{-2y}$. 
 The main goal of this paper is to complete this work and establish tightness in the Fyodorov-Hiary-Keating conjecture, 
by showing $F(y) \to 1$ as $y \to - \infty$.  The following is the main result.

 \begin{theorem}
 \label{thm: left tail}
There exists $c>0$ such that for any $T\geq 100$ and $0\leq y\leq (\log\log T)^{1/10}$ we have 
$$
\frac{1}{T} \cdot \text{\rm meas} \Big \{ T \leq t \leq 2T : \max_{|h|\leq 1} |\zeta(1/2+\ii t+\ii h)|< e^{-y}\frac{\log T}{(\log\log T)^{3/4}} \Big \} \leq c^{-1} y^{-c}.
$$
 \end{theorem}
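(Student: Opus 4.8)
\emph{Overview and reduction to a Dirichlet polynomial.} The plan is to pass to a Dirichlet polynomial with an explicit branching structure, prove the corresponding lower bound for this model by a barrier-constrained second moment method, and then bootstrap the resulting positive probability up to $1-c^{-1}y^{-c}$ by exploiting near-independence across disjoint intervals of the $h$-variable. Set $K=\log\log T$ and, for $1\le j\le K$, let $\mathcal P_j$ be the primes in $(\exp(e^{j-1}),\exp(e^{j})]$, so that $\sum_{p\in\mathcal P_j}p^{-1}\approx 1$ and $\mathcal D_j(u):=\re\sum_{p\in\mathcal P_j}p^{-1/2-\ii u}$ behaves like a scale-$j$ increment, essentially constant on $h$-intervals of length $e^{-j}$ and of variance $\approx\tfrac12$. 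After randomising $t\mapsto\tau$, the partial sums $S_\ell(u)=\sum_{j\le\ell}\mathcal D_j(u)$ mimic a branching random walk with $K$ levels, branching number $e$, and increment variance $\tfrac12$, whose maximum is $m_K:=K-\tfrac34\log K=\log\log T-\tfrac34\log\log\log T$. Using a mollified approximate functional equation as in \cite{ArgBouRad2020} and discarding a set of $t$ of measure $\ll T/\log T$, it suffices to produce, for every $t$ outside a set of measure $\le\tfrac12 c^{-1}y^{-c}T$, a point $h\in[-1,1]$ with $S_\ell(t+h)\le g(\ell)$ for all $\ell\le K$ and $S_K(t+h)\ge m_K-y$, where $g$ is a barrier of slope close to $(m_K-y)/K$. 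The barrier is what connects the Dirichlet sum back to $\zeta$: on that event the argument principle applied to the Dirichlet sum shows $\zeta$ has no zero within distance $\asymp 1/\log T$ of $1/2+\ii(t+h)$, so that $\log|\zeta(1/2+\ii t+\ii h)|\ge S_K(t+h)-\OO(1)\ge m_K-y-\OO(1)$, which is the statement after adjusting $y$ by a constant.

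\emph{A barrier second moment: the base case.} Discretise $[-1,1]$ into $N\asymp\log T$ points $v$ at spacing $\asymp 1/\log T$, and for each let $E_v$ be the event that $S_\ell(v)\le g(\ell)$ for all $\ell\le K$ and $S_K(v)\in[m_K-y,\,m_K-y+1]$. For $X=\sum_v\1_{E_v}$, a first-moment computation---a Selberg-type central limit theorem for the $\mathcal D_j$ together with the ballot estimate forced by the barrier---gives $\E_\tau X\asymp 1$, and a second-moment computation whose pair sum is tamed by the barrier (the dominant pairs $(v,v')$ decouple after only $\OO(1)$ shared scales, and the ballot factor beats the number of such pairs) gives $\E_\tau X^2\ll(\E_\tau X)^2$. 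Paley--Zygmund then yields $\PP_\tau(X>0)\ge\delta$ for an absolute $\delta>0$ whenever $y$ lies below a fixed constant; since $c^{-1}y^{-c}\ge 1$ in that range once $c$ is small, this proves the theorem for bounded $y$ and furnishes the base of the induction.

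\emph{Bootstrap to $1-c^{-1}y^{-c}$, and the main obstacle.} We induct on $y$. Condition on the coarse scales $j\le k_0$ with $k_0\asymp\log y$; this partitions $[-1,1]$ into $\asymp e^{k_0}\asymp y^{\Theta(1)}$ intervals $I$ of length $e^{-k_0}$, and with probability $\ge 1-\exp(-c' e^{k_0})$ a positive proportion of these are \emph{good}, in the sense that $S_{k_0}$ does not drop below $-\OO(\sqrt{k_0})$ on $I$ (a first/second-moment count of the bad $I$, $S_{k_0}$ being of size $\sqrt{k_0}$). On a good $I$, the increments $\mathcal D_j$ with $j>k_0$ form a near-independent copy of the $(K-k_0)$-scale model over $I$---near-independence because for $p\in\mathcal P_j$ with $j>k_0$ the phase $h\log p$ winds $\gg 1$ times over $I$, which one makes quantitative through a factorisation of joint moments in $\tau$---and there one must reach the level $m_K-y-S_{k_0}(I)=m_{K-k_0}-\bigl(y-\Theta(\log y)\bigr)$, since $m_K-m_{K-k_0}\approx k_0\asymp\log y$. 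By the inductive hypothesis at the strictly smaller value $y-\Theta(\log y)$ (and with $K-k_0$ scales), each good $I$ succeeds with probability $\ge 1-2c^{-1}y^{-c}$; as the $\asymp y^{\Theta(1)}$ good intervals are near-independent, the chance that all of them fail is at most $(2c^{-1}y^{-c})^{y^{\Theta(1)}}$, and combined with the $\exp(-c' e^{k_0})$ term this is $\le c^{-1}y^{-c}$ for all $y$ up to $(\log\log T)^{1/10}$. The genuinely hard points are the \emph{quantitative} near-independence across the $\asymp y^{\Theta(1)}$ subintervals---the joint moments over $\tau$ must factorise with errors small enough to be absorbed through all $\asymp y/\log y$ rounds of the induction---and, in the reduction step, converting ``$S_\ell$ stays below the barrier'' into ``$\zeta$ has no nearby zero'' uniformly in $t$ and $h$, so that $S_K(t+h)$ genuinely bounds $\log|\zeta(1/2+\ii t+\ii h)|$ from below; both rest on the analytic machinery of \cite{ArgBouRad2020}.
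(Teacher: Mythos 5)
Your proposal shares the broad skeleton with the paper — reduce to Dirichlet polynomial partial sums, impose a barrier, run a second moment/Paley--Zygmund computation — but the actual engine of the proof is fundamentally different, and the differences are not cosmetic.

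The paper does \emph{not} bootstrap. It arranges the second moment so that the ratio $\E[\#G_{\mathcal L}]^2/\E[(\#G_{\mathcal L})^2]$ is already $1-y^{-c}$, in one shot. The device that makes this possible, and that is missing from your proposal, is \emph{dropping the first $n_0\asymp y$ scales}: the partial sums $S_k$ in \eqref{eqn: Sk} run only over primes with $n_0<\log\log p\leq k$, and the discrete grid $G_0$ has spacing $e^{-(n_{\mathcal L}-n_0)}$. As a result, the proportion of pairs $(h,h')$ with $|h-h'|\le e^{-n_0}$ is only $\asymp e^{-n_0}\asymp e^{-y}$, and for the remaining (dominant) pairs the walks are already almost independent from the start, so the decoupling in Lemma \ref{le:gaussian} together with a Ballot estimate gives the $1+O(y^{-c})$ factor directly in Proposition \ref{prop: comp second}. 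The price of discarding the coarse scales — the contribution of $P_{n_0}$ in \eqref{eqn:Pn0} — is a fixed $O(y)$ shift, absorbed into the $100y$ in Proposition \ref{prop:zetalocal}. Your plan of conditioning on $j\le k_0\asymp\log y$ and inducting is aiming at the same effect, but requires quantitative \emph{conditional} independence of $\asymp y^{\Theta(1)}$ barrier events over $\tau$, iterated through $\asymp y/\log y$ rounds. You flag this as "the genuinely hard point," and it is: the arithmetic tools available (Montgomery--Vaughan mean values, moment factorizations) are second-moment in nature and do not by themselves give the high-multiplicity near-independence your union bound $(2c^{-1}y^{-c})^{y^{\Theta(1)}}$ requires, nor do they obviously tolerate error accumulation over that many rounds. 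The paper's formulation sidesteps this entirely.

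Two further gaps. First, your barrier is one-sided ($S_\ell\le g(\ell)$); the paper's barrier is two-sided ($L_k\le S_k\le U_k$), and the lower barrier $L_k$ is not cosmetic. It is what guarantees the increment windows $u_j$ in \eqref{eqn: u bound} are bounded by $\Delta_j^{1/4}$, which in turn is what allows the polynomial approximation of indicators (Lemma \ref{lem: DP}) and the Dirichlet-to-Gaussian transfer (Propositions \ref{prop: 1 point} and \ref{prop: 2 points}) to go through with multiplicative errors $1+O(n_0^{-10})$. Without a running lower constraint you lose control of the increments from below and the arithmetic-to-probabilistic comparison breaks. Second, your route from $S_K$ large to $\log|\zeta|$ large (argument principle, no nearby zeros) is not the paper's. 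The introduction explicitly rejects the older working-to-the-right-of-the-line strategy of \cite{ArgBelBouRadSou2019} as "considerably more involved" at this precision, and replaces it with Lemma \ref{le:Analytic}: a Paley--Wiener/contour-shift identity showing that a smoothed average of $\zeta\cdot\prod_{p\le X}(1-p^{-s})$ on the critical line equals $1+O(T^{-1})$, combined with the regularity-on-good-points estimate Proposition \ref{prop:Low}. This gives the pointwise lower bound $\log|\zeta(\tfrac12+\ii\tau+\ii h+\ii u)|\ge S_{n_{\mathcal L}}(h+u)+P_{n_0}(h+u)-\sqrt{|u|e^{n_{\mathcal L}}}-C$ directly, without any zero-free region input.

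In short: your plan is coherent at the level of intuition and correctly identifies the branching structure, the ballot/barrier mechanism, and the need to decouple scales, but it substitutes a multi-scale induction for the paper's single sharp second moment, misses the $n_0\asymp y$ truncation and the lower barrier that make that second moment computable, and relies on a near-independence step that the paper deliberately avoids because it is not available with current arithmetic technology.
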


A direct consequence of the above result and \cite[Theorem 1]{ArgBouRad2020} is the expected tightness of maxima on short intervals, and existence of subsequential limits.

 \begin{corollary}\label{eqn:tightness}
   For every $\varepsilon > 0$ there exists $C > 0$ such that for any $T\geq 100$,  for $t \in [T, 2T]$ in a set of measure larger than $(1 - \varepsilon) T$ we have
   $$
   \left|\max_{|h| \leq 1} \log |\zeta(\tfrac 12 + \ii t + \ii h)| - (\log\log T - \frac{3}{4} \log\log\log T)\right| \leq C.
   $$
In particular, there exists a subsequence $T_{\ell} \to\infty$ and a distribution function $F$  such that
   $$
   \frac{1}{T_{\ell}}\cdot  \text{\rm meas} \Big \{ t \in [T_{\ell}, 2 T_{\ell}]: \max_{|h| \leq 1} |\zeta(\tfrac 12 + \ii t + \ii h)| > e^y \cdot \frac{\log T_\ell}{(\log\log T_\ell)^{3/4}} \Big \} \rightarrow F(y),
   $$
   uniformly in $y \in \mathbb{R}$ outside of a countable set.
\end{corollary}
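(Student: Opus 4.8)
The plan is to read off the tightness bound by pairing the lower estimate of Theorem~\ref{thm: left tail} with the upper estimate of \cite[Theorem 1]{ArgBouRad2020}, and then to produce the subsequential limits from tightness by a soft compactness argument; no new information about $\zeta$ is needed. Throughout write $m_T := \log\log T - \tfrac34\log\log\log T$, so that $e^{m_T} = \log T/(\log\log T)^{3/4}$, and, for $\tau$ uniform on $[T,2T]$, set $M_T := \max_{|h|\le 1}\log|\zeta(\tfrac12 + \ii\tau + \ii h)|$; the normalized measure appearing in the two displays of the corollary is then $\PP(M_T - m_T > y)$ and $\PP(|M_T - m_T| > C)$ respectively.

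Fix $\varepsilon > 0$. On the upper side I would pick $C_1 = C_1(\varepsilon)$ so large that the bound $\PP(M_T - m_T > C_1) \ll C_1 e^{-2C_1}$ supplied by \cite[Theorem 1]{ArgBouRad2020} falls below $\varepsilon/2$. On the lower side I would pick $C_2 = C_2(\varepsilon)$ so large that $c^{-1}C_2^{-c} < \varepsilon/2$, with $c$ the constant of Theorem~\ref{thm: left tail}; that theorem then gives $\PP(M_T - m_T < -C_2) < \varepsilon/2$ for every $T$ with $C_2 \le (\log\log T)^{1/10}$, i.e.\ for all $T \ge T_0(\varepsilon)$. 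Taking $C := \max(C_1,C_2)$ yields $\PP(|M_T - m_T| > C) < \varepsilon$ for $T \ge T_0(\varepsilon)$, which is the asserted two-sided display. The residual window $100 \le T \le T_0(\varepsilon)$ I would handle by enlarging $C$ and using only crude facts: $m_T$ is positive and bounded on this compact range; the convexity bound $|\zeta(\tfrac12 + \ii t)| \ll (2 + |t|)^{1/4}\log(2+|t|)$ gives an upper bound $M_T \le A(\varepsilon)$ valid for every $t \in [T,2T]$; and the continuity and strict positivity of $t \mapsto \max_{|h|\le 1}|\zeta(\tfrac12 + \ii t + \ii h)|$ on the compact interval $[99,\,2T_0(\varepsilon)+1]$ gives a matching lower bound $M_T \ge -A(\varepsilon)$. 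Hence $|M_T - m_T| \le C$ for all such $t$ once $C$ is chosen large enough in terms of $\varepsilon$, completing the first assertion.

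For the subsequential limits, set $\bar G_T(y) := \PP(M_T - m_T > y)$; for each $T$ this is a nonincreasing, right-continuous, $[0,1]$-valued function of $y$, and the bound just proved is exactly the statement that $\{M_T - m_T\}_{T \ge 100}$ is a tight family. By Helly's selection theorem every sequence $T_\ell \to \infty$ has a subsequence, still denoted $T_\ell$, along which $\bar G_{T_\ell}(y) \to F(y)$ at every continuity point $y$ of some nonincreasing right-continuous limit $F$; tightness forces $F(y) \to 1$ as $y \to -\infty$ and $F(y) \to 0$ as $y \to +\infty$, so $F$ is a genuine complementary distribution function. Its discontinuity set $D$ is countable, and along $T_\ell$ the convergence $\bar G_{T_\ell} \to F$ is uniform on every compact set disjoint from $D$ by the monotone‐limit argument (a generalized Pólya theorem); this is the content of the final clause ``uniformly in $y$ outside of a countable set.''

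Strictly speaking the corollary is a formal consequence of the two cited theorems, so the only genuine work is bookkeeping, and if I had to name the main obstacle it would be exactly that: ensuring that the ranges of validity of the input theorems — in particular the restriction $0 \le y \le (\log\log T)^{1/10}$ in Theorem~\ref{thm: left tail} — cover every $T \ge 100$ for a single constant $C = C(\varepsilon)$, which is why the bounded range of $T$ must be dispatched separately as above, and being careful that the stated uniformity of the distributional convergence means uniformity on compacta away from the countable discontinuity set rather than on all of $\R$ minus a countable set.
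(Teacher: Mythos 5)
The paper states this corollary without proof, calling it ``a direct consequence'' of Theorem~\ref{thm: left tail} and the upper tail bound from \cite[Theorem 1]{ArgBouRad2020}; your proposal fills in exactly the intended bookkeeping and is correct. In particular, you correctly identify the two ingredients (choosing $C$ large enough so that both tail probabilities are below $\varepsilon/2$), correctly notice that the range restriction $0 \le y \le (\log\log T)^{1/10}$ in Theorem~\ref{thm: left tail} forces $T$ to be large for the tail argument to apply, and correctly dispatch the residual compact range $T \in [100, T_0(\varepsilon)]$ with a crude convexity/continuity bound (which in fact gives $|M_T - m_T| \le A(\varepsilon)$ for \emph{every} $t$ there, so the measure of bad $t$'s is zero). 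The Helly--Pólya argument for subsequential limits is standard and the caveat you raise about the exact meaning of ``uniformly in $y$ outside of a countable set'' (uniformity on compact sets away from the discontinuity set of $F$, rather than literal uniformity on $\mathbb{R}$ minus a countable set, which can fail) is a genuine and correctly noted subtlety in how the corollary is phrased.
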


Previous results in the direction of Theorem \ref{thm: left tail} were limited to the first order $\log T$,  conditionally on the Riemann Hypothesis by Najnudel \cite{Naj2018} and unconditionally by the authors with Belius and Soundararajan \cite{ArgBelBouRadSou2019}.  This contrasts with the developments on the upper bound, starting with the first order $\log T$ proved in \cite{Naj2018, ArgBelBouRadSou2019}, then the second order by Harper \cite{Har2019},  and finally the optimal upper bound with the tail distribution \cite{ArgBouRad2020}.  In fact, the present work builds on many techniques developed for the upper bound in \cite{ArgBouRad2020},  as well as new inputs as we now explain.\\

Progress towards the Fyodorov-Hiary-Keating conjecture has relied on the observation that the maxima of $|\zeta|$ on a short interval are related to extremes of branching processes. Indeed,
 the emergence of large values of $|\zeta|$  follows a scenario first identified by Bramson \cite{Bra1978} in the setting of branching Brownian motion.  As explained in the introduction of \cite{ArgBouRad2020},
the explicit branching structure behind $\zeta$ comes from the Dirichlet polynomials $(S_k(h), k\geq 1)$, $|h|\leq 1$, defined in (\ref{eqn: Sk}).  
These polynomials behave similarly to correlated random walks, the time index $k$ corresponding to primes in the loglog scale.
Bramson's scenario translates into the ballistic behavior of  
$(S_k(h), k\leq n_{\mathcal L})$ conditioned not to cross an upper barrier.
Estimating the maximum of $\zeta$ with a precision of order one is a delicate task because the final index $n_{\mathcal L}$ needs to be $y$-dependent and very large, i.e., the sum must include primes very close to $T$.

The proof of Theorem \ref{thm: left tail} is decomposed into two parts. First, it is shown that large values of $S_{n_{\mathcal L}}$ indeed imply large values of $\log|\zeta|$, cf.~Proposition \ref{prop:zetalocal}. Second, we prove that large values of $S_{n_{\mathcal L}}$ of the claimed size are achieved, see Proposition \ref{prop:almostsuregood}.
Proposition \ref{prop:almostsuregood} builds on two techniques from  \cite{ArgBouRad2020}, namely the introduction of a lower barrier ensuring that large deviations of the increments of $S_k$ can be obtained even for large primes,  and the precise encoding through Dirichlet sums of the event that $S$ remains in the corridor defined by an upper barrier and lower barrier.

To justify that large values of $S_{n_{\mathcal L}}$ imply large values of $\log|\zeta|$,  the first order asymptotics from \cite{ArgBelBouRadSou2019} relied on working on the right of the critical line.  However,  implementing  this method for the much finer tightness would be considerably more involved.  Instead,  Proposition \ref{prop:zetalocal} uses a new,  simpler argument allowing to work directly on the critical line,  through an integral approximation of $\zeta$ by a finite Euler product (Lemma \ref{le:Analytic}),  and a control of the regularity in $h$ of $S_{n_{\mathcal L}}$ on high points    (Proposition \ref{prop:Low}). 

With these methods developed for Theorem \ref{thm: left tail}, we can also complement the upper bound $F(y) \ll y e^{-2 y}$ from \cite[Theorem 1]{ArgBouRad2020}, and show that $F(y) \asymp y e^{-2y}$ for positive $y$. 

 \begin{theorem}
 \label{thm: right tail} For any $C>0$ there exists $c>0$ such that 
for any  $10\leq y\leq C\frac{\log\log T}{\log\log\log T}$,  we have
\begin{equation}
\label{eqn: LB right tail}
\PP\Big(\max_{|h|\leq 1} |\zeta(1/2+\ii\tau+\ii h)|>e^y\frac{\log T}{(\log\log T)^{3/4}}\Big)\geq c\, y e^{-2y} e^{-y^2/\log\log T}.
\end{equation}
 \end{theorem}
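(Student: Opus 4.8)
The plan is to mirror the proof of Theorem \ref{thm: left tail}, replacing the left-tail target $e^{-y}\log T/(\log\log T)^{3/4}$ by the right-tail target $e^{y}\log T/(\log\log T)^{3/4}$ and correspondingly shifting the barrier that the Dirichlet random walk $(S_k(h),\,k\le\ttL)$ is conditioned to stay below. Concretely, I would first use Proposition \ref{prop:zetalocal} in the direction "large values of $S_{\ttL}$ imply large values of $\log|\zeta|$": it suffices to produce a set of $\tau$ of measure $\gg y e^{-2y} e^{-y^2/\log\log T}$ on which there is some $|h|\le 1$ with $S_{\ttL}(h) \ge (\log\log T - \tfrac34\log\log\log T) + y + O(1)$, together with the regularity control of $S_{\ttL}$ in $h$ provided by Proposition \ref{prop:Low} and the Euler-product approximation of Lemma \ref{le:Analytic}. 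So the entire problem reduces to a \emph{lower} bound on the probability of a large value of the log-correlated walk $S$ reaching a level shifted \emph{up} by $y$ relative to the typical maximum.

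The heart of the argument is thus a first-moment/second-moment (or Bramson-type barrier) estimate for the branching random walk $S$. I would discretize $[-1,1]$ in $h$ at scale $1/T$ (equivalently, consider the $\asymp \log T$ relevant "leaves" in the loglog scale), and for each leaf $h$ define the good event $G_y(h)$ that $S_k(h)$ stays below the slightly-raised upper barrier for all $k\le\ttL$ while ending near level $\log\log T - \tfrac34\log\log\log T + y$. The expected number of such leaves is computed via the Dirichlet-sum encoding of the corridor event from \cite{ArgBouRad2020} (the same device used in Proposition \ref{prop:almostsuregood}): the Gaussian cost of ending at height $m_T+y$ contributes the factor $e^{-2y}$ (the exponent $2$ is forced by the variance $\tfrac12\log\log T$ of $S_{\ttL}$ against the $\log T$ entropy), the ballistic/barrier constraint contributes the polynomial factor $y$ (a ballot-type probability that a walk conditioned to reach height $m_T+y$ stays below the line), and the curvature of the barrier across scales $k\le \ttL$ contributes the Gaussian correction $e^{-y^2/\log\log T}$ (this is exactly where the restriction $y \le C\log\log T/\log\log\log T$ enters, keeping this factor nontrivial). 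To turn the first-moment lower bound into a genuine probability lower bound one needs a matching second-moment bound on the number of good leaves; here the truncation by the lower barrier (again from \cite{ArgBouRad2020}) is what keeps the pair correlations under control, since it prevents the walk from dipping low and then making an anomalously large jump on the last few (large-prime) scales where increments cannot be controlled by moments alone.

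The main obstacle I expect is precisely this second-moment control at the top of the tree combined with the large-prime scales: two leaves $h,h'$ that branch at loglog-time $u$ share the walk up to scale $u$, and near the optimal shape the shared part sits close to the barrier, so the conditional probability that both continuations reach $m_T+y$ is delicate and must be shown to decay fast enough in $u$ that $\sum_{h,h'} \PP(G_y(h)\cap G_y(h')) \ll (\sum_h \PP(G_y(h)))^2$ up to constants uniform in $y$ over the whole range. Handling the very large primes (those within $o(1)$ of $\log T$ in the loglog scale, needed because $\ttL$ is essentially $\log T$) requires the lower-barrier large-deviation estimates for increments developed in \cite{ArgBouRad2020} rather than naive moment bounds; fusing these with the $h$-regularity statement of Proposition \ref{prop:Low} so that discretization does not destroy the gain is the technically heaviest point. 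Once the second moment is bounded, Paley–Zygmund gives $\PP(\exists h: G_y(h)) \gg y e^{-2y} e^{-y^2/\log\log T}$, and feeding this into Proposition \ref{prop:zetalocal} yields \eqref{eqn: LB right tail}.
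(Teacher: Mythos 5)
Your overall strategy matches Section~6 of the paper: replace the barriers by ones shifted up by (roughly) $y$, compute the expected number of ``good'' discretization points $h$ with $S_{\ttL}(h)$ near $\log\log T-\tfrac34\log\log\log T+y$ via the Dirichlet-polynomial corridor encoding and a Ballot theorem (yielding the three factors $y$, $e^{-2y}$, $e^{-y^2/\log\log T}$), control the second moment using the lower barrier, apply Paley--Zygmund, and feed the result into a zeta-local estimate plus an $h$-regularity estimate for $S_{\ttL}$. That is genuinely what the paper does, with new analogues (Propositions~\ref{prop:analogue}, \ref{prop:NewMax}, \ref{prop:NewLow}) of Propositions~\ref{prop:zetalocal}, \ref{prop:max}, \ref{prop:Low} adapted to the new barriers; your sketch of the reduction step and the role of the lower barrier at large primes is essentially correct.

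There is however a concrete gap in ``mirror the left-tail proof'': in the left tail, the walk $S_k$ starts at time $n_0$ and the primes $p$ with $\log\log p\le n_0$ are dropped entirely; their contribution $P_{n_0}$ is absorbed into the $\OO(y)$ error, which is fine because the target is $e^{-y}$. If you literally did the same for the right tail with $n_0\asymp y$, the first moment would carry an extraneous factor $e^{-n_0}\asymp e^{-cy}$ (the entropy of $G_0$ is $e^{\ttL-n_0}$ rather than $e^{\ttL}$), and you would only prove a bound of the form $y\,e^{-2y}e^{-n_0}e^{-y^2/n}$, which is strictly weaker than \eqref{eqn: LB right tail} in the regime $y\to\infty$. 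The paper avoids this by changing the definition of $S_j$ to include \emph{all} primes from $p=2$ (see \eqref{eqn: Sk thm 3}) while still only imposing the barrier constraint for $j\ge n_0=\lfloor y/100\rfloor$: the first $n_0$ steps are treated as free (no barrier), and their density is controlled by a separate Gaussian tail estimate rather than by a barrier/Ballot argument. This asymmetry is forced because for $j<n_0$ the corridor width $U_j-L_j\asymp y$ is too large compared to the variance $\asymp j$ for the Gaussian comparison of Propositions~\ref{prop: 1 point}--\ref{prop: 2 points} to apply, so those scales must be handled differently. A second, smaller point: in the second moment the dominant contribution now comes from the near-diagonal pairs $|h-h'|\ll e^{-\ttL}$, unlike the left tail where the far pairs dominate; your discussion of branching times is directionally right but the actual verification hinges on the three-region split $j^\star\le n_0$ vs.\ $n_0<j^\star<\ttL$ vs.\ $j^\star\approx\ttL$ with the small-prime density $f(z,z')$ controlled separately, which your sketch does not anticipate.
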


This proves the matching lower bound of the upper tail not only in the in the exponential regime $y\leq \sqrt{\log\log T}$ but also in the Gaussian regime $\sqrt{\log\log T}\leq y\leq C\frac{\log\log T}{\log\log\log T}$,  because the proof of \cite[Theorem 1]{ArgBouRad2020} implies the Gaussian decay in this range.

The estimate \eqref{eqn: LB right tail} essentially
matches the range $y=\oo(t)$ proved by Bramson  \cite{Bra1983}  for the branching Brownian motion up to time $t$.
(The time $t$ corresponds to $\log\log T$ in our problem.)
It is weaker by a logarithmic factor as it would corresponds to $y\leq C\, t/\log t$ in the branching Brownian motion case.
We are not aware of other examples of log-correlated processes where the order of the right tail of the maximum is known to this level of precision.  In fact, any form of decay has only been proved for a few models in this universality class.
Notably 
for the branching random walk,  the best known range is  $y=\OO(\sqrt{t})$ \cite{BraDinZei2016bis},  which matches the known precision for the two dimensional discrete Gaussian free field on the $N\times N$ square grid,  $y=\OO(\sqrt{\log N})$ \cite{Din2013,DinZei2014,BraDinZei2016}.
A finer control of the contributions from small primes in the random walk would improve this range of $y$ in Theorem \ref{thm: right tail} to match Bramson's.  \\

The distributional limit obtained in Corollary \ref{eqn:tightness} is presumably unique but we believe this is out of reach with current number theory techniques. Moreover, no explicit formula for $F$ was conjectured.
Indeed,  denoting $U_n$ a Haar-distributed $n\times n$  unitary matrix, \cite{FyoHiaKea2012} proposed a very precise limiting distribution for 
\begin{equation}\label{eqn:RMT}
\sup_{|z|=1} \big(\log|{\rm det}(z-U_n)|-\log n + \frac{3}{4} \log \log n\big),
\end{equation}
but as explained in \cite{FyoKea2014} this limit is not expected to coincide with $F$: It
primarily suggested 
the characteristic exponent $3/4$ and the tail distribution $y e^{-2y}$ for $\zeta$, which are the prominent signatures of extremal statistics in log-correlated fields \cite{CarLed01}.
Progress on a limit for \eqref{eqn:RMT} culminated in the breakthrough proofs of 
tightness \cite{ChhMadNaj2018}  and uniqueness \cite{PaqZei2022} of a limiting distribution for the more general circular beta ensembles,  after initial steps verifying the first  \cite{ArgBelBou2017} and second order terms \cite{PaqZei2018}.

The exact form of the limiting distribution of  \eqref{eqn:RMT},  and universality of its right tail, remain open.\\

\noindent {\bf Acknowledgment.} L.-P.~A. is supported by the grants NSF CAREER 1653602 and NSF DMS 2153803,  P.~B.  is supported by the NSF grant DMS 2054851, and M.~R.  is supported by the NSF grant DMS 1902063.\\

\noindent {\bf Notation.} Throughout the paper, $\tau$ will denote a random variable uniformly distributed in $[T, 2T]$, and $T$ will be some large parameter that is usually taken to go to infinity. 
With this notation, for any measurable function $f$ on $[T,2T]$ and event $A$, we have
 $$
 \mathbb{P}( f(\tau) \in A) := \frac{1}{T} \cdot \text{\rm meas} \Big \{ T \leq t \leq 2T : f(t) \in A \Big \}.  
 $$

 \section{Proof of Theorem \ref{thm: left tail}}
 \label{sect: proof thm 1}

 Let
 $$
 n_0 := \lfloor y \rfloor \text{ and } n := \lfloor \log\log T \rfloor \text{ and } n_{\mathcal{L}} := n - n_0. 
 $$
 For $n_0 \leq k \leq n_{\mathcal{L}}$ and $|h|\leq 1$, we consider the partial sums
 \begin{equation}
\label{eqn: Sk}
S_k(h)  = \sum_{n_0<\log\log  p \leq k} \re \Big( p^{-(1/2 + \ii \tau + \ii h)} + \frac 12 \cdot p^{- 2(1/2 + \ii \tau + \ii h)} \Big ).
 \end{equation}
 Essentially one can think of $S_k(h)$ as an approximation to
 $$
\int_{\mathbb{R}} \log |\zeta(\tfrac 12 + \ii  \tau + \ii h + \ii x)| f ( e^k x)  \ e^k \rd x. 
 $$
 for some choice of smoothing with $\widehat{f}$ compactly supported. 

 We will show that with high probability the local maxima of $S_{n_{\mathcal{L}}}(h)$ arise at those $h$ at which the partial sums $S_k(h)$ evolve in a predictable manner as $k$ runs from $n_0$ to $n_{\mathcal{L}}$. More precisely, the partial sums $S_k(h)$ of maximizing $h$'s are constrained between $L_k$ and $U_k$ (defined below) for all $n_0 \leq k \leq n_{\mathcal{L}}$. Once $k$ reaches $n_{\mathcal{L}}$ there are only $\OO(1)_y$ well-spaced (i.e, $1/\log T$ spaced) values of $h$ that can satisfy all those constraints, thus identifying the maximum almost uniquely.  

 In order to define $L_k$ and $U_k$ we introduce the \textit{slope},
 \begin{equation}
 \label{eqn: alpha}
 \alpha=1-\frac{3}{4}\frac{\log n}{n},
 \end{equation}
 Furthermore given a function $f$, we define a symmetrized version,
 $$
 \mathcal{S}_{\mathcal{L}}(f)(k) := \begin{cases}
   f(k - n_0) & \text{ for } n_0 < k \leq \frac{n}{2}, \\ 
   f(n_{\mathcal{L}} - k)  & \text{ for } \frac{n}{2} < k < n_{\mathcal{L}}, \\
     0 & \text{ for } k \geq n_{\mathcal{L}} \text{ or } k \leq n_0.
   \end{cases}
 $$
 Then, the so-called \textit{barriers} (i.e., values $L_k$ and $U_k$) are defined as
\begin{align}
  \label{eq:barup}U_k & = \frac{y}{10} +\alpha (k-n_0) - 10 \mathcal{S}_{\mathcal{L}}(x \mapsto \log(x))(k), \\
\label{eq:bardown} L_k & = - 10 y +\alpha (k-n_0) - \mathcal{S}_{\mathcal{L}}(x \mapsto x^{3/4})(k).
\end{align}
 We now introduce the set of \textit{good points} $G_{\mathcal{L}}$, defined more generally for
 $n_0 \leq \ell \leq n_{\mathcal{L}}$ as
 \begin{equation}
 \label{eqn: G}
 \begin{aligned}
  G_0&=[-\tfrac 12,\tfrac 12]\cap e^{-(n_\mathcal L - n_0)}\mathbb Z,\\
 G_{\ell} &= \{ h\in G_0 : S_{k}(h) \in[L_k,  U_k] \text{ for all } k \leq \ell \}. 
  \end{aligned}
 \end{equation}
 We will show that with high probability the local maximum belongs to $G_{\mathcal{L}}$.  

We first comment on the above choices of barriers and discrete sets.
The interval $[-\tfrac 12,\tfrac 12]$ defining $G_0$ needs to be strictly included in  the original interval $[-1,1]$, as it will be apparent in the proof of  Proposition \ref{prop:max}.  Moreover,  the discretization step $e^{-(n_\mathcal L - n_0)}$ will be convenient for the proof of Proposition \ref{prop: comp second} as it corresponds to the number of steps of the random walk \eqref{eqn: Sk}, but it is not essential and any step in $[e^{-n_\mathcal L },e^{-(n_\mathcal L - n_0)}]$ would work.  However,  contrary to \cite{ArgBouRad2020},  it is essential that the upper barrier is convex and not concave,  as we will see in the proof of Proposition \ref{prop: comp second}.

 The proof of the main theorem reduces now to two main propositions. In the first proposition, we show how the local maxima of the zeta function arise from the good points $h \in G_{\mathcal{L}}$. 

 \begin{prop} \label{prop:zetalocal}
   There exists an absolute constant $C>0$ such that uniformly in $T\geq 100$ and $0\leq y\leq (\log\log T)^{1/10}$ we have 
   \begin{align*}
\mathbb{P} \Big ( \max_{|h| \leq 1} & \log |\zeta(\tfrac 12 + \ii \tau + \ii  h)| \geq n - \frac{3}{4} \log n - 100 y - C \Big ) \geq \mathbb{P} \Big ( \exists h \in G_{\mathcal{L}} \Big ) + \OO (e^{-y}).
   \end{align*}
 \end{prop}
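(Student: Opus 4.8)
The plan is to obtain Proposition~\ref{prop:zetalocal} by combining a purely deterministic lower bound on $S_{n_{\mathcal L}}$ at good points with an almost-sure transfer, valid off an exceptional set of $\tau$'s of measure $\OO(e^{-y})T$, of a large value of $S_{n_{\mathcal L}}(h)$ into a large value of $\log|\zeta|$ somewhere in $[-1,1]$. That exceptional set is exactly what produces the additive $\OO(e^{-y})$ in the statement, so it suffices to show that on its complement the event $\{\exists h\in G_{\mathcal L}\}$ is contained in the event $\{\max_{|h|\le1}\log|\zeta(\tfrac12+\ii\tau+\ii h)|\ge n-\tfrac34\log n-100y-C\}$.

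The deterministic step uses only the lower barrier. If $h\in G_{\mathcal L}$ then $S_{n_{\mathcal L}}(h)\ge L_{n_{\mathcal L}}$, and since $\mathcal S_{\mathcal L}(\cdot)(n_{\mathcal L})=0$, \eqref{eq:bardown} reads $L_{n_{\mathcal L}}=-10y+\alpha(n-2n_0)$. Because $\alpha=1-\tfrac34\tfrac{\log n}{n}$, $n_0=\lfloor y\rfloor$, and $y\le(\log\log T)^{1/10}$, one has $\alpha(n-2n_0)=n-\tfrac34\log n-2n_0+\oo(1)$, hence, using $n_0\le y$,
$$
S_{n_{\mathcal L}}(h)\ \ge\ n-\tfrac34\log n-12y-1\qquad\text{for every }h\in G_{\mathcal L}.
$$
This exceeds the target value $n-\tfrac34\log n-100y-C$ by a margin of order $y$, more than enough to absorb the $\OO(1)$ loss incurred in the transfer.

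For the transfer I would invoke the integral approximation of Lemma~\ref{le:Analytic} together with the high-point regularity of Proposition~\ref{prop:Low}. Lemma~\ref{le:Analytic} supplies a nonnegative kernel $f$ with $\widehat f$ compactly supported, $\int_{\R}f(e^{n_{\mathcal L}}x)\,e^{n_{\mathcal L}}\,\rd x=1$, concentrated on a window of width $\asymp e^{-n_{\mathcal L}}$ and rapidly decaying, such that off an exceptional set $\mathcal B$ with $\PP(\mathcal B)=\OO(e^{-y})$ one has
$$
\int_{\R}\log\big|\zeta(\tfrac12+\ii\tau+\ii h+\ii x)\big|\,f(e^{n_{\mathcal L}}x)\,e^{n_{\mathcal L}}\,\rd x\ \ge\ S_{n_{\mathcal L}}(h)-C
$$
for the relevant $h$, namely those at which $S_{n_{\mathcal L}}(h)$ is close to the lower barrier. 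Morally the left side is, up to $\OO(1)$, the Dirichlet sum over the primes with $\log\log p\le n_{\mathcal L}$ attached to the logarithm of the corresponding finite Euler product; the discrepancy between that product and $\log|\zeta|$ comes from zeros of $\zeta$ within $\OO(e^{n_0}/\log T)$ of height $\tau+h$, whose contribution is integrable in $x$ and is $\OO(1)$ once $\tau\notin\mathcal B$, and Proposition~\ref{prop:Low} is what upgrades this to a statement holding simultaneously at all the high points (a union bound over $G_0$, of size $\asymp e^{n-2n_0}$, being out of the question) and reconciles the continuous window with the discrete set $G_0$. Finally, since $f\ge0$, integrates to $1$, and keeps $h+x\in[-1,1]$ for $h\in G_0\subset[-\tfrac12,\tfrac12]$ and $x$ in the bulk of the window (the rapidly decaying tail being harmless for $\tau\notin\mathcal B$), the left-hand integral is at most $\max_{|h'|\le1}\log|\zeta(\tfrac12+\ii\tau+\ii h')|$. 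Chaining the three displays yields, for any $\tau\notin\mathcal B$ with $G_{\mathcal L}\ne\varnothing$,
$$
\max_{|h'|\le1}\log|\zeta(\tfrac12+\ii\tau+\ii h')|\ \ge\ n-\tfrac34\log n-100y-C,
$$
and $\PP(\mathcal B)=\OO(e^{-y})$ completes the proof.

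The main obstacle is that the shortcut of moving slightly off the critical line is unavailable: one must compare $\zeta$ with its finite Euler product \emph{on} the line, where $\zeta$ vanishes, with an $\OO(1)$ error that is uniform over the vast set of candidate maximisers. This is precisely why Lemma~\ref{le:Analytic} is formulated as an \emph{integral} approximation — averaging over a window of width $\asymp e^{n_0}/\log T$ turns the logarithmic singularities at the zeros into a finite and, on average, $\OO(1)$ quantity — and why Proposition~\ref{prop:Low} is needed to make the bound uniform over high points; the $\OO(e^{-y})$-measure set $\mathcal B$ then merely discards those $\tau$ for which the zeros near height $\tau$ are atypically dense.
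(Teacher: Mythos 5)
Your overall skeleton is right — a deterministic lower bound on $S_{n_{\mathcal L}}$ coming from the lower barrier at $k=n_{\mathcal L}$, plus a transfer from large $S_{n_{\mathcal L}}$ to large $\log|\zeta|$ off a set of measure $\OO(e^{-y})$, with Proposition~\ref{prop:Low} controlling the oscillation of $S_{n_{\mathcal L}}$ over the short window. The deterministic step (computing $L_{n_{\mathcal L}}\approx n-\tfrac34\log n-12y$) is correct. But the transfer step as you have written it does not follow from Lemma~\ref{le:Analytic}, and the route you sketch to fill the gap is exactly the one the paper deliberately avoids. Lemma~\ref{le:Analytic} is an \emph{identity} for the linear integral $\log X\int\zeta(s)\prod_{p\le X}(1-p^{-s})f(x\log X)\,\rd x=1+\OO(T^{-1})$; it says nothing directly about $\int\log|\zeta|\cdot f$. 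By concavity of the logarithm (Jensen), a lower bound on $\int|\zeta\cdot\prod|\,f$ does \emph{not} yield a lower bound on $\int\log|\zeta|\,f$; that is precisely the wrong direction. To lower-bound $\int\log|\zeta|\,f$ one would need to control the contribution of zeros of $\zeta$ near height $\tau$, i.e.\ the Hadamard/explicit-formula approach, which the introduction to the paper flags as ``considerably more involved'' and explicitly replaces.

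The paper's actual mechanism is different and cleaner: from the identity, $\int|\zeta(s)\prod(1-p^{-s})|\,|f(x\log X)|\log X\,\rd x\gg1$ for every $\tau$ and every $h$; writing $|\prod(1-p^{-s})|^{-1}=\exp(S_{n_{\mathcal L}}(h+x)+P_{n_0}(h+x)+\OO(1))$ and using the superexponential decay $|f(x)|\ll e^{-2\sqrt{|x|}}$ together with the mean-value theorem, one extracts a single point $u$ (in some dyadic range) at which the \emph{pointwise} bound \eqref{eq:opoi}, namely $\log|\zeta(\tfrac12+\ii\tau+\ii h+\ii u)|\ge S_{n_{\mathcal L}}(h+u)+P_{n_0}(h+u)+\sqrt{|u|e^{n_{\mathcal L}}}-C$, holds. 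One then shows via Lemma~\ref{le:Easy} that the dyadic index can be taken $k=0$ (so $|u|\le1/4$) outside a set of measure $\ll e^{-n}$, and --- this is where the $\OO(e^{-y})$ actually comes from --- that $\max_{|h|\le1,|u|\le1/4}|P_{n_0}(h+u)|\le20y$ outside a set of measure $\ll e^{-y}$. Your proposal omits $P_{n_0}$ entirely and attributes the exceptional set to ``atypically dense zeros near height $\tau$,'' which is not the source of the $e^{-y}$. So the gap is concrete: the inequality $\int\log|\zeta|f\ge S_{n_{\mathcal L}}-C$ you assert as the content of Lemma~\ref{le:Analytic} is not what the lemma says, would require the zero-counting argument the paper avoids, and elides the small-prime contribution that produces the stated error term.
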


 In the second proposition, we then show that good points exist with high probability.
 
 \begin{prop} \label{prop:almostsuregood}
   There exists $c>0$ such that uniformly in $T\geq 100$ and $0\leq y\leq (\log\log T)^{1/10}$ we have 
   \begin{align*}
   \mathbb{P} \Big ( \exists h \in G_{\mathcal{L}} \Big ) = 1 + \OO(y^{-c}).
   \end{align*}
 \end{prop}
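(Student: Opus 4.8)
The plan is a conditional second moment method for the number of good points
$$
N \;:=\; \#G_{\mathcal L} \;=\; \sum_{h\in G_0} \1\big[\, S_k(h)\in[L_k,U_k] \text{ for all } n_0\le k\le n_{\mathcal L}\,\big],
$$
together with the Paley--Zygmund inequality $\mathbb P(N\ge 1)\ge \mathbb E[N]^2/\mathbb E[N^2]$; the whole difficulty is that the ratio has to be pushed all the way down to $1+\OO(y^{-c})$, not merely bounded, so both moments must be computed essentially exactly. The first step is to turn the moments of $N$ into random-walk barrier probabilities. Each $S_k(h)$ is a Dirichlet polynomial in $\tau$ of length $\exp(\exp k)\le\exp(\exp n_{\mathcal L})$, which is only a small power of $T$; replacing the sharp indicators $\1[S_k(h)\in[L_k,U_k]]$ by smooth exponential weights that sandwich them and stay short Dirichlet polynomials --- the ``precise encoding of the corridor event'' imported from \cite{ArgBouRad2020} --- one expands $\mathbb E[N]$ and $\mathbb E[N^2]$ into Dirichlet polynomials of length $T^{\oo(1)}$ and evaluates them with the mean value theorem. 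After this reduction $S_k(h)$ behaves like a Gaussian random walk with independent increments of variance $\sim\tfrac{1}{2}$ per unit of $k$, and for $h\ne h'$ with $|h-h'|\asymp e^{-j}$ the walks $S_\bullet(h)$ and $S_\bullet(h')$ coincide up to level $j$ and are independent afterwards. The lower barrier $L_k$ is what makes this reduction legitimate for the large primes: on the corridor event the walk can never fall behind the slope $\alpha$, hence it makes only $\OO(1)$-size increments at every step, which keeps the Dirichlet polynomials short even for primes very close to $T$ (large $k$); without $L_k$ one could realize a large value of $S_{n_{\mathcal L}}$ by a single late jump, requiring an uncontrollably long polynomial.

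For the first moment, $\mathbb E[N]=|G_0|\cdot p_{\mathrm{full}}$, where $p_{\mathrm{full}}$ is the probability that a single walk started at $0$ stays inside $[L_k,U_k]$ for all $n_0\le k\le n_{\mathcal L}$; a barrier (ballot-type) estimate for Brownian motion constrained to a corridor whose walls have slope $\alpha$ and are bent by the symmetrized logarithmic and power-$\tfrac{3}{4}$ corrections gives $p_{\mathrm{full}}$ of order $|G_0|^{-1}$ times a fixed positive power of $\log\log T$, so that $\mathbb E[N]$ grows like a power of $\log\log T$; in particular $\mathbb E[N]^{-1}=\OO(y^{-c})$. The harder quantity is
$$
\mathbb E[N^2] \;=\; \mathbb E[N] \;+\; \sum_{j}\ \sum_{\substack{h\ne h'\in G_0\\ |h-h'|\asymp e^{-j}}} \mathbb P\big(h,h'\in G_{\mathcal L}\big).
$$
The pairs with $|h-h'|>e^{-n_0}$ separate before the walk even starts and are therefore independent, so they contribute $\big(|G_0|^2-\#\{\text{close pairs}\}\big)p_{\mathrm{full}}^2=(1+\OO(e^{-n_0}))\,\mathbb E[N]^2$, which is the main term since $e^{-n_0}\ll y^{-c}$. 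For a correlated pair ($j\ge n_0$) one bounds $\mathbb P(h,h'\in G_{\mathcal L})$ by the probability that the common walk stays in the corridor up to level $j$ and lands at $z$, times the square of the probability that an independent continuation from $z$ remains in the corridor to the end, summed over the landing point $z$ and the branching scale $j$. Here the \emph{convexity} of $U_k$ is essential: it forces the conditioned walk to hug the upper wall, where the continuation probability decays in $n_{\mathcal L}-j$ quickly enough that, after summing over all scales, the correlated pairs contribute only $\OO(y^{-c})\,\mathbb E[N]^2$. This saving is exactly where the slack of order $y$ built into the starting values $U_{n_0}=y/10$ and $L_{n_0}=-10y$ of the barriers is spent (it is the walk-side counterpart of the $-100y$ conceded in Proposition~\ref{prop:zetalocal}). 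Combining, $\mathbb E[N^2]\le(1+\OO(y^{-c}))\,\mathbb E[N]^2$, and Paley--Zygmund gives $\mathbb P(N\ge1)\ge1-\OO(y^{-c})$, which is the claim; for $y=\OO(1)$ the statement is vacuous, so one may assume $y$ large throughout.

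I expect the second moment bound $\mathbb E[N^2]=(1+\OO(y^{-c}))\,\mathbb E[N]^2$ to be the main obstacle. A soft Paley--Zygmund argument would only need $\mathbb E[N^2]\ll\mathbb E[N]^2$, but here the correlated-pair sum must be a genuinely lower-order perturbation of $\mathbb E[N]^2$, which demands \emph{sharp} two-point barrier asymptotics with the correct curvature (the convex $U_k$, in contrast to the concave upper barrier used in \cite{ArgBouRad2020}) so that the continuation probabilities do not merely stay bounded but actually decay, and this has to hold uniformly across all branching scales $j$ from $n_0$ to $n_{\mathcal L}$ --- including the delicate range where $j$ is close to $n_{\mathcal L}$ and the two walks separate only in the final $\OO(n_0)$ steps. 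A second delicate ingredient is the uniform control, in $h$ and $h'$, of the mean-value-theorem error terms for the Dirichlet polynomials attached to the large primes; this is precisely the role the lower barrier is engineered to play.
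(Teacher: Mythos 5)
Your proposal is correct and follows essentially the same path as the paper: Paley--Zygmund applied to $N=\#G_{\mathcal L}$, reduction from the arithmetic event to a Gaussian random-walk barrier event via Dirichlet-polynomial encoding and the mean value theorem (the paper's Propositions~\ref{prop: 1 point}, \ref{prop: 2 points} and the resulting Proposition~\ref{prop:gauss}), and then a sharp ballot-type second-moment computation for the Gaussian walk (Proposition~\ref{prop: comp second}) organized exactly as you describe --- by branching scale, with Gaussian decorrelation for the late increments, conditioning on the branching-point level, and the convex $U_k$ giving the needed decay in the intermediate scales. A couple of small imprecisions worth noting but that do not affect the plan: the far pairs $|h-h'|>e^{-n_0}$ are only approximately decorrelated (the paper pays a multiplicative $\exp(\OO(\sum_j\rho_j))$ via Lemma~\ref{le:gaussian} and handles the intermediate window $e^{-n_0}<|h-h'|\le e^{-n_0/2}$ separately by counting), and in the paper the dominant $y^{-c}$ loss actually arises from the ballot-theorem relaxation of the $\pm1$-shifted barriers in the main (far-pair) term, while the branching-pair contribution you emphasize is in fact exponentially small, $\OO(e^{-y/10})$.
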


 Combining Proposition \ref{prop:zetalocal} and Proposition \ref{prop:almostsuregood} yields Theorem \ref{thm: left tail}. We now describe the proofs of Proposition \ref{prop:almostsuregood} and Proposition \ref{prop:zetalocal}

 \subsection{Proof of Proposition \ref{prop:zetalocal}}

 The proof of Proposition \ref{prop:zetalocal} breaks down into two propositions.

 \begin{prop} \label{prop:max}
There exists $C>0$ such that for any $1000 < y < n^{1/10}$
   $$
\mathbb{P} \Big ( \max_{|h| \leq 1} \log |\zeta(\tfrac 12 + \ii  \tau + \ii h)| \geq \max_{h \in G_0} \min_{|u| \leq 1} (S_{n_{\mathcal{L}}}(h + u) + \sqrt{|u| e^{n_{\mathcal{L}}}}) - 2 C - 20 y \Big ) \geq 1 - \OO(e^{-y}).
$$
 \end{prop}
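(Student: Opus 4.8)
The plan is to show that the local maximum of $\log|\zeta|$ dominates the finite Euler product $S_{n_{\mathcal L}}$ by combining the integral approximation of $\zeta$ by this product with the elementary fact that a probability‑weighted average is at most a supremum. Write $\delta:=e^{-n_{\mathcal L}}$, and let $\Phi\ge 0$ be a fixed smooth weight with $\int_{\R}\Phi=1$ and $\widehat\Phi$ supported in $[-1,1]$, so $\Phi$ decays rapidly. First I would invoke Lemma~\ref{le:Analytic} (a hybrid Euler–Hadamard‑type representation, used directly on the critical line): on an event $\mathcal E_1$ with $\PP(\mathcal E_1)\ge 1-\OO(e^{-y})$ — outside of which $\zeta$ has an anomalously close zero near $\tfrac12+\ii\tau$ or Selberg's formula degenerates, events of measure $\ll T^{1-ce^{y}}$ by classical zero‑density input — one has, uniformly in $|h|\le 1$,
\[
\int_{\R}\log\bigl|\zeta(\tfrac12+\ii(\tau+h+\delta v))\bigr|\,\Phi(v)\,\dd v\;\ge\;\re\!\!\sum_{p\le X_0}\!\Bigl(p^{-(1/2+\ii\tau+\ii h)}+\tfrac12 p^{-2(1/2+\ii\tau+\ii h)}\Bigr)\;-\;\OO(1),
\]
with $X_0:=e^{e^{n_{\mathcal L}}}$. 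The key point for uniformity in $h$ is that the genuinely analytic errors (the Green/zero contributions, the Selberg error) are $\OO(1)$ \emph{deterministically on $\mathcal E_1$}, hence automatically uniform, rather than controlled through moments of $\zeta$.

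Next I would pass from this average back to a pointwise value. Splitting the $\Phi$‑average at $|v|\le R$ for $R:=\lfloor\log\log T\rfloor$, the left side above is at most
\[
\max_{|v|\le R}\log\bigl|\zeta(\tfrac12+\ii(\tau+h+\delta v))\bigr|\;+\;\int_{|v|>R}\log\bigl|\zeta(\tfrac12+\ii(\tau+h+\delta v))\bigr|\,\Phi(v)\,\dd v .
\]
Since $\delta R=\oo(1)$, for $|h|\le \tfrac12$ the supremum is over points $h'=h+\delta v$ with $|h'|\le 1$ — this is exactly where the inclusion $G_0\subseteq[-\tfrac12,\tfrac12]$ is needed. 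For the tail I would use the convexity bound $\log|\zeta(\tfrac12+\ii t)|\ll\log(2+|t|)$ together with the (good‑event) fact that the unit‑interval averages $\int_j^{j+1}\log|\zeta(\tfrac12+\ii(\tau+u))|\,\dd u$ are $\OO(\log\log T)$ for all $|j|\le\log T$ — a union bound over $\OO(\log T)$ intervals against a crude second moment (or against \cite[Theorem~1]{ArgBouRad2020}); this makes the tail $\OO(\log\log T/R)=\OO(1)$. On a good event, then, for all $|h|\le\tfrac12$, $\max_{|h'|\le1}\log|\zeta(\tfrac12+\ii(\tau+h'))|$ exceeds the Euler‑product quantity on the right minus $\OO(1)$.

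It remains to bound that Euler‑product quantity from below by $\min_{|u|\le1}\bigl(S_{n_{\mathcal L}}(h+u)+\sqrt{|u|e^{n_{\mathcal L}}}\bigr)-\OO(y)$ for $h\in G_0$. It differs from $S_{n_{\mathcal L}}(h)$ only by the primes $p\le e^{e^{n_0}}$; applied at the scale $e^{-n_0}$ on which that contribution is essentially constant, a second‑moment bound gives it is $\ge-10y$ on a further good event of probability $1-\OO(e^{-y})$ (only $\OO(e^{n_0})$ values enter the union bound, so the restriction $y\le (\log\log T)^{1/10}$ — equivalently $n_0\le y$ — suffices). The remaining discrepancies that cannot be made uniform over the $\asymp e^{n_{\mathcal L}-n_0}$ points of $G_0$ are absorbed through the robust $\min_u$: near any $h\in G_0$ there is a nearby $h+u$ at which the approximation holds cleanly, and by Proposition~\ref{prop:Low} (regularity of $S_{n_{\mathcal L}}$ on high points) one has $|S_{n_{\mathcal L}}(h)-S_{n_{\mathcal L}}(h+u)|\le \OO(y)+\sqrt{|u|e^{n_{\mathcal L}}}$. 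Evaluating at $h+u$, then maximising over $h\in G_0$ and using $\min_{|u|\le1}(\cdot)\le S_{n_{\mathcal L}}(\cdot)$, gives the claimed inequality with the loss $2C+20y$ (the $10y$ in $U_k,L_k$ being calibrated so that $S_{n_{\mathcal L}}$ on $G_{\mathcal L}$, and hence the right side, stays of size $n-\tfrac34\log n-\OO(y)$).

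The hard part is precisely the uniformity in $h$: the Euler‑product representation must be made uniform over the fine set $G_0$, which the analytic errors allow (being deterministic on a zero‑density good event) but the arithmetic errors do not — and it is this tension that forces the proposition into the form $\max_{h\in G_0}\min_{|u|\le1}(S_{n_{\mathcal L}}(h+u)+\sqrt{|u|e^{n_{\mathcal L}}})$, with the penalty $\sqrt{|u|e^{n_{\mathcal L}}}$ being the regularity cost from Proposition~\ref{prop:Low} of shifting to the nearby good point, rather than the cleaner $\max_{h\in G_0}S_{n_{\mathcal L}}(h)$.
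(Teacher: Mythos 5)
The proposal misreads the structure of the paper's argument at several load-bearing points, and the gaps are genuine.

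\textbf{Misuse of Lemma~\ref{le:Analytic}.} The lemma is not an inequality for $\int\log|\zeta|$; it is an identity saying that $\log X\int_\R \zeta(\cdot)\prod_{p\le X}(1-p^{-(\cdot)})\,f(x\log X)\,\rd x = 1+\OO(T^{-1})$. The passage to a pointwise lower bound on $\log|\zeta|$ is elementary once one has this: since the weighted integral equals $1+\OO(T^{-1})$, the mean-value theorem produces a single shift $u$ in the (rapidly decaying) support of $f$ at which $|\zeta|\cdot|\prod(1-p^{-(\cdot)})|\cdot|f(u e^{n_{\mathcal L}})|\, e^{n_{\mathcal L}}\gg 1$, i.e.\
$\log|\zeta(\tfrac12+\ii\tau+\ii h+\ii u)|\ge S_{n_{\mathcal L}}(h+u)+P_{n_0}(h+u)-\log|f(ue^{n_{\mathcal L}})|^{-1}-C$.
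There is no Selberg-formula input, no zero-density event, and no ``deterministic error'' to set up; your $\mathcal{E}_1$, with its unverified bound $\ll T^{1-ce^y}$, replaces a one-line deterministic pigeonhole with an entire conditional machinery that the proof never needs.

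\textbf{Source of the penalty $\sqrt{|u|e^{n_{\mathcal L}}}$.} You attribute it to Proposition~\ref{prop:Low}, as the ``regularity cost of shifting to a nearby good point.'' That is backwards. The penalty comes directly from the kernel: $f$ satisfies $|f(x)|\ll e^{-2\sqrt{|x|}}$, so $-\log|f(ue^{n_{\mathcal L}})|\lesssim\sqrt{|u|e^{n_{\mathcal L}}}$, and that is exactly the $\sqrt{|u|e^{n_{\mathcal L}}}$ that appears. Proposition~\ref{prop:Low} is the \emph{other} half of Proposition~\ref{prop:zetalocal}, used afterwards to \emph{absorb} this penalty for $h\in G_{\mathcal L}$; invoking it inside the proof of Proposition~\ref{prop:max} is both unnecessary and unavailable — it holds only for $h\in G_{\mathcal L}$, whereas here one needs a statement for all $h\in G_0$.

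\textbf{Tail control.} The paper rules out the distant shift $u$ (your ``tail'' of the $\Phi$-average) not with convexity bounds and a union bound over unit intervals, but with the crude moment estimates (\ref{eqn:41})--(\ref{eqn:42}) of Lemma~\ref{le:Easy}: if the pigeonholed $u$ fell in a dyadic range $|u|\asymp 2^k$ with $k\ge 1$, then either $|\zeta|$ or $|S_{n_{\mathcal L}}|$ would have to be atypically large there, each an event of probability $\ll e^{-n}$, which is negligible. Finally, the $20y$ loss is produced by the small-prime sum $P_{n_0}$, controlled by (\ref{eqn:43}) via a high ($2\lfloor 100y\rfloor$-th) moment, not a second moment — a second moment gives only polynomial, not exponential, decay in $y$. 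Your overall shape (approximate $\log|\zeta|$ by the finite Euler product, control small primes separately, exploit $G_0\subset[-\tfrac12,\tfrac12]$) matches the paper's, but the mechanisms you supply at each step are either circular, misattributed, or considerably heavier than what is actually required.
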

 
 We then show that with high probability for all $h \in G_{\mathcal{L}}$ and all $|u| \leq 1$, 
 \begin{align*}
   |S_{n_{\mathcal{L}}}(h + u) - S_{n_{\mathcal{L}}}(h)| \leq  20y + \sqrt{|u| e^{n_{\mathcal{L}}}}. 
 \end{align*}
 
 \begin{prop} \label{prop:Low}
For any $1000 < y < n^{1/10}$ we have
   $$
   \mathbb{P} \Big (\forall h \in G_{\mathcal{L}} \ \forall |u| \leq 1 : |S_{n_{\mathcal{L}}}(h + u) - S_{n_{\mathcal{L}}}(h)| \leq 20 y + \sqrt{|u| e^{n_{\mathcal{L}}}} \Big ) = 1 - \OO(e^{-y}). 
   $$
 \end{prop}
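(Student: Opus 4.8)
The plan is to fix $h\in G_0$, to bound the probability of the ``bad'' event
\[
  \mathcal B_h:=\{h\in G_{\mathcal L}\}\cap\Big\{\exists\,|u|\le1:\ \big|S_{n_{\mathcal L}}(h+u)-S_{n_{\mathcal L}}(h)\big|>20y+\sqrt{|u|\,e^{n_{\mathcal L}}}\Big\},
\]
and then to sum over $h\in G_0$. Keeping the constraint $h\in G_{\mathcal L}$ inside $\mathcal B_h$ is essential: $|G_0|\asymp e^{n_{\mathcal L}-n_0}$ is far too large for a crude union bound, whereas $\sum_{h\in G_0}\mathbb P(\mathcal B_h)=\mathbb E|G_{\mathcal L}|\cdot\sup_{h\in G_0}\mathbb P(\mathcal B_h\mid h\in G_{\mathcal L})$ and $\mathbb E|G_{\mathcal L}|\le e^{\OO(y)}$ (part of the second--moment analysis underlying Proposition \ref{prop:almostsuregood}), so it suffices to bound the conditional probability by $e^{-Ay}$ for a large absolute constant $A$. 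To that end, decompose the range of $u$ dyadically, $|u|\in[e^{-j-1},e^{-j}]$ for $0\le j\le n_{\mathcal L}$ (the tail $|u|<e^{-n_{\mathcal L}-1}$ is only easier, every block being in the ``low'' regime below), set $V_k:=S_{k+1}-S_k$, and write at scale $j$
\[
  S_{n_{\mathcal L}}(h+u)-S_{n_{\mathcal L}}(h)=\sum_{n_0\le k<n_{\mathcal L}}\big(V_k(h+u)-V_k(h)\big).
\]

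The key input is that each $h\mapsto V_k(h+u)-V_k(h)$ is again a Dirichlet polynomial whose $L^2$ size over $\tau$ is $\asymp\min(1,|u|\,e^k)$ --- the factor $p^{-\ii u}-1$ genuinely cancels, which is far sharper than the Bernstein estimate $|u|\,\|V_k'\|_\infty$. Hence the ``low'' blocks $k\le\log(1/|u|)$ have total $L^2$ size $\OO(1)$, and by the Gaussian--type maximal inequality for Dirichlet polynomials their contribution is $\OO(\sqrt y)$ uniformly on an event of probability $1-\OO(e^{-Ay})$; the moment bounds are valid to the required order because the primes involved are $\le\exp(e^{n_{\mathcal L}})$ while $\log T\asymp e^{n}$, leaving headroom $\asymp e^{n_0}=e^{\OO(y)}$. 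For the ``high'' blocks $k>j$, of which there are $m:=n_{\mathcal L}-j$, the good--point constraint enters: $\sum_{j\le k<n_{\mathcal L}}V_k(h)=S_{n_{\mathcal L}}(h)-S_j(h)$ lies between $L_{n_{\mathcal L}}-U_j$ and $U_{n_{\mathcal L}}-L_j$, an interval of length $\OO(y)$ about $\alpha m$ (using that $\mathcal S_{\mathcal L}(x\mapsto x^{3/4})$ and $\mathcal S_{\mathcal L}(x\mapsto\log x)$ are $\OO(m^{3/4})$ near $k=n_{\mathcal L}$), while $\sum_{j\le k<n_{\mathcal L}}V_k(h+u)=S_{n_{\mathcal L}}(h+u)-S_j(h+u)$ is a Dirichlet polynomial in $m$ blocks which, over the narrow window $\{h+u:|u|\in[e^{-j-1},e^{-j}]\}$ carrying only $\OO(e^m)$ effectively independent samples, is $\OO(m+\lambda_j)$ by a maximal inequality.

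Assembling the pieces gives $\big|S_{n_{\mathcal L}}(h+u)-S_{n_{\mathcal L}}(h)\big|\le\OO(y)+\OO(m)+\OO(\lambda_j)$, and at scale $j$ one has $\sqrt{|u|\,e^{n_{\mathcal L}}}\ge e^{(m-1)/2}$, which dominates every term growing with $m$ --- in particular the ``drift'' $\alpha m$ of the walk over its last $m$ blocks, which is precisely why the slack $\sqrt{|u|\,e^{n_{\mathcal L}}}$ appears in the claim. Choosing $\lambda_j$ a suitable multiple of $\sqrt m$ (large enough to survive the union over the window samples, the $e^{\OO(y)}$ good points, and the scales) and tracking the absolute constants yields the bound $20y+\sqrt{|u|\,e^{n_{\mathcal L}}}$. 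The step I expect to be the main obstacle is the near--critical regime $|u|\approx e^{-n_{\mathcal L}}$: there the slack $\sqrt{|u|\,e^{n_{\mathcal L}}}$ is only $\OO(1)$, so every error term must be genuinely $\OO(y)$; this forces a sharp use of the good--point constraint and, since $\{h\in G_{\mathcal L}\}$ and the values of $S_{n_{\mathcal L}}$ near $h$ are built from the same primes and are correlated, requires a careful control of their joint law --- conditioning on the path $\big(S_k(h)\big)_{k\le n_{\mathcal L}}$ and exploiting that the scales above $\log(1/|u|)$ are essentially independent of it.
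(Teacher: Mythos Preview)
Your overall architecture is close to the paper's: a dyadic decomposition in $|u|$, a split into ``low'' blocks $k<j$ (where $p^{-\ii u}-1$ gains a factor $|u|e^k$) and a ``high'' tail, and the realization that the barrier constraint on $h$ is what makes the union bound over $G_0$ affordable. Where the proposal falls short is precisely the step you flag as the main obstacle: controlling the joint law of $\{h\in G_{\mathcal L}\}$ and $S_{n_{\mathcal L}}(h+u)$. Writing $\sum_h \PP(\mathcal B_h)\le \E|G_{\mathcal L}|\cdot\sup_h\PP(\mathcal B_h\mid h\in G_{\mathcal L})$ and then appealing to ``the scales above $\log(1/|u|)$ are essentially independent'' is the heuristic, not an argument; the event $\{h\in G_{\mathcal L}\}$ is a complicated function of \emph{all} primes up to $\exp(e^{n_{\mathcal L}})$, and there is no mechanism in the proposal to compute moments under this conditioning. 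Your separate bounding of $\sum_{k\ge j}V_k(h)$ (via the barrier) and $\sum_{k\ge j}V_k(h+u)$ (via a maximal inequality) also inflates the estimate --- the first is $\approx\alpha m$, not $\OO(y)$, so the constant in $\OO(m)$ matters, and the moment bound over $e^m$ window samples does not obviously beat the $e^{\OO(y)}$ from $\E|G_{\mathcal L}|$ in the small-$m$ regime.

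The paper's resolution is a simple but decisive trick you are missing: rather than condition on $h\in G_{\mathcal L}$, \emph{relax} the indicator block by block. One writes
\[
  |S_{n_{\mathcal L}}(h+u)-S_{n_{\mathcal L}}(h)|\,\mathbf 1_{h\in G_{\mathcal L}}
  \le \sum_{n_0\le j<k}\big|(S_{j+1}-S_j)(h+u)-(S_{j+1}-S_j)(h)\big|\,\mathbf 1_{h\in G_j}
  +\big|(S_{n_{\mathcal L}}-S_k)(\cdot)\big|\,\mathbf 1_{h\in G_k},
\]
so that in each term the Dirichlet polynomial is supported on primes with $\log p>e^{j}$ (resp.\ $e^k$) while the indicator $\mathbf 1_{h\in G_j}$ only sees primes with $\log p\le e^j$. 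This disjointness is exactly the hypothesis of Proposition~\ref{prop: 1 point} (the \emph{twisted} one-point estimate with a weight $|\mathcal Q|^2$), which then factorizes $\E[|\mathcal Q|^{2v}\mathbf 1_{h\in G_j}]$ into $\E[|\mathcal Q|^{2v}]\cdot\PP(h\in\mathfrak G_j)$ up to $1+\oo(1)$. The resulting moment computation is packaged in Lemma~\ref{le:Bound}, after which a Chernoff bound with $v$ chosen as a function of $j$ and $k$ finishes the argument. In particular the paper bounds moments of the \emph{difference} directly, never separating $\sum V_k(h)$ from $\sum V_k(h+u)$, which is both cleaner and avoids the extra $\alpha m$ you incur.
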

 On the event that there exists a $h \in G_{\mathcal{L}}$, Proposition \ref{prop:Low} now implies
 \begin{align*}
 \max_{v \in G_0} \min_{|u| \leq 1} (S_{n_{\mathcal{L}}}(v + u) + \sqrt{|u|e^{n_{\mathcal{L}}}}) & \geq \min_{|u| \leq 1} (S_{n_{\mathcal{L}}}(h + u) + \sqrt{|u| e^{n_{\mathcal{L}}}}) \\ & \geq S_{n_{\mathcal{L}}}(h) - 20y \geq n - \frac{3}{4} \log n - 50 y
 \end{align*}
 outside of a set of probability $\OO(e^{-y})$. Proposition \ref{prop:max} then yields that outside of a set of $\tau$ of probability $\ll e^{-y}$,
 $$
 \max_{|h| \leq 1} \log |\zeta(\tfrac 12 + \ii \tau + \ii h)| > n - \frac{3}{4} \log n - 100 y - 2C.
 $$
 In other words,
 $$
 \mathbb{P} (\exists h \in G_{\mathcal{L}} ) \leq \mathbb{P} \Big (\max_{|h| \leq 1} \log |\zeta(\tfrac 12 + \ii  \tau + \ii h)| > n - \frac{3}{4} \log n - 100 y - 2C \Big ) + \OO(e^{-y})
 $$
 and Proposition \ref{prop:zetalocal} follows. 
 
 \subsection{Proof of Proposition \ref{prop:almostsuregood}}
Cauchy-Schwarz inequality readily implies
 \begin{equation} \label{eq:pz}
 \mathbb{P} \Big ( \exists h \in G_{\mathcal{L}} \Big ) \geq \frac{\mathbb{E}[\#G_{\mathcal{L}}]^2}{\mathbb{E}[(\# G_{\mathcal{L}})^2]}.
 \end{equation}

 For fixed $k$ and $h,h'\in G_0$, we posit that the random variables $(S_k(h), S_k(h'))$ can be well approximated by two correlated Gaussian random variables $(\mathcal{G}_k(h), \mathcal{G}_k(h'))$ with,
 \begin{align} \label{eqn: Gk}
   \mathcal{G}_k(h) := \sum_{n_0 \leq j \leq k} \mathcal{N}_j \text{ and } \mathcal{G}_k(h') := \sum_{n_0 \leq j \leq k} \mathcal{N}_j',
 \end{align}
 where the increments $\mathcal{N}_{j}$ and $ \mathcal{N}_j'$ are Gaussian random variables with mean $0$, equal variance
 \begin{equation}\label{eqn:sk}
\E[\mathcal{N}_{k}^2]=\E[{\mathcal{N}'_{k}}^2]= \mathfrak{s}_k^2 := \sum_{e^{k-1} < \log p \leq e^{ k}} \Big ( \frac{1}{2 p} + \frac{1}{8 p^2} \Big ),
 \end{equation}
 and covariance
 \begin{equation}\label{eqn:rhok}
 \E[\mathcal{N}_k\mathcal{N}'_k]=\rho_k:=  \sum_{e^{ k-1} < \log p \leq e^{ k}}  \Big ( \frac{\cos(|h - h'| \log p)}{ 2 p} + \frac{\cos(2 |h - h'| \log p)}{ 8 p^2} \Big ).
 \end{equation}
The analog of the good sets \eqref{eqn: G} for the Gaussian random variables is
 $$
 \mathfrak{G}_{\mathcal{L}}^{\pm} := \# \Big \{ h \in G_0 : \mathcal{G}_k(h) \in [L_k \mp 1, U_k \pm 1] \text{ for all } n_0 \leq k \leq n_{\mathcal{L}} \Big \}. 
 $$
 We then show that in \eqref{eq:pz} we can replace the \textit{arithmetic} good set $G_{\mathcal{L}}$ by the purely \textit{probabilistic} good sets $\mathfrak{G}_{\mathcal{L}}^{\pm}$.
 \begin{prop} \label{prop:gauss}
Uniformly in  $T\geq 100$ and $100\leq y\leq n^{1/10}$, we have
   $$
   \frac{\mathbb{E}[\# G_{\mathcal{L}}]^2}{\mathbb{E}[(\# G_{\mathcal{L}})^2]} \geq \Big (1 + \OO(y^{-10}) \Big ) \frac{\mathbb{E}[\# \mathfrak{G}_{\mathcal{L}}^{+}]^2}{\mathbb{E}[(\# \mathfrak{G}_{\mathcal{L}}^{-})^2]}.
   $$
 \end{prop}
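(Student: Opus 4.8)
The statement is a Gaussian comparison: it asserts that the first and second moments of the arithmetic count $\#G_{\mathcal L}$ agree, up to the relative error $\OO(y^{-10})$ and the harmless $\pm1$ widening of the barriers, with those of the Gaussian surrogates $\#\mathfrak G_{\mathcal L}^{\pm}$. The plan is to expand $\E[\#G_{\mathcal L}]=\sum_{h\in G_0}\PP\big(S_k(h)\in[L_k,U_k]\text{ for all }k\big)$ and $\E[(\#G_{\mathcal L})^2]=\sum_{h,h'\in G_0}\PP\big(S_k(h),S_k(h')\in[L_k,U_k]\text{ for all }k\big)$, and in each summand to replace the Dirichlet polynomials $(S_k(h))_k$, respectively the correlated pair $(S_k(h),S_k(h'))_k$, by the Gaussian walk $(\mathcal G_k(h))_k$, respectively $(\mathcal G_k(h),\mathcal G_k(h'))_k$, of \eqref{eqn: Gk}, whose first two moments are matched to the arithmetic ones by \eqref{eqn:sk}--\eqref{eqn:rhok}. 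The replacement is carried out in the spirit of the Dirichlet-sum encoding of corridor events from \cite{ArgBouRad2020}.

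For the first moment I would fix $h\in G_0$ and sandwich the hard barrier indicator between products of smooth bumps $\prod_k\phi_k$ with $|\phi_k'|\ll1$: for a lower bound, $\phi_k=1$ on $[L_k+1,U_k-1]$ and supported in $[L_k,U_k]$; for an upper bound, $\phi_k=1$ on $[L_k,U_k]$ and supported in $[L_k-1,U_k+1]$. The crux is then to show $\E\big[\prod_k\phi_k(S_k(h))\big]=(1+\OO(y^{-10}))\,\E\big[\prod_k\phi_k(\mathcal G_k(h))\big]$. I would obtain this by writing each $\phi_k$ through its Fourier transform and expanding the resulting exponentials of Dirichlet polynomials over the primes, so that $\E[\prod_k\phi_k(S_k(h))]$ becomes a mean value of Dirichlet polynomials; the orthogonality relation $\frac1T\int_T^{2T}(m/n)^{\ii t}\,\dd t=\1[m=n]+\OO(mn/T)$ isolates the diagonal $m=n$, whose main term is exactly $\E[\prod_k\phi_k(\mathcal G_k(h))]$, plus an off-diagonal remainder. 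Since every prime here is $\leq\exp(e^{n_{\mathcal L}})\leq T^{e^{1-y}}$, any product of $\asymp e^{y}$ of them stays $\leq T^{1/2}$, so truncating the Fourier and Taylor expansions at order $\asymp e^{y}$ leaves a power-saving off-diagonal term and a truncation tail of size $\exp(-c\,e^{y})\ll y^{-10}$; the residual non-Gaussianity of a single block is far smaller still, each block being a sum of $\gg e^{e^{k-1}}$ primes with individually doubly-exponentially small contributions. Summing over $h\in G_0$, and using that enlarging the Gaussian corridor by $\pm1$ changes its first moment only by a factor $1+\OO(y^{-10})$, this yields $\E[\#G_{\mathcal L}]\geq(1+\OO(y^{-10}))\,\E[\#\mathfrak G_{\mathcal L}^{+}]$.

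The second moment is handled by the same device applied to pairs $h,h'\in G_0$ at distance $|h-h'|\in[e^{-n_{\mathcal L}},1]$: one smooths the two-variable barrier indicator and replaces $(S_k(h),S_k(h'))_k$ by the Gaussian pair with covariances $\mathfrak s_k^2$ and $\rho_k$ from \eqref{eqn:sk}--\eqref{eqn:rhok}. The replacement is again a Dirichlet-polynomial mean value; the primes now come from the blocks attached to both $h$ and $h'$, but products of $\asymp e^{y}$ of them are still $\leq T^{1/2}$, so the same off-diagonal and truncation estimates apply, this time producing the upper bound $\E[(\#G_{\mathcal L})^2]\leq(1+\OO(y^{-10}))\,\E[(\#\mathfrak G_{\mathcal L}^{-})^2]$ (the $\pm1$ shrinking of the Gaussian corridor once more absorbed into $1+\OO(y^{-10})$). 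Inserting these two estimates into \eqref{eq:pz} gives the proposition.

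The main obstacle is performing the Dirichlet-polynomial-to-Gaussian replacement uniformly in $h$ (and in $h,h'$) with the multiplicative precision $\OO(y^{-10})$ while the $\asymp n$ barrier constraints are all simultaneously in force: a crude Fourier expansion of a function of $n$ variables is hopelessly lossy, so the swap must proceed one prime block at a time, exploiting the near-independence of the increments $S_k-S_{k-1}$ and keeping the number-theoretic off-diagonal term controlled at each step — and it is precisely this term that forces the choice $n_{\mathcal L}=n-\lfloor y\rfloor$ (primes only up to $T^{e^{-y}}$) and caps the usable moment order at $\asymp e^{y}$, which is the ultimate source of the $y^{-10}$ loss. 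The two-point version is the more delicate of the two: one has to reproduce the decorrelation of $S_\cdot(h)$ and $S_\cdot(h')$ that sets in once $k$ exceeds $\log(1/|h-h'|)$, and to cover the whole range of $|h-h'|$ down to the discretization scale $e^{-n_{\mathcal L}}$, where the Dirichlet polynomials are longest relative to the decorrelation length and the off-diagonal bookkeeping is tightest.
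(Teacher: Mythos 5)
You correctly reduce the proposition to one- and two-point Gaussian comparisons, which is exactly the paper's route — the proof literally consists of citing Propositions \ref{prop: 1 point} and \ref{prop: 2 points} — and you identify the right ingredients: a Dirichlet-polynomial mean-value theorem to pass from the arithmetic $(S_k)$ to an independent random model, a Berry--Esseen step to reach the Gaussian walk, and the $\pm 1$ barrier slack to absorb the transitions.

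The mechanism you sketch for that replacement, however, does not go through as stated. You propose sandwiching the corridor indicator by products $\prod_k\phi_k(S_k(h))$ and then Fourier-and-Taylor expanding. After writing each $\phi_k$ through $\widehat{\phi}_k$ and collecting by increments, $\sum_k\xi_k S_k=\sum_j\eta_j Y_j$ with $\eta_j=\sum_{k\geq j}\xi_k$ and $Y_j=S_j-S_{j-1}$; the effective frequency $\eta_j$ at the early blocks then piles up to size of order $n_{\mathcal L}-j$, which can be as large as $n$. The Taylor truncation of $e^{2\pi\ii\eta_j Y_j}$ then needs a degree that grows with $n$, pushing the length of the resulting Dirichlet polynomial past any admissible power of $T$ once $y\ll\log n$ — a regime that the hypothesis $100\leq y\leq n^{1/10}$ permits. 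You flag this yourself (``the swap must proceed one prime block at a time'') but do not resolve it. The paper's resolution is structural and comes \emph{before} any smoothing: the corridor event is first decomposed into a disjoint union of increment-box events $\{Y_j\in[u_j,u_j+\Delta_j^{-1}]\ \forall j\}$ via the inclusions \eqref{eqn: inclusion LB}--\eqref{eqn: inclusion UB}, and only then is each one-dimensional box indicator approximated by an explicit polynomial $|\mathcal D^{\pm}_{\Delta_j,A}|^2$ of degree $\Delta_j^{10A}$ (Lemma \ref{lem: DP}), controlled by a power of $j\wedge(n-j)$ rather than by $n$. Because the $Y_j$ live on disjoint prime blocks, the product over $j$ factorizes exactly under the mean-value theorem (Lemma \ref{lem: Transition}). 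Two attributions in your sketch are also off: the $\OO(y^{-10})$ loss is the overspill $\Delta_j^{-A/2}$ of the indicator approximation summed over blocks, not a moment cap at $\asymp e^y$; and the choice $n_0=\lfloor y\rfloor$ is not forced by the Gaussian comparison (any $n_0\geq C$ suffices for Propositions \ref{prop: 1 point}--\ref{prop: 2 points}) but by the contribution of $P_{n_0}$ in Proposition \ref{prop:max} and the Ballot-type estimates in Proposition \ref{prop: comp second}.
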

 
 \begin{proof}
     This result is an immediate consequence of the comparison with Gaussian random walks as stated in Propositions \ref{prop: 1 point} and  \ref{prop: 2 points} in the next Section \ref{sec:GaussAppr}.
\end{proof}

The problem is now reduced to a purely probabilistic computation. 
The proof of Proposition \ref{prop:almostsuregood} is concluded by the next proposition building on ideas of Bramson. 
    
 \begin{prop}
   \label{prop: comp second}
There is an absolute constant $c>0$ such that for any $T\geq 100$ and $c^{-1}\leq y\leq n^{1/10}$,
   $$
   \frac{\E[\#\mathfrak{G}_{\mathcal L}^{+}]^2}{\mathbb{E}[(\# \mathfrak{G}_{\mathcal{L}}^{-})^2]} \geq 1 - y^{- c}. 
   $$
 \end{prop}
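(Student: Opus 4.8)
The plan is to run Bramson's first- and second-moment argument directly on the Gaussian walks $\mathcal G_k(\cdot)$. Since the law of $(\mathcal G_k(h))_{n_0\le k\le n_{\mathcal L}}$ is independent of $h$, one has $\E[\#\mathfrak G_{\mathcal L}^{\pm}]=\#G_0\cdot p^{\pm}$, where $p^{\pm}$ is the probability that a centred Gaussian walk with increment variances $\mathfrak s_k^2=\tfrac12+\OO(e^{-k})$, started at $0$ at index $n_0$, stays in the corridor $[L_k\mp1,\,U_k\pm1]$ for every $n_0\le k\le n_{\mathcal L}$; likewise $\E[(\#\mathfrak G_{\mathcal L}^{-})^2]=\sum_{h,h'\in G_0}q^{-}(h,h')$, with $q^{-}$ the corresponding two-point probability for $(\mathcal G_k(h),\mathcal G_k(h'))$. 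Writing $N:=n_{\mathcal L}-n_0$ so that $\#G_0=e^{N}+\OO(1)$, the proposition reduces to proving $p^{-}\asymp p^{+}\asymp y^{a}e^{-N}$ for some absolute $a>0$ and $\E[(\#\mathfrak G_{\mathcal L}^{-})^2]\le(1+\OO(y^{-c}))\E[\#\mathfrak G_{\mathcal L}^{+}]^2$. For the first estimate I would tilt out the slope $\alpha$: removing the drift $\alpha$ costs $e^{-\alpha^2 N}=e^{-N+\tfrac32\log n+\oo(1)}$ (using $\mathfrak s_k^2=\tfrac12+\OO(e^{-k})$), after which $p^{\pm}$ is that factor times the probability that a centred near-Brownian path from $0$ stays below $\tfrac{y}{10}-10\,\mathcal S_{\mathcal L}(\log)(k)+\OO(1)$, above $-10y-\mathcal S_{\mathcal L}(x\mapsto x^{3/4})(k)+\OO(1)$, and lands at index $n_{\mathcal L}$ in a window of width $\asymp y$. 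A standard two-sided ballot (Bramson-type) estimate evaluates this up to constants, the slope $\alpha$ together with the correction $-10\,\mathcal S_{\mathcal L}(\log)$ being calibrated exactly so that $\#G_0\cdot p^{\pm}$ is a fixed positive power of $y$ rather than exponentially small.

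The same estimates, localised to subwindows, also produce the continuation probabilities $\phi(j,z):=\PP(\text{the walk from }(j,z)\text{ stays in the corridor on }[j,n_{\mathcal L}])$ and, crucially, the \emph{localisation} statement that a walk conditioned to stay in the corridor on $[n_0,j]$ and, independently thereafter, on $[j,n_{\mathcal L}]$, sits at index $j$ within $\OO(1)$ of the bulk of the corridor; equivalently, with $\nu_j$ the sub-probability law of the walk at index $j$ on staying in the corridor on $[n_0,j]$,
$$
\int\nu_j(\dd z)\,\phi(j,z)^2\ \asymp\ \frac{\big(\int\nu_j(\dd z)\,\phi(j,z)\big)^2}{\nu_j(\R)}\,.
$$
For the second moment I would exploit the branching structure of the covariances. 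Set $k^{*}=k^{*}(h,h'):=\lceil\log(1/|h-h'|)\rceil$; then $\rho_k\asymp\mathfrak s_k^2$ for $n_0<k\le k^{*}$ while $\sum_{k>k^{*}}|\rho_k|=\OO(1)$ (summation by parts on $\cos(|h-h'|\log p)$ and the prime number theorem), so one writes $\mathcal G_k(h)=\mathcal A_k+\mathcal B_k$ and $\mathcal G_k(h')=\mathcal A_k+\mathcal B_k'$ with $\mathcal A,\mathcal B,\mathcal B'$ independent, $\mathcal A$ carrying the common variance, and absorbs the $\OO(1)$ discrepancies (and the sign of $\rho_k$ for $k>k^{*}$) into the $\pm1$ slack of the barriers by a Gaussian comparison. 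Conditioning on $\mathcal A$ at index $k^{*}$ gives $q^{-}(h,h')=\int\nu_{k^{*}}(\dd z)\,\phi(k^{*},z)^2$ for $k^{*}>n_0$, so the localisation yields $q^{-}(h,h')\asymp(p^{-})^2/p^{-}_{[n_0,k^{*}]}$, where $p^{-}_{[n_0,j]}=\nu_j(\R)$ is the probability of staying in the corridor on $[n_0,j]$ alone; for $k^{*}\le n_0$ the two walks are independent up to $\OO(1)$ shared variance and $q^{-}(h,h')\le(1+\OO(e^{-(n_0-k^{*})}))(p^{-})^2$.

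Since the number of pairs with $k^{*}(h,h')=j$ is $\asymp\#G_0\,e^{N-j}$, the pairs with $k^{*}\le n_0$ contribute at most $(1+\oo(1))(\#G_0\,p^{-})^2\le(1+\oo(1))\E[\#\mathfrak G_{\mathcal L}^{+}]^2$, the diagonal contributes $\#G_0\,p^{-}=\OO(\E[\#\mathfrak G_{\mathcal L}^{+}])$, and, using $p^{-}_{[n_0,j]}\gg e^{-\alpha^2(j-n_0)+2\alpha(j-n_0)^{3/4}}(j-n_0)^{-\OO(1)}$ (the gain $e^{2\alpha(j-n_0)^{3/4}}$ coming from the $-\mathcal S_{\mathcal L}(x^{3/4})$ term of $L_k$),
$$
\sum_{j=n_0}^{N}\#G_0\,e^{N-j}\,\frac{(p^{-})^2}{p^{-}_{[n_0,j]}}
\ \asymp\ \E[\#\mathfrak G_{\mathcal L}^{-}]^2\sum_{j=n_0}^{N}\frac{e^{-j}}{p^{-}_{[n_0,j]}}
\ \ll\ \E[\#\mathfrak G_{\mathcal L}^{-}]^2\;e^{-n_0}\sum_{m\ge0}e^{-\alpha m^{3/4}}m^{\OO(1)}
\ \ll\ e^{-n_0}\,\E[\#\mathfrak G_{\mathcal L}^{+}]^2 .
$$
Since $e^{-n_0}\le e\,e^{-y}\ll y^{-c}$ and $\E[\#\mathfrak G_{\mathcal L}^{+}]\gg y^{a}$, summing the three contributions gives $\E[(\#\mathfrak G_{\mathcal L}^{-})^2]\le(1+\OO(y^{-\min(a,c)}))\E[\#\mathfrak G_{\mathcal L}^{+}]^2$, which is the proposition.

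I expect the main obstacle to be the two-point estimate, and within it the localisation $\int\nu_{k^{*}}\phi^2\asymp(\int\nu_{k^{*}}\phi)^2/\nu_{k^{*}}(\R)$. The crude bound $q^{-}\le\|\phi\|_\infty\,p^{-}$, with $\|\phi\|_\infty$ attained at the top of the corridor, is too lossy to sum over $k^{*}$ uniformly over the full range $c^{-1}\le y\le n^{1/10}$; one must instead show that two walks branching at scale $k^{*}$ which both survive in the corridor have their common part at index $k^{*}$ concentrated near the bulk rather than near the upper barrier. This is exactly where the convexity of $U_k$ is used: a concave upper barrier would let the common walk linger profitably near the top of a bulging corridor before branching, biasing $\mathcal A_{k^{*}}$ upward and inflating $q^{-}$, whereas the pinched, convex corridor forbids this. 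A secondary difficulty is making the common/independent decomposition rigorous given that $\rho_k$ is only comparable to, not equal to, $\mathfrak s_k^2$ below $k^{*}$ and changes sign above it; this is handled by a Slepian/Kahane-type Gaussian comparison against a genuinely hierarchical reference.
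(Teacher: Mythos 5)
Your overall architecture matches the paper's: reduce to the model Gaussian walks, sort pairs $(h,h')$ by the branching scale $k^\star=\lfloor\log|h-h'|^{-1}\rfloor$, decorrelate the increments after $k^\star$ by a Gaussian comparison (Lemma~\ref{le:gaussian} in the paper plays the role of your Slepian/Kahane step), decompose the increments before $k^\star$ into a common part and an orthogonal part (the paper uses the centre of mass $\overline{\mathcal G}_j=(\mathcal G_j+\mathcal G_j')/2$ and $\mathcal G_j^\perp=(\mathcal G_j-\mathcal G_j')/2$, analogous to your $\mathcal A,\mathcal B,\mathcal B'$), and then sum Ballot estimates over $k^\star$. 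You also correctly single out why the convex shape of $U_k$ matters and why a naive $q^-\le\|\phi\|_\infty p^-$ bound is too lossy. So the strategy is essentially the paper's.

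There is, however, a genuine gap: the \emph{localisation} estimate
$\int\nu_{j}(\dd z)\,\phi(j,z)^2\ll\bigl(\int\nu_{j}(\dd z)\,\phi(j,z)\bigr)^2/\nu_{j}(\mathbb R)$
with a $y$-uniform constant is the entire mathematical content of the proposition beyond bookkeeping, and you state it without proof. (Only the $\ge$ direction is a Cauchy--Schwarz triviality; the $\le$ direction is exactly the reverse-Cauchy--Schwarz that fails without the carefully chosen barrier shape, and it is the step where the convexity of $U_k$ and the $-\mathcal S_{\mathcal L}(x^{3/4})$ widening of $L_k$ enter quantitatively.) The paper does not isolate such a lemma; instead it substitutes the Ballot theorem (Proposition~\ref{prop:barrier}) three times into the one- and two-point probabilities and carries out the explicit $(v_1,v_2,v_3,q)$-integral in~\eqref{eq:MASD}, which is what a proof of your localisation would have to reproduce. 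Two smaller imprecisions: (i) you cannot "absorb the $\OO(1)$ discrepancies into the $\pm1$ slack of the barriers" --- an $\OO(1)$ \emph{shared variance} is not a drift shift, and the paper instead applies Lemma~\ref{le:gaussian} to \emph{enlarge the variance} of independent increments, paying a multiplicative (not additive) price; and (ii) your stated lower bound $p^-_{[n_0,j]}\gg e^{-\alpha^2(j-n_0)+2\alpha(j-n_0)^{3/4}}(j-n_0)^{-\OO(1)}$ is false once $j>n/2$, since the widening term in $L_k$ is $\bigl((k-n_0)\wedge(n_{\mathcal L}-k)\bigr)^{3/4}$, not $(k-n_0)^{3/4}$; fortuitously the factor $e^{(\alpha^2-1)(j-n_0)}\asymp n^{-\frac32\frac{j-n_0}{n}}$ rescues the final sum, but the estimate as written would not survive scrutiny. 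Finally, your claimed bound $\OO(e^{-n_0})$ for the correlated-pair contribution is in fact sharper than the paper's $U_{n_0}^{-1}e^{2U_{n_0}-n_0}$; since you have not justified the localisation constant, this should be taken as a sign that the constant may not be $y$-uniform as hoped rather than as an improvement over the paper.
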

 Combining the two above propositions with the lower bound from \eqref{eq:pz} yield Proposition \ref{prop:almostsuregood}. 

    \section{Approximations by Gaussian Random Walks}\label{sec:GaussAppr}
The proof of Proposition \ref{prop:gauss} relies on approximating one-point and two-point correlations in terms of correlations of Gaussian random variables, see Propositions \ref{prop: 1 point} and \ref{prop: 2 points} below. 
 Note that the one-point estimate contains an additional twist by a Dirichlet polynomial. This will be needed in the proof of Proposition \ref{prop:Low}.
The proofs of Propositions \ref{prop: 1 point} and \ref{prop: 2 points} are independent of the rest of the paper and  can be skipped on a first reading. 

\begin{restatable}{prop}{onepoint}
  \label{prop: 1 point}
  \label{lem: 1 point}
 Let $h\in[-1,1]$.
 Let $n_0 \leq \ell \leq n_{\mathcal{L}}$. 
 Let $(S_k(h), n_0 \leq k \leq n_{\mathcal{L}})$ and $(\mathcal G_k(h),n_0 \leq k \leq n_{\mathcal{L}})$
 be as in  Equations \eqref{eqn: Sk} and \eqref{eqn: Gk}. 
 Let $\mathcal{Q}$ be a Dirichlet polynomial of length $\leq \exp(\tfrac{1}{100} e^n)$ and supported on integers such that
 all their prime factors are greater than $\exp(e^{\ell})$. 
 Then, we have for $n_0$ large enough,
 \begin{align}
   \label{eqn: LB P}
   \mathbb{E}  \Big [ & |\mathcal{Q}(\tfrac 12 + \ii  \tau + \ii  h)|^2 \mathbf{1} \Big ( S_{k}(h) \in [L_k, U_k], k \leq \ell \Big ) \Big ]
    \\ \nonumber & \geq (1 + n_0^{-10}) \mathbb{E} \Big [ |\mathcal{Q}(\tfrac 12 + \ii  \tau + \ii  h)|^2 \Big ] \cdot \mathbb{P} \Big (\mathcal{G}_k(h) \in [L_k + 1, U_k - 1] , n_0 \leq k \leq \ell \Big )
 \end{align}
 and
   \begin{align}
 \label{eqn: UB P}
 \mathbb{E} \Big [ & |\mathcal{Q}(\tfrac 12 + \ii  \tau + \ii h)|^2 \mathbf{1} \Big ( S_{k}(h)\in [L_k, U_k], k \leq \ell \Big ) \Big ] \\ \nonumber & \leq (1+n_0^{-10}) \mathbb{E}\Big [ |\mathcal{Q}(\tfrac 12 + \ii  \tau + \ii  h)|^2 \Big ] \cdot \mathbb{P}(\mathcal G_{k}(h)\in [L_k-1, U_k+1], n_0 < k \leq \ell).
 \end{align}
 \end{restatable}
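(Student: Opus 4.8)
The plan is to prove Proposition~\ref{prop: 1 point} by a mollified moment computation, comparing the constrained second moment of the Dirichlet polynomial $\mathcal{Q}$ with the analogous Gaussian probability. First I would expand $|\mathcal{Q}(\tfrac12+\ii\tau+\ii h)|^2$ and the indicator $\mathbf{1}(S_k(h)\in[L_k,U_k], k\le\ell)$, approximating the indicator by a smooth truncation of the Fourier transform so that the whole quantity becomes a Dirichlet polynomial in $p$. Since $\mathcal{Q}$ is supported on integers whose prime factors exceed $\exp(e^\ell)$, while $S_k(h)$ for $k\le\ell$ only involves primes $p$ with $\log\log p\le\ell$, i.e.\ $\log p\le e^\ell$, the prime supports of $\mathcal{Q}$ and of the $S_k$-part are \emph{disjoint}. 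This is the crucial structural feature: after expanding, the $\mathcal{Q}$-variables and the $S_k$-variables are carried by coprime integers, so averaging over $\tau\in[T,2T]$ (using that the length of everything is $\le\exp(\tfrac1{100}e^n)\le T^{1/50}$, comfortably shorter than $T$) factorizes the main term into $\mathbb{E}[|\mathcal{Q}|^2]$ times a moment of the $S_k$-polynomial. The off-diagonal contributions are bounded using a standard mean-value estimate for Dirichlet polynomials (e.g.\ $\int_T^{2T}|\sum_{n\le N}a_n n^{-\ii t}|^2\,dt = (T+O(N))\sum|a_n|^2$), and they cost only a factor $1+O(n_0^{-10})$ once $n_0$ is large enough because the shortest prime in play is of size $\exp(e^{n_0})$, which makes the error savings a negative power of $e^{n_0}$.

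Next I would carry out the Gaussian comparison for the remaining $S_k$-moment. Having isolated $\mathbb{E}[\prod_k \Phi_k(S_k(h))]$ for suitable smooth bump-like functions $\Phi_k$ approximating the indicators of $[L_k,U_k]$, I would compare it to $\mathbb{E}[\prod_k\Phi_k(\mathcal{G}_k(h))]$ by matching moments: the $S_k(h)$ are sums over primes of $\re(p^{-1/2-\ii(\tau+h)}+\tfrac12 p^{-1-2\ii(\tau+h)})$ whose first two moments (over $\tau$) agree with those of the Gaussian increments $\mathcal{N}_j$ up to $O(1/p)$ errors, and higher cumulants are negligible since each prime contributes a bounded, small ($\asymp p^{-1/2}$) summand. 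A Lindeberg-type swapping argument, replacing one block of primes (i.e.\ one increment $\mathcal{N}_j$) at a time and Taylor-expanding $\prod\Phi_k$ to third order, yields the comparison; the slight widening of the barrier window from $[L_k,U_k]$ to $[L_k\pm1,U_k\mp1]$ on the Gaussian side absorbs both the smoothing error in approximating the sharp indicator and the accumulated swapping errors. The lower bound \eqref{eqn: LB P} uses test functions $\Phi_k$ that are $\le\mathbf{1}_{[L_k,U_k]}$ on the arithmetic side and $\ge\mathbf{1}_{[L_k+1,U_k-1]}$ on the Gaussian side; the upper bound \eqref{eqn: UB P} uses the reverse sandwich with the wider window $[L_k-1,U_k+1]$.

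I expect the main obstacle to be controlling the interaction between the mollifier $\mathcal{Q}$ and the barrier event with enough uniformity. Although the prime supports are disjoint, $\mathcal{Q}$ may be as long as $\exp(\tfrac1{100}e^n)$, so when multiplied by the (also long) Dirichlet polynomial encoding the smoothed barrier indicators, the resulting polynomial has length close to $T^{c}$ for a constant $c$ that must be kept below $1$; verifying that all the Fourier-truncation parameters and the number $n_{\mathcal L}-n_0$ of constraints can be chosen so that the total length stays $\le T^{1/50}$ while the smoothing errors remain $O(n_0^{-10})$ requires care. A secondary delicate point is that the constraints run over $k$ up to $\ell$, which may be as large as $n_{\mathcal L}$, so there are $\asymp\log\log T$ nested indicator functions; the swapping argument must be organized so that the per-step error, summed over all $\asymp n$ increments, still beats $n_0^{-10}$—this is where one exploits that the variances $\mathfrak{s}_k^2\asymp 1$ are bounded and that the barrier width is $\asymp y$ plus a correction, giving room for a union-type loss that is only polynomial in $n$ and hence dwarfed by the $e^{-e^{n_0}}$-type savings from the smallest prime. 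The computations themselves are routine Dirichlet-polynomial mean-value estimates; the bookkeeping of parameters is the real work.
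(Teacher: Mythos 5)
Your overall architecture is correct and matches the paper's: exploit the coprimality of $\mathcal{Q}$ and the $S_k$-piece to factor out $\mathbb{E}[|\mathcal{Q}|^2]$, smooth the barrier indicators into Dirichlet polynomials, pass to the random multiplicative ("Steinhaus") model via the mean-value theorem (Lemma~\ref{lem: Transition}), and then compare to Gaussians. The paper's Lemma~\ref{lem: DP} (Beurling--Selberg-type polynomial approximants $\mathcal{D}^\pm_{\Delta,A}$) plays the role of your ``smooth truncation of the Fourier transform,'' and the per-increment Berry--Esseen step (Lemma~\ref{lem:compProb}) plays the role you assign to a Lindeberg swap. So conceptually the two routes are close.

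However, there is a genuine gap in how you propose to carry out the Gaussian comparison. You work with $\prod_k\Phi_k(S_k(h))$, mollifying the indicator $\mathbf{1}(S_k\in[L_k,U_k])$ as a function of the \emph{partial sum}, and then swap one increment at a time, estimating the per-step error by a Taylor remainder times a cumulant, i.e.\ an \emph{additive} error of size roughly $n^{O(1)}\exp(-e^{n_0})$. But the conclusion you need is a \emph{multiplicative} error $1+O(n_0^{-10})$ uniformly in $n$, and the Gaussian barrier probability on the right-hand side of \eqref{eqn: LB P} is of size roughly $e^{-(n_{\mathcal L}-n_0)}$ times polynomial factors. For $n_0$ fixed (only ``large enough,'' i.e.\ bounded below by an absolute constant) and $n\to\infty$, your additive bound $n^{O(1)}\exp(-e^{n_0})$ does not beat $e^{-n}$ once $n\gg e^{n_0}$. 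In other words, a raw Lindeberg swap supplies an error that is not automatically comparable in scale to the (exponentially small) main term, and your argument for why ``per-step error, summed over all $\asymp n$ increments, still beats $n_0^{-10}$'' quietly conflates absolute with relative error.

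The device the paper uses to get the multiplicative error, and which your proposal omits, is the decomposition of the barrier event into disjoint events on the \emph{increments} falling in small boxes $[u_j,u_j+\Delta_j^{-1}]$, as in the inclusions~\eqref{eqn: inclusion LB} and \eqref{eqn: inclusion UB}. After the MVT step the random model factorizes exactly over independent increments, so for a fixed tuple $\mathbf u$ one compares each single-increment probability $\mathbb{P}(\mathcal Y_j - u_j\in[0,\Delta_j^{-1}])$ with its Gaussian counterpart $\mathbb{P}(\mathcal N_j - u_j\in[0,\Delta_j^{-1}])$. Crucially, each such quantity has size $\gg \Delta_j^{-1}e^{-O(\Delta_j^{1/2})}$ --- polynomially small in $j\wedge(n-j)$, not exponentially small in $n$ --- so the Berry--Esseen error $O(e^{-ce^{j/2}})$ can be absorbed as a multiplicative factor $1+O((j\wedge(n-j))^{-2A})$, and the product over $j$ of these factors is $1+O(n_0^{-A})$. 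Summing over tuples $\mathbf u$ (and widening/narrowing the barriers by the fixed constant $1$, which absorbs both the box discretization and the polynomial-approximation overspill) then yields the stated estimate. Without this box decomposition, the comparison of a long product $\prod_k\Phi_k(S_k)$ in one shot does not naturally produce a relative error controlled by $n_0$ alone, and this is where your proposal would break down.
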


 \begin{restatable}{prop}{twopoints}
   \label{lem: 2 points}
   \label{prop: 2 points}
 Let $h,h'\in [-1,1]$. 
Consider $(S_k(h), S_k(h))$ and $(\mathcal G_k(h), \mathcal G_k'(h))$ for $n_0<k\leq n_\mathcal L$ as defined in Equations \eqref{eqn: Sk} and \eqref{eqn: Gk}. 
We have for $n_0$ large enough
   \begin{equation}
 \label{eqn: UB P 2}
 \begin{aligned}
 &\PP\big((S_{k}(h),S_k(h'))\in [L_k, U_k]^{2}, n_0< k\leq n_\mathcal L\big)\\
& \hspace{2cm} \leq (1+n_0^{-10}) \cdot \PP\big((\mathcal G_{k}(h), \mathcal G_k(h'))\in [L_k-1, U_k+1]^{2}, n_0< k\leq n_\mathcal L\big).
 \end{aligned}
 \end{equation}
   \end{restatable}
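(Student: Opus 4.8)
\emph{Step 1 (setup and smoothing).} Both propositions instantiate the Gaussian-approximation method of \cite{ArgBouRad2020}; I describe it for the two-point bound \eqref{eqn: UB P 2} — the cleaner case, carrying no extra Dirichlet factor — and indicate the modifications for the one-point estimates \eqref{eqn: LB P}--\eqref{eqn: UB P} at the end. Put $N := n_{\mathcal L} - n_0$ and view the event in \eqref{eqn: UB P 2} as $\mathbf{1}_B(\vec S)$, where $\vec S := (S_k(h), S_k(h'))_{n_0 < k \le n_{\mathcal L}} \in \mathbb R^{2N}$ and $B := \prod_{n_0 < k \le n_{\mathcal L}}[L_k, U_k]^2$. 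Fixing a smooth $\psi \ge 0$ with $\psi \equiv 1$ on $[-\tfrac12,\tfrac12]$, $\operatorname{supp}\psi \subset [-1,1]$ and $\widehat\psi$ of rapid decay, build a smooth majorant $\Phi$ — a product of $2N$ one-dimensional bumps fashioned from $\psi$ — with $\mathbf{1}_B \le \Phi \le \mathbf{1}_{B'}$ and $\widehat\Phi$ of rapid decay, where $B' := \prod_{n_0 < k \le n_{\mathcal L}}[L_k - 1, U_k + 1]^2$; this one-unit enlargement of each interval is exactly what produces the widened barriers on the right of \eqref{eqn: UB P 2}. Since $\PP(\vec S \in B) \le \mathbb E[\Phi(\vec S)]$ and $\mathbb E[\Phi(\vec{\mathcal G})] \le \PP(\vec{\mathcal G} \in B')$ for the Gaussian walk $\vec{\mathcal G} := (\mathcal G_k(h), \mathcal G_k(h'))_{n_0 < k \le n_{\mathcal L}}$, it suffices to prove $\mathbb E[\Phi(\vec S)] \le (1 + n_0^{-10})\,\mathbb E[\Phi(\vec{\mathcal G})]$.

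\emph{Step 2 (reduction to a block-by-block characteristic-function estimate).} Write $\Delta_k(\cdot) := S_k(\cdot) - S_{k-1}(\cdot)$, with $S_{n_0} := 0$, a Dirichlet polynomial over primes $p$ with $e^{k-1} < \log p \le e^k$, so that $S_k = \sum_{n_0 < j \le k}\Delta_j$ and, on the Gaussian side, $\mathcal G_k = \sum_{n_0 < j \le k}\mathcal N_j$ with independent increments (as in \eqref{eqn: Gk}). Fourier inversion $\Phi(\vec x) = \int\widehat\Phi(\vec\xi)e^{\ii\vec\xi\cdot\vec x}\dd\vec\xi$ together with $\vec\xi\cdot\vec S = \sum_{n_0 < j \le n_{\mathcal L}}(b_j\Delta_j(h) + b_j'\Delta_j(h'))$, where $b_j, b_j'$ denote the partial sums over $k \ge j$ of the coordinates of $\vec\xi$, give
\begin{equation}\label{eq:plan-dec}
\mathbb E\big[\Phi(\vec S)\big] = \int\widehat\Phi(\vec\xi)\,\mathbb E\Big[\prod_{n_0 < j \le n_{\mathcal L}} e^{\ii(b_j\Delta_j(h) + b_j'\Delta_j(h'))}\Big]\dd\vec\xi,
\end{equation}
and the rapid decay of $\widehat\Phi$ confines the integral to $|b_j|, |b_j'| \le K$ for a suitable $K$ that is at most a fixed power of $n$, at the cost of an error absorbed into $n_0^{-10}$. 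The corresponding identity for $\vec{\mathcal G}$ is exact and factorizes, with $j$-th factor $\exp(-\tfrac12(b_j^2 + b_j'^2)\mathfrak s_j^2 - b_j b_j'\rho_j)$ by \eqref{eqn:sk}--\eqref{eqn:rhok}. So the task reduces to showing, uniformly for $|b_j|, |b_j'| \le K$, that the $\tau$-expectation in \eqref{eq:plan-dec} equals $(1 + \OO(n_0^{-10}))\prod_{n_0 < j \le n_{\mathcal L}}\exp(-\tfrac12(b_j^2 + b_j'^2)\mathfrak s_j^2 - b_j b_j'\rho_j)$. Heuristically: distinct $j$ involve disjoint primes, and $\mathbb E[m^{\ii\tau}]$ is negligible for integers $m \ne 1$, so the product decouples across $j$; and inside block $j$ the diagonal part of the Dirichlet-polynomial expansion of $e^{\ii(b_j\Delta_j(h) + b_j'\Delta_j(h'))}$, computed with $\mathbb E[\re(p^{-1/2-\ii\tau-\ii h})^2] \approx \tfrac1{2p}$ and $\mathbb E[\re(p^{-1/2-\ii\tau-\ii h})\re(p^{-1/2-\ii\tau-\ii h'})] \approx \tfrac{\cos(|h - h'|\log p)}{2p}$ (and the analogous $p^{-2}$ terms, the exponents being tiny), reconstructs precisely the Gaussian factor, i.e.\ the defining sums \eqref{eqn:sk}, \eqref{eqn:rhok}.

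\emph{Step 3 (the main obstacle).} Turning these heuristics into theorems requires bounding the off-diagonal contributions, which reduce to mean value estimates for Dirichlet polynomials; and the mean value theorem gains a power of $T$ only for polynomials of length $\le T^{1-\delta}$. For the low-lying blocks, where $e^j$ is a small power of $\log T$, this is immediate. The difficulty is the high blocks: there $\log p$ can be as large as $e^{n_{\mathcal L}} \asymp e^{-n_0}\log T$, so a single increment $\Delta_j$ and its bounded powers are Dirichlet polynomials of length $T^{\OO(e^{-n_0})}$, while the full product over the high blocks is far too long for any direct use of the mean value theorem, and the naive diagonalization breaks down. Circumventing this is the technical heart of \cite{ArgBouRad2020}: one splits the primes into many subranges, treats the high blocks one increment at a time by conditioning, truncates each exponential at a degree kept admissible by the largeness of $n_0$ (so that the relevant powers $\Delta_j^m$ remain of length $\le T^{1-\delta}$), and exploits that at large primes the increments $\Delta_j$ already match their Gaussian surrogates to every fixed moment order up to errors of size $\sum_{e^{j-1} < \log p \le e^j} p^{-2}$, which is negligible for $n_0$ large. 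This is where I expect essentially all the work to lie; given it, Steps 1--2 are comparatively routine.

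\emph{Step 4 (the one-point estimates).} For \eqref{eqn: LB P}--\eqref{eqn: UB P} one runs Steps 1--3 with a smooth minorant of $\mathbf{1}_B$ for the lower bound \eqref{eqn: LB P} and a majorant for \eqref{eqn: UB P}, carrying the weight $|\mathcal Q(\tfrac12 + \ii\tau + \ii h)|^2$ through \eqref{eq:plan-dec}. Since $\mathcal Q$ is supported on integers all of whose prime factors exceed $\exp(e^\ell)$, its primes are disjoint from those appearing in $S_k(h)$ for $k \le \ell$, so in the decoupling the $\mathcal Q$-factor splits off and contributes $\mathbb E[|\mathcal Q(\tfrac12 + \ii\tau + \ii h)|^2]$ up to $\OO(T^{-1})$ — legitimate because $\mathcal Q$ has length $\le \exp(\tfrac1{100}e^n) \le T^{1/100}$ — which is exactly the prefactor in \eqref{eqn: LB P}--\eqref{eqn: UB P}. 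All remaining steps are unchanged.
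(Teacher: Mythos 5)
Your outline takes a genuinely different route from the paper, and it is worth spelling out what the difference buys and where your proposal leaves a real gap.

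The paper's proof of Proposition~\ref{prop: 2 points} does not smooth the full barrier event in one shot and Fourier-invert it. Instead it first passes from the constraint $S_k\in[L_k,U_k]$ to a constraint on increments $Y_j=S_j-S_{j-1}$ via the inclusions~\eqref{eqn: inclusion UB}--\eqref{eqn: inclusion LB}, decomposing the event into a disjoint union of ``cells'' $\{Y_j-u_j\in[0,\Delta_j^{-1}]\ \forall j\}$ indexed by tuples $\mathbf u$. Each one-dimensional cell indicator is then replaced by a nonnegative Beurling--Selberg--type polynomial majorant $|\mathcal D^+_{\Delta_j,A}(Y_j-u_j)|^2$ of degree $\Delta_j^{10A}$ (Lemma~\ref{lem: DP}), so that the whole product becomes a single Dirichlet polynomial of length $\le T^{1/100}$, amenable to Lemma~\ref{lem: Transition}. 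The Berry--Esseen comparison (Lemma~\ref{lem:compProb}) is applied cell by cell, at the level of probabilities, and — crucially — the additive Berry--Esseen error $\OO(e^{-ce^{k/2}})$ is converted into a multiplicative $(1+\OO(\Delta_k^{-A/2}))$ by lower-bounding each cell probability, which in turn uses the barrier-induced bound $|u_j|\le 100\Delta_j^{1/4}$ from~\eqref{eqn: u bound}. The factor $(1+\OO(n_0^{-10}))$ arises as a \emph{product over $j$} of per-cell multiplicative errors.

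Your approach replaces all of this with a single smooth majorant $\Phi$ and a characteristic-function identity, which is an entirely different mechanism for converting the barrier event into Dirichlet-polynomial estimates. At the level of Steps 1--2 this is fine and even conceptually cleaner: it avoids the bookkeeping over tuples $\mathbf u$, and the $\pm 1$ widening of the barriers comes out for free. But the heart of the matter is Step 3, and there the proposal is not a proof: you assert that the $\tau$-expectation in~\eqref{eq:plan-dec} equals $(1+\OO(n_0^{-10}))$ times the Gaussian product \emph{uniformly} in $\vec\xi$ on the relevant range, and defer the justification entirely. This uniformity is precisely where the paper's cell mechanism does its work. A characteristic-function comparison naturally produces \emph{additive} errors (from truncating the exponential, from the mean value theorem, from the prime sums being only approximately Gaussian), and turning an additive error into a multiplicative $1+\OO(n_0^{-10})$ requires a lower bound on the object you are comparing — here the CF $\prod_j \exp(-\tfrac12(b_j^2+b_j'^2)\mathfrak s_j^2-b_jb_j'\rho_j)$, which has no useful lower bound once $|b_j|$ is even moderately large. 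The paper never faces this, because it lower-bounds cell \emph{probabilities} $\PP(\mathcal N_j-u_j\in[0,\Delta_j^{-1}])\gg\Delta_j^{-1}e^{-100^2\Delta_j^{1/2}}$ using $|u_j|\le 100\Delta_j^{1/4}$, a constraint that is invisible in a global Fourier smoothing. So either your plan needs its own substitute (e.g.\ a careful splitting of the $\vec\xi$-integral into ranges where the Gaussian CF is and is not negligible, combined with block-by-block truncation-degree bookkeeping of the kind the paper carries out), or the claimed conclusion of Step~2 is simply not attained by the method as stated. As it stands, you have a plausible alternative architecture with the entire engine missing, not a proof.
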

A similar lower bound can be proved, but is actually not needed in the proofs of Theorem \ref{thm: left tail} and \ref{thm: right tail}. \\

The proof of both propositions rely on an extension of  the techniques of \cite{ArgBouRad2020} to estimate the probability of events involving the partial sums \eqref{eqn: Sk} in terms of random walk estimates.
The first step is to approximate indicator functions in terms of explicit polynomials in Section \ref{sect: 1 to D}. 
The relations between the partial sums and the random walks are then established in Section \ref{sect: D to P} via Dirichlet polynomials.

\subsection{Approximation of Indicator Functions by Polynomials}
\label{sect: 1 to D}
First, we state a slight modification of \cite[Lemma 6]{ArgBouRad2020} that is more convenient when working with lower bounds.  Throughout the paper, the normalization for the Fourier transform is 
$$
\widehat{f}(u)=\int_{\mathbb{R}} e^{-\ii 2\pi u x} f(x)\rd x.
$$

\begin{lemma} \label{le:harmo}
  There exists an absolute constant $C > 0$ such that for any $\Delta, A \geq 3 $, there exist entire functions $G^-_{\Delta,A}$ and $G^+_{\Delta,A}(x) \in L^2(\mathbb{R})$ such that:
  \begin{enumerate}
  \item The Fourier transforms $\widehat{G^\pm_{\Delta,A}}$ are supported on $[-\Delta^{2A}, \Delta^{2A}]$.
  \item We have,
    $
    0 \leq G^-_{\Delta, A}(x)\leq  G^+_{\Delta, A}(x)\leq 1
    $
    for all $x \in \mathbb{R}$.
  \item We have
    $$
    \begin{aligned}
    \mathbf{1}(x \in [0, \Delta^{-1}]) &\leq G^+_{\Delta, A}(x) \cdot (1 + C e^{-\Delta^{A - 1}}), \\
      \mathbf{1}(x \in [0, \Delta^{-1}]) &\geq G^-_{\Delta, A}(x)-C e^{-\Delta^{A - 1}}.
      \end{aligned}
    $$
  \item We have
    $$
    \begin{aligned}
    G^+_{\Delta, A}(x) &\leq \mathbf{1}(x \in [-\Delta^{-A/2} , \Delta^{-1} + \Delta^{-A/2}]) + C e^{-\Delta^{A - 1}},\\
    G^-_{\Delta, A}(x) &\geq \mathbf{1}(x \in [\Delta^{-A/2} , \Delta^{-1} - \Delta^{-A/2}]) \cdot (1  - C e^{-\Delta^{A - 1}}).
    \end{aligned}
    $$
\item We have
  $
  \int_{\mathbb{R}} |\widehat{G^\pm_{\Delta, A}}(x)| \rd x \leq 2\Delta^{2A}.
  $
  \end{enumerate}
  \end{lemma}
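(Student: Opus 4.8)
The plan is to construct $G^{\pm}_{\Delta,A}$ by smoothing the indicator $\mathbf{1}(x\in[0,\Delta^{-1}])$ (shrunk or enlarged slightly at the endpoints) against a fixed bump function with compactly supported Fourier transform, and then checking the five properties in turn. First I would fix an even, nonnegative Schwartz function $\psi$ with $\int\psi=1$, $\widehat{\psi}$ supported in $[-1,1]$, and $\widehat{\psi}(0)=1$; such a $\psi$ exists by taking $\psi=\varphi*\varphi$ with $\widehat{\varphi}$ a smooth even bump on $[-1/2,1/2]$, and one has the crucial Gevrey-type tail bound $|\psi(x)|\ll e^{-c|x|}$ — more precisely, since we need decay like $e^{-\Delta^{A-1}}$ after rescaling by a factor comparable to $\Delta^{2A}$, I would instead choose $\varphi$ so that $\widehat{\varphi}$ is smooth but not analytic with derivatives growing like a Gevrey class, giving $|\psi(x)|\ll \exp(-|x|^{1-\delta})$ for some small $\delta$; this is the standard Ingham/Beurling construction of a one-sided majorant with super-polynomial (stretched-exponential) decay. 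Set $\psi_\eta(x)=\eta^{-1}\psi(x/\eta)$ with $\eta=\Delta^{-2A}$, so that $\widehat{\psi_\eta}$ is supported in $[-\Delta^{2A},\Delta^{2A}]$.

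Next I would define
$$
G^{+}_{\Delta,A}:=\mathbf{1}\big([-\tfrac12\Delta^{-A/2},\ \Delta^{-1}+\tfrac12\Delta^{-A/2}]\big)*\psi_\eta,\qquad
G^{-}_{\Delta,A}:=\mathbf{1}\big([\tfrac12\Delta^{-A/2},\ \Delta^{-1}-\tfrac12\Delta^{-A/2}]\big)*\psi_\eta,
$$
possibly after truncating $\psi$ to make $G^{-}$ genuinely nonnegative with a negligible error, or more cleanly by taking $\psi\ge 0$ from the outset (which the convolution-square construction provides). Property (1) is immediate from the support of $\widehat{\psi_\eta}$ and the convolution theorem; property (2) follows from $0\le\psi_\eta$, $\int\psi_\eta=1$, and the nesting of the two intervals. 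For property (3), on $x\in[0,\Delta^{-1}]$ the larger interval contains a neighborhood of $x$ of radius $\gg\Delta^{-A/2}\gg\eta^{1-\delta}$, so $G^{+}(x)=\int_{\text{interval}}\psi_\eta(x-t)\,dt\ge 1-\int_{|s|>\Delta^{-A/2}/3}\psi_\eta(s)\,ds\ge 1-Ce^{-\Delta^{A-1}}$ using the stretched-exponential tail with $\eta=\Delta^{-2A}$ (note $(\Delta^{-A/2}/\eta)^{1-\delta}=\Delta^{(3A/2)(1-\delta)}\gg\Delta^{A-1}$); rearranging gives the claimed one-sided bound, and symmetrically for $G^{-}$. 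For property (4), if $x\notin[-\Delta^{-A/2},\Delta^{-1}+\Delta^{-A/2}]$ then $x$ is at distance $\gg\Delta^{-A/2}$ from the defining interval of $G^{+}$, so $G^{+}(x)\le Ce^{-\Delta^{A-1}}$ by the same tail estimate; the lower bound for $G^{-}$ inside $[\Delta^{-A/2},\Delta^{-1}-\Delta^{-A/2}]$ is the same computation as in (3). For property (5), $\widehat{G^{\pm}_{\Delta,A}}(x)=\widehat{\mathbf 1_{I}}(x)\widehat{\psi_\eta}(x)$ with $|I|\le 2\Delta^{-1}$, so $\int|\widehat{G^{\pm}}|\le \int_{|x|\le\Delta^{2A}}|\widehat{\mathbf 1_I}(x)|\,dx$; crudely $|\widehat{\mathbf 1_I}(x)|\le|I|\le 2\Delta^{-1}$, hence the integral is $\le 2\Delta^{-1}\cdot 2\Delta^{2A}\le 2\Delta^{2A}$ (one may even absorb the $\Delta^{-1}$), giving the bound — though a slightly more careful split using $|\widehat{\mathbf 1_I}(x)|\le \min(|I|,|x|^{-1})$ keeps room to spare.

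The main obstacle is property (3), specifically engineering the bump $\psi$ so that its tail decay, after rescaling by $\eta=\Delta^{-2A}$, beats $e^{-\Delta^{A-1}}$ uniformly for all $\Delta,A\ge 3$: a plain Schwartz function gives only polynomial decay, which is far too weak, so one genuinely needs a Beurling–Selberg / Ingham-type function whose Fourier transform is $C^\infty$ with compact support and whose Gevrey order is tuned to produce stretched-exponential spatial decay $|\psi(x)|\le C\exp(-c|x|^{\theta})$ for a fixed $\theta\in(0,1)$; then the constraint is the elementary inequality $c\,(\Delta^{-A/2}/\Delta^{-2A})^{\theta}=c\,\Delta^{3A\theta/2}\ge \Delta^{A-1}$, which holds for all $\Delta,A\ge 3$ once $\theta$ is chosen with $3\theta/2\ge 1$, i.e. $\theta\ge 2/3$ (and such $\theta<1$ is admissible for a Gevrey-class construction). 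This is precisely the content already used in \cite[Lemma 6]{ArgBouRad2020}, so here it suffices to invoke that construction and verify the two-sided (lower majorant $G^-$) modification, which only requires the extra observation that $\psi$ can be taken nonnegative (via $\psi=\varphi*\varphi$), making $G^-\ge 0$ automatic.
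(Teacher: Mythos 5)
Your proposal follows the same strategy as the paper: both take $G^{\pm}_{\Delta,A}$ to be the convolution of the indicator of a slightly shrunk/enlarged version of $[0,\Delta^{-1}]$ (overspill of order $\Delta^{-A/2}$ or $\Delta^{-A}$) with a rescaled approximate identity $\psi_\eta$, $\eta=\Delta^{-2A}$, where $\psi\geq 0$, $\int\psi=1$, $\widehat\psi$ compactly supported, and $\psi$ has very fast tail decay; this is exactly the Ingham-type function that the paper imports via Lemma~\ref{le:ingham} (\cite[Lemma 5]{ArgBouRad2020}). Your verification of properties (1)--(5) from these ingredients is the same computation as the one the paper is alluding to when it says ``proved the same way as \cite[Lemma 6]{ArgBouRad2020}.''

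One small but genuine slip: in your construction of the bump you write $\psi=\varphi*\varphi$ to ``provide'' $\psi\geq 0$. Convolving makes $\widehat\psi=\widehat\varphi^{\,2}\geq 0$, not $\psi\geq 0$. To get pointwise nonnegativity you should instead set $\psi=|\varphi|^2$ (pointwise square), which gives $\psi\geq 0$ directly and keeps $\widehat\psi=\widehat\varphi*\overline{\widehat\varphi(-\cdot)}$ compactly supported. The paper avoids this issue by citing a lemma that furnishes $F_0\geq 0$ with the right Fourier support and decay already built in. Also note that the cited $F_0$ has nearly exponential decay $e^{-|x|/\log^2(|x|+10)}$, strictly stronger than the stretched-exponential $e^{-c|x|^\theta}$ with $\theta\geq 2/3$ that you aim for; your weaker decay still suffices to beat $e^{-\Delta^{A-1}}$ after the $\Delta^{2A}$ rescaling, but it is worth observing that the paper has more room to spare.
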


  \begin{proof}
  This is proved the same way as  \cite[Lemma 6]{ArgBouRad2020} with
  $$
  G^-_{\Delta,A}(x)=\int_{\Delta^{- A/2}-\Delta^{-A}}^{\Delta^{-1}-\Delta^{-A/2}+\Delta^{-A}} \Delta^{2A}F(\Delta^{2A}(x-t))\rd t
  $$
  and
    $$
  G^+_{\Delta,A}(x)=\int_{-\Delta^{-A}}^{\Delta^{-1}+\Delta^{-A}} \Delta^{2A}F(\Delta^{2A}(x-t))\rd t
  $$
   with the approximate identity $F=F_0/\|F_0\|_1$, where the existence of $F_0$ is given by the following lemma.\end{proof}

   \begin{lemma}{\cite[Lemma 5]{ArgBouRad2020}}
    \label{le:ingham}
  There exists a smooth function $F_0$ such that 
  \begin{enumerate}
  \item For all $x \in \mathbb{R}$, we have $0 \leq F_0(x) \leq 1$ and $\widehat{F}_0(x) \geq 0$. 
  \item $\widehat{F}_0$ is compactly supported on $[-1,1]$.
  \item Uniformly in $x \in \mathbb{R}$, we have
    $$
    F_0(x) \ll e^{-|x| / \log^2 (|x| + 10)}.
    $$
  \end{enumerate}
\end{lemma}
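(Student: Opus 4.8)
The plan is to take $F_0=\widehat g$, where $g$ is an infinite convolution of rescaled Fej\'er kernels,
$$
g:=K_{\beta_1}*K_{\beta_2}*\cdots,\qquad K_\beta(\xi):=\tfrac1\beta\big(1-|\xi|/\beta\big)_{+},
$$
for a positive summable sequence $(\beta_j)$ normalised so that $\sum_{j\ge1}\beta_j=1$. Each $K_{\beta_j}$ is a probability density supported in $[-\beta_j,\beta_j]$, so $g$ is a probability density (in fact continuous) supported in $[-1,1]$; consequently $\widehat{F_0}=g$ by Fourier inversion for tempered distributions, which is nonnegative and compactly supported in $[-1,1]$, giving (2) and half of (1). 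Since $\widehat{K_\beta}(x)=(\sin(\pi\beta x)/(\pi\beta x))^2$, we get $F_0(x)=\prod_{j\ge1}(\sin(\pi\beta_j x)/(\pi\beta_j x))^2$, a product of squares each in $[0,1]$, so $0\le F_0\le1$; and $F_0$ is smooth (indeed entire of exponential type) by Paley--Wiener--Schwartz, $\widehat{F_0}$ being a compactly supported measure. Thus the whole content of the lemma is the decay estimate (3).

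For (3) I would choose $\beta_j:=c_0\big/\big(j(\log(j+2))^{3/2}\big)$, with $c_0>0$ fixed by $\sum_{j\ge1}\beta_j=1$; using the exponent $3/2$ rather than $2$ costs nothing and yields a bound comfortably stronger than required, which spares us any numerical check on $c_0$. The estimate is trivial for $|x|\le10$, since $F_0\le1$ while $e^{-|x|/\log^2(|x|+10)}$ is bounded below there. For $|x|$ large, discard the factors with $\pi\beta_j|x|<1$ (each is $\le1$) and apply $|\sin t/t|\le|t|^{-1}$ to the rest, which gives $F_0(x)\le\prod_{j:\,\pi\beta_j|x|\ge1}(\pi\beta_j|x|)^{-2}$. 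Letting $J=J(x)$ be the number of indices with $\pi\beta_j|x|\ge1$, i.e.\ the largest $j$ with $j(\log(j+2))^{3/2}\ll|x|$, one finds $J\sim\pi c_0|x|/(\log|x|)^{3/2}$; and on taking logarithms, a short computation with Stirling's formula and $\sum_{j\le J}\log\log(j+2)=J\log\log J+\oo(J)$, together with $\log(\pi c_0|x|)=\log J+\tfrac32\log\log J+\oo(1)$, shows that the leading terms of $\sum_{j\le J}\log(\pi\beta_j|x|)$ cancel and this sum is $(1+\oo(1))J$. Hence $\log F_0(x)\le-(2+\oo(1))J\sim-2\pi c_0|x|/(\log|x|)^{3/2}$, which for $|x|$ large is far below $-|x|/\log^2(|x|+10)$ because $(\log|x|)^{1/2}\to\infty$; this is (3).

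The one genuinely delicate point will be that cancellation inside $\sum_{j\le J}\log(\pi\beta_j|x|)$: one must match $\log(\pi c_0|x|)$ against $\log J+\tfrac32\log\log J$ accurately enough that the surviving contribution is the term linear in $J$, which is precisely the near-critical rate $|x|/(\log|x|)^{3/2}$; everything else is routine bookkeeping about infinite products and Fourier transforms of compactly supported measures. It is worth recalling the soft reason behind the shape of the bound, which also shows the construction is essentially forced: a nonzero function with compactly supported Fourier transform cannot decay like $e^{-c|x|}$, since then $\widehat{F_0}$ would extend holomorphically to a horizontal strip and hence could not be compactly supported, so a subexponential rate such as $e^{-|x|/\log^2|x|}$ is the best one can hope for, and infinite products of sinc-type factors --- the classical device, going back to Ingham --- are the standard way to realise it.
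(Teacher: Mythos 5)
Your construction is correct, and it is precisely the classical Ingham device that the cited reference \cite{ArgBouRad2020} (Lemma 5 there, labelled ``ingham'' here too) uses: take $F_0=\widehat{g}$ for $g$ an infinite convolution of Fej\'er kernels $K_{\beta_j}$ with $\sum_j\beta_j\le 1$, so that $F_0(x)=\prod_j(\sin(\pi\beta_j x)/(\pi\beta_j x))^2$, and then extract the decay by bounding each factor with $\pi\beta_j|x|\ge 1$ by $(\pi\beta_j|x|)^{-2}$ and summing the logarithms. Your choice $\beta_j\asymp 1/(j(\log j)^{3/2})$ is a small (and sensible) variant that over-achieves the required $e^{-|x|/\log^{2}(|x|+10)}$ bound and thereby avoids tracking the normalizing constant; the bookkeeping via Stirling and the definition of $J(x)$ is done correctly.
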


With Lemma \ref{le:harmo}, we get the following estimate of indicator functions expressed in terms of polynomials.
\begin{lemma}
\label{lem: DP}
Let $ A\geq 3$ and $\Delta$ large enough. There exist polynomials $ \mathcal D^-_{\Delta, A}(x)$ and $ \mathcal D^+_{\Delta, A}(x)$  of degree at most $\Delta^{10A}$ with $\ell$-th coefficient bounded by $2\Delta^{2A(\ell+1)}$
such that for all $|x|\leq \Delta^{6A}$
 \begin{equation}
\label{eqn: D+}
\begin{aligned}
\1(x \in [0,\Delta^{-1}])&\leq  (1 +C e^{-\Delta^{A - 1}}) |\mathcal D^+_{\Delta, A}(x)|^2\\
 |\mathcal D^+_{\Delta, A}(x)|^2&\leq \mathbf{1}(x \in [-\Delta^{-A/2}, \Delta^{-1}+\Delta^{-A/2}]) +Ce^{-\Delta^{A-1}},
\end{aligned}
\end{equation}
and
 \begin{equation}
 \label{eqn: D-}
 \begin{aligned}
\1(x \in [0,\Delta^{-1}])&\geq |\mathcal D^-_{\Delta, A}(x)|^2-Ce^{-\Delta^{A-1}}\\
  |\mathcal D^-_{\Delta, A}(x)|^2&\geq (1 -C e^{-\Delta^{A - 1}})  \mathbf{1}(x \in [\Delta^{-A/2}, \Delta^{-1}-\Delta^{-A/2}]),
  \end{aligned}
 \end{equation}
   for some absolute constant $C>0$.
\end{lemma}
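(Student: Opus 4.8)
The goal is to convert the analytic Fejér-type kernels $G^{\pm}_{\Delta,A}$ from Lemma \ref{le:harmo} into genuine polynomials whose squared modulus sandwiches the same indicator functions, while keeping quantitative control on degree and coefficient size; the point of passing to $|\mathcal D^{\pm}|^2$ rather than to a single polynomial is that a nonnegative trigonometric/entire function of exponential type can be written as a squared modulus (Fejér–Riesz), which is exactly what is needed when these objects later get multiplied out inside the Dirichlet-polynomial moment computations of Section \ref{sect: D to P}.

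\textbf{Plan.} First I would recall that $\widehat{G^{\pm}_{\Delta,A}}$ is supported in $[-\Delta^{2A},\Delta^{2A}]$ and that $0\le G^{\pm}_{\Delta,A}\le 1$, so $G^{\pm}_{\Delta,A}$ is an entire function of exponential type $2\pi\Delta^{2A}$ that is nonnegative on the real line. By the Fejér–Riesz / Krein factorization for entire functions of exponential type in the Cartwright class (equivalently, by Boas's theorem), there exists an entire function $g^{\pm}_{\Delta,A}$ of exponential type $\pi\Delta^{2A}$ with $G^{\pm}_{\Delta,A}(x)=|g^{\pm}_{\Delta,A}(x)|^2$ for real $x$; moreover $\|g^{\pm}_{\Delta,A}\|_2^2 = \|G^{\pm}_{\Delta,A}\|_1 \le 2\Delta^{2A}$ by item (5) of Lemma \ref{le:harmo}. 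The next step is to turn $g^{\pm}$ into an actual polynomial on the relevant range: since $\widehat{g^{\pm}_{\Delta,A}}$ is supported in $[-\Delta^{2A}/2,\Delta^{2A}/2]$ and $g^{\pm}$ has controlled $L^2$-norm, I would expand $g^{\pm}_{\Delta,A}(x)=\sum_{j\ge 0} c_j x^j$ as a Taylor series around the origin and truncate at degree $D:=\Delta^{10A}$, defining $\mathcal D^{\pm}_{\Delta,A}(x)=\sum_{j\le D} c_j x^j$. Cauchy's estimates for a function of exponential type $\le \Delta^{2A}$ with the above $L^2$ bound give $|c_j|\le (\text{const})\,\Delta^{2A(j+1)}$ after a routine contour argument (estimate $g^{\pm}$ on a circle of radius $\sim\Delta^{-2A}$, where $|g^{\pm}|$ is $\OO(\Delta^{2A})$), which yields the claimed coefficient bound $2\Delta^{2A(\ell+1)}$; and the tail $\sum_{j>D}|c_j||x|^j$ for $|x|\le \Delta^{6A}$ is dominated by a geometric series with ratio $\Delta^{2A}\cdot\Delta^{6A}/D \le \Delta^{-2A}$, hence is $\le e^{-\Delta^{A-1}}$ with room to spare. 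Therefore $\big||\mathcal D^{\pm}_{\Delta,A}(x)|^2 - G^{\pm}_{\Delta,A}(x)\big| = \OO(e^{-\Delta^{A-1}})$ uniformly on $|x|\le\Delta^{6A}$.

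\textbf{Conclusion from the kernel bounds.} Once the polynomial approximation is in place, the four displayed inequalities \eqref{eqn: D+} and \eqref{eqn: D-} follow by feeding $|\mathcal D^{\pm}|^2 = G^{\pm} + \OO(e^{-\Delta^{A-1}})$ into items (3) and (4) of Lemma \ref{le:harmo}. For the upper sandwich: item (3) gives $\1(x\in[0,\Delta^{-1}])\le G^+(1+Ce^{-\Delta^{A-1}}) \le |\mathcal D^+|^2(1+Ce^{-\Delta^{A-1}}) + \OO(e^{-\Delta^{A-1}})$, and absorbing the additive error into a slightly larger multiplicative constant (legitimate since the left side is an indicator, hence the inequality is only nontrivial where it equals $1$, where $|\mathcal D^+|^2$ is bounded below by a positive constant by item (4)) gives the first line of \eqref{eqn: D+}; the second line of \eqref{eqn: D+} is immediate from item (4) for $G^+$. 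Symmetrically, item (4) for $G^-$ plus the approximation gives the lower bound on $|\mathcal D^-|^2$ in \eqref{eqn: D-}, and item (3) for $G^-$ gives the first line of \eqref{eqn: D-}. Adjusting the absolute constant $C$ to cover all four cases finishes the proof.

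\textbf{Main obstacle.} The delicate point is the passage from the analytic majorant/minorant to a true \emph{polynomial} with both a clean degree bound and a clean per-coefficient bound: one must choose the truncation degree $\Delta^{10A}$ large enough that the Taylor tail is negligibly small on the fat range $|x|\le\Delta^{6A}$, yet the coefficient estimates must not degrade — this forces the Cauchy-estimate radius and the type bound to be balanced carefully, and it is also why the larger support $[-\Delta^{2A},\Delta^{2A}]$ (rather than $[-1,1]$) of $\widehat{G^{\pm}}$ is needed. A secondary subtlety is justifying the Fejér–Riesz factorization in the entire (non-periodic, $L^2$ but not compactly supported) setting rather than for trigonometric polynomials; invoking the Krein factorization theorem for Cartwright-class functions, or alternatively first periodizing on a large torus of size $\gg\Delta^{6A}$ and applying the classical Fejér–Riesz lemma before undoing the periodization, handles this cleanly. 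Everything else is a routine contour-integral estimate.
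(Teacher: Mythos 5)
Your proposal is correct in structure but takes a genuinely different route from the paper. The paper does \emph{not} invoke any Fej\'er--Riesz/Krein factorization: it directly truncates the Taylor series of $G^{\pm}_{\Delta,A}$ \emph{itself} (not of a square root), defining $\mathcal D^{\pm}_{\Delta,A}(x)=\sum_{\ell\le\nu}\frac{(2\pi\ii x)^\ell}{\ell!}\int\xi^\ell\widehat{G^{\pm}_{\Delta,A}}(\xi)\,d\xi$ with $\nu=\Delta^{10A}$, so that $\mathcal D^{\pm}\approx G^{\pm}$ on $|x|\le\Delta^{6A}$; it then passes from $\mathcal D^{\pm}$ to $|\mathcal D^{\pm}|^2$ by a short algebraic argument exploiting only $G^{\pm}\in[0,1]$ (so that $\mathcal D^{\pm}$ is approximately in $[0,1]$, hence $\mathcal D^{\pm}-|\mathcal D^{\pm}|^2$ is $\OO(e^{-\Delta^{A-1}})$, treating the cases $x\in[0,\Delta^{-1}]$ and $x\notin[0,\Delta^{-1}]$ separately). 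Your route --- factoring $G^{\pm}=|g^{\pm}|^2$ via Krein's theorem for band-limited nonnegative functions, truncating $g^{\pm}$, and reading off $|\mathcal D^{\pm}|^2\approx G^{\pm}$ directly --- is also valid and makes the passage to a squared modulus conceptually transparent; the price is invoking the factorization theorem, which the paper avoids entirely by the elementary $[0,1]$ observation. Incidentally, your suggested motivation for squaring (Fej\'er--Riesz) is not the paper's: the reason $|\mathcal D^{\pm}|^2$ appears is that, once a Dirichlet polynomial is substituted for $x$, the resulting expression is a Dirichlet polynomial times its conjugate, which is precisely what the mean-value theorem (Lemma~\ref{lem: Transition}) handles.

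One imprecision worth flagging: your Cauchy estimate at the fixed radius $R\sim\Delta^{-2A}$ gives $|c_j|\ll\Delta^{2A(j+1)}$, a bound that \emph{grows} with $j$, and does not by itself produce a convergent geometric tail on $|x|\le\Delta^{6A}$; the ratio $\Delta^{2A}\Delta^{6A}/D$ you write down presupposes the factorial decay $|c_j|\sim(\pi\Delta^{2A})^j/j!$, which comes from the Fourier representation $c_j=\frac{(2\pi\ii)^j}{j!}\int\xi^j\widehat{g^{\pm}}(\xi)\,d\xi$ (or, equivalently, from optimizing the Cauchy radius $R\sim j/\Delta^{2A}$, not the fixed $\Delta^{-2A}$). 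This is exactly the mechanism the paper uses in \eqref{eqn: error lemma4}, where the $1/\nu!$ is explicit. Your conclusion is fine but you should state the factorial decay explicitly when bounding the tail, rather than citing the non-decaying per-coefficient bound and then asserting a geometric ratio involving $D$. A second small slip: in your absorption of the additive error for the first line of \eqref{eqn: D+}, the lower bound on $|\mathcal D^+|^2$ on $[0,\Delta^{-1}]$ comes from item~(3) of Lemma~\ref{le:harmo} (which forces $G^+\ge 1-Ce^{-\Delta^{A-1}}$ there), not from item~(4), which is an upper bound on $G^+$.
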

\begin{proof}
We prove the inequalities \eqref{eqn: D-} for $\mathcal D^-_{\Delta, A}$. The ones for $\mathcal D^+_{\Delta, A}(x)$ were proved in \cite{ArgBouRad2020} using the function $G^+_{\Delta, A}$, cf.~Equations (32), (33) and (41), (42) there.
The treatment is very similar to the one below.

For the first inequality in \eqref{eqn: D-}, item (3) of Lemma \ref{le:harmo} ensures the existence of a function $G^-_{\Delta, A}(x)$ in $L^2$ such that
\begin{equation}
\label{eqn: 1 to H}
\1(x\in [0,\Delta^{-1}])\geq G^-_{\Delta, A}(x)-Ce^{-\Delta^{A-1}}.
\end{equation}
For $\nu=\Delta^{10A}$, we write $G^-_{\Delta,A}(x)$ as 
\begin{equation}
\label{eqn: H to D}
G^-_{\Delta,A}(x)= \int_{\mathbb R} e^{2\pi \ii \xi x} \widehat{G^-_{\Delta,A}}(\xi){\rm d} \xi=  \mathcal D^-_{\Delta, A}(x) + \sum_{\ell >\nu} \frac{(2\pi \ii x)^{\ell}}{\ell!} \int_{\mathbb{R}} \xi^{\ell} \widehat{G^-_{\Delta, A}}(\xi) {\rm d}\xi,
\end{equation}
where
   \begin{equation}
     \label{eqn: D}
     \mathcal D^-_{\Delta, A}(x)= \sum_{\ell \leq \nu} \frac{(2\pi \ii x)^{\ell}}{\ell!} \int_{\mathbb{R}} \xi^{\ell} \widehat{G^-_{\Delta, A}}(\xi) {\rm d}\xi .
   \end{equation}
Clearly,  the degree of $ \mathcal D^-_{\Delta, A}$ is $\nu=\Delta^{10A}$,and 
\begin{equation}
\label{eqn: bound G}
\int_{\mathbb{R}} |\xi|^{\ell} |\widehat{G^-_{\Delta,A}}(\xi)| {\rm d} \xi \leq \Delta^{2A \ell}\int_{\mathbb{R}} |\widehat{G^-_{\Delta,A}}(\xi)| {\rm d} \xi \leq 2 \Delta^{2A (\ell+1)},
\end{equation}
by properties {\it (1)} and {\it (5)} of Lemma \ref{le:harmo}. Thus, the coefficients of $\mathcal D^-_{\Delta, A}(x)$ are bounded by $\ll \Delta^{2A(\ell+1)}$.

Assuming that $|x|\leq \Delta^{6A}$, then the error term in Equation \eqref{eqn: H to D} is smaller than
  \begin{equation}
  \label{eqn: error lemma4}
  \frac{(2\pi)^{\nu}}{\nu!} |x|^{\nu} \int_{\mathbb{R}} |\xi^{\nu}| |\widehat{G^-_{\Delta, A}}(\xi)| {\rm d}\xi \leq \frac{10^{\nu}}{\nu !} \Delta^{6A\nu}\,\Delta^{2A(\nu + 1)} \leq \frac{10^{\nu}}{\nu !} \Delta^{9A\nu}.
  \end{equation}
 This is $\leq e^{-\Delta^A}$ for the choice $\nu=\Delta^{10A}$.
 This shows that whenever $|x|\leq \Delta^{6A}$
\begin{equation}
\label{eqn: H D}
G^-_{\Delta, A}(x)=\mathcal D^-_{\Delta, A}(x)+\OO^*(e^{-\Delta^A}),
\end{equation}
where the $\OO^\star$ means that the implicit constant is smaller than $1$. 
If $x\notin [0,\Delta^{-1}]$, then Equations \eqref{eqn: 1 to H} and \eqref{eqn: H D} with the fact that $G^-_{\Delta, A}\geq 0$ imply
$$
-e^{-\Delta^A}\leq \mathcal D^-_{\Delta, A}(x)\leq 2 C e^{-\Delta^{A-1}},
$$
for $\Delta$ large enough (depending on $C$).
Therefore, in this case, the following holds
$$
\begin{aligned}
\1(x\in [0,\Delta^{-1}])&\geq  |\mathcal D^-_{\Delta, A}(x)|^2+(\mathcal D^-_{\Delta, A}(x)-|\mathcal D^-_{\Delta, A}(x)|^2)-2Ce^{-\Delta^{A-1}}\\
&\geq  |\mathcal D^-_{\Delta, A}(x)|^2-2|\mathcal D^-_{\Delta, A}(x)|-2Ce^{-\Delta^{A-1}}\\
&\geq  |\mathcal D^-_{\Delta, A}(x)|^2-6Ce^{-\Delta^{A-1}}.
\end{aligned}
$$
If $x\in [0,\Delta^{-1}]$, then the fact that $G^-_{\Delta, A}\leq 1$ implies instead.
$$
-e^{-\Delta^A}\leq \mathcal D^-_{\Delta, A}(x)\leq 1+2 C e^{-\Delta^{A-1}}.
$$
We deduce that:
$$
\begin{aligned}
\1(x\in [0,\Delta^{-1}])&\geq  |\mathcal D^-_{\Delta, A}(x)|^2+(\mathcal D^-_{\Delta, A}(x)-|\mathcal D^-_{\Delta, A}(x)|^2)-2Ce^{-\Delta^{A-1}}\\
&=  |\mathcal D^-_{\Delta, A}(x)|^2+|\mathcal D^-_{\Delta, A}(x)|\Big(\text{sgn}\mathcal D^-_{\Delta, A}(x) -|\mathcal D^-_{\Delta, A}(x)|\Big)-2Ce^{-\Delta^{A-1}}\\
&\geq  |\mathcal D^-_{\Delta, A}(x)|^2-6Ce^{-\Delta^{A-1}}.
\end{aligned}
$$
This establishes the first inequality in \eqref{eqn: D-} by redefining $C$.

For the second inequality in \eqref{eqn: D-}, item (4) of Lemma \ref{le:harmo} and Equation \eqref{eqn: H D} give
$$
  |\mathcal D^-_{\Delta, A}(x)+\OO^*(e^{-\Delta^{A-1}})|^2\geq (1 -C e^{-\Delta^{A - 1}})\cdot \mathbf{1}(x \in [\Delta^{-A/2}, \Delta^{-1}-\Delta^{-A/2}]).
 $$
Since the constant in $\OO^*$ is $\leq 1$, the dominant term on the left-hand side is $\mathcal D^-_{\Delta, A}(x)$, and we can absorb the additive error in a multiplicative factor to get
the second inequality in \eqref{eqn: D-}.
 \end{proof}

 \subsection{Proof of Propositions \ref{prop: 1 point} and \ref{prop: 2 points}.} \label{sect: D to P}
For these proofs, we need two preliminary steps. First, the constraints for the random walk $(S_k)_k$ (\ref{eqn: G}) are re-expressed in terms of its increments. 
Second, this allows to write the probabilities for the Dirichlet sums $S_k$ in terms of a probabilistic model. \\

\noindent{\it Constraints and increments.}\  First,  the polynomial approximation of indicator functions from Lemma \ref{lem: DP} will be related to events involving the partial sums $S_k$, $n_0<k\leq n_\mathcal L$. Fix $h\in [-1,1]$.
 Consider the increments
 $$
 Y_j(h)=S_j(h)-S_{j-1}(h), \quad n_0< j\leq n_\mathcal L.
 $$
 To shorten the notation, we consider the set of times
 \begin{equation}
 \label{eqn: J}
 \mathcal J_{\ell}=\{n_0+1, n_0+2, \dots, \ell -1 , \ell\}.
 \end{equation}
 with $n_0 \leq \ell \leq n_{\mathcal{L}}$. 
 We will partition the intervals of values taken by $Y_j$, $j\in \mathcal J_{\ell}$ into sub-intervals of length  $\Delta_j^{-1}$ where 
 $$
 \Delta_j=(j \wedge (n-j))^4.
 $$
 The exponent $4$ is chosen to ensure summability.  In particular we will simply use that for $y$ chosen large enough we have
\begin{equation}
\label{eqn:step}
 \sum_{j \in \mathcal{J}_{\ell}} \Delta_j^{-1} \leq  \sum_{j\geq n_0} \Delta_j^{-1} \leq 1.
\end{equation}
We consider events for the partial sums of the form
 $$
\{S_{j}(h)\in [L_j, U_j], j\in \mathcal J_{\ell}\}, \quad h\in [-1,1].
 $$
 We would like to decompose the above in terms of events for the increments
 \begin{equation}
 \label{eqn: Y}
 \{Y_j(h)\in [u_j,u_j + \Delta_j^{-1}],j\in \mathcal J_{\ell}\},\quad h\in [-1,1],
 \end{equation}
 for a given tuple $(u_{j}, j\in \mathcal J_{\ell})$. Note that such events are disjoint for two distinct tuples. 
On an event of the form \eqref{eqn: Y},  from (\ref{eqn:step}) we have
\begin{equation}
\label{eqn: u S}
\begin{aligned}
\sum_{i\leq j} u_i \leq S_{j}(h) &\leq \sum_{i\leq j} (u_i +\Delta_i^{-1})&\leq  \sum_{i\leq j} u_i +1, \quad \text{ for all $j\in \mathcal J_{\ell}$,}
\end{aligned}
\end{equation}
This means that we have the following inclusions
 \begin{align}
  \{S_{j}(h)\in [L_j, U_j+1], j\in \mathcal J_{\ell}\}&\supset \bigcup_{\mathbf u \in \mathcal I} \{Y_j(h)\in [u_j,u_j + \Delta_j^{-1}], j\in \mathcal J_{\ell}\} \label{eqn: inclusion LB},\\
\{S_{j}(h)\in [L_j+1, U_j], j\in \mathcal J_{\ell}\}&\subset \bigcup_{\mathbf u \in \mathcal I}  \{Y_j(h)\in [u_j,u_j + \Delta_j^{-1}], j\in \mathcal J_{\ell}\}\label{eqn: inclusion UB},
  \end{align}
  where $\mathcal I$ is the set of tuples $\mathbf u=(u_j, j\in \mathcal J_{\ell})$, $u_j\in \Delta_j^{-1}\mathbb Z$, such that $\sum_{i\leq j}u_i\in [L_i, U_i]$ for all $j\in \mathcal J_{\ell}$.
The definition of $\mathcal I$ imposes restrictions on the $u_j$'s. 
Indeed, we must have
$$
u_j\leq U_j-L_{j-1} \leq 10\Delta_j^{1/4} \qquad u_j \geq L_j-U_{j-1}\geq -10\Delta_j^{1/4}.
$$
In all cases, we  have the following bound which will be repeatedly used:
\begin{equation}
\label{eqn: u bound}
|u_j|\leq 100 \Delta_j^{1/4}, \quad j\in \mathcal J_{\ell}. 
\end{equation}
 
\noindent {\it Probabilistic model for the increments.} Additionally to the original random walk (\ref{eqn: Sk}) and its Gaussian counterpart (\ref{eqn: Gk}), as an intermediate we now consider another probabilistic model needed for the proofs of Propositions \ref{prop: 1 point} and \ref{prop: 2 points}.
For $h\in[-1,1]$,  let
\begin{equation}
\label{eqn: steinhaus walk}
\mathcal S_{k}(h) = \sum_{n_0\leq \log\log p\leq  k} \re \Big ( e^{\ii \theta_p} \, p^{-(1/2 +\ii h)} + \tfrac 12 \,  e^{2\ii \theta_p} \, p^{-(1 + 2 \ii h)} \Big ),\quad k\leq n_{\mathcal L},
\end{equation}
where $(\theta_p, p \text{ prime})$ are i.i.d.  random variables distributed uniformly on $[0,2\pi]$, and define the corresponding increments
\begin{equation}
\label{eqn: Y_k}
\mathcal Y_k(h)=\mathcal S_{k}(h)-\mathcal S_{k-1}(h), \quad k\leq n_{\mathcal L}. 
\end{equation}
It is easy to see that $\mathcal S_k$ and $\mathcal Y_k$ have mean $0$. The variance of the increments $\mathcal Y_k$ coincides with (\ref{eqn:sk}) and by a quantitative version of the Prime Number Theorem (see \cite[Equation (74)]{ArgBouRad2020}) they satisfy
\begin{equation}\label{eqn:skasymp}
\mathfrak{s}_j^2=\frac{1}{2}+{\rm O}(e^{-{c \sqrt{j}}}).
\end{equation}
for some universal $c>0$.  These precise asymptotics are not used in the comparison with the Gaussian model,  i.e.  in the proof of Proposition \ref{prop: 1 point} below,  and they will be used only for convenience in the first and second moment for the Gaussian model, Proposition \ref{prop: comp second}. In fact to apply the Ballot theorem from Proposition \ref{prop:barrier} we will only rely on $\mathfrak{s}_j^2\in[\kappa,\kappa^{-1}]$ for some fixed $\kappa>0$.

 \begin{proof}[Proof of Proposition \ref{prop: 1 point}] 
   We prove \eqref{eqn: LB P}. The upper bound \eqref{eqn: UB P} is proved in a similar way, see Proposition \ref{lem: 2 points}. We define the weighted expectation, 
   $$
   \mathbb{E}_{\mathcal{Q}}[X] := \mathbb{E} \Big [ |\mathcal{Q}(\tfrac 12 + \ii  \tau)|^2 \cdot X(\tau) \Big ] \cdot \mathbb{E} \Big [ |\mathcal{Q}(\tfrac 12 + \ii  \tau)|^2 \Big ]^{-1},
   $$
   and the corresponding measure $\mathbb{P}_{\mathcal{Q}}(A) := \mathbb{E}_{\mathcal{Q}}[\mathbf{1}(\tau \in A)]$.
   
In what follows, we drop the dependence on $h$ as it plays no role. 
Equation \eqref{eqn: inclusion LB} directly implies (by taking $U_k$ instead of $U_{k}+\ee$):
 \begin{equation}
 \label{eqn: sum over u}
  \mathbb{P}_{\mathcal{Q}}(S_{k}\in [L_k, U_k],k\in \mathcal J_{\ell})\geq \sum_{\mathbf u \in \mathcal I} \mathbb{P}_{\mathcal{Q}}(Y_k-u_k\in [0,\Delta_k^{-1}], k\in \mathcal J_{\ell}),
 \end{equation}
 where $\mathcal I$ is now the set of tuples $\mathbf u=(u_j, j\in \mathcal J_{\ell})$, $u_j\in \Delta_j^{-1}\mathbb Z$, such that $\sum_{i\leq j}u_i\in [L_i, U_i-1]$ for all $j\in \mathcal J_{\ell}$.
 By introducing the indicator functions $\prod_k \1(|Y_k-u_k|\leq \Delta_k^{6A})$, Equation \eqref{eqn: D-} of Lemma \ref{lem: DP} can be applied with $A=10$ (say), thanks to the bound \eqref{eqn: u bound}.
 This yields
 \begin{equation}
 \label{eqn: P to D}
 \mathbb{P}_{\mathcal{Q}}(Y_k-u_k\in [0,\Delta_k^{-1}],k\in \mathcal J_{\ell})\geq \mathbb{E}_{\mathcal{Q}} \Big[\prod_{k}\Big(|\mathcal D^-_{\Delta_k,A}(Y_k-u_k)|^2-Ce^{-\Delta_k^{A-1}}\Big)\1(|Y_k-u_k|\leq \Delta_k^{6A})\Big].
 \end{equation}
 The tricky part is to get rid of the indicator function. 
 For simplicity, let's write $\mathcal D_k$ for $|\mathcal D^-_{\Delta_k,A}(Y_k-u_k)|^2-Ce^{-\Delta_k^{A-1}}$. 
Since $\1(|Y_k-u_k|\leq \Delta_k^{6A})=1-\1(|Y_k-u_k|> \Delta_k^{6A})$, we can rewrite the above as
\begin{equation}
\label{eqn: indicator D}
 \mathbb{E}_{\mathcal{Q}} \Big[\prod_{k\in  \mathcal J_{\ell}} \mathcal D_k\Big]
 +\sum_{J\subseteq \mathcal J_{\ell}, J\neq \emptyset}(-1)^{|J|}\mathbb{E}_{\mathcal{Q}} \Big[\prod_{k\in  \mathcal J_{\ell}} \mathcal D_k\prod_{j\in J}\1(|Y_j-u_j|>\Delta_j^{6A})\Big].
\end{equation}
We start with the first term, which will be dominant.
Each $Y_j$ is a Dirichlet polynomial of length at most $\exp(2e^{j})$. Therefore,
from Lemma \ref{lem: DP},  for any subset $\mathcal M\subset \mathcal J_\ell$ the Dirichlet polynomial $\prod_{j\in \mathcal M}\mathcal D^-_{\Delta_j,A}(Y_j-u_j)$ is of length at most 
\begin{equation}
\label{eqn: length D}
\exp(2e^{\ttL}\Delta_{\ttL}^{100})
\leq \exp \Big ( \frac{1}{100} e^n \Big )
\end{equation}
for $y$ large enough.
Therefore,  Lemma \ref{lem: Transition}  applies to compare with the random model with increments $\mathcal Y_k$ given in \eqref{eqn: Y_k}:
\begin{equation}
\label{eqn: D to random}
 \mathbb{E}_{\mathcal{Q}} \Big[\prod_{k\in  \mathcal M} |\mathcal D^-_{\Delta_k, A}(Y_k-u_k)|^2\Big]
 = (1+\OO(T^{-99/100}))\prod_{k\in  \mathcal M} \mathbb{E} \Big [ |\mathcal{D}^-_{\Delta_k,A}(\mathcal Y_k - u_k)|^2 \Big ],
 \end{equation}
where we have split the expectation $\E_{\mathcal Q}$ and used $\E=\E_{\mathcal Q}$ for the probabilistic model, thanks to the independence of the $\mathcal Y_k$'s.
 Moreover,   for each $k$, we have
\begin{equation}
\label{eqn: D LB}
\begin{aligned}
 \E\Big[ |\mathcal D^-_{\Delta_k,A}(\mathcal Y_k-u_k)|^2\Big]&\geq  \E\Big[ |\mathcal D^-_{\Delta_k,A}(\mathcal Y_k-u_k)|^2\1(|\mathcal Y_k-u_k|\leq\Delta_k^{6A})\Big]\\
 &\geq (1-Ce^{-\Delta_k^{A-1}})\cdot \PP(\mathcal Y_k-u_k \in [\Delta_k^{-A/2}, \Delta_k^{-1}-\Delta_k^{-A/2}]),
 \end{aligned}
\end{equation}
where the second inequality follows from  \eqref{eqn: D-},
noting that the condition $|\mathcal Y_k-u_k|\leq\Delta_k^{6A}$ is implied by $\mathcal Y_k-u_k \in [\Delta_k^{-A/2}, \Delta_k^{-1}-\Delta_k^{-A/2}]$, and thus can be dropped.
We now rewrite this probability in terms of Gaussian increments.
Lemma \ref{lem:compProb} in Appendix \ref{appendixA} gives
\begin{equation}
\label{eqn: Y_k estimate}
\PP(\mathcal Y_k-u_k \in [\Delta_k^{-A/2}, \Delta_k^{-1}-\Delta_k^{-A/2}])=\PP(\mathcal N_k-u_k\in [\Delta_k^{-A/2},\Delta_k^{-1}-\Delta_k^{-A/2}])+\OO(e^{-ce^{k/2}}).
\end{equation}
The overspill $\Delta_j^{-A/2}$ can be removed at no cost:  from  \eqref{eqn: u bound} and  $\mathfrak{s}_k\asymp 1$,  uniformly in $x,y\in u_k+[\Delta_k^{-A/2},\Delta_k^{-1}-\Delta_k^{-A/2}]$ the density $f_k$ of $\mathcal{N}_k$ satisfies $f_k(x)\asymp f_k(y)$, so
\begin{equation}
\label{eqn: dropping delta}
 \PP(\mathcal N_k -u_k\in [\Delta_k^{-A/2},\Delta_k^{-1}-\Delta_k^{1-A/2}]) =(1+\OO(\Delta_k^{-A/2}))\cdot \PP(\mathcal N_k-u_k\in [0,\Delta_k^{-1}]).
\end{equation}
Moreover, 
\begin{equation}
\label{eqn: LB prob}
\PP(\mathcal N_k-u_k\in [0,\Delta_k^{-1}])\gg \Delta_k^{-1}e^{-2 u_k^2}\gg \Delta_k^{-1}e^{-100^2\Delta_k^{1/2}}.
\end{equation}
This is much larger than the additive error term $\OO(e^{-ce^{k/2}})$ in \eqref{eqn: Y_k estimate},  which can therefore be replaced by a multiplicative error. Both multiplicative errors together give
for $k\leq n_{\mathcal L}$
\begin{equation}\label{eqn:GaussianDens}
\PP(\mathcal Y_k-u_k \in [\Delta_k^{-A/2}, \Delta_k^{-1}-\Delta_k^{-A/2}])=\Big(1+\OO \Big ((k\wedge (n-k))^{-2A} \Big ) \cdot \PP(\mathcal N_k-u_k\in [0,\Delta_k^{-1}]).
\end{equation}
The product over $k\in \mathcal J_{\ell}$ of the error terms above is $(1+\OO(n_0^{-A}))$.
Going back to Equations \eqref{eqn: D to random} and \eqref{eqn: D LB}, we have established that
\begin{equation}
\label{eqn: prod random}
\E_{\mathcal Q}\Big[\prod_{k\in\mathcal M}|\mathcal D_{\Delta_k, A}^-(Y_k-u_k)|^2\Big]\geq  (1+\OO(n_0^{-A}))\prod_{k\in\mathcal M}  \PP(\mathcal N_k-u_k\in [0,\Delta_k^{-1}]).
\end{equation}
Remember that we aim at a similar estimate for $\mathcal D_k=|\mathcal D^-_{\Delta_k, A}(Y_j-u_j)|^2-Ce^{-\Delta_k^{A-1}}$.  From \eqref{eqn: LB prob},  $\PP(\mathcal N_k-u_k\in [0,\Delta_k^{-1}])\gg e^{-\Delta_k}$ and (\ref{eqn: prod random}) holds for arbitrary $\mathcal M\subset \mathcal J_\ell$, so that by a simple expansion we have 
\begin{equation}
\label{eqn: D2}
\E_{\mathcal Q}\Big[\prod_{k\in \mathcal J_{\ell}}\mathcal D_{k}\Big]\geq  (1+\OO(n_0^{-A}))\prod_{k\in \mathcal J_{\ell}}  \PP(\mathcal N_k-u_k\in [0,\Delta_k^{-1}]).
\end{equation}
We now bound the second term in \eqref{eqn: indicator D}. Let's fix the non-empty subset $J\subseteq \mathcal J_{\ell}$ in the sum.
Since 
$\1(|X|>\lambda)\leq \frac{|X|^{2q}}{\lambda^{2q}},$
 we have
\begin{equation}
\label{eqn: D 1}
\mathbb{E}_{\mathcal{Q}} \Big[\prod_{k\in  \mathcal J_{\ell}} \mathcal D_k\prod_{j\in J}\1(|Y_j-u_j|>\Delta_j^{6A})\Big]\leq \mathbb{E}_{\mathcal{Q}} \Big[\prod_{k\in \mathcal J_{\ell}}\mathcal D_k\prod_{j\in J} \frac{|Y_j-u_j|^{2q_j}}{\Delta_j^{12Aq_j}}\Big],
\end{equation}
where we pick $q_j=\lfloor\Delta_j^{6A}\rfloor$, $A=10$.
As for the first term, we need to handle the error $Ce^{-\Delta_k^{A-1}}$ in $\mathcal D_k$.
For this we abbreviate $d_k(x)= D^-_{\Delta_k,A}(x-u_k)$, $\varepsilon_k=Ce^{-\Delta_k^{A-1}}$, and expand
\begin{equation}\label{eqn:intermed}
\mathbb{E}_{\mathcal{Q}} \Big[\prod_{k\in \mathcal J_{\ell}}\mathcal D_k\prod_{j\in J} \frac{|Y_j-u_j|^{2q_j}}{\Delta_j^{12Aq_j}}\Big]\leq\sum_{B\subset \mathcal{J}_\ell}\mathbb{E}_{\mathcal{Q}} \Big[\prod_{k\in B}|d_k(Y_k)|^2\prod_{k\in\mathcal{J}_{\ell}\setminus B}\varepsilon_k\prod_{j\in J} \frac{|Y_j-u_j|^{2q_j}}{\Delta_j^{12Aq_j}}\Big].
\end{equation}
From Lemma \ref{lem: DP}, the Dirichlet polynomial $d_j$ is of length at most $\exp(2e^{j}\Delta_j^{100})$. 
The choice of $q_j$ implies that the Dirichlet polynomial
$
\prod_{k\in A} d_k \prod_{j\in J} (Y_j-u_j)^{q_j}$
has length at most $\exp(2e^{\ttL} \Delta_{\ttL}^{100})\leq \exp(\tfrac{1}{100} e^n)$ as in \eqref{eqn: length D}.
Therefore, we can use Lemma \ref{lem: Transition} again, and work with the random model term by term.
Again, the fact that $\mathcal{Q}$ is supported on integers with primes $p$ with $\log p > e^{\ell}$ means
that for the random model the expectation with respect to $\mathbb{E}_{\mathcal{Q}}$ is equal to the
expectation with respect to $\mathbb{E}$.  
We start with the case $j\in B\cap J$. We have
\begin{equation}\label{eqn:sch}
\E\Big[|d_j(\mathcal Y_j)|^2|\mathcal Y_j-u_j|^{2q_j}\Big]\ll \E \Big [|d_j(\mathcal Y_j)|^4 \Big ]^{1/2}\cdot \E\Big[|\mathcal Y_j-u_j|^{4q_j}\Big]^{1/2}.
\end{equation}
The definition of $\mathcal D^-_{\Delta_j, A}$ in Equations \eqref{eqn: D} and \eqref{eqn: bound G} implies the following bound on all $2k$-moments, $k\in \mathbb N$,
\begin{multline}
\label{eqn: 4th D bound}
 \E[|d_j(\mathcal Y_j)|^{2k}]
  \leq \E\Big[ \Big ( \sum_{\ell \leq \Delta_j^{10 A}} \frac{(2\pi )^{\ell}}{\ell!}  2\Delta_j^{2A(\ell+1)}  (|\mathcal Y_j|+100\Delta_j^{1/4})^{\ell} \Big )^{2k}\Big ]  \\
  \ll  \Delta_j^{4k A} \, \E[\exp( 4\pi k \Delta_j^{2A}  (| \mathcal Y_j|+100 \Delta_j^{1/4}))] \ll_k e^{\Delta_j^{5A}},
\end{multline}
where the third inequality follows from Lemma \ref{lem:basicFour}. By Lemma \ref{lem:basicFour} and the inequality $\frac{x^{4q}}{q^{4q}}\leq \frac{(4q)!}{(\lambda q)^{4q}}\cdot (e^{\lambda x}+e^{-\lambda x})$
with the choice $q=q_j=\lfloor\Delta_j^{6A}\rfloor$,  $\lambda=10$,  we have  for any $j\leq n_\mathcal L$
\begin{equation}
\label{eqn: Y moment}
\E\Big[\frac{|\mathcal Y_j-u_j|^{4q_j}}{\Delta_j^{24Aq_j}}\Big]
\ll e^{-2\Delta_j^{6A}},
\end{equation}
by Stirling's formula and the fact that $|u_j|\leq 100 \Delta_j^{1/4}$.
From equations (\ref{eqn: D 1}) (\ref{eqn:intermed}) (\ref{eqn:sch}) \eqref{eqn: 4th D bound} and \eqref{eqn: Y moment} we have proved
\begin{multline*}
\mathbb{E}_{\mathcal{Q}} \Big[\prod_{k\in  \mathcal J_{\ell}} \mathcal D_k\prod_{j\in J}\1(|Y_j-u_j|>\Delta_j^{6A})\Big]\ll \sum_{B\subset\mathcal{J}_\ell}\prod_J e^{-\Delta_j^{6A}}\prod_{B\backslash J}\E[|d_j({\mathcal Y}_j)|^2]\prod_{\mathcal{J}_\ell\backslash B}\varepsilon_j\\
=\prod_{j\in {\mathcal J}_\ell}\E[|d_j({\mathcal Y}_j)|^2] \sum_{B\subset\mathcal{J}_\ell}\prod_J \frac{e^{-\Delta_j^{6A}}}{\E[|d_j({\mathcal Y}_j)|^2] }\prod_{\mathcal{J}_\ell\backslash (B\cup J)}\frac{\varepsilon_j}{\E[|d_j({\mathcal Y}_j)|^2] }\prod_{J\backslash B}\varepsilon_j.
\end{multline*}
Moreover, from  \eqref{eqn: D LB} with the estimates \eqref{eqn: LB prob}, \eqref{eqn:GaussianDens},  we have 
$\E[|d_j({\mathcal Y}_j)|^2] \gg \Delta_j^{-1}e^{-100^2\Delta_j^{1/2}}$. We have obtained
\begin{multline*}
\left|\sum_{J\subseteq \mathcal J_{\ell}, J\neq \emptyset}(-1)^{|J|}\mathbb{E}_{\mathcal{Q}} \Big[\prod_{k\in  \mathcal J_{\ell}} \mathcal D_k\prod_{j\in J}\1(|Y_j-u_j|>\Delta_j^{6A})\Big]\right| \\ \ll
\prod_{j\in {\mathcal J}_\ell}\E[|d_j({\mathcal Y}_j)|^2]\sum_{J\subset\mathcal{J}_\ell,J\neq\emptyset} \sum_{B\subset\mathcal{J}_\ell}\prod_J e^{-\tfrac{1}{2}\Delta_j^{6A}}\prod_{\mathcal{J}_\ell\backslash B}\varepsilon_j^{1/2}\\=\prod_{j\in {\mathcal J}_\ell}\E[|d_j({\mathcal Y}_j)|^2]\big(\prod_{\mathcal{J}_\ell}(1+e^{-\tfrac{1}{2}\Delta_j^{6A}})-1\big)\prod_{\mathcal{J}_\ell}(1+\sqrt{\varepsilon_j})\ll e^{-n_0^{100}}\prod_{j\in {\mathcal J}_\ell}\E[|d_j({\mathcal Y}_j)|^2].
\end{multline*}
The above product is $\ll \prod_{j\in \mathcal J_{\ell}}\PP(\mathcal N_j-u_j\in [0,\Delta_j^{-1}])$ as easily proved by combining \eqref{eqn: D-} and \eqref{eqn: Y moment}.  (A similar bound in the more general case of joint increments is detailed in (\ref{eqn:Dbound}).)

Equations \eqref{eqn: P to D},\eqref{eqn: indicator D} and \eqref{eqn: D2} with the above finally yield

$$
 \mathbb{P}_{\mathcal{Q}}(Y_j-u_j \in [0,\Delta_j^{-1}], j\in \mathcal J_{\ell})
 \geq (1+\OO(n_0^{-10})) \prod_{j\in \mathcal J_{\ell}}\PP(\mathcal N_j-u_j\in [0,\Delta_j^{-1}]).
$$
The claim \eqref{eqn: LB P} follows by summing over $\mathbf u\in \mathcal I$ as in Equation \eqref{eqn: sum over u}, and by applying the inclusion \eqref{eqn: inclusion UB} for the Gaussian random walk with increments $\mathcal N_j$.
 \end{proof}

For the proof Proposition \ref{prop: 2 points} below, we will also consider the partial sums  at $h$ and $h'$ jointly,  i.e., $S_k(h)$ and $S_k(h')$,  $n_0<k\leq n_\mathcal L$, as well as the joint increments  $\mathcal Y_j(h)$ and $\mathcal Y_j(h')$. These increments  have 
covariance and correlations identical to those of $\mathcal{N}_{j}$ and $ \mathcal{N}_j'$, i.e.,   they are given by  \eqref{eqn:rhok}, which satisfies the asymptotics
\begin{equation}
\label{eqn: cov estimate}
\begin{aligned}
\rho_j&=
\begin{cases}
\mathfrak{s}_j^2+\OO((e^j|h-h'|)^2) \ & \text{if $j\leq \log |h-h'|^{-1}$},\\
\OO((e^j|h-h'|)^{-1}) \ & \text{if $j\geq \log |h-h'|^{-1}$},
\end{cases}
\end{aligned}
\end{equation}
as is easily proved using the Prime Number Theorem as in \cite[Lemma 2.1]{ArgBelHar2017}.
We also define $\varepsilon_j=\varepsilon_j(h,h')$ by 
\begin{align} \label{eq:epsilonjdef}
\rho_j = \begin{cases}
  \mathfrak{s}_j^2 - \varepsilon_j & \text{ if } j \leq \log |h - h'|^{-1}, \\
  \varepsilon_j & \text{ if } j > \log |h - h'|^{-1}. 
  \end{cases}
\end{align}
The precise asymptotics of the covariances in \ref{eq:epsilonjdef} will not play a role in  the  proof of  Proposition \ref{prop: 2 points} below.
However, it will be crucial in the proof of Proposition \ref{prop: comp second}.

 \begin{proof}[Proof of Proposition \ref{lem: 2 points}] 
We write $(S_k,S_k')$ for $(S_k(h),S_k(h'))$ for conciseness, and similarly for the increments. 
 The event on the left-hand side of \eqref{eqn: UB P} is decomposed using the increments as in Equation \eqref{eqn: inclusion UB}.
Then, Equation \eqref{eqn: D+} can be used to bound the indicator functions for both points. We take $A=10$ (say). This gives that the left-hand side of \eqref{eqn: UB P 2} is
 \begin{equation}
\label{eqn: eqn1 2points}
\leq (1+\OO(e^{-n_0^{10}}))\sum_{\mathbf u, \mathbf u'\in \mathcal I}\E\Big[\prod_{j\in \mathcal J_{\mathcal{L}}} |\mathcal D^+_{\Delta_j, A}(Y_j-u_j) \mathcal D^+_{\Delta_j, A}(Y'_j-u'_j)|^2\Big],
\end{equation}
where we write $\mathcal J_{\ell}$ as in \eqref{eqn: J}. 
We proceed as in Equation \eqref{eqn: D to random}.
From Lemma \ref{lem: DP}, the Dirichlet polynomial $\prod_j\mathcal D^+_{\Delta_j,A}(Y_j-u_j)$ is of length at most $\exp(2e^{n_\mathcal L}\Delta_{n_\mathcal L}^{100})$. So the product of the polynomials for $h$ and $h'$ has length smaller than $\exp(4e^{n_\mathcal L}\Delta_{n_\mathcal L}^{100})\leq T^{1/100}$,  as in \eqref{eqn: length D}.
Lemma \ref{lem: Transition} then implies
\begin{equation}
\label{eqn: eqn2 2points}
\begin{aligned}
&\E\Big[ \prod_{j\in \mathcal J_{\mathcal{L}}}|\mathcal D^+_{\Delta_j, A}(Y_j-u_j)\mathcal D^+_{\Delta_j, A}( Y'_j-u'_j)|^2\Big]\\
&=(1+\OO(T^{-99/100}))\prod_{j\in \mathcal J_{\mathcal{L}}}\E\Big[|\mathcal D^+_{\Delta_j, A}(\mathcal Y_j-u_j)\mathcal D^+_{\Delta_j, A}( \mathcal Y'_j-u'_j)|^2\Big].
\end{aligned}
\end{equation}

We estimate the expectation for each $j$. 
Write for short $\mathcal D^+_{\Delta_j, A}(\mathcal Y_j-u_j)=\mathcal D_j$ and similarly for $\mathcal D_j'$. 
We would like to introduce the indicator functions $\1(|\mathcal Y_j-u_j|\leq \Delta_j^{6A})$ and $\1(|\mathcal Y'_j-u'_j|\leq \Delta_j^{6A})$. 
For this, note first that
$$
\E\Big[|\mathcal D_j \mathcal D'_j |^2\1(|\mathcal Y_j-u_j|> \Delta_j^{6A})\Big]
\leq \E \Big [|\mathcal D_j|^6 \Big]^{1/3}\cdot \E \Big [|\mathcal D'_j|^6 \Big ]^{1/3}\cdot \PP \Big (|\mathcal Y_j-u_j|> \Delta_j^{6A} \Big )^{1/3}\ll e^{-\Delta_j^{6A}},
$$
by Equation \eqref{eqn: 4th D bound} (with $a=3$) and Markov's inequality using \eqref{eqn: Y moment}.
This observation implies that
\begin{align}
\E \Big [|\mathcal D_j \mathcal D'_j |^2 \Big ]&=\E\Big[|\mathcal D_j \mathcal D'_j |^2\1(|\mathcal Y_j-u_j|\leq \Delta_j^{6A}, |\mathcal Y'_j-u'_j| \leq \Delta_j^{6A}) \Big ] +\OO \Big (e^{-\Delta_j^{6A}} \Big )\notag\\
&\leq \PP \Big ((\mathcal Y_j-u_j, \mathcal Y_j'-u_j')\in [-\Delta_j^{-A/2},\Delta_j^{-1}+\Delta_j^{-A/2}]^{2} \Big ) +\OO \Big (e^{-\Delta_j^{A-1}} \Big ),\label{eqn:Dbound}
\end{align}
by Equation \eqref{eqn: D+} applied to both $\mathcal D_j$ and $\mathcal D_j'$. 
The Berry-Esseen approximation of Lemma \ref{lem:compProb} can now be applied:
$$
\begin{aligned}
\E\Big[|\mathcal D_j \mathcal D'_j |^2 \Big ]
&\leq (1+\Delta_j^{-A/2}) \PP \Big ((\mathcal N_j-u_j, \mathcal N_j'-u_j')\in [-\Delta_j^{-A/2},\Delta_j^{-1}+\Delta_j^{-A/2}]^{2} \Big ) +\OO \Big (e^{-\Delta_j^{A-1}} \Big ).
\end{aligned}
$$
The overspill $\Delta_j^{-A/2}$ can be also removed as in \eqref{eqn: dropping delta}.
We conclude that the above is
$$
\begin{aligned}
&=  (1+{\rm O}(\Delta_j^{-A/2}))  \PP \Big ((\mathcal N_j-u_j, \mathcal N_j'-u_j')\in [0,\Delta_j^{-1}]^{2} \Big ) +\OO \Big (e^{-\Delta_j^{A-1}} \Big )\\
&=(1+{\rm O}(\Delta_j^{-A/2}))  \PP \Big ((\mathcal N_j-u_j, \mathcal N_j'-u_j')\in [0,\Delta_j^{-1}]^{2} \Big ),
\end{aligned}
$$
since $ \PP((\mathcal N_j-u_j, \mathcal N_j'-u_j')\in [0,\Delta_j^{-1}]^{2})\gg e^{-cu_j^2 -c {u_j'}^2}\gg e^{-2 c\Delta_j^{1/2}}$ by the bound on $u_j$ and $u_j'$.
It remains to use the above bound in \eqref{eqn: eqn2 2points} and then  \eqref{eqn: eqn1 2points}. 
The claim then follows from Equation \eqref{eqn: inclusion UB} for the Gaussian random walks. 
 \end{proof}

 \section{Proof of Proposition \ref{prop:max}}

 We first need preliminary bounds on the size of $\zeta$ and Dirichlet sums. We will use the notation
\begin{equation}\label{eqn:Pn0}
  P_{n_0}(h) =\sum_{\log\log p\leq n_0} \re \Big( p^{-(1/2 + \ii \tau + \ii h)} + \frac 12 \cdot p^{- 2(1/2 + \ii \tau + \ii h)} \Big ).
\end{equation}

 \begin{lemma} \label{le:Easy}
   We have, for $1000 < y < n / 10$, 
   \begin{align}\label{eqn:41}
  & \mathbb{P} \Big (\forall m \geq 1: \max_{|u| \leq 2^{m}} |\zeta(\tfrac 12 + \ii  \tau + \ii u)| \leq 2^{2m} e^{2n_{\mathcal{L}}} \Big ) = 1 - \OO(e^{-n}),\\
\label{eqn:42}
   &\mathbb{P} \Big (\forall m \geq 1: \max_{|u| \leq 2^{m}} |S_{n_{\mathcal{L}}}(u)| \leq 2^{m/100} e^{n_{\mathcal{L}} / 100} \Big ) = 1 - \OO(e^{-n}),\\
\label{eqn:43}
   &\mathbb{P} \Big ( \forall m \geq 1: \max_{|u| \leq 2^m} | P_{n_0}(u)| \leq 2^{m / 100} \cdot 10 y \Big ) = 1 - \OO(e^{-y}). 
    \end{align}
 \end{lemma}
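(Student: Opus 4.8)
The three estimates \eqref{eqn:41}--\eqref{eqn:43} are all of the same flavour: for a function $X_m(\tau)$ growing (at most) polynomially in $2^m$, we must show that the event $\{\forall m\geq 1: X_m(\tau)\leq B_m\}$ has probability $1-\OO(\text{small})$, where $B_m$ grows exponentially in $m$. The strategy is a union bound over $m$ together with a one-scale estimate; because $B_m$ grows exponentially in $m$ while the relevant second (or high) moments only grow polynomially in $2^m$, Chebyshev/Markov on each dyadic scale gives a term that is summable in $m$, and the whole sum is dominated by its first term $m=1$, which is where the $\OO(e^{-n})$ or $\OO(e^{-y})$ comes from.

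For \eqref{eqn:43} I would proceed as follows. Fix $m\geq 1$. The Dirichlet polynomial $P_{n_0}(u)$ has length $\exp(e^{n_0})=\exp(e^{\lfloor y\rfloor})\leq T^{o(1)}$, so by a mean value theorem for Dirichlet polynomials (e.g.\ the moment computation underlying Lemma~\ref{lem: Transition}, or directly Montgomery--Vaughan), for any fixed integer $q\geq 1$,
\begin{equation*}
\mathbb{E}\Big[\max_{|u|\leq 2^m}|P_{n_0}(u)|^{2q}\Big]\ll_q 2^m\cdot \mathbb{E}\big[|P_{n_0}(\tau)|^{2q}\big]\ll_q 2^m\, (C q\, \mathfrak{v}_{n_0})^{q},
\end{equation*}
where $\mathfrak{v}_{n_0}=\sum_{\log\log p\leq n_0}\tfrac1{2p}\asymp n_0\leq y$ is the variance, the extra factor $2^m$ coming from turning the continuous maximum over $|u|\leq 2^m$ into a discrete maximum over $\OO(2^m\log T)$ well-spaced points plus a Sobolev/derivative bound (the derivative of $P_{n_0}$ is again short, so its $L^{2q}$ norm is controlled the same way). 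Then Markov's inequality with $\lambda=10y\cdot 2^{m/100}$ gives
\begin{equation*}
\mathbb{P}\Big(\max_{|u|\leq 2^m}|P_{n_0}(u)|>10y\,2^{m/100}\Big)\ll_q \frac{2^m (Cqy)^q}{(10y)^{2q}2^{mq/50}}=2^{m(1-q/50)}\Big(\frac{Cq}{100\,y}\Big)^{q}.
\end{equation*}
Choosing $q$ a large absolute constant (say $q=200$) makes the exponent of $2^m$ negative, hence the bound is summable over $m\geq 1$ and dominated by $m=1$, which for $y$ large is $\ll (C'/y)^{q}\ll e^{-y}$ provided $q\gg1$; for the stated range $y>1000$ this is $\OO(e^{-y})$ after possibly enlarging $q$. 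This proves \eqref{eqn:43}.

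The estimates \eqref{eqn:41} and \eqref{eqn:42} are the same argument with $e^{n_{\mathcal L}}$ (rather than $y$) as the controlling parameter. For \eqref{eqn:42}, $S_{n_{\mathcal L}}(u)$ is a Dirichlet polynomial of length $\exp(2e^{n_{\mathcal L}})\leq T^{o(1)}$ with variance $\asymp n_{\mathcal L}\leq n$, so the same discretize–moment–Markov scheme with threshold $2^{m/100}e^{n_{\mathcal L}/100}$ and a large fixed power $q$ yields a per-scale bound $\ll 2^{m(1-q/50)}(Cq/e^{n_{\mathcal L}/50})^{q}$, summable in $m$ and $\ll e^{-n}$ at $m=1$ once $q$ is large (using $e^{n_{\mathcal L}}=e^{n-n_0}\asymp e^{n-y}$ and $y<n/10$, so $e^{n_{\mathcal L}/50}$ is a positive power of $e^{n}$). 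For \eqref{eqn:41} one replaces moments of $S_{n_{\mathcal L}}$ by moments of $\log|\zeta|$ (or of $\zeta$ itself); here I would use a standard large-deviation/moment bound for $\zeta$ on the critical line away from exceptional $\tau$ — e.g.\ the bound $\mathbb{E}\big[\max_{|u|\le 2^m}|\zeta(\tfrac12+\ii\tau+\ii u)|^{2k}\big]\ll 2^m (\log T)^{k^2+o(1)}$ valid for $k$ up to a small power of $\log T$, or more simply the Soundararajan/Harper moment estimates — and take $e^y\ge T^{1/k}$–type parameters so that the threshold $2^{2m}e^{2n_{\mathcal L}}$ (which is a fixed positive power of $\log T$ times $2^{2m}$) beats the typical size by the margin needed; again Markov with a large fixed power and summation over $m$ gives $\OO(e^{-n})$. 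The \textbf{main obstacle} is \eqref{eqn:41}: controlling a genuine maximum of $|\zeta|$ (not a Dirichlet polynomial) over an interval of length $2^m$ requires an a priori upper bound for $\zeta$ on the critical line, which one gets either from an approximate functional equation / Dirichlet polynomial approximation valid for typical $\tau$ (so a small exceptional set must be discarded, contributing its own $\OO(e^{-n})$) or from known moment bounds; stitching the dyadic scales together while keeping the total exceptional measure $\OO(e^{-n})$ is the only delicate bookkeeping point, and it works precisely because the allowed bound $2^{2m}e^{2n_{\mathcal L}}$ is so generous (polynomial in $\log T$, exponential in $m$) that crude estimates suffice.
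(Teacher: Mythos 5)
Your overall strategy matches the paper's: union bound over the dyadic scales $m$, control of the $\max$ over $|u|\le 2^m$ by a moment of a Dirichlet polynomial (discretization/subharmonicity giving the extra factor $\ll 2^m$), and Markov at each scale, with the sum dominated by $m=1$. For \eqref{eqn:42} this is essentially what the paper does, and a large \emph{fixed} power (they use $400$) suffices because the threshold $e^{n_{\mathcal L}/100}$ beats the typical size $\sqrt{n_{\mathcal L}}$ by a factor that grows with $n$, so a fixed number of powers already yields $e^{-cn_{\mathcal L}}\ll e^{-n}$.

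There is, however, a genuine gap in your treatment of \eqref{eqn:43}. After Markov you arrive at a per-scale bound of the form $2^{m(1-q/50)}(C'/y)^{q}$ and assert that, ``choosing $q$ a large absolute constant,'' this is $\OO(e^{-y})$. That cannot work: for fixed $q$ the quantity $(C'/y)^{q}=\exp(-q(\log y-\log C'))$ decays only polynomially in $y$, whereas the claim requires genuine exponential decay over the full range $1000<y<n/10$ (in which $y\to\infty$). The Gaussian heuristic shows why: $P_{n_0}$ has variance $\asymp n_0\asymp y$, and the threshold is $\asymp y\asymp\sqrt{\text{Var}}\cdot\sqrt{y}$, so the correct tail is $e^{-cy}$ and can only be captured by a Chernoff-type argument with a $y$-dependent power. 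This is exactly what the paper does: it takes $v=\lfloor 100y\rfloor$ moments and uses the sharp Gaussian-moment bound of Lemma~\ref{lem: Gaussian moments} (the factor $\frac{(2v)!}{2^v v!}(Cy)^v$), turning $(C'/y)^{v}$ into a bound of the form $c^{v}=c^{100y}\ll e^{-90y}$. Replacing your fixed $q$ by $q\asymp y$ (and invoking a moment bound sharp enough to carry the combinatorial factor $\frac{(2q)!}{2^q q!}$) repairs this step.

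A secondary comment on \eqref{eqn:41}: the parameter discussion there (invoking Soundararajan/Harper moment bounds and ``$e^y\ge T^{1/k}$-type parameters'') is misdirected. The threshold $2^{2m}e^{2n_{\mathcal L}}$ is so generous that the classical unconditional second moment $\E[|\zeta(\tfrac12+\ii\tau)|^2]\ll\log T\asymp e^n$, together with the subharmonicity bound for $\E[\max_{|u|\le 2^m}|\zeta(\tfrac12+\ii\tau+\ii u)|^2]\ll 2^m e^{2n}$ (as in \cite[Lemma~28]{ArgBouRad2020}), already gives the per-scale bound $2^{-4m}e^{-4n_{\mathcal L}}\cdot 2^m e^{2n}\ll e^{-n}$ once one uses $n_0\le n/10$. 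No higher moments of $\zeta$ or exceptional-set surgery are needed.
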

 \begin{proof}
   By a union bound, the probability of the complement of the first event is
   $$
   \sum_{m \geq 1} 2^{- 4 m} e^{-4n_{\mathcal{L}}} \mathbb{E} \Big [ \max_{|u| \leq 2^{m}} |\zeta(\tfrac 12 + \ii  \tau + \ii  u)|^2 \Big ] \ll \sum_{m \geq 1} 2^{-4m} e^{-4n_{\mathcal{L}}} \cdot 2^{m} e^{2n} \ll e^{-n},
   $$
   as claimed,  where  the first inequality above relies on the same subharmonicity argument as \cite[Lemma 28]{ArgBouRad2020}. For the second claim, we similarly have that the probability of the complement is bounded by, 
   $$
   \sum_{m \geq 1} 2^{-4m} e^{-4n_{\mathcal{L}}} \cdot \mathbb{E} \Big [ \max_{|u| \leq 2^{m}} |S_{n_{\mathcal{L}}}(u)|^{400} \Big ] \ll \sum_{m \geq 1} 2^{-4m} e^{-4n_{\mathcal{L}}} \cdot 2^{m} e^n n^{200} \ll e^{-2n},
   $$
   where we used \eqref{eqn:mom2} in Lemma \ref{lem: Gaussian moments}.
   Finally, the last bound is proved in exactly the same way, using that, for $v = \lfloor 100 y \rfloor$,
\begin{multline*}
   \sum_{m \geq 1} 2^{-2v m / 100} (10 y)^{-2 v} \cdot \mathbb{E} \Big [ \max_{|u| \leq 2^{m}}  | P_{n_0}(u) |^{2v} \Big ]  \\
\ll \sum_{m \geq 1} 2^{-2v m / 100} (10 y)^{-2v} \cdot 2^{m} e^{n_0} \cdot v^{1/2}\frac{(2v)!}{2^v v!} \cdot (C y)^{v} \ll e^{-90 y},
\end{multline*}
where the moments calculation is now based on \eqref{eqn:mom3} in Lemma \ref{lem: Gaussian moments}.
 \end{proof}
 
 The main analytic input is the next lemma. 

 \begin{lemma} \label{le:Analytic}
   Let $100 \leq T \leq t \leq 2T$ and $|h| \leq 1$. 
   Let $f$ be a smooth function with $\widehat{f}$ compactly supported in $[-\frac{1}{2\pi},\frac{1}{2\pi}]$ and such that $\widehat{f}(0) = 1$. Then,
   \begin{align} \label{eq:shift}
  \log X \int_{\mathbb{R}} \zeta(\tfrac 12 + \ii t + \ii h + \ii x) \prod_{p \leq X} \Big (1 - \frac{1}{p^{1/2 + \ii t + \ii h + \ii x}} \Big ) f (x \log X) \rd x = 1 + \OO(T^{-1}).
   \end{align}
 \end{lemma}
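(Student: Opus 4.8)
The plan is to recognise the left-hand side as a contour integral and to shift the contour across the pole of $\zeta$ at $s=1$. Denote the left-hand side by $I$, set $s_0=\tfrac12+\ii t+\ii h$, and consider $F(w):=\zeta(w)\prod_{p\le X}\bigl(1-p^{-w}\bigr)$. This function is meromorphic on $\mathbb C$ with a single, simple pole at $w=1$, of residue $\prod_{p\le X}(1-1/p)$, and for $\re w>1$ it coincides with the absolutely convergent Dirichlet series $F(w)=\prod_{p>X}(1-p^{-w})^{-1}=\sum_{n\ge1}b_n n^{-w}$, where $b_n=1$ if every prime factor of $n$ exceeds $X$ and $b_n=0$ otherwise; in particular $b_1=1$ while $b_n=0$ for all $2\le n\le X$. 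The substitution $w=s_0+\ii x$ turns the identity to be proved into
$$
I=\frac{\log X}{\ii}\int_{(1/2)}F(w)\,f\!\bigl(-\ii(w-s_0)\log X\bigr)\,\dd w,
$$
where $(1/2)$ denotes the line $\re w=\tfrac12$ traversed upwards.

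Next I would push the contour to the line $\re w=2$. The horizontal segments at height $\pm V$ are negligible as $V\to\infty$: on them one has $\zeta\ll 1+V$ (standard convexity bounds, away from $w=1$) and $\bigl|\prod_{p\le X}(1-p^{-w})\bigr|\le\prod_{p\le X}(1+p^{-1/2})$, whereas $f$ extends to an entire function with $|f(a+\ii b)|\ll_N(1+|a|)^{-N}e^{|b|}$ for every $N$ (a consequence of $\widehat f\in C_c^\infty$, obtained by repeated integration by parts in $\int\widehat f(\xi)e^{2\pi\ii\xi(a+\ii b)}\,\dd\xi$), and along the segments the real part of the argument of $f$ is $\asymp V\log X$, so $f$ decays faster than any power of $V$. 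Crossing the pole at $w=1$ therefore shows that $I$ equals $\tfrac{\log X}{\ii}\int_{(2)}F(w)f(-\ii(w-s_0)\log X)\,\dd w$ minus $2\pi\log X\cdot\prod_{p\le X}(1-\tfrac1p)\cdot f\!\bigl(-(t+h)\log X-\tfrac{\ii}{2}\log X\bigr)$, the residue factor being computed from $\mathrm{Res}_{w=1}F=\prod_{p\le X}(1-1/p)$ and $-\ii(1-s_0)=-(t+h)-\tfrac{\ii}{2}$.

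On $\re w=2$ I would expand $F$ into its Dirichlet series and integrate term by term (justified by absolute convergence). After parametrising $w=2+\ii v$, shifting the resulting contour back to the real axis — which costs a factor $n^{3/2}$ and is legitimate since $|e^{-\ii u\log n}|\le 1$ on the intervening horizontal strip — gives, for each $n\ge1$,
$$
\int_{(2)}n^{-w}\,f\!\bigl(-\ii(w-s_0)\log X\bigr)\,\dd w=\frac{\ii}{\log X}\,n^{-s_0}\,\widehat f\!\Bigl(\frac{\log n}{2\pi\log X}\Bigr).
$$
Since $\widehat f$ is supported in $[-\tfrac1{2\pi},\tfrac1{2\pi}]$ this vanishes unless $n\le X$, and for $n\le X$ one has $b_n=0$ except at $n=1$, whose contribution is exactly $\tfrac{\ii}{\log X}\widehat f(0)=\tfrac{\ii}{\log X}$. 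Hence $\tfrac{\log X}{\ii}\int_{(2)}F\,f\,\dd w=1$ identically, so $I$ equals $1$ minus the residue term. Finally, $\prod_{p\le X}(1-1/p)\ll 1/\log X$ by Mertens (the trivial bound $\le1$ would also do), and $|f(-(t+h)\log X-\tfrac{\ii}{2}\log X)|\ll_N((t+h)\log X)^{-N}X^{1/2}\ll_N(T\log X)^{-N}X^{1/2}$; choosing $N$ large, the residue term is $\ll_N X^{1/2}(T\log X)^{-N}$, which is $\OO(T^{-1})$ (indeed far smaller) once $\log X\ll\log T$, as holds in every application of the lemma. This yields $I=1+\OO(T^{-1})$.

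The main obstacle is the rigorous bookkeeping of the two contour shifts: the (mild) polynomial growth of $\zeta$ in the strip and of $n^{-w}$ on $\re w=2$ must be outweighed by the super-polynomial decay of $f$ along horizontal lines. That decay — and with it the whole argument — rests on $\widehat f\in C_c^\infty$, and one must check it still wins after absorbing the inflation $e^{|b|}\le X^{\OO(1)}$ of $|f|$ off the real axis, which is precisely where the (implicit) restriction on the size of $X$ is used.
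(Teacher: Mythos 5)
Your argument is correct and follows essentially the same route as the paper's: shift the contour from $\re w = \tfrac12$ to $\re w = 2$ (picking up the residue at the pole of $\zeta$, which is negligible since $f$ is evaluated at a point with real part $\asymp T\log X$ and decays faster than any power there by Paley--Wiener), expand $\zeta(w)\prod_{p\le X}(1-p^{-w})$ into the Dirichlet series $\sum b_n n^{-w}$ supported on integers whose prime factors all exceed $X$, integrate term by term and shift each term back to $\re w=\tfrac12$, and finally observe that only $n=1$ survives because $\widehat f$ vanishes for $n>X$ while $b_n=0$ for $1<n\le X$. The paper states the residue contribution as $\OO(T^{-1})$ without elaboration, whereas you quantify it explicitly; both give the same conclusion.
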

 \begin{proof}
For $z\in\mathbb{R}$,  we have $f(z)=\int_{\mathbb{R}} \widehat f(u)e^{\ii 2\pi z u }\rd u$. As $\widehat{f}$ is compactly supported, by Paley-Wiener this defines for $z\in\mathbb{C}$  an entire function of rapid (faster than polynomial) decay as $|\re\ z| \rightarrow \infty$ inside any fixed strip. We can therefore shift the contour of integration in \eqref{eq:shift} and see that it is equal to
   $$
  \log X \int_{2 - i \infty}^{2 + i \infty} \zeta(s + \ii t + \ii  h) \prod_{p \leq X} \Big ( 1 - \frac{1}{p^{s + \ii t + \ii h}} \Big ) f \Big ( \frac{s - \tfrac 12}{\ii } \cdot \log X \Big ) \frac{\rd s}{\ii} + \OO(T^{-1})
   $$
   where $T^{-1}$ is the contribution of the pole at $s = 1 - it - ih$ of $\zeta$. On the line $\re \ s = 2$ we can write pointwise
   $$
   \zeta(s + \ii t + \ii h) \prod_{p \leq X} \Big ( 1 - \frac{1}{p^{s + \ii t + \ii h}} \Big ) = 1 + \sum_{\substack{n > 1 \\ p | n \implies p > X}} \frac{1}{n^{s + \ii t + \ii h}}.
   $$
  After nterchanging the sum and integral,  the task reduces to estimating
   $$
   \frac{\log X}{\ii} \int_{2 - \ii  \infty}^{2 + \ii  \infty} n^{-s - \ii t - \ii  h} f \Big ( \frac{s - \tfrac 12}{\ii } \cdot \log X \Big ) \rd s.
   $$
   Shifting the contour back to the line $\re\ s = \tfrac 12$, this is equal to
   $$
   \log X \int_{\mathbb{R}} \frac{1}{n^{1/2 + \ii t + \ii h + \ii x}} \cdot f(x \log X) \rd x = \widehat{f} \Big ( -\frac{\log n}{2\pi\log X} \Big ) \cdot \frac{1}{n^{1/2 + \ii t + \ii h}}.
   $$
   If $n = 1$, then this is equal to $\widehat{f}(0) = 1$. On the other hand if $n \neq 1$ then $n > X$ and then by assumption $\widehat{f}(-\log n / (2\pi\log X)) = 0$. This gives the claim. 
 \end{proof}

 We are now ready to prove Proposition \ref{prop:max}.
 \begin{proof}[Proof of Proposition \ref{prop:max}]

From Lemma \ref{le:ingham},  there exists a smooth function $f \geq 0$ such that $\widehat{f}(0) = 1$, $\widehat{f}$ is compactly supported in $[-\frac{1}{2\pi},\frac{1}{2\pi}]$ and
   $$
   |f(x)| \ll e^{- 2 \sqrt{|x|}}. 
   $$
Applying Lemma \ref{le:Analytic} with this choice for $f$ and $X=\exp(e^{n_{\mathcal L}})$, we find by the mean-value theorem that for every $\tau$ and $h \in G_0$, there exists a $k \geq 0$ and $\frac{1}{4} \cdot (2^{k} - 1) \leq |u| \leq \tfrac{1}{4} \cdot (2^{k + 1} - 1)$ such that
   \begin{equation} \label{eq:opoi} \log |\zeta(\tfrac 12 + \ii \tau + \ii h + \ii u)| - S_{n_{\mathcal{L}}}(h + u) - P_{n_0}(h + u) - \sqrt{|u| e^{n_{\mathcal{L}}}} \geq -C \end{equation}
  with $C > 0$ an absolute constant and where we remind the definition \eqref{eqn:Pn0}.

  By (\ref{eqn:41}) and (\ref{eqn:42}) in Lemma \ref{le:Easy}, the probability (in $\tau$) that there exists an $|h| \leq 1$ and $k \geq 1$ for which \eqref{eq:opoi} holds is $\ll e^{-n}$. Moreover, by (\ref{eqn:43}) in Lemma \ref{le:Easy}, we also know that $$\max_{\substack{|h| \leq 1 \\ |u| \leq 1/4}} |P_{n_0}(h + u)| \leq 20 y$$ for all $\tau$ outside of a set of probability $\ll e^{-y}$. Therefore, for all $\tau$ outside of a set of probability $\ll e^{-y}$ we find that for all $h \in G_0$ there exists a $|u| \leq 1/4$ such that
 $$ \log |\zeta(\tfrac 12 + \ii\tau + \ii h + \ii u)| - S_{n_{\mathcal{L}}}(h + u) - \sqrt{|u| e^{n_{\mathcal{L}}}} \geq -C - 20 y.$$
  Since $G_0 \subset [-\tfrac 12, \tfrac 12]$, it follows that for all $\tau$ outside of a set of measure $\ll e^{-y}$, for all $h \in G_0$, there exists an $|u| \leq 1/4$ such that
   \begin{align*}
   \max_{|v| \leq 1} \log |\zeta(\tfrac 12 + \ii  \tau + \ii v)| & > S_{n_{\mathcal{L}}}(h + u) + \sqrt{|u| e^{n_{\mathcal{L}}}} - 2C - 20 y \\ & \geq \min_{|u| \leq 1} (S_{n_{\mathcal{L}}}(h + u) + \sqrt{|u| e^{n_{\mathcal{L}}}}) - 2C - 20 y. 
   \end{align*}
   We now take an $h \in G_0$ that maximizes the right-hand side, and the claim follows. 
 \end{proof}
  
   \section{Proof of Proposition \ref{prop:Low}}

   The following lemma will be important.
   \begin{lemma} \label{le:Bound}
     Let $n_0 \leq \ell \leq n_{\mathcal{L}}$.
     Let $v \geq 1$ and $0 \leq k \leq n$ be given.
     Let $\mathcal{Q}$ be a Dirichlet polynomial supported on primes $p$ or their squares $p^2$, such that $e^{\ell} \leq \log p \leq e^{n_{\mathcal{L}}}$ and of length $\leq \exp(\tfrac{1}{200 v} e^n)$:
\begin{equation}\label{eqn:Qdef}
\mathcal{Q}(s)=\sum_{e^{\ell} \leq \log p \leq e^{n_{\mathcal{L}}}}\left(\frac{a(p)}{p^s}+\frac{b(p)}{p^{2s}}\right),
\end{equation}
where we also assume $|b(p)|\leq 1$.
Then 
     \begin{align} \label{eq:to bound 101}
       \mathbb{E} \Big [ & \sup_{\substack{|h| \leq 1 \\ |u| \leq e^{-k + 1}}} |\mathcal{Q}(\tfrac 12 + \ii  \tau + \ii  h + \ii  u)  - \mathcal{Q}(\tfrac 12 + \ii  \tau + \ii  h)|^{2v} \cdot \mathbf{1}_{h \in G_{\ell}} \Big ] \\ \nonumber & \ll e^{n_{\mathcal{L}} - n_0 - \ell + 10 ((\ell-n_0) \wedge (n_{\mathcal{L}} - \ell))^{3/4} + 20 y} \\ & \ \ \ \  \times 100^{v} v! \cdot \Big ( \Big ( e^{-2k + 4} \sum_{e^{\ell} \leq \log p \leq e^k} \frac{|a(p)|^2 \log^2 p}{p} \Big )^{v}  + \Big ( 16 \sum_{e^k \leq \log p} \frac{|a(p)|^2}{p} \Big )^{v} \cdot e^{n_{\mathcal{L}} - k} +1\Big )\label{eqn:bounded}.
       \end{align}
   \end{lemma}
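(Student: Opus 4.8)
The plan is to reduce the left side of \eqref{eq:to bound 101} to one-point estimates of the kind supplied by Proposition~\ref{prop: 1 point} and then to sum the resulting random-walk barrier probabilities over $h\in G_0$ via the ballot theorem of Proposition~\ref{prop:barrier}. First, since $G_\ell\subseteq G_0\subseteq[-\tfrac12,\tfrac12]$ is discrete, the factor $\1_{h\in G_\ell}$ lets me bound the supremum over $|h|\le 1$ by a sum over $h\in G_0$, and for $h\in G_0$ one has $\1_{h\in G_\ell}=\1(S_k(h)\in[L_k,U_k],\ k\le\ell)$. Next I would split $\mathcal Q=\mathcal Q^{(1)}+\mathcal Q^{(2)}+\mathcal Q^{(3)}$ according to the $p$-terms with $\log p\le e^k$, the $p$-terms with $\log p>e^k$, and the $p^2$-terms, and split the difference $D_h(u):=\mathcal Q(\tfrac12+\ii\tau+\ii h+\ii u)-\mathcal Q(\tfrac12+\ii\tau+\ii h)$ accordingly; by $|a+b+c|^{2v}\le 9^v(|a|^{2v}+|b|^{2v}+|c|^{2v})$ it suffices to treat each piece, with $9^v$ absorbed into the final $100^v$.

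The first genuine step is to dispose of the supremum over $u$. For $\mathcal Q^{(1)}$, since $D_h^{(1)}(0)=0$ I would write $D_h^{(1)}(u)=\ii u\int_0^1(\mathcal Q^{(1)})'(\tfrac12+\ii\tau+\ii h+\ii tu)\,\dd t$, so that $\sup_{|u|\le e^{-k+1}}|D_h^{(1)}(u)|^{2v}\le e^{(1-k)2v}\sup_{|t|\le e^{-k+1}}|(\mathcal Q^{(1)})'(\tfrac12+\ii\tau+\ii h+\ii t)|^{2v}$; since $(\mathcal Q^{(1)})'$ has frequencies $\le e^k$ and $t$ ranges over an interval of length $2e^{-k+1}$, a standard Sobolev-type inequality for short Dirichlet polynomials (applied to $((\mathcal Q^{(1)})')^v$, as for \eqref{eqn:42} in Lemma~\ref{le:Easy}) bounds the expectation of the remaining supremum by an absolute multiple of $\E[|(\mathcal Q^{(1)})'(\tfrac12+\ii\tau+\ii h+\ii t_0)|^{2v}]$ for some $|t_0|\le e^{-k+1}$, the $e^4$ rather than $e^2$ in \eqref{eqn:bounded} accommodating the Sobolev constant. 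For $\mathcal Q^{(2)}$, whose frequencies are $\le 2e^{n_{\mathcal L}}$, I would instead cover $|u|\le e^{-k+1}$ by $\ll e^{n_{\mathcal L}-k}$ points spaced $\asymp e^{-n_{\mathcal L}}$ and bound the supremum of $|D_h^{(2)}|^{2v}$ by the sum over these points; this is what produces the factor $e^{n_{\mathcal L}-k}$ on the large-prime term. The $p^2$-part $\mathcal Q^{(3)}$ (with $|b(p)|\le1$) I would bound crudely: splitting it at $\log p=e^k$ as well, the small part contributes $\ll e^{-2kv}\,\OO(1)^v v!\ll\OO(1)^v v!$ by the same argument together with $\sum_p\log^2 p/p^2=\OO(1)$, while the large part contributes $\ll e^{n_{\mathcal L}-k}\bigl(\OO(1)\sum_{\log p>e^k}p^{-2}\bigr)^v v!\ll\OO(1)^v v!$ since $\sum_{\log p>e^k}p^{-2}$ is smaller than any fixed power of $e^{-k}\le e^{-n_0}$; together these give the ``$+1$'' in \eqref{eqn:bounded}.

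It remains to estimate, for each $h\in G_0$ and each of the $\OO(e^{n_{\mathcal L}-k})$ shifts $w$ produced above, a quantity of the form $\E\bigl[|g(\tfrac12+\ii\tau+\ii h+\ii w)|^{2v}\,\1(S_k(h)\in[L_k,U_k],\ k\le\ell)\bigr]$, where $g$ is one of $\mathcal Q^{(1)},(\mathcal Q^{(1)})',\mathcal Q^{(2)}$, each supported on primes $p$ with $\log p\ge e^\ell$. Writing $|g(\tfrac12+\ii\tau+\ii h+\ii w)|^{2v}=|\widetilde g(\tfrac12+\ii\tau+\ii h)|^2$ with $\widetilde g:=\bigl(g(\,\cdot\,+\ii w)\bigr)^v$ a Dirichlet polynomial supported on integers whose prime factors all exceed $\exp(e^\ell)$ and --- crucially --- of length $\le\bigl(\exp(\tfrac1{200v}e^n)\bigr)^v=\exp(\tfrac1{200}e^n)\le\exp(\tfrac1{100}e^n)$ by the length hypothesis on $\mathcal Q$, I would invoke the upper bound \eqref{eqn: UB P} of Proposition~\ref{prop: 1 point}, which bounds this by $(1+n_0^{-10})\,\E[|g(\tfrac12+\ii\tau+\ii h+\ii w)|^{2v}]\cdot\PP(\mathcal G_k(h)\in[L_k-1,U_k+1],\ n_0<k\le\ell)$. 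The second moment is then controlled by the mean value theorem for Dirichlet polynomials (Lemma~\ref{lem: Transition}, legitimate since the length is $\le\exp(\tfrac1{100}e^n)\le T^{1/100}$) together with the Gaussian moment bound of Lemma~\ref{lem: Gaussian moments}, which for a prime-supported polynomial $\sum_p c_p p^{-s}$ gives $\E\bigl[\bigl|\sum_p c_p p^{-(1/2+\ii\tau)}\bigr|^{2v}\bigr]\ll 100^v v!\bigl(\sum_p|c_p|^2/p\bigr)^v$; using $|p^{-\ii w}-1|\le\min(2,|w|\log p)$ with $|w|\le e^{-k+1}$ one gets $\sum_p|c_p|^2/p\ll e^{-2k+2}\sum_{e^\ell\le\log p\le e^k}|a(p)|^2\log^2 p/p$ in the small-prime case and $\sum_p|c_p|^2/p\le 4\sum_{\log p>e^k}|a(p)|^2/p$ in the large-prime case, matching the two variance sums in \eqref{eqn:bounded}.

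Finally, after summing the shifts $w$ (costing $\OO(1)$ for the first piece and $\OO(e^{n_{\mathcal L}-k})$ for the second) and the $\#G_0\asymp e^{n_{\mathcal L}-n_0}$ values of $h\in G_0$, every term carries the common factor $e^{n_{\mathcal L}-n_0}\cdot\PP(\mathcal G_k(h)\in[L_k-1,U_k+1],\ n_0<k\le\ell)$. After removing the drift $\alpha(k-n_0)$ the barriers become $\tfrac y{10}-10\,\mathcal S_{\mathcal L}(x\mapsto\log x)(k)$ on top (essentially constant, decreasing logarithmically) and $-10y-\mathcal S_{\mathcal L}(x\mapsto x^{3/4})(k)$ on the bottom, and the ballot theorem (Proposition~\ref{prop:barrier}, applicable since $\mathfrak s_j^2\in[\kappa,\kappa^{-1}]$ by \eqref{eqn:sk}) bounds this common factor by $\ll e^{\,n_{\mathcal L}-n_0-\ell+10((\ell-n_0)\wedge(n_{\mathcal L}-\ell))^{3/4}+20y}$: the $e^{n_{\mathcal L}-n_0-\ell}$ comes from combining $\#G_0=e^{n_{\mathcal L}-n_0}$ with the Cram\'er cost $e^{-(\ell-n_0)\alpha^2}$ of a step-variance-$\tfrac12$ walk shadowing the slope-$\alpha$ corridor and with the polynomial ballot prefactor (the $n^{\pm3/2}$ factors cancelling, leaving only a power of $y$ which $e^{20y}$ absorbs), while $((\ell-n_0)\wedge(n_{\mathcal L}-\ell))^{3/4}$ is the entropic cost of the lower-barrier dip $\mathcal S_{\mathcal L}(x\mapsto x^{3/4})$. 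Collecting the three pieces yields \eqref{eqn:bounded}. I expect the main obstacle to be the second paragraph: arranging the $u$-supremum so that it costs exactly the factor $e^{n_{\mathcal L}-k}$, and only on the large-prime part, all the while keeping every auxiliary Dirichlet polynomial of length $\le\exp(\tfrac1{100}e^n)$ so that Proposition~\ref{prop: 1 point} can be invoked; the ballot-theorem bookkeeping of the last step is the other delicate point, though it is routine given Proposition~\ref{prop:barrier}.
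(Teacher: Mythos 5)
Your proposal follows essentially the same route as the paper: bound the $h$-supremum by a sum over $G_0$ (giving $e^{n_{\mathcal L}-n_0}$), split the difference into a small-prime ($\log p\le e^k$) piece handled by differentiation, a large-prime piece handled by discretization (giving the extra $e^{n_{\mathcal L}-k}$), and the $p^2$-piece absorbed into the $+1$; then for each fixed shift invoke Proposition~\ref{prop: 1 point} to pull out the barrier probability, bound the unconditioned moment by Lemma~\ref{lem: Transition} together with a Gaussian moment bound, and finish with the Ballot theorem (Proposition~\ref{prop:barrier}). Two points where your write-up is imprecise and the paper is cleaner: (i) for the small-prime piece you bound $\sup_{|u|\le e^{-k+1}}|D_h^{(1)}(u)|^{2v}$ by a sup of $|(\mathcal Q^{(1)})'|^{2v}$ and then claim ``the expectation of the remaining supremum is an absolute multiple of the expectation at some fixed $t_0$''; as stated this is false, since the maximizing $t_0$ depends on $\tau$ and a discretization of $((\mathcal Q^{(1)})')^v$ over a length-$e^{-k+1}$ interval requires $\asymp v$ points, so the Sobolev cost is $\OO(v)$, not $\OO(1)$ --- this is harmless (it is $\le 2^v$, absorbed in $100^v$) but the paper sidesteps the issue entirely by using the fundamental theorem of calculus followed by Jensen's inequality, $\sup_{|u|}|\mathcal Q_{\le k}(\cdot+\ii u)-\mathcal Q_{\le k}(\cdot)|^{2v}\le e^{-(2v-1)(k-1)}\int_0^{e^{-k+1}}|\mathcal Q_{\le k}'(\cdot+\ii x)|^{2v}\,\rd x$, after which the expectation commutes trivially with the integral and Proposition~\ref{prop: 1 point} applies pointwise; (ii) your ballot-theorem bookkeeping is informal: $\#G_0\cdot e^{-(\ell-n_0)\alpha^2}$ gives $e^{n_{\mathcal L}-\ell}$, not $e^{n_{\mathcal L}-n_0-\ell}$, and the missing $e^{-n_0}$ is only recovered because $n_0=\lfloor y\rfloor$ and the stated exponent carries the slack $+20y$; you acknowledge some slack but the way you phrase the source of $e^{n_{\mathcal L}-n_0-\ell}$ is not quite right. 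Finally, you cite Lemma~\ref{lem: Gaussian moments} for the moment bound of a general prime-supported polynomial, but that lemma is specific to the increments $S_k-S_j$; what you want is Lemma~\ref{lem:Sound} (which the paper uses). None of these issues are fatal --- the architecture and all the key lemmas are the right ones, and each imprecision is recoverable --- but as written the small-prime step and the final barrier computation would need to be tightened.
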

   \begin{proof} To simplify the exposition we first assume that $b(p)= 0$ for all $p$.
     Since $G_{\mathcal{\ell}} \subset G_{0} = e^{-(n_{\mathcal{L}} - n_0)} \mathbb{Z} \cap [-1,1]$ we have,
     \begin{align*}
     \sup_{\substack{|h| \leq 1 \\ |u| \leq e^{-k + 1}}} & | \mathcal{Q}(\tfrac 12 + \ii  \tau + \ii  h + \ii  u) - \mathcal{Q}(\tfrac 12 + \ii  \tau + \ii  h) |^{2v} \cdot \mathbf{1}_{h \in G_{\ell}} \\ & \leq \sum_{h \in G_0} \sup_{|u| \leq e^{-k + 1}} | \mathcal{Q}(\tfrac 12 + \ii  \tau + \ii  h + \ii  u) - \mathcal{Q}(\tfrac 12 + \ii  \tau + \ii  h) |^{2v} \cdot \mathbf{1}_{h \in G_{\ell}}.
     \end{align*}
     Taking the expectation we find that \eqref{eq:to bound 101} is
     \begin{align} \label{eq:to bound 102}
     & \leq e^{n_{\mathcal{L}} - n_0} \cdot \mathbb{E} \Big [ \sup_{|u| \leq e^{-k + 1}} |\mathcal{Q}(\tfrac 12 + \ii \tau + \ii u) - \mathcal{Q}(\tfrac 12 + \ii \tau)|^{2v} \cdot \mathbf{1}_{0 \in G_{\ell}} \Big ].
     \end{align}
     We now split the Dirichlet polynomial $\mathcal{Q}(\tfrac 12 + \ii  \tau + \ii u) - \mathcal{Q}(\tfrac 12 + \ii \tau)$ into two parts. One part $\mathcal{Q}_{\leq k}(\tfrac 12 + \ii  \tau + \ii  u) - \mathcal{Q}_{\leq k}(\tfrac 12 + \ii  \tau)$ composed of primes $p$ with $\log p \leq e^k$ and another part supported on primes $p$ with $\log p > e^k$, denoted $\mathcal{Q}_{> k}(\tfrac 12 + i \tau + i u) - \mathcal{Q}_{> k}(\tfrac 12 + i \tau)$. For the first part, for $|u| \leq e^{-k + 1}$, 
     \begin{align*}
     |\mathcal{Q}_{\leq k}(\tfrac 12 + \ii  \tau + \ii  u) - \mathcal{Q}_{\leq k}(\tfrac 12 + \ii  \tau)|^{2v} & \leq \Big (\int_{0}^{e^{-k + 1}} |\mathcal{Q}_{\leq k}'(\tfrac 12 + \ii  \tau + \ii x)| \rd x \Big )^{2v} \\ & \leq e^{- (2v - 1) (k - 1)} \int_{0}^{e^{-k + 1}} |\mathcal{Q}_{\leq k}'(\tfrac 12 + \ii  \tau + \ii x)|^{2v} \rd x.
     \end{align*}
     Then
     $$
     \mathbb{E} \Big [ |\mathcal{Q}_{\leq k}'(\tfrac 12 + \ii \tau + \ii x)|^{2v} \cdot \mathbf{1}_{0  \in G_{\ell}} \Big ] \ll v! \cdot \Big ( \sum_{\log p \leq e^k} \frac{|a(p)|^2 \log^2 p}{p} \Big )^{v} \cdot e^{-\ell + 20y +  10 ((\ell-n_0) \wedge (n_{\mathcal{L}} - \ell))^{3/4})},
     $$
     using  Proposition \ref{prop: 1 point}, Lemma \ref{lem:Sound} and the Ballot theorem from Proposition \ref{prop:barrier}. Therefore
     \begin{align*}
     e^{n_{\mathcal{L}} - n_0} \cdot \mathbb{E} \Big [ \sup_{|u| \leq e^{-k + 1}} |& \mathcal{Q}_{\leq k}(\tfrac 12 + \ii \tau + \ii u)  - \mathcal{Q}_{\leq k}(\tfrac 12 + \ii  \tau)|^{2v} \cdot \mathbf{1}_{0 \in G_{\ell}} \Big ] \\ & \ll e^{n_{\mathcal{L}} - n_0 - \ell + 20y + 10 ((\ell-n_0) \wedge (n_{\mathcal{L}} - \ell))^{3/4}} \cdot v! \cdot \Big ( e^{-2k + 2} \sum_{\log p \leq e^k} \frac{|a(p)|^2 \log^2 p}{p} \Big )^{v}. 
     \end{align*}
    For the second part, we bound the contribution of $\mathcal{Q}_{\geq k}(\tfrac 12 + \ii  \tau + \ii u) - \mathcal{Q}_{\geq k}(\tfrac 12 + \ii \tau)$ simply by the triangle inequality and the discretization Lemma (\ref{lem: discretization}) applied to $D=\mathcal{Q}_{\geq k}^v$,  followed by Proposition \ref{prop: 1 point}.  This gives
     \begin{align*}
     e^{n_{\mathcal{L}} - n_0} \cdot \mathbb{E} \Big [ \sup_{|u| \leq e^{-k + 1}} | & \mathcal{Q}_{\geq k}(\tfrac 12 + \ii \tau + \ii u) - \mathcal{Q}_{\geq k}(\tfrac 12 + \ii  \tau)|^{2v} \cdot \mathbf{1}_{0 \in G_{\ell}} \Big ] \\ & \ll e^{n_{\mathcal{L}} - n_0 - \ell + 20 y + 10 ((\ell-n_0) \wedge (n_{\mathcal{L}} - \ell))^{3/4}} \cdot e^{n_{\mathcal{L}} - k} \cdot 2^{2v}\, v! \cdot \Big ( 4 \sum_{\log p > e^k} \frac{|a(p)|^2}{p} \Big )^{v}. 
     \end{align*}
     Combining everything we obtain the claim when $b(p)=0$.  When $b$ is non-trivial,  the only difference is that we cannot dirrectly  apply Lemma \ref{lem:Sound}  to bound the moments of $\mathcal{Q}$: instead,  we just use $|X+Y|^{2v}\leq 2^{2v}(|X|^{2v}+|Y|^{2v})$
for $X=\sum \frac{a(p)}{p^s}$, $Y=\sum\frac{b(p)}{p^{2s}}$, and apply Lemma \ref{lem:Sound} separately to each term.  The assumption $|b(p)|\leq 1$ allows to absorb the contribution of $|Y|^{2v}$ into the $+1$ in \eqref{eqn:bounded}.
\end{proof}
   
   We are now ready to prove Proposition \ref{prop:Low}.

   \begin{proof}[Proof of Proposition \ref{prop:Low}]
If there exists an $h \in G_{\mathcal{L}}$ and $|u| \leq 1$ such that
     \begin{equation} \label{eq:stuff2}
     |S_{n_{\mathcal{L}}}(h + u) - S_{n_{\mathcal{L}}}(h)| > 20 y + \sqrt{|u| e^{n_{\mathcal{L}}}},
     \end{equation}
     then there exists a $0 \leq k < n_{\mathcal{L}}' := n_{\mathcal{L}} - \lfloor 2 \log y \rfloor$ such that
     $$
     \sup_{\substack{|h| \leq 1 \\ |u| \leq e^{-k + 1}}} |S_{n_{\mathcal{L}}}(h + u) - S_{n_{\mathcal{L}}}(h)| \cdot \mathbf{1}_{h \in G_{\mathcal{L}}} \geq e^{(n_{\mathcal{L}} - k) / 2}.
     $$
     Notice that we can stop at $k = n_{\mathcal{L}}' := n_{\mathcal{L}} - \lfloor 2 \log y \rfloor$ thanks to the term $20 y$. 
     Therefore it suffices to bound
     \begin{align} \label{eq:MAINBOUND}
     \sum_{0 \leq k < n'_{\mathcal{L}}} \mathbb{P} \Big ( \sup_{\substack{|h| \leq 1 \\ e^{-k} \leq |u| \leq e^{-k + 1}}} |S_{n_{\mathcal{L}}}(h + u) - S_{n_{\mathcal{L}}}(h)| \cdot \mathbf{1}_{h \in G_{\mathcal{L}}} \geq e^{(n_{\mathcal{L}} - k) / 2} \Big ).
     \end{align}
     Suppose now that $|u| \leq e^{-k + 1}$ for some $0 \leq k < n'_{\mathcal{L}}$.
     Notice that
     \begin{align*}
       |S_{n_{\mathcal{L}}}(h + u) - S_{n_{\mathcal{L}}}(h)| \mathbf{1}_{h \in G_{\mathcal{L}}} \leq & \sum_{n_0\leq j < k} |(S_{j + 1} - S_j)(h + u) - (S_{j + 1} - S_{j})(h)|\mathbf{1}_{h \in G_{j}} \\ & \ \ \ \  + |(S_{n_{\mathcal{L}}} - S_{k})(h + u) - (S_{n_{\mathcal{L}}} - S_k)(h)| \mathbf{1}_{h \in G_{k}},
     \end{align*}
because $S_{n_0}=0$.
     Therefore, by the union bound, for each $0 \leq k \leq n_{\mathcal{L}}'$, 
     \begin{align} \nonumber
       \mathbb{P} & \Big ( \sup_{\substack{|h| \leq 1 \\ e^{-k} \leq |u| \leq e^{-k + 1}}} |S_{n_{\mathcal{L}}}(h + u) - S_{n_{\mathcal{L}}}(h)| \cdot \mathbf{1}_{h \in G_{\mathcal{L}}} \geq e^{(n_{\mathcal{L}} - k) / 2} \Big ) \\ \nonumber & \leq \sum_{0 \leq j < k} \mathbb{P} \Big ( \sup_{\substack{|h| \leq 1 \\ |u| \leq e^{- k + 1}}} |(S_{j + 1} - S_j)(h + u) - (S_{j + 1} - S_{j})(h)|\mathbf{1}_{h \in G_{j}} \geq \frac{e^{(n_{\mathcal{L}} - k) / 2}}{4 (k - j)^2} \Big ) \\ \label{eq:finalTerm} & \ \ \ \ \ \ \ \ \ + \mathbb{P} \Big (      \sup_{\substack{|h| \leq 1 \\ |u| \leq e^{- k + 1}}} |(S_{n_{\mathcal{L}}} - S_{k})(h + u) - (S_{n_{\mathcal{L}}} - S_k)(h)| \mathbf{1}_{h \in G_{k}} \geq \frac{e^{(n_{\mathcal{L}} - k) / 2}}{4} \Big ).
     \end{align}
     We now estimate each of the above probabilities using Chernoff's bound.
     According to Lemma \ref{le:Bound} for $0 \leq j < k$, for $v\geq 1$, we have
     \begin{align} 
\notag       \mathbb{P} & \Big ( \sup_{\substack{|h| \leq 1 \\ |u| \leq e^{- k + 1}}} |(S_{j + 1} - S_j)(h + u) - (S_{j + 1} - S_{j})(h)|\mathbf{1}_{h \in G_{j}} \geq \frac{e^{(n_{\mathcal{L}} - k) / 2}}{4 (k - j)^2} \Big ) \\ & \ll\notag (4 (k - j))^{4v} \cdot \mathbb{E} \Big [ \sup_{\substack{|h| \leq 1 \\ |u| \leq e^{-k + 1}}} \frac{|(S_{j + 1} - S_j)(h + u) - (S_{j + 1} - S_{j})(h)|^{2v}}{e^{v (n_{\mathcal{L}} - k)}} \cdot \mathbf{1}_{h \in G_{j}} \Big ] \\ & \ll(k - j)^{4v} \cdot e^{n_{\mathcal{L}} - n_0 - j + 20 y + 10 ((j-n_0) \wedge (n_{\mathcal L} - j)^{3/4})} \cdot e^{- v (n_{\mathcal{L}} - k)} \cdot v! \cdot e^{\tilde Cv} \cdot e^{2v(j - k)}.
  \label{Balotprecis}   \end{align}
 The above $e^{2v(j-k)}$ factor is due to the contribution of  $\big(e^{-2k + 4} \sum_{e^{j} \leq \log p \leq e^{j+1}} \frac{|a(p)|^2 \log^2 p}{p} \big )^{v}  $ in Lemma \ref{le:Bound}.
We choose $v=\lfloor e^{n_{\mathcal L}-j-C}\rfloor$ for fixed $C>0$. Then the Dirichlet sum $S_{j+1}^v$ has length $\exp(e^j\cdot  e^{n_{\mathcal L}-j-C})\leq \exp(e^n/200)$ for large enough $C$, so Lemma \ref{le:Bound} can be applied. The above bound becomes, for some absolute positive constant $\tilde C$,
     \begin{align*}
     &\ll e^{n_{\mathcal{L}} - n_0 - j + 20 y + 10 ((j-n_0) \wedge (n_{\mathcal{L}} - j))^{3/4}}\exp \Big (v\log v- (n_{\mathcal L}+k-2j-4\log (k-j)-\tilde C) v \Big )\\
&\ll e^{n_{\mathcal{L}} - n_0 - j + 20 y + 10 ((j-n_0) \wedge (n_{\mathcal{L}} - j))^{3/4}}\exp \Big (-v(k-j-4\log(k-j)+C-\tilde C) \Big )\\
&\ll e^{n_{\mathcal{L}} - n_0 - j + 20 y + 10 ((j-n_0) \wedge (n_{\mathcal{L}} - j))^{3/4}}\exp \Big (-c e^{n_{\mathcal{L}}-j}(k-j)\Big ),
     \end{align*}
for some small constant $c>0$, by choosing $C$ large enough.
     Summing over $n_0 \leq j < k$ we see that the sum is dominated by the contribution of the last term $j = k - 1$. The full sum (over $j$ and $k$) is therefore bounded with
     $$
     \sum_{0\leq k<n'_{\mathcal L}}e^{n_{\mathcal{L}} - n_0 - k + 20 y + 10 ((k-n_0) \wedge (n_{\mathcal{L}} - k))^{3/4}} \exp( - ce^{n_{\mathcal{L}} - k}),$$
which is dominated by $k=n'_{\mathcal{L}}-1$ and gives a global bound $c^{c_1 y-c_2 y^2}$ for some absolute $c_1, c_2>0$.

     The second probability in \eqref{eq:finalTerm} is again by a Chernoff bound, 
     \begin{align}\notag 
       \mathbb{P} & \Big ( \sup_{\substack{|h| \leq 1 \\ |u| \leq e^{- k + 1}}} |(S_{n_{\mathcal{L}}} - S_k)(h + u) - (S_{n_{\mathcal{L}}} - S_{k})(h)|\mathbf{1}_{h \in G_{k}} \geq \frac{e^{(n_{\mathcal{L}} - k) / 2}}{4} \Big ) \\ & \ll 4^{4v} \cdot \mathbb{E} \Big [ \sup_{\substack{|h| \leq 1 \\ |u| \leq e^{-k + 1}}} \frac{|(S_{n_{\mathcal{L}}} - S_k)(h + u) - (S_{n_{\mathcal{L}}} - S_{k})(h)|^{2v}}{e^{v (n_{\mathcal{L}} - k)}} \cdot \mathbf{1}_{h \in G_{k}} \Big ] \notag\\ & \ll e^{n_{\mathcal{L}} - n_0 - k + 10 ((k-n_0) \wedge (n_{\mathcal{L}} - k))^{3/4}}\cdot e^{- v (n_{\mathcal{L}} - k)} \cdot v! \cdot e^{\tilde Cv} \cdot (n_{\mathcal{L}} - k)^{v}\cdot e^{n_{\mathcal L}-k}\label{eqn:bidule},
     \end{align}
for some absolute $\tilde C$.
     Choosing $v = \lfloor e^{n_{\mathcal{L}} - k-C}/(n_{\mathcal L}-k)^4\rfloor$, we see that this is also $\ll e^{v(\tilde C-C)}$. Therefore, for alarge enough absolute constant $C > 0$, the full contribution of 
\eqref{eq:finalTerm} after summation over $k$ is
     $$
     \ll \sum_{0 \leq k < n_{\mathcal{L}'}} e^{n_{\mathcal{L}} - n_0 - k + 10 ((k-n_0) \wedge (n_{\mathcal{L}} - k))^{3/4}} \cdot \exp(-e^{n_{\mathcal{L}} - k-C}/(n_{\mathcal L}-k)^4)\ll e^{\tilde c_1 y-\tilde c_2\frac{y^2}{(\log y)^4}},
     $$
 for some absolute $\tilde c_1,\tilde  c_2>0$,
where we used that the main contribution comes from $k=n'_{\mathcal L}$.  This concludes the proof. 
   \end{proof}

   \section{Proof of Proposition \ref{prop: comp second}}

We first need a lemma which precisely captures the coupling/decoupling of the Gaussian walks $\mathcal G_k(h)$ defined in \eqref{eqn: Gk} as a function of the distance $|h-h'|$. For this,  the following elementary lemma will be key in the decoupling regime $|h-h'|>e^{-j}$.

\begin{lemma} \label{le:gaussian} Let $|\rho|<\mathfrak{s}^2$.
Consider the following Gaussian vectors and their covariance matrices: 
\begin{align*}
&(\mathcal{N}_1,\mathcal{N}_1'),&\mathcal C_1=\left(\begin{matrix} 
\mathfrak{s}^2  & \rho\\
\rho &\mathfrak{s}^2 
\end{matrix}\right),\\
&(\mathcal{N}_2,\mathcal{N}_2'),&\mathcal C_2=\left(\begin{matrix} 
\mathfrak{s}^2+|\rho|  &0\\
0&\mathfrak{s}^2+|\rho| 
\end{matrix}\right).
\end{align*}
Then for any measurable set $A\subset\mathbb{R}^2$ we have
$$
\mathbb{P}((\mathcal{N}_1,\mathcal{N}'_1)\in A)\leq\sqrt{\frac{\mathfrak{s}^2+|\rho| }{\mathfrak{s}^2-|\rho|}}\cdot \mathbb{P}((\mathcal{N}_2,\mathcal{N}'_2)\in A).
$$
\end{lemma}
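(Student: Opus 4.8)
The plan is to prove the inequality at the level of densities. Since $|\rho|<\mathfrak{s}^2$ the matrix $\mathcal C_1$ is positive definite, and $\mathcal C_2$ plainly is too, so both Gaussian vectors are nondegenerate with densities $f_i(x)=(2\pi)^{-1}(\det\mathcal C_i)^{-1/2}\exp(-\tfrac12 x^{\mathsf T}\mathcal C_i^{-1}x)$ on $\mathbb R^2$, with $f_2>0$ everywhere. It therefore suffices to establish the pointwise bound
$$
f_1(x)\ \le\ \sqrt{\tfrac{\mathfrak{s}^2+|\rho|}{\mathfrak{s}^2-|\rho|}}\;f_2(x)\qquad\text{for all }x\in\mathbb R^2,
$$
since integrating over $A$ then gives the claim.

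First I would pin down the constant. One has $\det\mathcal C_1=\mathfrak{s}^4-\rho^2=(\mathfrak{s}^2-|\rho|)(\mathfrak{s}^2+|\rho|)$ and $\det\mathcal C_2=(\mathfrak{s}^2+|\rho|)^2$, hence $\sqrt{\det\mathcal C_2/\det\mathcal C_1}=\sqrt{(\mathfrak{s}^2+|\rho|)/(\mathfrak{s}^2-|\rho|)}$, exactly the stated factor. Since
$$
\frac{f_1(x)}{f_2(x)}=\sqrt{\frac{\det\mathcal C_2}{\det\mathcal C_1}}\;\exp\!\Big(-\tfrac12\,x^{\mathsf T}(\mathcal C_1^{-1}-\mathcal C_2^{-1})x\Big),
$$
it remains only to check that the quadratic form $x^{\mathsf T}(\mathcal C_1^{-1}-\mathcal C_2^{-1})x$ is nonnegative, so that the exponential factor is $\le 1$. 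This I would verify by a short computation: with $\mathcal C_1^{-1}=(\mathfrak{s}^4-\rho^2)^{-1}\!\left(\begin{smallmatrix}\mathfrak{s}^2 & -\rho\\ -\rho & \mathfrak{s}^2\end{smallmatrix}\right)$ and $\mathcal C_2^{-1}=(\mathfrak{s}^2+|\rho|)^{-1}I$, expanding and simplifying yields
$$
x^{\mathsf T}(\mathcal C_1^{-1}-\mathcal C_2^{-1})x=\frac{|\rho|\,\big(x_1-\operatorname{sgn}(\rho)\,x_2\big)^2}{(\mathfrak{s}^2-|\rho|)(\mathfrak{s}^2+|\rho|)}\ \ge\ 0.
$$
Alternatively, one can observe that $\mathcal C_2-\mathcal C_1=\left(\begin{smallmatrix}|\rho| & -\rho\\ -\rho & |\rho|\end{smallmatrix}\right)$ has eigenvalues $0$ and $2|\rho|$, so $\mathcal C_2\succeq\mathcal C_1\succ0$, and antimonotonicity of matrix inversion gives $\mathcal C_1^{-1}\succeq\mathcal C_2^{-1}$, which is the same fact.

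Combining these, $f_1(x)\le\sqrt{(\mathfrak{s}^2+|\rho|)/(\mathfrak{s}^2-|\rho|)}\,f_2(x)$ for every $x$, and integration over $A$ completes the proof. There is essentially no obstacle here; the only points needing a little care are the bookkeeping of the sign of $\rho$ in the off-diagonal term, and noting that the hypothesis $|\rho|<\mathfrak{s}^2$ is precisely what guarantees $\mathcal C_1$ is nondegenerate, so that the density comparison is legitimate.
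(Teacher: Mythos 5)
Your proof is correct and follows essentially the same route as the paper: compare the two Gaussian densities pointwise, identify the prefactor with the ratio of determinants, and show the difference of quadratic forms is a nonnegative multiple of a perfect square. The only cosmetic difference is that the paper first treats $\rho\ge 0$ and then reduces $\rho\le 0$ to it by the reflection $(x,y)\mapsto(x,-y)$, whereas you absorb the sign directly via the $\operatorname{sgn}(\rho)$ factor in $(x_1-\operatorname{sgn}(\rho)x_2)^2$.
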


\begin{proof}
The proof is simply by expanding the density of $(\mathcal N_1, \mathcal N'_1)$, which is
\begin{equation}
\label{eqn: pdf}
\frac{1}{2\pi \sqrt{\mathfrak{s}^4-\rho^2}} \exp\left(-\frac{\mathfrak{s}^2 w^2+\mathfrak{s}^2 z^2-2\rho w z}{2(\mathfrak{s}^4-\rho^2)}\right).
\end{equation}
If $\rho\geq 0$ then  for any $w,z\in\mathbb{R}$ we have $\mathfrak{s}^2 w^2+\mathfrak{s}^2 z^2-2\rho w z\geq (\mathfrak{s}^2-\rho)(w^2+z^2)$ so that
$$
\frac{\mathfrak{s}^2 w^2+\mathfrak{s}^2 z^2-2\rho w z}{2(\mathfrak{s}^4-\rho^2)}\geq \frac{w^2+z^2}{2(\mathfrak{s}^2+\rho)},
$$
and the conclusion follows.
If $\rho\leq 0$ then from the previous case for any $B\subset\mathbb{R}^2$
$$
\mathbb{P}((\mathcal{N}_1,-\mathcal{N}'_1)\in B)\leq\sqrt{\frac{\mathfrak{s}^2-\rho}{\mathfrak{s}^2+\rho}}\cdot \mathbb{P}((\mathcal{N}_2,-\mathcal{N}'_2)\in B),
$$
which concludes the proof by choosing $B=\{(x,-y):(x,y)\in A\}$.
\end{proof}

\begin{proof}[Proof of  Proposition \ref{prop: comp second}]
We have
$\E[(\#\mathfrak{G}^{+}_{\mathcal L})^2]=
\sum_{h,h'}\PP\Big(\mathfrak S(h)\cap \mathfrak S(h')\Big)$ 
where
\begin{equation}
\label{eqn: S}
\mathfrak S(h)=\{\mathcal G_k(h)\in [L_k-1,U_k+1], \forall n_0<k\leq n_\mathcal L\}, \quad h\in [-1,1].
\end{equation}
In what follows, we fix $h$, $h'$ and simply write $(\mathcal G_k,\mathcal G_k')$ for $(\mathcal G_k(h),\mathcal G_k(h'))$.
We divide the above sum over pairs in three ranges of $|h-h'|$; this is necessary to achieve the precision $1+\oo(1)$ required by Proposition \ref{prop: comp second}.

\subsection{Case $|h-h'|>e^{-n_0/2}$}
This is the dominant term. 
We can express the events $\mathfrak S(h)$ in terms of the increments using \ref{eqn: inclusion UB}, and then in terms of independent increments using Lemma \ref{le:gaussian}. 
Under the product over $j$, the multiplicative error from Lemma \ref{le:gaussian} is
\begin{equation}\label{eqn:prefactor}
\prod_{n_0<j\leq n_\mathcal L}\sqrt{\frac{\mathfrak{s}_j^2+|\rho_j| }{\mathfrak{s}_j^2-|\rho_j|}}=\exp\Big(\OO(\sum_{n_0\leq j\leq  n_\mathcal L} \rho_j)\Big)=\exp\Big(\OO(\sum_{n_0\leq j\leq  n_\mathcal L} \frac{1}{e^j|h-h'|}\Big)\leq 1+ \OO(e^{-n_0/2}),
\end{equation}
therefore we obtain
$$
\sum_{|h-h'|>e^{-n_0/2}}\PP\Big(\mathfrak S(h)\cap \mathfrak S(h')\Big)\leq (1+\OO(n_0^{-10}))\cdot  \Big(\PP(\widetilde{\mathcal G}_{j}\in [L_j-1, U_j+2], n_0<j\leq n_\mathcal L)\Big)^2,
$$
where $\widetilde{\mathcal G}_{j}=\sum_{i\leq j}\widetilde{\mathcal N}_j$ and the independent  Gaussian centered $\widetilde{\mathcal N}_j$'s have variance $\mathfrak{s}_j^2+|\rho_j|$. Moreover  the change from the original interval $[L_j-1, U_j+1]$ to $[L_j-1, U_j+2]$ is due to (\ref{eqn: inclusion LB}) when transferring the constraint on increments back to the random walk itself.
From the Ballot theorem in Proposition \ref{prop:barrier} the barrier can be changed into $[L_j+1,U_j-1]$, and the $\widetilde{\mathcal G}_{j}$ can be replaced by ${\mathcal G}_{j}$ at a combined multiplicative cost of $1+\OO(y^{-c})$,  so that in particular
$$
\sum_{|h-h'|>e^{-n_0/2}}\PP\Big(\mathfrak S(h)\cap \mathfrak S(h')\Big)\leq (1+\OO(y^{-c}))(\E[\#\mathfrak{G}_{\mathcal L}^{-}])^2.
$$
All the other cases will be much smaller than $(\E[\#\mathfrak{G}_{\mathcal L}^{-}])^2$. 

\subsection{Case $e^{-n_0}< |h-h'|\leq e^{-n_0/2}$}
The same reasoning as above applies in this case. The multiplicative error term analogue to (\ref{eqn:prefactor}) is now $\OO(1)$,  and the precise estimate of this error is not necessary since there are only $\ll e^{2(n_\mathcal L-n_0)}e^{-n_0/2}$ pairs $(h,h')$ to consider. Therefore, we obtain
$$
\sum_{e^{-n_0}< |h-h'|\leq e^{-n_0/2}}\PP(\mathfrak S(h)\cap \mathfrak S(h'))\ll e^{-\frac{n_0}{2}} (\E[\#\mathfrak{G}_{\mathcal L}^{-}])^2.
$$

\subsection{Case $e^{-(n_{\mathcal{L}}-n_0)} \leq |h-h'| \leq e^{-n_0}$}
We start by writing,
$$
\sum_{\substack{h, h' \in \mathfrak{G}^{+}_{\mathcal{L}} \\ e^{-(n_{\mathcal{L}} - n_0)} < |h -h'| \leq e^{-n_0}}}  \mathbb{P} \Big ( \mathfrak{S}(h) \cap \mathfrak{S}(h') \Big ) =
\sum_{n_0 \leq j^{\star} \leq n_{\mathcal{L}}} \sum_{\substack{h, h' \in \mathfrak{G}^{+}_{\mathcal{L}} \\ j^{\star} = \lfloor \log |h - h'|^{-1} \rfloor}}  \mathbb{P} \Big ( \mathfrak{S}(h) \cap \mathfrak{S}(h') \Big ).
$$
In order to evaluate $\mathbb{P}(\mathfrak{S}(h) \cap \mathfrak{S}(h'))$,  we apply the Gaussian decorrelation Lemma \ref{le:gaussian} for the increments $j\geq j^*$.  
For the increments before $j^*$, it will be useful to consider the 
random variables
\begin{equation}\label{eqn:orth}
\overline{\mathcal G}_{j}=\frac{{\mathcal G}_{j}+\mathcal G'_{j}}{2},\qquad \mathcal G^\perp_{j}=\frac{{\mathcal G}_{j}-\mathcal G'_{j}}{2}, \quad n_0< j\leq n_\mathcal L.
\end{equation}
Note that $(\overline{\mathcal{G}}_j)_{j}$ and $(\mathcal{G}_{j}^{\perp})_{j}$ are independent and  $\mathcal{G}_j = \overline{\mathcal{G}}_j + \mathcal{G}_j^{\perp}$,  $\mathcal{G}_j' = \overline{\mathcal{G}}_j - \mathcal{G}_j^{\perp}.$

As before we can express the events $\mathfrak S(h)$ in terms of the increments using \eqref{eqn: inclusion UB}; here we only use such a decomposition for the  process $\mathcal{G}_{j^{\star},j} := \mathcal{G}_j - \mathcal{G}_{j^{\star}}$,  approximating its increments with independent ones through  Lemma \ref{le:gaussian}, up to a multiplicative error equal to
$$
\prod_{j^*<j\leq n_\mathcal L}\sqrt{\frac{\mathfrak{s}_j^2+|\rho_j| }{\mathfrak{s}_j^2-|\rho_j|}}=\OO(1).
$$
For $h, h'$ such that $\lfloor \log |h - h'|^{-1} \rfloor = j^{\star}$, this gives
\begin{align} \label{eqn: prob 3rd}
\mathbb{P}(\mathfrak{S}(h) \cap \mathfrak{S}(h'))\ll \sum_{L_{j^{\star}} - 1 \leq v-q, v+q \leq U_{j^{\star}}} \text{C}_{j^{\star}}(h, h', v, q) \ \text{D}_{j^{\star}}(h, v - q)\text{D}_{j^{\star}}(h', v + q),
\end{align}
where
\begin{align*}
  \text{C}_{j^{\star}}(h, h', v, q) & := \mathbb{P} \Big ( \mathcal{G}_{j}, \mathcal{G}'_j \in [L_j - 1, U_j + 1] \text{ for all } j < j^{\star},   \overline{\mathcal{G}_{j^{\star}}} \in [v, v + 1], \mathcal{G}_{j^{\star}}^{\perp} \in [q, q + 1] \Big ), \\
  \text{D}_{j^{\star}}(h, v)& := \mathbb{P} \Big ( \widetilde{\mathcal{G}}_{j^{\star}, j}(h) + v \in [L_j - 2, U_j + 2] \text{ for all } j > j^{\star} \Big),
\end{align*}
and $\widetilde{\mathcal{G}}_{j^{\star},j}=\widetilde{\mathcal{G}}_{j}-\widetilde{\mathcal{G}}_{j^*}$.
The proof now reduces to bounding the correlated (C) and decorrelated (D) terms. 

\subsubsection{The Correlated term} 

Note that if $\mathcal{G}_{j}, \mathcal{G}_j' \in [L_j - 1, U_j + 1]$ for all $j < j^{\star}$ then also
$
\overline{\mathcal G}_{j}\in [L_j - 1, U_j + 1]
$
for all $j < j^{\star}$.  Moreover,   $\mathcal{G}_{j^*}^{\perp}$ is independent of $(\overline{\mathcal{G}}_j)_{j\leq j^*}$. We can therefore bound
$$
\text{C}_{j^{\star}}(h,h', v, q) \leq \mathbb{P} \Big ( \overline{\mathcal G}_{j}\in [L_j - 1, U_j + 1] \text{ for all } j < j^{\star},  \overline{\mathcal{G}_{j^{\star}}} \in [v, v + 1]\Big)\cdot \mathbb{P}\big(\mathcal{G}_{j^{\star}}^{\perp} \in [q, q + 1) \big ). 
$$
The Gaussian $\mathcal G^\perp_{j^\star}$ is centered with variance $\ll \sum_{j\leq j^\star}\e_j^2\ll 1$ from \eqref{eqn: cov estimate} and \eqref{eq:epsilonjdef}. We thus have
$$
\PP( \mathcal G^\perp_{j} \in [q, q + 1))\ll e^{-cq^2},\quad \text{for some $c>0$.} 
  $$
Moreover, $(\overline{\mathcal{G}}_j)_{j\leq j^*}$ satisfies the assumptions of Proposition \ref{prop:barrier},  and $\overline{\mathcal{G}}_{j^*}$ has variance $\frac{1}{2}\sum_{j\leq j^*}(\mathfrak{s}_j^2+\rho_j)=\frac{j^*-n_0}{2}+\OO(1)$ from \eqref{eqn:skasymp} and \eqref{eqn: cov estimate}. Thus, uniformly in $|v|\leq 100(j^*-n_0)$, we have
  $$
  \text{C}_{j^{\star}}(h,h', v, q) \ll \frac{U_{n_0} \cdot (U_{j^{\star}} - v + 1)}{(j^{\star} - n_0)^{3/2}} \cdot e^{- \frac{v^2}{j^{\star} - n_0}  - c q^2 }. 
  $$

\subsubsection{The Decorrelated term}
We condition on $\mathcal{G}_{j, j^{\star}} \in [v_2, v_2 + 1]$ which implies that $v_1 + v_2 \in [U_{n_{\mathcal{L}}}, L_{n_{\mathcal{L}}}]$. Then by the Ballot theorem stated in Proposition \ref{prop:barrier}, $\text{D}_{j^{\star}}(h, v_1 - q)$ is
$$\ll \sum_{\substack{L_{n_{\mathcal{L}}} - 2 \leq v_1 + v_2 \leq U_{n_{\mathcal{L}}} + 2}} \frac{(U_{j^{\star}} - v_1 + q + 1) (U_{n_{\mathcal{L}}} - v_1 - v_2 + q + 1)}{(n_{\mathcal{L}} - j^{\star})^{3/2}} \cdot e^{ - \frac{(v_2 - q)^2}{n_{\mathcal{L}} - j^{\star}}},
$$
where we have used from \eqref{eqn:skasymp} and \eqref{eqn: cov estimate} to obtain $\E[(\widetilde{\mathcal{G}}_{j^{\star},n_{\mathcal L}})^2]=\sum_{j^*<j\leq n_{\mathcal L}}(\mathfrak{s}_j^2+|\rho_j|)=n_{\mathcal{L}}-j^*+\OO(1)$,  and $|v_2-q|\leq 100(n_{\mathcal{L}}-j^*)$.
Likewise, $\text{D}_{j^{\star}}(h, v_1 + q)$ is
$$
\ll \sum_{\substack{L_{n_{\mathcal{L}}} - 2 \leq v_1 + v_3 \leq U_{n_{\mathcal{L}}} + 2}} \frac{(U_{j^{\star}} - v_1 - q + 1) (U_{n_{\mathcal{L}}} - v_1 - v_3 - q + 1)}{(n_{\mathcal{L}} - j^{\star})^{3/2}} \cdot e^{- \frac{(v_3 + q)^2}{n_{\mathcal{L}} - j^{\star}}}
$$
\subsubsection{Putting it together}
The above estimates give, after summing over $q \in \mathbb{Z}$,
\begin{multline} \label{eq:MASD}
\mathbb{P} \Big ( \mathfrak{S}(h) \cap \mathfrak{S}(h') \Big )\ll\sum_{\substack{L_{j^{\star}} - 1 \leq v_1 \leq U_{j^{\star}} + 1 \\ L_{n_{\mathcal{L}}} - 2 \leq v_1+v_2, v_1+v_3 \leq U_{n_{\mathcal{L}}} + 2}} e^{- \frac{v_1^2}{j^{\star} - n_0} - \frac{v_2^2}{n_{\mathcal{L}} - j^{\star}} - \frac{v_3^2}{n_{\mathcal{L}} - j^{\star}}} 
\\
\times \frac{U_{n_0} (U_{j^{\star}} - v_1 + 1)^3 (U_{n_{\mathcal{L}}} - v_1 - v_2 + 1) (U_{n_{\mathcal{L}}} - v_1 - v_3 + 1)}{(n_{\mathcal{L}} - j^{\star})^3 \cdot (j^{\star} - n_0)^{3/2}}.
\end{multline}
We change the variables to $\overline{v_1}=v_1-\alpha(j^\star-n_0)$, $\overline{v_2}=v_2-\alpha(n_\mathcal L-j^\star)$ and $\overline{v_3} = v_3 - \alpha(n_{\mathcal{L}} - j^{\star})$ so that $\overline{v_1} + \overline{v_2} \in [L_{n_0}, U_{n_0}]$ and $\overline{v_1} + \overline{v_3} \in [L_{n_0}, U_{n_0}]$,   giving
\begin{equation*}
 \frac{e^{ - \frac{v_1^2}{j^{\star} - n_0} - \frac{v_2^2}{n_{\mathcal{L}} - j^{\star}} - \frac{v_3^2}{n_{\mathcal{L}} - j^{\star}}}}{(n_{\mathcal{L}} - j^{\star})^3 (j^{\star} - n_0)^{3/2}} 
 \ll e^{-2(n_\mathcal L-j^\star)-(j^\star-n_0) + 2 \overline{v_1} - 2 (\overline{v_1} + \overline{v_2}) - 2 (\overline{v_1} + \overline{v_3}) } \cdot \frac{n^{\frac{3}{2}\frac{j^\star}{n}} n^{3(1-\frac{j^\star}{n})}}{(j^\star-n_0)^{3/2}(n_{\mathcal{L}}-j^\star)^3}.
\end{equation*}
The contribution of the integral over $\overline{v_1}+\overline{v_2}\in [L_{n_0},U_{n_0}]$ is
$$
\int_{[L_{n_0},U_{n_0}]} (U_{n_0}- z+1) e^{-2z}\rd z\ll |L_{n_0}| e^{2|L_{n_0}|}.
$$
 The same bound holds for the integral over $\overline{v_1}+\overline{v_3}$. The integral over $\overline{v_1}$ is for $\mathcal B_{j^{\star}}=U_{n_0}-10\log (( j^{\star}-n_0)\wedge (n_\mathcal L-j^{\star}))$
 $$
\ll \int_{-\infty}^{\mathcal B_{j^\star}} ( \mathcal B_{j^\star}-\overline{v_{1}}+1)^3 e^{2\overline{v_{1}}} \rd \overline{v_{1}}\ll e^{2\mathcal B_{j^\star}}.
$$
Combining these estimates for the  $\OO(e^{2(n_\mathcal L-n_0)-j^\star})$ pairs with $\log |h-h'|^{-1}\geq j^{\star}$,  we obtain
\begin{multline}\label{lala}
\sum_{e^{-(n_\mathcal L - n_0)} \leq |h-h'|\leq e^{-n_0}}\PP(\mathfrak S(h)\cap \mathfrak S(h'))\ll 
e^{-n_0}U_{n_0} L_{n_0}^2 e^{4|L_{n_0}|}\sum_{j^\star} e^{2\mathcal B_{j^\star}} 
\frac{n^{\frac{3}{2}\frac{j^\star}{n}} n^{3(1-\frac{j^\star}{n})}}{(j^\star-n_0)^{3/2}(n_{\mathcal{L}}-j^\star)^3}\\
 \ll e^{-n_0}U_{n_0} L_{n_0}^2 e^{4|L_{n_0}|}\cdot  e^{2U_{n_0}}.
\end{multline}
On the other hand,   from Proposition \ref{prop:barrier} we have a simple lower bound for $\mathbb{E}[\# \mathfrak{G}^{-}_{\mathcal L}]$:
\begin{equation} \label{eq:firstestim}
\E[\#\mathfrak{G}^{-}_{\mathcal L}]=e^{n_{\mathcal L}-n_0}\cdot \mathbb{P}(\mathcal{G}_{k}\in [L_k + 1,U_k - 1],  n_0<k\leq n_{\mathcal L}) \gg U_{n_0} |L_{n_0}| e^{2 |L_{n_0}|}.
\end{equation}
We conclude from \eqref{lala} and \eqref{eq:firstestim} that
\begin{equation*}
\sum_{e^{-(n_\mathcal L - n_0)} \leq |h-h'|\leq e^{-n_0}}\PP(\mathfrak S(h)\cap \mathfrak S(h')) \ll U_{n_0}^{-1}e^{2U_{n_0}-n_0}(\E[\#\mathfrak{G}^{-}_{\mathcal L}])^2\ll e^{-y/10}(\E[\#\mathfrak{G}^{-}_{\mathcal L}])^2
\end{equation*}
by the choice of $n_0$ and $U_{n_0}$.

\subsection{Conclusion.} When $|h - h'| \leq e^{-(n_{\mathcal{L}} - n_0)}$,
because of the spacing constraint we necessarily have $h = h'$,  and the contribution from such trivial pairs admits the same upper bound as for $j^*=n_{\mathcal L}$ above. All together, we have obtained
$$
\E[(\#\mathfrak{G}^{+}_{\mathcal L})^2]\leq (1+\OO(y^{-c}))(\E[\#\mathfrak{G}_{\mathcal L}^{-}])^2,
$$
which concludes the proof of Proposition \ref{prop: comp second}.
\end{proof}

  \section{Proof of Theorem \ref{thm: right tail}}
\label{sect: right tail}

The proof of the theorem follows the same structure as the one of Theorem 1. The parameters need to be picked differently. 
We take for the times
$$
n_0=\lfloor y/100\rfloor, \qquad n_\mathcal L=\log\log (T^{1/100})=n-\log 100.
$$
The partial sums on primes are now starting from $p=2$ and not $\exp e^{n_0}$
 \begin{equation}
\label{eqn: Sk thm 3}
S_j(h)  = \sum_{\log\log  p \leq j} \re \Big( p^{-(1/2 + \ii \tau + \ii h)} + \frac 12 p^{- 2(1/2 + \ii \tau + \ii h)} \Big ), \quad j\in\mathbb N.
\end{equation}
The set of good points are
$$
G_0=[-\tfrac 12,\tfrac 12]\cap e^{-n_\mathcal L}\mathbb Z,\qquad G_j=\{h\in G_0: S_j\in [L_j,U_j], n_0\leq j\leq n_{\mathcal L}\},
$$
where the barriers are now for $j\geq n_0$
\begin{equation}
\begin{aligned}
\label{eqn: barriers thm3}
U_j&=y+\alpha j -10\log (j\wedge (n-j)),\\
L_j&=-10+(\alpha +\frac{y}{n_\mathcal L})j -(j\wedge (n-j))^{3/4}.
\end{aligned}
\end{equation}
The slope $\alpha$ is $1-\frac{3}{4}\frac{\log n}{n}$ as before. Both barriers are convex, which is crucial. Note that the final interval for $S_{n_\mathcal L}$ is $[L_{n_\mathcal L}, U_{n_{\mathcal L}}]$ where
$$
U_{n_\mathcal L}=n-\frac{3}{4}\log n +y\qquad L_{n_\mathcal L}=n-\frac{3}{4}\log n +y-10.
$$
The reason for the  slightly larger slope in $L_j$, i.e., $(\alpha +\frac{y}{n_\mathcal L})$ instead of $\alpha$,  is to ensure that the width of the final interval is order one. The factor $y/n_\mathcal L$ will not affect the proof.

It is necessary to take  $U_{n_0}=y+\alpha n_0$, as this is the origin of the factor $y$ in front of the exponential decay in Theorem \ref{thm: right tail}. 
For $y$ of order one, it would be possible to take $n_0=\OO(1)$. However, for larger $y$, the spread $U_j-L_j$ could be quite large for small $j$. This prevents a Gaussian comparison for small primes. 
For this reason,   the barrier starts  at $n_0$, a multiple of $y$. For these times, the spread is proportional to variance and the Gaussian comparison goes through. 

Unlike the left tail, we do need to include the small primes in the partial sums. Dropping the first $\exp e^{n_0}$ primes would give a lower bound $ye^{-2y}e^{-n_0}e^{-y^2/n}$, which is suboptimal for $n_0\asymp y$. 
A more involved analysis of the small primes would probably allow to improve the result range of Theorem \ref{thm: right tail}  to $y=\oo(n)$, matching the branching Brownian motion estimate.

For the proof of  Theorem \ref{thm: right tail}, we first need the analogue of Proposition \ref{prop:zetalocal}.

\begin{prop} \label{prop:analogue}
 We have, for any fixed $C > 10$, uniformly in $1 \leq y = \oo(n)$
  $$
  \mathbb{P} \Big ( \max_{|h| \leq 1} \log |\zeta(\tfrac 12 + \ii  \tau + \ii  h)| > n - \frac{3}{4} \log n + y - 10 C \Big ) \geq \mathbb{P} \Big ( \exists h \in G_{\mathcal{L}} \Big ) - \OO(e^{-50 C} y e^{- 2y} e^{-y^2 / n}).
  $$
\end{prop}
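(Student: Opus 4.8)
The plan is to follow the proof of Proposition~\ref{prop:zetalocal} in Section~\ref{sect: proof thm 1}, splitting the statement into an \emph{analytic step} (the analogue of Proposition~\ref{prop:max}) and a \emph{regularity step} (the analogue of Proposition~\ref{prop:Low}). Two features make the right-tail version different. First, the partial sums \eqref{eqn: Sk thm 3} run over \emph{all} primes, so there is no $P_{n_0}$ term to peel off and the analytic step loses only $\OO(1)$ rather than $\OO(y)$. Second, the target error $\OO(e^{-50C}y e^{-2y}e^{-y^2/n})$ is of the same order as $\PP(\exists h\in G_{\mathcal L})$ itself, so the regularity step must be carried out with precision $\OO(C)$, not $\OO(y)$, and its exceptional probability must be a genuine $e^{-50C}$-multiple of that order.

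For the analytic step I would first show that, off an event of probability $\OO(e^{-n})$,
$$
\max_{|h|\le 1}\log|\zeta(\tfrac12+\ii\tau+\ii h)|\ \ge\ \max_{h\in G_0}\min_{|u|\le 1}\big(S_{n_{\mathcal L}}(h+u)+\sqrt{|u|e^{n_{\mathcal L}}}\,\big)-C_0
$$
for an absolute constant $C_0$. This is the proof of Proposition~\ref{prop:max} almost verbatim: apply Lemma~\ref{le:Analytic} with $X=\exp(e^{n_{\mathcal L}})$ and the nonnegative $f$ from Lemma~\ref{le:ingham} (so $|f(x)|\ll e^{-2\sqrt{|x|}}$ and $\widehat f(0)=1$), and use the mean value theorem to produce, for every $\tau$ and $h\in G_0$, a shift $u$ with $\log|\zeta(\tfrac12+\ii\tau+\ii h+\ii u)|\ge S_{n_{\mathcal L}}(h+u)+\sqrt{|u|e^{n_{\mathcal L}}}-\OO(1)$; since \eqref{eqn: Sk thm 3} sums over all $p\le X$, there is no $P_{n_0}$ and no $20y$ loss here. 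The dyadic moment bounds of the type \eqref{eqn:41}, \eqref{eqn:42} in Lemma~\ref{le:Easy} (not \eqref{eqn:43}, which is no longer needed; and with the number of moments adjusted so the relevant Dirichlet polynomials stay within length $T$, using $e^{n-n_{\mathcal L}}=100$) then confine the shift to $|u|\le\tfrac14$ off an event of probability $\OO(e^{-n})$, which gives the display.

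The regularity step is the crux. The goal is that, on $\{\exists h\in G_{\mathcal L}\}$ and off an event of probability $\OO(e^{-50C}y e^{-2y}e^{-y^2/n})$, one has $S_{n_{\mathcal L}}(h+u)\ge S_{n_{\mathcal L}}(h)-\sqrt{|u|e^{n_{\mathcal L}}}-C_1C$ for all $h\in G_{\mathcal L}$ and $|u|\le1$, with $C_1$ absolute. I would first establish the analogue of Lemma~\ref{le:Bound} for the convex barriers \eqref{eqn: barriers thm3}: the Ballot theorem, Proposition~\ref{prop:barrier}, applied to these barriers gives that $e^{n_{\mathcal L}-n_0}\,\PP(\mathcal G_k\in[L_k,U_k]\text{ for }n_0\le k\le\ell)$ equals, up to a polynomial factor and $e^{\OO(((\ell-n_0)\wedge(n_{\mathcal L}-\ell))^{3/4})}$, a quantity of order $y\,e^{-2y}e^{-y^2/n}$ — the $y$ from $U_{n_0}=y+\alpha n_0$, the $e^{-2y}$ from the boundary behaviour, the $e^{-y^2/n}$ from the slightly steeper slope $\alpha+y/n_{\mathcal L}$ in $L_j$ — matching the order of $\PP(\exists h\in G_{\mathcal L})$. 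Feeding this into the chaining argument from the proof of Proposition~\ref{prop:Low}, over dyadic scales $e^{-k}\le|u|\le e^{-k+1}$, the essential change is that one cannot stop at $n_{\mathcal L}-\lfloor2\log y\rfloor$: the needed precision being $\OO(C)$, the chaining must run up to $k=n_{\mathcal L}-\OO(1)$. For $k\le n_{\mathcal L}-\OO(\log C)$ the threshold $e^{(n_{\mathcal L}-k)/2}+\OO(C)$ is dominated by its first term, a super-exponential deviation, and the Chernoff bound of Lemma~\ref{le:Bound} with the optimal parameter $v\asymp e^{n_{\mathcal L}-k}$ gives decay $\exp(-c\,e^{n_{\mathcal L}-k})$, so this part of the sum is governed by the largest admissible $k$. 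The last $\OO(1)$ scales need a separate, more direct treatment: the admissible Dirichlet polynomial length $\le\exp(\tfrac1{200}e^n)$ (again $e^{n-n_{\mathcal L}}=100$) only permits a bounded Chernoff parameter $v$ there, but the increments over these scales have variance $\OO(1)$, so a deviation of size $\OO(C)$ is an ordinary large deviation contributing $\le e^{-cC^2}$, which one imports via the Gaussian comparison of Proposition~\ref{prop: 1 point} with a bounded number of moments. With the numerical constants in \eqref{eqn: barriers thm3} and the admissible slack fixed suitably, summing over all scales gives total exceptional probability $\OO(e^{-50C}y\,e^{-2y}e^{-y^2/n})$.

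Finally, on $\{\exists h\in G_{\mathcal L}\}$ intersected with the two preceding high-probability events, fix such an $h$ (which lies in $G_0$); the regularity step gives $\min_{|u|\le1}(S_{n_{\mathcal L}}(h+u)+\sqrt{|u|e^{n_{\mathcal L}}}\,)\ge S_{n_{\mathcal L}}(h)-\OO(C)\ge L_{n_{\mathcal L}}-\OO(C)=n-\tfrac34\log n+y-\OO(C)$, and the analytic step then yields $\max_{|h|\le1}\log|\zeta(\tfrac12+\ii\tau+\ii h)|\ge n-\tfrac34\log n+y-\OO(C)>n-\tfrac34\log n+y-10C$ for $C>10$ and the appropriate absolute constants, the slack $10C$ comfortably absorbing the $\OO(1)$ analytic loss and the $\OO(C)$ regularity loss. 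Collecting the two exceptional probabilities, and using $e^{-n}=\oo(y\,e^{-2y}e^{-y^2/n})$ for $1\le y=\oo(n)$, gives the proposition. I expect the only real difficulty to be the regularity step — in particular, pushing the chaining to the end while keeping the exceptional probability a true $e^{-50C}$-multiple of $\PP(\exists h\in G_{\mathcal L})$, which is exactly why the final $\OO(1)$ scales (where only bounded Chernoff exponents are available) must be handled by hand, and why the Ballot-theorem asymptotics for the convex barriers \eqref{eqn: barriers thm3} must reproduce precisely the order $y\,e^{-2y}e^{-y^2/n}$.
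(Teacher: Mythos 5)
Your overall decomposition — an analytic step (analogue of Proposition \ref{prop:max}, no $P_{n_0}$ term, exceptional set $e^{-n}$), a regularity step (analogue of Proposition \ref{prop:Low} built on the barrier--Ballot input), then a deterministic combination — matches the paper's Propositions \ref{prop:NewMax}, \ref{prop:NewLow} and Lemma \ref{lem:NewBound}. The analytic step and the final assembly are essentially the paper's argument.

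Where you depart is in the regularity step, and the departure is unnecessary and, as written, not quite correct. You assert that the chaining ``must run up to $k=n_{\mathcal L}-\OO(1)$'' and that the final $\OO(\log C)$ scales need a \emph{separate} Gaussian treatment giving $e^{-cC^2}$. The paper does not do this, for the same reason one can stop at $n_{\mathcal L}-\lfloor 2\log y\rfloor$ in Proposition \ref{prop:Low}: the additive constant $C$ in the threshold $C+\sqrt{|u|e^{n_{\mathcal L}}}$ lets you stop the dyadic sum at $n'_{\mathcal L}:=n_{\mathcal L}-\lfloor 2\log C\rfloor$. If the bad event occurs with $|u|$ in a shell $k^{\star}\geq n'_{\mathcal L}$, the deviation is $>C$, and that same $u$ is covered by the supremum at the coarsest surviving scale $k=n'_{\mathcal L}-1$, where the threshold $e^{(n_{\mathcal L}-k)/2}$ is $\asymp C$; so the event is already detected by the main sum. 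No separate argument over the last $\OO(\log C)$ scales is required. This is not just cosmetic: your fallback ``$e^{-cC^2}$ via Proposition \ref{prop: 1 point} with a bounded number of moments'' does not hold — a bounded number $v$ of moments via Markov only gives $C^{-2v}$, not $e^{-cC^2}$ — and the admissible Dirichlet polynomial length is much tighter here ($e^{n-n_{\mathcal L}}=100$ rather than the $e^{n_0}\geq e^{1000}$ slack available for the left tail), so there is essentially no room to raise $v$ at the scales you isolate. The paper sidesteps this entirely.

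A smaller point: the paper uses a \emph{fixed} Chernoff exponent $v=100$, giving geometric decay $e^{-100(n_{\mathcal L}-k)}$ across the whole range $k<n'_{\mathcal L}$, rather than your growing $v\asymp e^{n_{\mathcal L}-k}$. Your choice can be made to work with a carefully chosen $e^{-C'}$-factor in $v$ (otherwise the $\exp(\tfrac{1}{200v}e^n)$ length constraint fails near $j\approx k$), but it is not needed once the chaining is cut at $n'_{\mathcal L}$. The rest of your regularity step — using the Ballot asymptotics from the new convex barriers \eqref{eqn: barriers thm3} to reproduce the order $y\,e^{-2y}e^{-y^2/n}$, then reading off the $e^{-50C}$ gain from the stopping point $k\leq n'_{\mathcal L}$ — is exactly what the paper does.
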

Therefore, upon taking $C$ large enough but fixed,  the estimate
\begin{equation}\label{eqn:enough!}
\PP\Big(\#G_{n_\mathcal L}\geq 1 \Big )\gg y e^{-2y} e^{-y^2/n}
\end{equation}
will imply  Theorem \ref{thm: right tail}.
Equation \eqref{eqn:enough!} follows directly from the Paley-Zygmund inequality from the propositions below.
\begin{prop}
\label{prop: 1 thm3}
Uniformly in $10\leq y\leq C\frac{\log\log T}{\log\log\log T}$,
$$
\E[\#G_{n_\mathcal L}]\gg y e^{-2y}e^{-y^2/n}.
$$
\end{prop}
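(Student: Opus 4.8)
The plan is to reduce the first moment of $\#G_{n_{\mathcal L}}$ to a single one-point probability, pass to the associated Gaussian random walk via the comparison results of Section~\ref{sec:GaussAppr}, and finish with the Ballot theorem, Proposition~\ref{prop:barrier}.

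\emph{Step 1: reduction to a one-point probability.} Write $\E[\#G_{n_{\mathcal L}}]=\sum_{h\in G_0}\PP\big(S_j(h)\in[L_j,U_j]\ \text{for all}\ n_0\leq j\leq n_{\mathcal L}\big)$. The law of $(S_j(h))_{n_0\leq j\leq n_{\mathcal L}}$ under $\tau$ depends on $h$ only through the translation $\tau\mapsto\tau+h$, which moves $[T,2T]$ by at most $1$, so each summand equals the $h=0$ value up to a factor $1+\OO(1/T)$; since $G_0=[-\tfrac12,\tfrac12]\cap e^{-n_{\mathcal L}}\mathbb Z$ has $e^{n_{\mathcal L}}+\OO(1)\gg e^{n_{\mathcal L}}$ points, it suffices to prove
$$
\PP\big(S_j(0)\in[L_j,U_j],\ n_0\leq j\leq n_{\mathcal L}\big)\gg y\,e^{-n_{\mathcal L}}\,e^{-2y}\,e^{-y^2/n}.
$$

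\emph{Step 2: Gaussian comparison.} I would then invoke the one-point comparison with Gaussian random walks — the analogue of Proposition~\ref{prop: 1 point} for the partial sums \eqref{eqn: Sk thm 3} and barriers \eqref{eqn: barriers thm3}, proved in exactly the same way — applied with trivial twist $\mathcal Q\equiv1$ and $\ell=n_{\mathcal L}$, to get
$$
\PP\big(S_j(0)\in[L_j,U_j],\ n_0\leq j\leq n_{\mathcal L}\big)\geq(1+\oo(1))\,\PP\big(\mathcal G_j\in[L_j+1,U_j-1],\ n_0\leq j\leq n_{\mathcal L}\big),
$$
where $\mathcal G_j=\sum_{i\leq j}\mathcal N_i$ is the Gaussian walk whose independent increments have variance $\mathfrak s_i^2\in[\kappa,\kappa^{-1}]$. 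The problem is now purely probabilistic, and of the same nature as the lower bound \eqref{eq:firstestim} from the proof of Proposition~\ref{prop: comp second}, the essential difference being that the right endpoint of the corridor is raised by $y$, which is what turns the enhancement $e^{2|L_{n_0}|}$ there into the suppression $e^{-2y-y^2/n}$ here.

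\emph{Step 3: Ballot theorem.} I would apply Proposition~\ref{prop:barrier} on $[n_0,n_{\mathcal L}]$, extending the barriers linearly to $[0,n_0]$ so that the free evolution of $\mathcal G$ before time $n_0$ is harmless: the corridor width $U_{n_0}-L_{n_0}\asymp y$ comfortably exceeds the $\asymp\sqrt{n_0}\asymp\sqrt y$ fluctuation of $\mathcal G_{n_0}$, so $\mathcal G_{n_0}$ lands in $[L_{n_0},U_{n_0}]$ with no exponential loss beyond what is already present in the endpoint density. The points to verify and extract are: \textbf{(i)} both barriers are convex with linear parts of slope $\alpha$ and $\alpha+y/n_{\mathcal L}$ and sublinear corrections $-10\log(\cdot)$ and $-(\cdot)^{3/4}$; \textbf{(ii)} since the exponent $3/4$ exceeds $1/2$, the term $(j\wedge(n-j))^{3/4}$ dominates the conditional standard deviation of the walk everywhere along the corridor, so the lower barrier is soft and the problem is effectively one-sided, which Proposition~\ref{prop:barrier} accommodates; \textbf{(iii)} the gap $U_{n_0}-\alpha n_0=y+\OO(\log y)\asymp y$ between the upper barrier and the natural ballistic height at time $n_0$ is the source of the polynomial prefactor $y$ (only one power of $y$, in contrast with the two-sided situation in Theorem~\ref{thm: left tail}); \textbf{(iv)} the endpoint window $[L_{n_{\mathcal L}},U_{n_{\mathcal L}}]$ has width $\OO(1)$ and sits at height $n-\tfrac34\log n+y+\OO(1)$, so, as $\mathcal G_{n_{\mathcal L}}$ has variance $\tfrac{n_{\mathcal L}}2+\OO(1)$ by \eqref{eqn:skasymp}, its density there is $\asymp\frac1{\sqrt{n_{\mathcal L}}}\exp\!\big(-(n-\tfrac34\log n+y)^2/n_{\mathcal L}\big)\asymp n\,e^{-n_{\mathcal L}}e^{-2y}e^{-y^2/n}$, where the $+\tfrac32\log n$ coming out of the square cancels the prefactor up to a factor $n$. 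Multiplying this density by the one-sided Ballot factor $\asymp y/n$ for staying below the upper barrier and ending in the $\OO(1)$-window, Proposition~\ref{prop:barrier} gives
$$
\PP\big(\mathcal G_j\in[L_j+1,U_j-1],\ n_0\leq j\leq n_{\mathcal L}\big)\gg y\,e^{-n_{\mathcal L}}e^{-2y}e^{-y^2/n},
$$
which, combined with Steps 1 and 2, proves the proposition.

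The main obstacle is Step 3: one must track all the $\OO(1)$ and sub-polynomial factors through Proposition~\ref{prop:barrier} accurately enough to land on $y\,e^{-2y}e^{-y^2/n}$ rather than, say, $y^2$, $\sqrt y$, or a spurious power of $\log n$. This uses the convexity of the barriers in an essential way (as emphasized after \eqref{eqn: barriers thm3}), the exponent $10$ in $-10\log(\cdot)$ keeping the upper barrier tight enough to be compatible with the matching second-moment estimate while $3/4$ keeps the lower barrier soft, and the relation $n_0\asymp y$ ensuring that free motion before time $n_0$ costs nothing. For $y$ of order one, where $n_0=\OO(1)$, one instead takes $n_0$ to be a large enough absolute constant (changing the barriers and the bound only by constants), so that the Gaussian comparison still applies and the estimate becomes elementary.
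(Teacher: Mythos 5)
Your plan — reduce to a one-point probability, pass to a probabilistic model, apply the Ballot theorem — is the paper's, and your Step 3 bookkeeping (endpoint density $\asymp n\,e^{-n_{\mathcal L}}e^{-2y}e^{-y^2/n}$, one-sided ballot factor $\asymp y/n$, $\OO(1)$ endpoint window) agrees with the paper's explicit integral computation. The gap is in Step 2 and its continuation into Step 3: you assert that ``the analogue of Proposition~\ref{prop: 1 point}, proved in exactly the same way'' compares $(S_j)_{n_0\leq j\leq n_{\mathcal L}}$ with a Gaussian walk $\mathcal G_j=\sum_{i\leq j}\mathcal N_i$ started from time $0$. For Theorem~\ref{thm: right tail}, however, the partial sum $S_{n_0}$ from \eqref{eqn: Sk thm 3} is a nontrivial Dirichlet polynomial over \emph{all} primes $p$ with $\log p\leq e^{n_0}$ — in contrast to Theorem~\ref{thm: left tail}, where $S_{n_0}=0$ and Proposition~\ref{prop: 1 point} needs only approximate the increments at indices $>n_0$. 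The distribution of $S_{n_0}$ is not approximately Gaussian: the Berry--Esseen error in Lemma~\ref{lem:compProb} is $\OO(e^{-ce^{k/2}})$ per increment, which is $\OO(1)$ for the small $k$ composing $S_{n_0}$, and Proposition~\ref{prop: 1 point} neither covers these indices nor is stated for a nonzero starting value. Your device of ``extending the barriers linearly to $[0,n_0]$ so that the free evolution of $\mathcal G$ before time $n_0$ is harmless'' presupposes exactly the Gaussian comparison that is missing.

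The paper's actual route splits $S_j$ (for $j\geq n_0$) into the small-prime part, retained as the Steinhaus random model $\mathcal S_{n_0}$ and compared to the arithmetic quantity only up to a multiplicative constant as in \cite[Equations (31), (40)]{ArgBouRad2020} (no passage to Gaussian there), plus a genuinely Gaussian walk $\mathcal G_j$ on $[n_0,n_{\mathcal L}]$ via Proposition~\ref{prop: 1 point}. It then conditions on $\mathcal S_{n_0}=z$, applies the Ballot theorem on $[n_0,n_{\mathcal L}]$, and integrates against the density $f$ of $\mathcal S_{n_0}$, for which the two-sided Gaussian-like bound $f(u)\asymp e^{-u^2/n_0}/\sqrt{n_0}$ on $|u|<100n_0$ from the proof of \cite[Lemma 18]{ArgBouRad2020} is what is actually available. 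The factor $y$ then emerges from
$\int_{\bar L_{n_0}}^{\bar U_{n_0}}(\bar U_{n_0}-\bar z)e^{-\bar z^2/n_0}\,\rd\bar z/\sqrt{n_0}\gg\bar U_{n_0}\asymp y$,
which is precisely your heuristic that the $\asymp\sqrt{n_0}\asymp\sqrt y$ fluctuation of the small-prime part is negligible against the $\asymp y$ corridor — but the correct vehicle for it is a density estimate for the Steinhaus model, not a Gaussian CLT at small indices. Your remark at the end about taking $n_0$ a large absolute constant when $y=\OO(1)$ is the same as the paper's comment, but it does not remove the difficulty when $y$ is large, since $S_{n_0}$ always includes $p=2,3,5,\dots$ and its law never becomes Gaussian at any precision beyond Berry--Esseen's $\OO(n_0^{-1/2})$.
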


\begin{prop}
\label{prop: 2 thm3}
Uniformly in $10\leq y\leq C\frac{\log\log T}{\log\log\log T}$,
$$
\E[(\#G_{n_\mathcal L})^2]\ll y e^{-2y}e^{-y^2/n}.
$$
\end{prop}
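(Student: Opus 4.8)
The plan is to mirror the proof of Proposition~\ref{prop: comp second}, with the parameters and barriers \eqref{eqn: barriers thm3} replacing those of Section~\ref{sect: proof thm 1}, and with the roles of diagonal and off-diagonal pairs interchanged: since here $\#G_{n_\mathcal L}$ is typically $0$, the second moment is dominated by the diagonal $h=h'$, and it suffices to show the off-diagonal pairs contribute no more. Writing $\E[(\#G_{n_\mathcal L})^2]=\sum_{h,h'\in G_0}\PP\big(S_j(h),S_j(h')\in[L_j,U_j],\ n_0\le j\le n_\mathcal L\big)$, I would first replace the arithmetic events by Gaussian ones, at a multiplicative cost $1+\OO(n_0^{-10})$, using the one-point comparison (Proposition~\ref{prop: 1 point}) on the diagonal and the two-point comparison (Proposition~\ref{prop: 2 points}) off it. This reduces matters to bounding $\sum_{h,h'}\PP\big((\mathcal G_j(h),\mathcal G_j(h'))\in[L_j-1,U_j+1]^2,\ n_0<j\le n_\mathcal L\big)$.

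The diagonal term is $e^{n_\mathcal L-n_0}\cdot\PP(\mathcal G_j\in[L_j-1,U_j+1],\ n_0<j\le n_\mathcal L)$. Applying the Ballot theorem (Proposition~\ref{prop:barrier}) to the walk against the convex barriers \eqref{eqn: barriers thm3}, whose final window $[L_{n_\mathcal L},U_{n_\mathcal L}]$ has width $10$ and sits at height $n-\tfrac34\log n+y$, gives $\ll ye^{-2y}e^{-y^2/n}$: the factor $e^{-2y}e^{-y^2/n}$ is the Gaussian cost of the endpoint reaching that height, the $e^{n_\mathcal L}$ cancels the leading exponential in $n$, and the prefactor $y$ is the entropic-repulsion factor coming from the corridor width $U_{n_0}\asymp y$ at the left end. (This is also the matching upper bound for Proposition~\ref{prop: 1 thm3}.) So it remains to bound the off-diagonal sum by $\OO(ye^{-2y}e^{-y^2/n})$.

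For the off-diagonal pairs I split by the branching time $j^\star=\lfloor\log|h-h'|^{-1}\rfloor$. For $n_0\le j^\star<n_\mathcal L$, exactly as in Proposition~\ref{prop: comp second} I introduce $\overline{\mathcal G}_j=\tfrac12(\mathcal G_j+\mathcal G'_j)$, $\mathcal G^\perp_j=\tfrac12(\mathcal G_j-\mathcal G'_j)$, pass to increments via \eqref{eqn: inclusion UB}, and decouple the increments with index $j>j^\star$ through the Gaussian decorrelation Lemma~\ref{le:gaussian}; its prefactor $\prod_{j>j^\star}\sqrt{(\mathfrak s_j^2+|\rho_j|)/(\mathfrak s_j^2-|\rho_j|)}$ is $\OO(1)$ by \eqref{eqn: cov estimate}. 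This bounds the joint probability (with the barriers now those of \eqref{eqn: barriers thm3}) by a sum over the intermediate heights $v$ of the merged walk and the perpendicular coordinate $q$ of a correlated factor — $\overline{\mathcal G}$ staying in the corridor from $n_0$ to $j^\star$, estimated by the Ballot theorem over a length-$(j^\star-n_0)$ walk — times the square of a decorrelated factor — a single branch running from $j^\star$ to $n_\mathcal L$, by the Ballot theorem over a length-$(n_\mathcal L-j^\star)$ walk. Summing over the $\asymp e^{2n_\mathcal L-j^\star}$ pairs with that $j^\star$, I expect a bound rapidly decaying in $n_\mathcal L-j^\star$ times $ye^{-2y}e^{-y^2/n}$, which sums to $\OO(ye^{-2y}e^{-y^2/n})$ and is dominated by $j^\star=n_\mathcal L$; the trivial pairs with $|h-h'|<e^{-n_\mathcal L}$ (forced $h=h'$ by the spacing of $G_0$) are absorbed there. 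Finally, for the very far pairs $|h-h'|>e^{-n_0}$ (so $j^\star<n_0$), the increments are essentially uncorrelated for every $j\ge n_0$, so Lemma~\ref{le:gaussian} decouples the two walks on the entire corridor at cost $\OO(1)$, and a saddle-point evaluation of the resulting two-fold Ballot integral gives a contribution $\ll(\E[\#G_{n_\mathcal L}])^2\ll\E[\#G_{n_\mathcal L}]$, using $ye^{-2y}e^{-y^2/n}\le1$ in the range of $y$.

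The main obstacle is the bookkeeping for $n_0\le j^\star<n_\mathcal L$: one must make the three Ballot-theorem contributions — one merged piece over $[n_0,j^\star]$ and two branches over $[j^\star,n_\mathcal L]$, with their polynomial prefactors and quadratic exponents — combine so that, after summing over $v$, $q$, and then over $j^\star$, one recovers exactly the order $ye^{-2y}e^{-y^2/n}$ with no spurious factor of $y$, $n$, or $\log y$. This is precisely where \emph{convexity} of the barriers \eqref{eqn: barriers thm3} is essential: with a concave upper barrier it would be strictly cheaper to branch at an intermediate $j^\star$, and the sum over $j^\star$ would diverge. One also has to check that replacing the slope $\alpha$ by $\alpha+y/n_\mathcal L$ in $L_j$ changes the de-drifted lower barrier by only $\OO(y^2/n)$ across the whole range, which is harmless within the target exponent.
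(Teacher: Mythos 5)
Your overall shape — reduce to a Gaussian corridor probability, split by branching time $j^\star$, use the $\overline{\mathcal G}/\mathcal G^\perp$ decomposition and Lemma~\ref{le:gaussian} for $j>j^\star$, bound each piece by the Ballot theorem, and observe the diagonal dominates — is the right one and is what the paper does. But there is a structural gap that makes the reduction step in your second paragraph incorrect as written.

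For Theorem~\ref{thm: right tail} the partial sums~\eqref{eqn: Sk thm 3} start at $p=2$, not at $\exp(e^{n_0})$, so $S_{n_0}$ is a nontrivial random variable of size $\asymp\sqrt{n_0}$, while the barriers~\eqref{eqn: barriers thm3} sit at height $\alpha j+y+\OO(j^{3/4})$ already at $j=n_0$. Propositions~\ref{prop: 1 point} and \ref{prop: 2 points} are stated and proved for the walk~\eqref{eqn: Sk}, whose increments are indexed by $j>n_0$, and the key combinatorial constraint~\eqref{eqn: u bound} uses the barrier width $U_j-L_j\ll\Delta_j^{1/4}$ — which fails for $j\le n_0$ where the width is $\asymp j$. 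So the $1+\OO(n_0^{-10})$ comparison simply does not apply to the first $n_0$ increments, and your claim that the problem ``reduces to bounding $\sum_{h,h'}\PP\big((\mathcal G_j(h),\mathcal G_j(h'))\in[L_j-1,U_j+1]^2,\ n_0<j\le n_\mathcal L\big)$'' (with $\mathcal G$ starting from $0$) is wrong: a walk started at $0$ at time $n_0$ is far below $L_{n_0}\asymp n_0$, and that probability is essentially zero. The paper explicitly warns about this (``Unlike the left tail, we do need to include the small primes in the partial sums''), and in its proofs of Propositions~\ref{prop: 1 thm3} and \ref{prop: 2 thm3} it replaces the arithmetic event by $\mathfrak S(h)=\{\mathcal S_{n_0}(h)\in[L_{n_0},U_{n_0}],\ \mathcal S_{n_0}(h)+\mathcal G_j(h)\in[L_j,U_j],\ n_0<j\le n_\mathcal L\}$, keeping the small-prime Steinhaus sum $\mathcal S_{n_0}$ (whose Dirichlet-polynomial comparison is only known up to a multiplicative constant, not $1+\oo(1)$), and then conditions on the joint density $f(z,z')$ of $(\mathcal S_{n_0},\mathcal S_{n_0}')$.

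This omission propagates. Your discretization count should be $\#G_0\asymp e^{n_\mathcal L}$ (here $G_0=[-\tfrac12,\tfrac12]\cap e^{-n_\mathcal L}\mathbb Z$), not $e^{n_\mathcal L-n_0}$; the extra $e^{n_0}$ is precisely what gets paid for by the requirement $\mathcal S_{n_0}\in[L_{n_0},U_{n_0}]$ through the factor $f(z,z')$, so your diagonal estimate is only dimensionally right by a cancellation of two missing ingredients. More seriously, your treatment of the far pairs ($j^\star<n_0$, i.e. $|h-h'|>e^{-n_0}$) via ``decouple the increments and get $\ll(\E[\#G_{n_\mathcal L}])^2$'' is not automatic: Lemma~\ref{le:gaussian} decouples only the increments with index $j>n_0$, while the initial values $(\mathcal S_{n_0},\mathcal S_{n_0}')$ remain correlated for all $|h-h'|\le1$. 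The paper handles this by explicitly integrating against $f(z,z')$ and using the pointwise bound $f(u)\ll e^{-u^2/n_0}/\sqrt{n_0}$ together with Cauchy--Schwarz; you would need to fill in an analogous argument. To repair your proof you must carry $\mathcal S_{n_0}$ through the entire argument — in the Gaussian comparison (using only a multiplicative-constant estimate for the first $n_0$ steps), in the Ballot-theorem applications (which then start from the random height $z$ rather than $0$), and in the final $z,z'$ integral — exactly as the paper does.
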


Unlike the left tail, the dominant term in the second moment will come from the pairs $h,h'$ that are very close, i.e., $|h-h'|\ll e^{-n_\mathcal L}$.

\subsection{Proof of Proposition \ref{prop:analogue}}

First we have the following easy variant of Proposition \ref{prop:max}

 \begin{prop} \label{prop:NewMax}
   We have, for $1000 < y < n^{1/10}$, 
   $$
\mathbb{P} \Big ( \max_{|h| \leq 1} \log |\zeta(\tfrac 12 + \ii  \tau + \ii h)| \geq \max_{h \in G_0} \min_{|u| \leq 1} (S_{n_{\mathcal{L}}}(h + u) + \sqrt{|u| e^{n_{\mathcal{L}}}}) - 2 C \Big ) \geq 1 - \OO(e^{-n}),
$$
with $C > 0$ an absolute constant. 
 \end{prop}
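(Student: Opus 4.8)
The plan is to run the proof of Proposition \ref{prop:max} essentially verbatim. The only structural change is that in the setting of Theorem \ref{thm: right tail} the partial sums \eqref{eqn: Sk thm 3} already include the small primes $p\le\exp(e^{n_0})$, so there is no auxiliary polynomial $P_{n_0}$ to control; this deletes precisely the step of Proposition \ref{prop:max} that invoked \eqref{eqn:43} of Lemma \ref{le:Easy}, the step responsible for the weaker exceptional probability $\OO(e^{-y})$. Everything else uses only \eqref{eqn:41} and \eqref{eqn:42}, whence the improved error $\OO(e^{-n})$.

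Concretely: by Lemma \ref{le:ingham}, fix a smooth $f\ge 0$ with $\widehat f$ supported in $[-\tfrac1{2\pi},\tfrac1{2\pi}]$, $\widehat f(0)=1$, $|f(x)|\ll e^{-2\sqrt{|x|}}$, and put $X=\exp(e^{n_\mathcal L})=T^{1/100}$. Since $\log\log p\le n_\mathcal L\iff p\le X$, we have $\log\bigl|\prod_{p\le X}(1-p^{-(1/2+\ii\tau+\ii h+\ii x)})\bigr|=-S_{n_\mathcal L}(h+x)+\OO(1)$, the $\OO(1)$ being the contribution of the prime powers $p^k$ with $k\ge 3$. Plugging this and $f\ge 0$ into Lemma \ref{le:Analytic} gives, for all $\tau$ and $|h|\le 1$, the bound $1\ll\log X\int_{\R}|\zeta(\tfrac12+\ii\tau+\ii h+\ii x)|e^{-S_{n_\mathcal L}(h+x)}f(x\log X)\,\rd x$. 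As $\log X\,f(x\log X)$ is a probability density and, by the decay of $f$, $\int_{\R}e^{\sqrt{|x|e^{n_\mathcal L}}}\log X\,f(x\log X)\,\rd x=\int_{\R}e^{\sqrt{|y|}}f(y)\,\rd y=\OO(1)$ — the absoluteness of this last bound is exactly what keeps the eventual constant from growing with $n$ — a pigeonhole argument over the dyadic ranges $\tfrac14(2^k-1)\le|u|\le\tfrac14(2^{k+1}-1)$, $k\ge 0$, yields: for all $\tau$ and $h\in G_0$ there is such a $u$ with $\log|\zeta(\tfrac12+\ii\tau+\ii h+\ii u)|-S_{n_\mathcal L}(h+u)-\sqrt{|u|e^{n_\mathcal L}}\ge -C$, $C>0$ absolute.

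It remains to see that, outside a set of $\tau$ of probability $\OO(e^{-n})$, this forces $k=0$. By \eqref{eqn:41} and \eqref{eqn:42} of Lemma \ref{le:Easy}, outside probability $\OO(e^{-n})$ one has $\log|\zeta(\tfrac12+\ii\tau+\ii(h+u))|\ll\log(2+|u|)+n_\mathcal L$ and $|S_{n_\mathcal L}(h+u)|\ll(2+|u|)^{1/100}e^{n_\mathcal L/100}$ for all $h,u$; for $k\ge 1$ the quantity $\sqrt{|u|e^{n_\mathcal L}}\gg 2^{k/2}e^{n_\mathcal L/2}$ dominates both (since $n$, hence $n_\mathcal L$, is large — here $y>1000$ is used), so the displayed inequality fails, forcing $|u|\le 1/4$. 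As $G_0\subset[-\tfrac12,\tfrac12]$ we then get $|h+u|\le 1$, hence $\max_{|v|\le 1}\log|\zeta(\tfrac12+\ii\tau+\ii v)|\ge S_{n_\mathcal L}(h+u)+\sqrt{|u|e^{n_\mathcal L}}-C\ge\min_{|u'|\le 1}(S_{n_\mathcal L}(h+u')+\sqrt{|u'|e^{n_\mathcal L}})-C$ for all $h\in G_0$; maximizing over $h\in G_0$ and noting that the $-2C$ in the statement is weaker than $-C$ gives the claim.

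Since this is a simplification of an argument already in the paper, I do not anticipate a genuine obstacle; the one point needing care is the localization step: one must check that Lemma \ref{le:Easy} still applies with the $S_j$ of \eqref{eqn: Sk thm 3} — it does, since its proof only uses moment bounds for $S_{n_\mathcal L}$ and subharmonicity of $\zeta$, both unaffected by starting the primes at $p=2$ and by the present values of $n_0,n_\mathcal L$ — and that the polynomial growth of $|\zeta|$ and of $S_{n_\mathcal L}$ in $|u|$ is genuinely beaten by $\sqrt{|u|e^{n_\mathcal L}}$ at every dyadic scale $k\ge 1$, which is exactly where the largeness of $n$ enters.
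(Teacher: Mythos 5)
Your proposal is correct and is essentially the same argument the paper gives: the paper's own proof of Proposition \ref{prop:NewMax} is a one-sentence remark that one repeats the proof of Proposition \ref{prop:max}, noting that since the redefined $S_{n_\mathcal L}$ of \eqref{eqn: Sk thm 3} already contains the small primes, the $P_{n_0}$-control step (and its attendant $-20y$ loss and $\OO(e^{-y})$ exceptional set from \eqref{eqn:43}) simply disappears, leaving only \eqref{eqn:41}--\eqref{eqn:42} and the better error $\OO(e^{-n})$. Your supporting checks (the prime-power $\OO(1)$, the absolute bound $\int e^{\sqrt{|y|}}f(y)\,\rd y=\OO(1)$, the dominance of $\sqrt{|u|e^{n_\mathcal L}}$ over the polynomial growth bounds for $k\ge 1$, and the observation that Lemma \ref{le:Easy} transfers unchanged to the new $S_j$) are all sound.
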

 \begin{proof}
   This is the same proof as Proposition \ref{prop:max}, the only difference is that this time we do not need to bound the contribution of the primes $p$ with $\log p \leq e^{n_0}$ and therefore there is no additional term $-20 y$.  Because of this, the exceptional set is also better, i.e., $e^{-n}$ instead of $e^{-y}$. 
 \end{proof}

We highlight the changes needed in Lemma \ref{le:Bound} and Proposition \ref{prop:Low}, with the following two variants.

\begin{lemma} \label{lem:NewBound}  
     Let $1 \leq \ell \leq n_{\mathcal{L}}$.
     Let $v \geq 1$ and $0 \leq k \leq n$ be given.
     Let $\mathcal{Q}$ be a Dirichlet polynomial as defined in \eqref{eqn:Qdef},  such that $e^{\ell} \leq \log p \leq e^{n_{\mathcal{L}}}$ and of length $\leq \exp(\tfrac{1}{200 v} e^n)$. Denote by $a(p)$ the coefficients of $\mathcal{Q}$. Then
     \begin{align} \label{eq:to bound 10101}
       \mathbb{E} \Big [ & \sup_{\substack{|h| \leq 1 \\ |u| \leq e^{-k + 1}}} |\mathcal{Q}(\tfrac 12 + \ii  \tau + \ii  h + \ii  u)  - \mathcal{Q}(\tfrac 12 + \ii  \tau + \ii  h)|^{2v} \cdot \mathbf{1}_{h \in G_{\ell}} \Big ] \\ \nonumber & \ll e^{n_{\mathcal{L}}} \ \mathbb{P}(\mathcal{G}_{\ell}) \cdot 2^{2v} v! \cdot \Big ( \Big ( e^{-2k + 4} \sum_{e^{\ell} \leq \log p \leq e^k} \frac{|a(p)|^2 \log^2 p}{p} \Big )^{v}  + \Big ( 16 \sum_{e^k \leq \log p} \frac{|a(p)|^2}{p} \Big )^{v} \cdot e^{n_{\mathcal{L}} - k} \Big ).
       \end{align}
Moreover, we simply bound $\mathbb{P}(\mathcal{G}_{\ell})\leq 1$ if $\ell<n_0$, and otherwise  
$$
\mathbb{P}(\mathcal{G}_{\ell}) \ll \frac{y}{\ell^{3/2}} \exp \Big ( - \ell + \frac{3}{2} \cdot \frac{\ell \log n_{\mathcal{L}}}{n_{\mathcal{L}}} - \frac{2 y \ell}{n_{\mathcal{L}}} - \frac{y^2 \ell}{n_{\mathcal{L}}^2} + 10 (\ell \wedge (n - \ell))^{3/4} \Big ).
$$
\end{lemma}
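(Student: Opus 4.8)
The plan is to mimic the proof of Lemma~\ref{le:Bound} almost verbatim, adjusting only the bookkeeping to the finer discretization step $e^{-n_{\mathcal L}}$ of $G_0$ and to the convex barriers \eqref{eqn: barriers thm3}, and to isolate the Ballot-theorem input as the stated bound on $\mathbb{P}(\mathcal{G}_\ell)$ rather than plugging in a crude estimate. First I would replace the supremum over $h\in G_\ell$ by $\sum_{h\in G_0}$, which costs $\#G_0\asymp e^{n_{\mathcal L}}$, and then reduce to $h=0$ by translation invariance of the law of $\tau$; this yields the prefactor $e^{n_{\mathcal L}}$. Then split $\mathcal{Q}=\mathcal{Q}_{\le k}+\mathcal{Q}_{>k}$ according to $\log p\le e^k$ or $\log p>e^k$.

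For the low part, bound $|\mathcal{Q}_{\le k}(\tfrac12+\ii\tau+\ii u)-\mathcal{Q}_{\le k}(\tfrac12+\ii\tau)|^{2v}\le e^{-(2v-1)(k-1)}\int_0^{e^{-k+1}}|\mathcal{Q}_{\le k}'(\tfrac12+\ii\tau+\ii x)|^{2v}\rd x$ by the fundamental theorem of calculus and the power-mean inequality, then take the expectation against $\mathbf{1}_{0\in G_\ell}$ and apply Proposition~\ref{prop: 1 point} with the twist $(\mathcal{Q}_{\le k}')^v$ (admissible since its primes obey $\log p\ge e^\ell$ and, by the hypothesis on the length of $\mathcal{Q}$, its length is $\le\exp(\tfrac1{200}e^n)$) together with the Dirichlet-polynomial moment bound of Lemma~\ref{lem:Sound}. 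This produces $v!\,\big(e^{-2k+4}\sum_{e^\ell\le\log p\le e^k}|a(p)|^2\log^2p/p\big)^v$ times the Gaussian corridor probability appearing in \eqref{eqn: UB P}, which is $\mathbb{P}(\mathcal{G}_\ell)$. For the high part, use the triangle inequality, apply the discretization Lemma~\ref{lem: discretization} to $\mathcal{Q}_{>k}^v$, and then Proposition~\ref{prop: 1 point} again with the twist $\mathcal{Q}_{>k}^v$ (primes $>e^k\ge e^\ell$), exactly as in Lemma~\ref{le:Bound}; this contributes $v!\,\big(16\sum_{\log p>e^k}|a(p)|^2/p\big)^v\cdot e^{n_{\mathcal L}-k}$ times $\mathbb{P}(\mathcal{G}_\ell)$. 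Combining the two parts, and handling the square-prime coefficients $b(p)$ as at the end of the proof of Lemma~\ref{le:Bound} (via $|X+Y|^{2v}\le 2^{2v}(|X|^{2v}+|Y|^{2v})$, $|b(p)|\le1$ and $\sum_pp^{-2}\ll1$, so that their contribution is absorbed into $e^{n_{\mathcal L}}\mathbb{P}(\mathcal{G}_\ell)\cdot 2^{2v}v!$), gives \eqref{eq:to bound 10101}.

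For the quantitative claim, $\mathbb{P}(\mathcal{G}_\ell)\le1$ is immediate and covers $\ell<n_0$, where there is no corridor constraint. For $\ell\ge n_0$, $\mathbb{P}(\mathcal{G}_\ell)$ is the probability that the centered Gaussian walk $\mathcal{G}_j$, with $\mathbb{E}[\mathcal{G}_\ell^2]=\sum_{j\le\ell}\mathfrak{s}_j^2=\tfrac\ell2+\OO(1)$ by \eqref{eqn:skasymp}, stays between $L_j$ and $U_j$ (up to the $\mp1$ shifts) for $n_0\le j\le\ell$; I would feed this into the Ballot theorem, Proposition~\ref{prop:barrier}, applicable since both barriers of \eqref{eqn: barriers thm3} are convex. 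As $\mathcal{G}_j$ is driftless while both barriers grow with slope close to $1$, the walk must be pushed to the bottom of the corridor, so the dominant factor is the Gaussian density of $\mathcal{G}_\ell$ near $L_\ell$, namely $\asymp\ell^{-1/2}\exp(-L_\ell^2/\ell)$; writing $L_\ell=(\alpha+\tfrac{y}{n_{\mathcal L}})\ell-10-(\ell\wedge(n-\ell))^{3/4}$ and expanding with $\alpha=1-\tfrac34\tfrac{\log n}{n}$ and $n=n_{\mathcal L}+\OO(1)$ gives $L_\ell^2/\ell=\ell-\tfrac32\tfrac{\ell\log n_{\mathcal L}}{n_{\mathcal L}}+\tfrac{2y\ell}{n_{\mathcal L}}+\tfrac{y^2\ell}{n_{\mathcal L}^2}+\OO\big((\ell\wedge(n-\ell))^{3/4}+1\big)$; the remaining Ballot factors — the entrance window at $n_0$ of width $\asymp y$, the bridge term $\asymp(\ell-n_0)^{-3/2}$, and the cost of clearing the convex corrections $-10\log(j\wedge(n-j))$ and $-(j\wedge(n-j))^{3/4}$ — are bounded crudely by $\tfrac{y}{\ell^{3/2}}\exp\big(10(\ell\wedge(n-\ell))^{3/4}\big)$, giving the displayed estimate.

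The main obstacle, as in Lemma~\ref{le:Bound}, is the high-prime part: since $\mathcal{Q}_{>k}$ cannot be differentiated (its derivative is too large at scale $e^{-k}$), the supremum over the shift $u$ must be removed by the discretization lemma, and one must check that the $\OO(e^{n_{\mathcal L}-k})$ cost recombines into the claimed term and that the choice of $v$ keeps $\mathcal{Q}_{>k}^v$ of admissible length. A secondary point is to verify that every lower-order term in the expansion of $L_\ell^2/\ell$, the ratio $n/n_{\mathcal L}$, and the $\mp1$ widening of the corridor remain within the stated exponent, which they do with room to spare.
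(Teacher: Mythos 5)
Your proposal is correct and follows the paper's approach. The paper's own proof of Lemma~\ref{lem:NewBound} is essentially a one-line reference back to Lemma~\ref{le:Bound} ("the proof is the same, with a different barrier bound for $\mathbb{P}(\mathcal{G}_\ell)$"), so your fuller elaboration is consistent and useful. Two small remarks. First, the paper explicitly flags that for $1\le\ell<n_0$ there is no corridor constraint at all (since the constraints defining $G_\ell$ start at $j=n_0$), and hence Proposition~\ref{prop: 1 point} is \emph{not} invoked in that range; the moment is bounded directly by Lemma~\ref{lem:Sound} and the trivial bound $\mathbb{P}(\mathcal{G}_\ell)\le 1$. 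You implicitly recover this via $\mathbb{P}(\mathcal{G}_\ell)\le 1$, but you phrase the twisted moment estimate as always going through Proposition~\ref{prop: 1 point}, which is stated only for $\ell\ge n_0$; it is worth stating the two cases separately. Second, the numerology in your expansion of $L_\ell^2/\ell$ is right: $(\alpha+y/n_{\mathcal L})^2\ell$ yields the leading $\ell-\tfrac32\tfrac{\ell\log n_{\mathcal L}}{n_{\mathcal L}}+\tfrac{2y\ell}{n_{\mathcal L}}+\tfrac{y^2\ell}{n_{\mathcal L}^2}$, the cross terms $(\log n_{\mathcal L})^2\ell/n_{\mathcal L}^2$ and $y\ell\log n_{\mathcal L}/n_{\mathcal L}^2$ are $\OO(1)$ in the stated range of $y$, and the subtracted $10+(\ell\wedge(n-\ell))^{3/4}$ contributes the $+\OO((\ell\wedge(n-\ell))^{3/4})$ absorbed into the factor $10$ in the exponent; the entrance window of width $\asymp y$ at $n_0$ together with the bridge decay $\asymp\ell^{-3/2}$ supplies the polynomial prefactor, exactly as you indicate.
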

\begin{proof}
  The proof is the same as for  Lemma \ref{le:Bound}, the only differences being the  different bound for $\mathbb{P}(\mathcal{G}_{\ell})$ (when $\ell\geq n_0$) that arises from a different barrier.
Note that for $1\leq \ell\leq n_0$ there is no barrier so the proof does  not rely on Proposition \ref{prop: 1 point}, which requires $\ell\geq n_0$.
\end{proof}

 \begin{prop} \label{prop:NewLow}
   We have, for any $C > 10$, and for $y = o(n)$, 
   $$
   \mathbb{P} \Big (\forall h \in G_{\mathcal{L}} \ \forall |u| \leq 1 : |S_{n_{\mathcal{L}}}(h + u) - S_{n_{\mathcal{L}}}(h)| \leq C + \sqrt{|u| e^{n_{\mathcal{L}}}} \Big ) = 1 + \OO \Big ( e^{-50 C} y e^{-2 y} e^{-y^2 / n} \Big ). 
   $$
 \end{prop}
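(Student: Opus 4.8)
The plan is to run the proof of Proposition~\ref{prop:Low} essentially verbatim, with three changes of bookkeeping: the set $G_0$ now has spacing $e^{-n_{\mathcal L}}$, so summing over $h\in G_0$ produces a factor $e^{n_{\mathcal L}}$ rather than $e^{n_{\mathcal L}-n_0}$; the partial sums no longer vanish at $n_0$, so the telescoping runs over all levels $0\le j<k$ (with $\mathbf{1}_{h\in G_j}=1$ and $\mathbb P(\mathcal G_j)\le1$ for $j<n_0$, where there is no barrier); and Lemma~\ref{lem:NewBound} is used in place of Lemma~\ref{le:Bound}, so each bound now carries the Ballot-theorem factor $\mathbb P(\mathcal G_\ell)$ from Proposition~\ref{prop:barrier}, with the new convex barriers of \eqref{eqn: barriers thm3}.

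First I would reduce to a dyadic estimate exactly as before. If the event fails there are $h\in G_{\mathcal L}$ and $|u|\le1$ with $|S_{n_{\mathcal L}}(h+u)-S_{n_{\mathcal L}}(h)|>C+\sqrt{|u|e^{n_{\mathcal L}}}$; choosing $k$ with $e^{-k}\le|u|\le e^{-k+1}$, this forces, for some $0\le k<n_{\mathcal L}':=n_{\mathcal L}-\lfloor\beta C\rfloor$ with $\beta$ a suitable absolute constant,
$$
\sup_{\substack{|h|\le1\\ e^{-k}\le|u|\le e^{-k+1}}}|S_{n_{\mathcal L}}(h+u)-S_{n_{\mathcal L}}(h)|\cdot\mathbf{1}_{h\in G_{\mathcal L}}\ \ge\ e^{(n_{\mathcal L}-k)/2},
$$
and the residual range $|u|<e^{-n_{\mathcal L}'}$ is absorbed by running the same estimate at the single scale $k=n_{\mathcal L}'$ against the fixed threshold $C$ (allowed by the slack in the statement). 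It then suffices to bound $\sum_{0\le k<n_{\mathcal L}'}\mathbb P(\cdots)$. For each such $k$ one telescopes $S_{n_{\mathcal L}}(h+u)-S_{n_{\mathcal L}}(h)$ into the increments $(S_{j+1}-S_j)(h+u)-(S_{j+1}-S_j)(h)$ over $0\le j<k$ together with the remainder $(S_{n_{\mathcal L}}-S_k)(h+u)-(S_{n_{\mathcal L}}-S_k)(h)$, applies a union bound, and then Chernoff's inequality to each piece as in \eqref{eq:finalTerm}--\eqref{eqn:bidule}. For the increment at level $j$ one uses $\mathbf{1}_{h\in G_{\mathcal L}}\le\mathbf{1}_{h\in G_j}$, the derivative bound on the low-frequency ($\log p\le e^k$) part, and Lemma~\ref{lem:NewBound} with $v\approx e^{n_{\mathcal L}-j-C'}$, getting $\ll e^{n_{\mathcal L}}\,\mathbb P(\mathcal G_j)\exp(-c\,e^{n_{\mathcal L}-j}(k-j))$ after optimising; the sum over $j<k$ is then dominated by $j=k-1$. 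For the remainder at level $k$ one uses the triangle inequality, the discretisation Lemma~\ref{lem: discretization} and Proposition~\ref{prop: 1 point}, as in the second half of the proof of Lemma~\ref{le:Bound}; since here the topmost generations of primes make $S_{n_{\mathcal L}}-S_k$ a Dirichlet polynomial of length close to $T^{1/100}$, the admissible moment order is only bounded in terms of $C$, so one first splits off the generations $\log p>e^{n_{\mathcal L}-\OO(\log C)}$ and estimates their contribution directly before applying Proposition~\ref{prop: 1 point} to the rest; the outcome is still $\ll e^{n_{\mathcal L}}\,\mathbb P(\mathcal G_k)\cdot\exp(-c\,(n_{\mathcal L}-k))$ up to factors polynomial in $n_{\mathcal L}-k$.

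Finally one does the accounting over $k$ (and over $j<n_0$, where there is no barrier help but $n_{\mathcal L}-j$ is of order $n$, so the factor $\exp(-c\,e^{n_{\mathcal L}-j}(k-j))$ is doubly-exponentially small). The decisive point is that $e^{n_{\mathcal L}}\mathbb P(\mathcal G_\ell)$ is large when $\ell$ is far below $n_{\mathcal L}$ but is killed there by the Chernoff factor, while at $\ell\approx n_{\mathcal L}$ the estimate behind Lemma~\ref{lem:NewBound} gives $e^{n_{\mathcal L}}\mathbb P(\mathcal G_{n_{\mathcal L}})\asymp y\,e^{-2y}e^{-y^2/n}$, exactly the target order; hence the double sum is dominated by $k=n_{\mathcal L}'-1$, equals $\asymp y\,e^{-2y}e^{-y^2/n}$ times the (polynomial-in-$C$, exponentially-in-$n_{\mathcal L}-k$ decaying) Chernoff factor there, and the latter is $\le e^{-50C}$ once $\beta$ is taken large enough. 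This gives the claimed error $\OO(e^{-50C}y\,e^{-2y}e^{-y^2/n})$.

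The main obstacle is precisely this last balancing act, and in particular the handling of scales $k$ close to $n_{\mathcal L}$: there the relevant Dirichlet polynomials are no longer short, so the clean choice $v\asymp e^{n_{\mathcal L}-k}$ available in Proposition~\ref{prop:Low} is not admissible, and one must stop the dyadic decomposition a bounded amount ($\OO(C)$) below $n_{\mathcal L}$ and treat the topmost generations of primes by hand, exploiting that even there the threshold $e^{(n_{\mathcal L}-k)/2}$ exceeds the $\OO(\sqrt{n_{\mathcal L}-k})$ typical fluctuation by an exponential factor. Throughout, one must track $\mathbb P(\mathcal G_\ell)$ --- which now carries the extra $e^{-y^2/n}$ from the convex barrier --- in every term and check that the whole sum is maximised at $\ell\approx n_{\mathcal L}$.
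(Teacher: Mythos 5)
Your overall structure matches the paper's: reduce to a dyadic estimate over scales $k$, telescope $S_{n_{\mathcal L}}(h+u)-S_{n_{\mathcal L}}(h)$ over increments and a remainder, apply Chernoff and Lemma~\ref{lem:NewBound} (with the new Ballot factor $\mathbb P(\mathcal G_\ell)$ carrying the $e^{-2y}e^{-y^2/n}$ weight), and observe that the sum is dominated by $k$ near $n_{\mathcal L}'$. However, there are two concrete deviations that create problems.

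First, your cutoff $n_{\mathcal L}':=n_{\mathcal L}-\lfloor\beta C\rfloor$ with $\beta$ an absolute constant does not make the reduction step valid. When $|u|$ is smaller than $e^{-n_{\mathcal L}'}$, the hypothesis only yields $|S_{n_{\mathcal L}}(h+u)-S_{n_{\mathcal L}}(h)|>C$, and to fold this into the single dyadic event at scale $k=n_{\mathcal L}'$ one needs $C\geq e^{(n_{\mathcal L}-n_{\mathcal L}')/2}=e^{\lfloor\beta C\rfloor/2}$, which fails badly for any fixed $\beta>0$ and $C$ large. The paper sets $n_{\mathcal L}'=n_{\mathcal L}-\lfloor 2\log C\rfloor$, precisely so that $e^{(n_{\mathcal L}-n_{\mathcal L}')/2}\le C$ and the reduction closes with no residual range to treat separately. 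Your alternative of ``running the estimate against the fixed threshold $C$'' at the last scale is a different argument that you would have to carry out; it is not a cosmetic variant, and the $e^{\beta C}$ loss from the $e^{n_{\mathcal L}-k}$ factor in Lemma~\ref{lem:NewBound} at $k=n_{\mathcal L}'$ then competes directly with the gain you are trying to obtain, so the choice of $\beta$ is delicate and cannot just be ``large enough.''

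Second, you import the choice $v\approx e^{n_{\mathcal L}-j-C'}$ from the proof of Proposition~\ref{prop:Low}, and then you must wrestle with the resulting length constraint (which you correctly flag as a difficulty for the top scales, and which necessitates your proposed splitting of the generations $\log p>e^{n_{\mathcal L}-\OO(\log C)}$). But this is overkill and is exactly what the paper avoids: because the target error here is only $e^{-50C}$ for a fixed $C$, rather than a quantity decaying in $y$, the paper simply takes $v=100$ fixed throughout. With $k\le n_{\mathcal L}-\lfloor 2\log C\rfloor$ the Chernoff factor $e^{-(v-2)(n_{\mathcal L}-k)}$ with $v=100$ already beats the $C^{v}(n_{\mathcal L}-k)^v v!$ loss and sums to the required $e^{-50C}$. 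Opting for a constant $v$ is the key simplification of this proof relative to Proposition~\ref{prop:Low}, and it is what keeps the admissible moment order within the constraints of Lemma~\ref{lem:NewBound} for the increment pieces without any of the surgery you describe. Your instinct that the length budget is tight for the topmost generations (since $n_{\mathcal L}$ is now $n-\log 100$, so $S_{n_{\mathcal L}}-S_k$ has length close to $T^{1/50}$) is worth retaining as a point to double-check, but the fix is to use small constant $v$, not large variable $v$ plus a secondary truncation.

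In short: same skeleton, but your cutoff $\lfloor\beta C\rfloor$ breaks the reduction as written, and the variable-$v$ strategy is both unnecessary and self-inflicted given the fixed-$C$ target. Replace $\lfloor\beta C\rfloor$ by $\lfloor 2\log C\rfloor$ and take $v$ to be a fixed constant such as $100$; then the proof proceeds cleanly along the lines you sketched.
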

 \begin{proof}
   The proof is very similar to Proposition \ref{prop:Low} but we still find it worthwhile to include the details. 
If there exists an $h \in G_{\mathcal{L}}$ and $|u| \leq 1$ such that
     \begin{equation} \label{eq:stuff2}
     |S_{n_{\mathcal{L}}}(h + u) - S_{n_{\mathcal{L}}}(h)| > C + \sqrt{|u| e^{n_{\mathcal{L}}}}
     \end{equation}
     then there exists a $0 \leq k < n_{\mathcal{L}}' := n_{\mathcal{L}} - \lfloor 2 \log C \rfloor$ such that, 
     $$
     \sup_{\substack{|h| \leq 1 \\ |u| \leq e^{-k + 1}}} |S_{n_{\mathcal{L}}}(h + u) - S_{n_{\mathcal{L}}}(h)| \cdot \mathbf{1}_{h \in G_{\mathcal{L}}} \geq e^{(n_{\mathcal{L}} - k) / 2},
     $$
where considering the case $k \leq n_{\mathcal{L}}'$ is enough thanks to the term $C$ in (\ref{eq:stuff2}).  
It now suffices to bound (\ref{eq:MAINBOUND})
through a bound for the right-hand side of (\ref{eq:finalTerm}),  but with our new definitions for $(S_j)_{j\geq 1}$,  $n_{\mathcal L}$,  $n'_{\mathcal L}$ and $G_k$.
For any  $0 \leq j < k$,  we have the following analogue of \ref{Balotprecis}, which is also obtained by  Lemma \ref{le:Bound}:
     \begin{align*} 
       \mathbb{P} & \Big ( \sup_{\substack{|h| \leq 1 \\ |u| \leq e^{- k + 1}}} |(S_{j + 1} - S_j)(h + u) - (S_{j + 1} - S_{j})(h)|\mathbf{1}_{h \in G_{j}} \geq \frac{e^{(n_{\mathcal{L}} - k) / 2}}{4 (k - j)^2} \Big ) \\ 
& \ll (k - j)^{4v} \cdot e^{n_{\mathcal{L}} - j + 10 (j \wedge (n - j)^{3/4}}  e^{- v (n_{\mathcal{L}} - k)} \cdot v! \cdot C^{v} \cdot e^{2v(j - k)} \cdot \frac{y}{j^{3/2}} \cdot e^{\frac{3}{2} \frac{j \log n}{n} - \frac{2 y j}{n} - \frac{y^2 j}{n^2}}.
     \end{align*}
     Pick $v = 100$. 
     Summing over $0 \leq j < k$ we see that the sum is dominated by the contribution of the last term $j = k - 1$, indeed, the sum is
     $$
     \ll e^{- (v - 1) (n_{\mathcal{L}} - k) + 10 (k \wedge (n - k))^{3/4} } \cdot \frac{y}{k^{3/2}} \exp \Big ( \frac{3}{2}\cdot \frac{k \log n}{n} - \frac{2 y k}{n} - \frac{y^2 k}{n^2} \Big ).
     $$
     The contribution of the second term in (\ref{eq:finalTerm}) is bounded similarly to (\ref{eqn:bidule}), and we obtain
     \begin{align*} 
       \mathbb{P} & \Big ( \sup_{\substack{|h| \leq 1 \\ |u| \leq e^{- k + 1}}} |(S_{n_{\mathcal{L}}} - S_k)(h + u) - (S_{n_{\mathcal{L}}} - S_{k})(h)|\mathbf{1}_{h \in G_{k}} \geq \frac{e^{(n_{\mathcal{L}} - k) / 2}}{4} \Big ) \\ 
 & \ll 4^{4v} \cdot e^{n_{\mathcal{L}} - k + 10 (k \wedge (n - k))^{3/4}} \cdot e^{- v (n_{\mathcal{L}} - k)} \cdot v! \cdot C^{v} \cdot (n_{\mathcal{L}} - k)^{v} \cdot \frac{y}{j^{3/2}} \cdot \exp \Big ( \frac{3}{2} \frac{j \log n}{n} - \frac{2 y j}{n} - \frac{y^2 j}{n^2} \Big ).
     \end{align*}
     Choosing $v = 100$ we see that this is also
     $$
     \ll e^{- (v - 2) (n_{\mathcal{L}} - k) + 10 (k \wedge (n - k))^{3/4} } \cdot \frac{y}{k^{3/2}} \exp \Big ( \frac{3}{2}\cdot \frac{k \log n}{n} - \frac{2 y k}{n} - \frac{y^2 k}{n^2} \Big ).
     $$
     Therefore with the new definitions for $(S_j)_{j\geq 1}$,  $n_{\mathcal L}$,  $n'_{\mathcal L}$ and $G_k$,   \eqref{eq:MAINBOUND} is bounded with
     $$
     \ll \sum_{0 \leq k \leq n_{\mathcal{L}'}} e^{- 100 (n_{\mathcal{L}} - k)} \cdot \frac{y}{k^{3/2}} \exp \Big ( \frac{3}{2} \cdot \frac{k \log n}{n} - \frac{2 y k}{n} - \frac{y^2 k}{n^2} \Big ) \ll e^{-50 C} \cdot y e^{-2 y} e^{-y^2 / n}
     $$
     as needed, and where the final gain $e^{-50 C}$ comes from $k \leq n_{\mathcal{L}}' = n_{\mathcal{L}} - \lfloor 2 \log C \rfloor$.  
\end{proof}

\begin{proof}[Proof of Proposition \ref{prop:analogue}]
If there exists an $h \in G_{\mathcal{L}}$, then from Proposition \ref{prop:NewLow}
 \begin{align*}
 \max_{v \in G_0} \min_{|u| \leq 1} (S_{n_{\mathcal{L}}}(v + u) + \sqrt{|u|e^{n_{\mathcal{L}}}}) & \geq \min_{|u| \leq 1} (S_{n_{\mathcal{L}}}(h + u) + \sqrt{|u| e^{n_{\mathcal{L}}}}) \\ & \geq S_{n_{\mathcal{L}}}(h) - 1 \geq n - \frac{3}{4} \log n - 10C,
 \end{align*}
 outside of a set of probability $\ll e^{-50 C} y e^{-2 y} e^{-y^2 / n}$.
 Proposition \ref{prop:NewMax} then implies that outside of a set of $\tau$ of probability $\ll e^{-n}$,
 $$
 \max_{|h| \leq 1} \log |\zeta(\tfrac 12 + \ii \tau + \ii h)| > n - \frac{3}{4} \log n + y - 10 C.
 $$
 In other words,
 $$
 \mathbb{P} (\exists h \in G_{\mathcal{L}} ) - \OO \Big ( e^{-50 C} y e^{-2y} e^{-y^2 / n} \Big ) \leq \mathbb{P} \Big (\max_{|h| \leq 1} \log |\zeta(\tfrac 12 + \ii \tau + \ii h)| > n - \frac{3}{4} \log n + y - 10 C \Big ),
 $$
 and Proposition  \ref{prop:analogue} follows.
\end{proof}

\subsection{Proof of Proposition \ref{prop: 1 thm3} and \ref{prop: 2 thm3}}

\begin{proof}[Proof of Propositions \ref{prop: 1 thm3}]
Clearly, we have
$$
\E[\#G_{n_\mathcal L}]\gg e^{n_\mathcal L}\cdot \PP(S_j\in [L_j,U_j], n_0\leq j\leq n_{\mathcal L}),
$$
where we write $S_j(0)=S_j$ for simplicity.
By the definition of $U_j,L_j$, we have for $j\geq n_0$
$$
U_j-L_j\ll(y-10)-\frac{y}{n}j +(j\wedge (n-j))^{3/4}\ll \Delta_j^{1/4}.
$$
Therefore, the proof of Proposition \ref{lem: 1 point} applies verbatim for all increments $j\geq n_0$.
For the first $n_0$ increments, the approximation in terms of Dirichlet polynomials still holds up to a multiplicative constant (as in  \cite[Equations (31) and (40)]{ArgBouRad2020}, for example). 
These considerations yield
\begin{multline}
\PP(S_j\in [L_j,U_j], n_0\leq j\leq n_{\mathcal L})\\
\gg \PP(\mathcal S_{n_0}\in [L_{n_0}+1,U_{n_0}-1],\mathcal S_{n_0}+\mathcal G_j \in [L_j+1, U_j-1], n_0<j\leq n_{\mathcal L}),\label{lowerBd}
\end{multline}
where  $(\mathcal G_j)_j$ is defined in  \eqref{eqn: Gk} and is independent of  $\mathcal S_{n_0}$, now defined as 
$$
\mathcal S_{n_0}(h) = \sum_{\log\log p\leq  n_0} \re \Big ( e^{\ii \theta_p} \, p^{-(1/2 +\ii h)} + \tfrac 12 \,  e^{2\ii \theta_p} \, p^{-(1 + 2 \ii h)} \Big ).
$$
Note that it differs from \eqref{eqn: steinhaus walk} as it consists in the first $n_0$ increments.
The $\pm 1$ in the barriers will not contribute to the estimate,  we henceforth drop them to lighten the notations.
We now write $f(z)$ for the density of $\mathcal S_{n_0}$,
we condition on $\mathcal G_{n_\mathcal L}$ and apply the Ballot theorem from Proposition \ref{prop:barrier}: The right-hand side of  \eqref{lowerBd} is lower bounded with
\begin{multline*}\int_{L_{n_0}}^{U_{n_0}}  \PP(\mathcal G_j \in [L_j-z, U_j-z], n_0<j\leq n_{\mathcal L})\ f(z)\rd z\\
\gg \int_{L_{n_0}}^{U_{n_0}}\int_{L_{n_\mathcal L}-z}^{U_{n_\mathcal L}-z} \frac{(U_{n_0}-z)(U_{n_\mathcal L}-z-w)}{(n_\mathcal L-n_0)^{3/2}} e^{-\frac{w^2}{n_\mathcal L-n_0}} f(z)\rd w\rd z.
\end{multline*}
Writing $\bar w=w-\alpha(n_\mathcal L-n_0)$ and $\bar z=z-\alpha n_0$, this becomes
$$
\gg \int_{\bar L_{n_0}}^{\bar U_{n_0}}\int_{y-10-\bar z}^{y-\bar z} \frac{(\bar U_{n_0}-\bar z)(y-\bar z-\bar w)}{(n_\mathcal L-n_0)^{3/2}} e^{-(\bar w+\alpha(n_\mathcal L-n_0))^2/(n_\mathcal L-n_0)} f(\bar z+\alpha n_0)\rd \bar w\rd \bar z.
$$
for $\bar L_{n_0}=L_{n_0}+\frac{y}{n_\mathcal L}n_0-n_0^{3/4}$ and $\bar U_{n_0}=U_{n_0}-10\log n_0^{3/4}$.
Expanding the square gives
\begin{equation}\label{eqn:expandSquare}
\frac{(\bar w+\alpha (n_\mathcal L-n_0))^2}{n_\mathcal L-n_0}
=\alpha^2(n_\mathcal L-n_0) +2\alpha \bar w +\frac{\bar w^2}{n_\mathcal L-n_0}
=(n_\mathcal L-n_0)-\frac{3}{2}\log t +2\alpha \bar w +\frac{\bar w^2}{n_\mathcal L-n_0}+\oo(1).
\end{equation}
The integral in $\bar w$ becomes
$$
\int_{y-10-\bar z}^{y-\bar z} (y-\bar z-\bar w)e^{-2\alpha \bar w}e^{-\frac{\bar w^2}{n_\mathcal L-n_0}} \rd \bar w\gg e^{-2\alpha y} e^{2\alpha \bar z}e^{-y^2/n}\gg e^{-2y}e^{-y^2/n} e^{2\alpha \bar z},
$$
by the assumption on $y$. So far, we have shown
$$
\E[\#G_{n_\mathcal L}]\gg e^{n_0}\cdot e^{-2y}e^{-y^2/n}\cdot \int_{\bar L_{n_0}}^{\bar U_{n_0}} (\bar U_{n_0}-\bar z) e^{2\alpha \bar z} f(\bar z+\alpha n_0)\rd \bar z.
$$
From the proof of \cite[Lemma 18]{ArgBouRad2020}, we have $f(u)\ll e^{-u^2/n_0}/\sqrt{n_0}$ uniformly in  $|u|<100 n_0$. This implies
\begin{multline*}
\E[\#G_{n_\mathcal L}]\gg e^{n_0}\cdot e^{-2y}e^{-y^2/n}\cdot \int_{\bar L_{n_0}}^{\bar U_{n_0}} (\bar U_{n_0}-\bar z) e^{2\alpha \bar z} \frac{e^{(-\bar z+\alpha n_0)^2/n_0}}{\sqrt{n_0}}\rd \bar z\\
\gg e^{-2y}e^{-y^2/n} \int_{\bar L_{n_0}}^{\bar U_{n_0}} (\bar U_{n_0}-\bar z) \frac{e^{-\bar z^2/n_0}}{\sqrt{n_0}}\rd \bar z
\gg ye^{-2y}e^{-y^2/n}
\end{multline*}
since the standard deviation of $\bar z$ is $\sqrt{n_0}\asymp \sqrt{y}$ and $\bar U_{n_0}=y-10\log n_0^{3/4}$.
\end{proof}

\begin{proof}[Proof of Proposition \ref{prop: 2 thm3}]
Proceeding as in Proposition \ref{prop: 1 thm3}, the estimate is reduced to 
$$
\E[(\#G_{n_\mathcal L})^2]\ll \sum_{h,h'\in G_0} \PP(\mathfrak S(h)\cap \mathfrak S(h')), 
$$
where
$$
\mathfrak S(h)=\{\mathcal S_{n_0}(h)\in [L_{n_0}-1,U_{n_0}+1],\mathcal S_{n_0}(h)+\mathcal G_j(h)\in [L_j-1, U_j+1], n_0<j\leq n_{\mathcal L}\}.
$$
Again, since the $\pm 1$ will not contribute to the estimates, we omit them from the notations. 
We write $\mathcal S_{n_0}(h)=\mathcal S_{n_0}$, $\mathcal S_{n_0}(h')=\mathcal S_{n_0}'$ and similarly for $\mathcal G$. 
We condition on the pair $(\mathcal S_{n_0},\mathcal S_{n_0}')$ to get
$$
 \PP(\mathfrak S(h)\cap \mathfrak S(h'))=
 \int_{[L_{n_0},U_{n_0}]^{2}} \PP(\mathcal G_j\leq U_j-z, \mathcal G_j'\leq U_j-z', n_0\leq j\leq n_\mathcal L) f(z,z')\rd z\rd z',
$$
where $f$ now stands for the density of $(\mathcal S_{n_0},\mathcal S_{n_0}')$.
The estimate depends on the {\it branching time} $j^\star=j^\star(h,h')=\lfloor\log |h-h'|^{-1}\rfloor$. We split into two cases ($j^\star\leq n_0$ and $j^\star> n_0$),
contrary to the proof of Proposition (\ref{prop: comp second}) which needs three cases as it requires matching of first and second moments up to $1+\oo(1)$ precision.\\

\noindent{\bf Case $j^\star\leq n_0$}.  
 In this case, the decoupling Lemma \ref{le:gaussian} can be applied to all increments. The probability in the integral is then
$$
 \ll\int_{[L_{n_0},U_{n_0}]^{2}} \PP( \widetilde{\mathcal G}_{j}\leq U_j-z, n_0\leq j\leq n_\mathcal L) \PP(\widetilde{\mathcal G}'_{j}\leq U_j-z', n_0\leq j\leq n_\mathcal L)  f(z,z')\rd z\rd z'
$$
where we recall that $\widetilde{\mathcal G}_{j}=\sum_{i\leq j}\widetilde{\mathcal N}_j$ and the independent  Gaussian centered $\widetilde{\mathcal N}_j$'s have variance $\mathfrak{s}_j^2+|\rho_j|$. 

After conditioning on $(\widetilde{\mathcal G}_{n_\mathcal L},\widetilde{ \mathcal G}'_{n_\mathcal L})$, the Ballot theorem from Proposition \ref{prop:barrier} can be applied to each term. The above becomes
$$
\ll \int_{[L_{n_0},U_{n_0}]^{2}}\int_{[L_{n_\mathcal L},U_{n_\mathcal L}]^{2}}\tfrac{(U_{n_0}-z)(U_{n_\mathcal L}-z-w)(U_{n_0}-z')(U_{n_\mathcal L}-z'-w')}{(n_\mathcal L-n_0)^3}  e^{-\frac{w^2+w'^2}{n_\mathcal L-n_0}}f(z,z')\rd w\rd w'\rd z\rd z'.
$$
The Gaussian density can be expanded as in \eqref{eqn:expandSquare}. The integral in $w, w'$ gives a contribution
$
\OO(e^{-2n_\mathcal L}e^{2n_0}e^{-4y}e^{-2y^2/n}).
$
There are $\OO(e^{2n_\mathcal L})$ pairs $h,h'$ with $j^\star\leq n_0$,  so
$$
\begin{aligned}
&\sum_{h,h': j^\star\leq n_0}  \PP(\mathfrak S(h)\cap \mathfrak S(h))\\
&\hspace{1cm}\ll  e^{2n_0} e^{-4y} \int_{[\bar L_{n_0},\bar U_{n_0}]^{2}}(\bar U_{n_0}-\bar z)(\bar U_{n_0}-\bar z')e^{2\alpha(\bar z+\bar z')}f(\bar z+\alpha n_0,\bar z'+\alpha n_0)\rd \bar z\rd \bar z'\\
&\hspace{1cm}\ll y^2e^{2n_0} e^{-4y} \int_{[\bar L_{n_0},\bar U_{n_0}]^{2}}e^{2\alpha(\bar z+\bar z')}f(\bar z+\alpha n_0,\bar z'+\alpha n_0)\rd \bar z\rd \bar z',
\end{aligned}
$$
where we used the barrier range to bound  $|\bar U_{n_0}-\bar z|\leq y$.  Using  the Cauchy-Schwarz inequality and recalling that  $f(u)\ll e^{-u^2/n_0}/\sqrt{n_0}$,  as $n_0=y/10$, this is
$$
\ll y^2e^{2n_0} e^{-4y}\cdot  \int_{-\infty}^{\infty} e^{4 \bar z} \frac{e^{-\bar z^2/n_0}}{\sqrt{n_0}}\ll y^2e^{2n_0} e^{-4y}\cdot e^{8 n_0}\ll y e^{-2y-y^2/n}.
$$

\noindent{\bf Case $j^\star>n_0$.} 
We proceed similarly to the proof of the left tail and consider the center of mass and the difference between the two Gaussian walks as in \eqref{eqn:orth}.
We index the value of $\overline{\mathcal G}_{j^\star}$ by $v_1$, the values $\mathcal G_{j^\star}^\perp$ by $q$, and  the values of 
two independent copies of
$\widetilde{\mathcal{G}}_{j^{\star},n_{\mathcal L}}$
by $v_2$ and $v_3$. 
Proceeding exactly as for Equation \eqref{eqn: prob 3rd}, i.e.,  using Lemma \ref{le:gaussian} for the increments after $j^*$, we obtain
that $\PP(\mathfrak S(h)\cap \mathfrak S(h'))$ is
$$
\begin{aligned}
&\sum_{q\in \mathbb Z}
\int_{[L_{n_0},U_{n_0}]^{2}} 
\int_{L_{j^\star}}^{U_{j^\star}}
\frac{(U_{n_0}-z_m)(U_{j^\star}-z_m-v_1)}{(j^\star-n_0)^{3/2}}\cdot e^{-cq^2}f(z,z')e^{-\frac{v_1^2}{j^\star-n_0}}\\
&\times \PP(\widetilde{\mathcal G}_{j^\star,j}+v_1+q\leq U_j-z, j>j^\star)\cdot
\PP(\widetilde{\mathcal G}_{j^\star, j}+v_1-q\leq U_j-z', j>j^\star) \rd v_1\rd z\rd z'
\end{aligned}
$$
where we applied Proposition \ref{prop:barrier} for $\bar{\mathcal G}$ between  $n_0$ and $j^\star$ and where we denoted $z_m=\frac{z+z'}{2}=\bar{\mathcal G}_{n_0}$. 
This Ballot theorem can also be applied to the two probabilities in the integral giving, after summing over $q$,
$$
\ll \frac{(U_{j^\star}-z_m-v_1)^2(U_{n_\mathcal L}-z-v_1-v_2)(U_{n_\mathcal L}-z'-v_1-v_3)}{(n_\mathcal L-j^\star)^3} e^{-\frac{v_2^2+v_3^2}{n_\mathcal L -j^\star}}.
$$
After expanding the squares, the densities of $v_1$, $v_2$, $v_3$ become
$$
e^{-2n_\mathcal L+j^\star} e^{n_0} \frac{n^{\frac{3}{2}\frac{j^\star}{t}}n^{3(1-\frac{j^\star}{n})}}{(j^\star-n_0)^{3/2}(n_\mathcal L-j^\star)^3} e^{2\alpha\overline{v_1}-\frac{\bar v_1^2}{j^\star-n_0}-2\alpha(\overline{v_1}+\overline{v_2})-2\alpha(\overline{v_1}+\overline{v_3})},
$$
for $\overline{v_1}=v_1-\alpha(j^\star-n_0)$, $\overline{v_i}=v_i-\alpha(n_\mathcal L-j^\star)$, $i=2,3$. 
The integral over $\overline{v_1}+\overline{v_2}\in [y-10-\bar z, y-\bar z]$ is 
$$
\int_{y-10-\bar z}^{ y-\bar z}(y-\bar z-\overline{v_1}-\overline{v_2})e^{-2\alpha(\overline{v_1}+\overline{v_2})}\rd \overline{v_1}\rd \overline{v_2}\ll e^{2\alpha\bar z-2y}.
$$
The integral over $\overline{v_1}+\overline{v_3}\in [y-10-\bar z', y-\bar z']$ is the same and contributes $\ll e^{2\alpha\bar z'}e^{-2y}$.
The integral over $\overline{v_1}$ is, for $\bar z_m=\frac{\bar z+\bar z'}{2}$,
$$
\ll \int_{\bar L_j^\star}^{\bar U_{j^\star}-\bar z_m} ( \bar U_{j^\star}-\bar z_m -\overline{v_{1}})^2 e^{2\alpha\overline{v_{1}}} e^{-\frac{\bar v_1^2}{j^\star-n_0}}\rd \overline{v_{1}}\ll \frac{e^{2y-2\alpha \bar z_m-y^2/n}}{(j^\star\wedge (n-j^\star))^{10}},
$$
where $\bar U_{j^\star}=y-10 \log (j^\star\wedge (n-j^\star))$ and  $\bar L_{j^\star}=-10- (j^\star\wedge (n-j^\star))^{3/4}$.

We now sum over all $j^*>n_0$ and the $\OO(e^{-2n_\mathcal L+j^\star})$ pairs with a given $j^\star$,  so that
\begin{multline*}
\sum_{h,h': n_0<j^\star<n_{\mathcal L}}  \PP(\mathfrak S(h)\cap \mathfrak S(h))
\ll e^{-2y-y^2/n+n_0}
\int_{\bar L_{n_0}}^{\bar U_{n_0}} (\bar U_{n_0}-\bar z_m) e^{2\alpha\bar z_m}f(\bar z+\alpha n_0, \bar z'+\alpha n_0)\rd z\rd z'\\
\times \sum_{ n_0<j^\star<n_{\mathcal L}} \frac{1}{(j^\star\wedge (n-j^\star))^{10}} \frac{n^{\frac{3}{2}\frac{j^\star}{n}}n^{3(1-\frac{j^\star}{n})}}{(j^\star-n_0)^{3/2}(n_\mathcal L-j^\star)^3}.
\end{multline*}
The integral over $z,z'$ is over a function of $\bar z_m$ only,  which has density $\ll e^{-u^2/n_0}/\sqrt{n_0}$ uniformly in  $|u|<100 n_0$. 
Moreover, we can simply bound $|\bar U_{n_0}-\bar z_m|\leq y$,  hence
$$
\sum_{h,h': n_0<j^\star<n_{\mathcal L}}  \PP(\mathfrak S(h)\cap \mathfrak S(h))\ll ye^{-2y-y^2/n+n_0}\int_{\bar L_{n_0}}^{\bar U_{n_0}}  e^{2\bar z_m}\frac{e^{-(\bar z_m+\alpha n_0)^2/n_0}}{\sqrt{n_0}}\rd {\bar z}_m\ll y e^{-2y}e^{-y^2/n},
$$
which  concludes the proof of Proposition \ref{prop: 2 thm3}. 
\end{proof}

\appendix

\section{Some Auxiliary Results}\label{appendixA}

Let $(Z_p, p \text{ prime})$ a sequence of independent and identically distributed random variables, uniformly distributed on the unit circle $|z| = 1$. For an integer $n$ with prime factorization $n = p_1^{\alpha_1} \ldots p_k^{\alpha_k}$ with $p_1, \ldots, p_k$ all distinct, consider 
$$
Z_n := \prod_{i = 1}^{k} Z_{p_i}^{\alpha_i}. 
$$
Then we have $\mathbb{E}[Z_n \overline{Z}_m] = \mathbf{1}_{n = m}$, and therefore, for an arbitrary sequence $a(n)$ of complex numbers, the following holds
$$
\sum_{n \leq N} |a(n)|^2 = \mathbb{E} \Big [ \Big | \sum_{n \leq N} a(n) Z_n \Big |^2 \Big ]. 
$$
The next lemma shows that the mean value of Dirichlet polynomial is close to the one of the above random model. It follows directly from \cite[Corollary 3]{MonVau1974}.

\begin{lemma}[Mean-value theorem for Dirichlet polynomials] \label{lem: Transition}
  We have,
$$
  \mathbb{E} \Big [ \Big | \sum_{n \leq N} a(n) n^{\ii \tau} \Big |^2 \Big ]  = \Big (1 + \OO \Big ( \frac{N}{T} \Big ) \Big ) \sum_{n \leq N} |a(n)|^2  = \Big ( 1 + \OO \Big ( \frac{N}{T} \Big ) \Big ) \mathbb{E} \Big [ \Big | \sum_{n \leq N} a(n) Z_n \Big |^2 \Big ].
$$
  \end{lemma}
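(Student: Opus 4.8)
The plan is to reduce the statement to the classical Montgomery--Vaughan mean-value theorem. First I would unfold the left-hand side as an average over $t \in [T,2T]$, expand the modulus squared, and separate the diagonal from the off-diagonal:
\[
\mathbb{E}\Big[\Big|\sum_{n\le N}a(n)n^{\ii\tau}\Big|^2\Big] = \sum_{n\le N}|a(n)|^2 + \sum_{\substack{m,n\le N\\ m\ne n}} a(m)\overline{a(n)}\cdot \frac{1}{T}\int_T^{2T}\Big(\frac{m}{n}\Big)^{\ii t}\rd t .
\]
The diagonal contributes exactly $\sum_{n\le N}|a(n)|^2$, which by the orthonormality relation $\mathbb{E}[Z_n\overline{Z_m}]=\mathbf{1}_{n=m}$ recalled just before the statement equals $\mathbb{E}\big[|\sum_{n\le N}a(n)Z_n|^2\big]$ exactly; this already yields the second equality in the lemma, so only the off-diagonal sum needs to be controlled.

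For the off-diagonal I would apply \cite[Corollary 3]{MonVau1974}, i.e. the bound $\int_0^T\big|\sum_{n\le N}a(n)n^{\ii t}\big|^2\rd t = \sum_{n\le N}|a(n)|^2\,(T+\OO(n))$, on the intervals $[0,2T]$ and $[0,T]$ and subtract, obtaining $\int_T^{2T}\big|\sum_{n\le N}a(n)n^{\ii t}\big|^2\rd t = \sum_{n\le N}|a(n)|^2\,(T+\OO(n))$. Dividing by $T$ and using $n\le N$ to absorb $\OO(n/T)$ into $\OO(N/T)$ gives the first equality. Equivalently, one may argue directly: the off-diagonal integral is $\OO\big(1/(T|\log(m/n)|)\big)$, and since distinct integers $\le N$ are at distance $\gg 1/N$ in the $\log$ metric, the Hilbert-inequality form of Montgomery--Vaughan bounds $\sum_{m\ne n}a(m)\overline{a(n)}/\log(m/n) \ll N\sum_{n\le N}|a(n)|^2$, which is $\OO(N/T)\cdot\sum_{n\le N}|a(n)|^2$ after division by $T$.

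I do not expect a genuine obstacle: the result is a black-box consequence of a standard theorem. The only points requiring a word of care are the passage from the interval $[0,T]$ appearing in the cited statement to the interval $[T,2T]$, which is handled by the subtraction above, and the observation that the error $\OO(n)$ is uniformly $\OO(N)$ over the range $n\le N$, so that the relative error after normalising by $T$ is the stated $\OO(N/T)$.
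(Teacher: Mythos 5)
Your proposal is correct and follows essentially the same route the paper takes: the paper simply notes that the lemma follows directly from Corollary~3 of Montgomery--Vaughan, and you are unpacking that citation by recording the diagonal/off-diagonal decomposition and the subtraction $\int_0^{2T}-\int_0^T$ to pass to $[T,2T]$. Both of these are routine and correctly handled, and the identification of $\sum_{n\le N}|a(n)|^2$ with $\mathbb{E}\big[|\sum a(n)Z_n|^2\big]$ via orthonormality is exactly as intended.
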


\begin{lemma}[Exponential moments for the probabilistic model,  Lemma 15 in \cite{ArgBouRad2020}]\label{lem:basicFour} Remember the definition \eqref{eqn: steinhaus walk}.
There exists an absolute $C>0$ such that for any $\lambda\in\mathbb{R}$ and $n_0\leq j\leq k$ we have 
$$
\E\Big[\exp\big(\lambda(\mathcal S_{k}(h)-\mathcal S_{j}(h) )\big)\Big]\leq \exp((k-j+C)\lambda^2/4).
$$
\end{lemma}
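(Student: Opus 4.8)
The plan is to exploit the independence of the phases $(\theta_p)$ to factorize the exponential moment over primes, and then, for each prime, to use the exact Fourier structure of the summand together with the sharp bound $I_0(x)\le e^{x^2/4}$ for the modified Bessel function $I_0(x)=\frac{1}{2\pi}\int_0^{2\pi}e^{x\cos\theta}\,\rd\theta$. Writing $\re\big(e^{\ii\theta_p}p^{-(1/2+\ii h)}\big)=p^{-1/2}\cos(\theta_p-h\log p)$ and $\re\big(\tfrac12 e^{2\ii\theta_p}p^{-(1+2\ii h)}\big)=\tfrac1{2p}\cos(2\theta_p-2h\log p)$, independence of the $\theta_p$ gives
\begin{equation*}
\E\Big[\exp\big(\lambda(\mathcal S_k(h)-\mathcal S_j(h))\big)\Big]=\prod_{e^{j}<\log p\le e^{k}}\frac{1}{2\pi}\int_0^{2\pi}\exp\Big(\lambda p^{-1/2}\cos(\theta-h\log p)+\tfrac{\lambda}{2p}\cos(2\theta-2h\log p)\Big)\,\rd\theta .
\end{equation*}
After the substitution $\theta\mapsto\theta+h\log p$ (harmless by $2\pi$-periodicity) the $p$-th factor equals $\E_\theta\big[e^{\lambda p^{-1/2}\cos\theta}\,e^{\frac{\lambda}{2p}\cos 2\theta}\big]$, where $\theta$ is uniform on $[0,2\pi]$.

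Next I would decouple the two frequencies by Hölder's inequality with conjugate exponents $(1+\delta_p,\,1+\delta_p^{-1})$, $\delta_p:=p^{-1/2}$, which bounds the $p$-th factor by
\begin{equation*}
\E_\theta\big[e^{(1+\delta_p)\lambda p^{-1/2}\cos\theta}\big]^{\frac{1}{1+\delta_p}}\cdot \E_\theta\big[e^{(1+\delta_p^{-1})\frac{\lambda}{2p}\cos 2\theta}\big]^{\frac{\delta_p}{1+\delta_p}}.
\end{equation*}
Since $\E_\theta[e^{a\cos\theta}]=I_0(a)\le e^{a^2/4}$ for every real $a$ — which follows at once from comparing the power series of $I_0$ and of $e^{a^2/4}$ term by term, using $(m!)^2\ge m!$ — each factor is the exponential of a quadratic in $\lambda$; with $\delta_p=p^{-1/2}$ a short computation reduces the bound on the $p$-th factor to $\exp\big(\tfrac{\lambda^2}{4p}+\OO(\lambda^2 p^{-3/2})\big)$.

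Finally I would take the product over $e^j<\log p\le e^k$ and sum the exponents. By Mertens' theorem with the error term from the Prime Number Theorem, $\sum_{e^j<\log p\le e^k}\tfrac1p=(k-j)+\OO(1)$, while $\sum_{p}p^{-3/2}=\OO(1)$, so the full product is at most $\exp\big(\tfrac{\lambda^2}{4}(k-j+C)\big)$ for an absolute constant $C$, which is the claim. The main obstacle is entirely quantitative: one must obtain the \emph{sharp} coefficient $\tfrac14$ rather than the $\tfrac12$ produced by a crude Hoeffding/sub-Gaussian estimate for the bounded increments. This is exactly why the tight Bessel bound $I_0(x)\le e^{x^2/4}$ is needed, and why the second-harmonic term $\tfrac{1}{2p}\cos 2\theta$ must be isolated carefully — through the Hölder split with $\delta_p\to 0$ as $p\to\infty$ — so that it contributes only to the absolute constant $C$ and not to the leading term $(k-j)\lambda^2/4$.
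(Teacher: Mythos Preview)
The paper does not prove this lemma; it is quoted from \cite{ArgBouRad2020} (Lemma~15 there) and listed in Appendix~A without argument, so there is no in-paper proof to compare against. Your argument is correct and self-contained: the factorization over primes by independence, the shift $\theta\mapsto\theta+h\log p$, the H\"older split with exponents $(1+p^{-1/2},\,1+p^{1/2})$, the sharp Bessel bound $I_0(x)\le e^{x^2/4}$ (via term-by-term comparison of power series), and the summation $\sum_{e^j<\log p\le e^k}p^{-1}=k-j+\OO(1)$ together with $\sum_p p^{-3/2}<\infty$ all combine to give exactly $\exp\big(\tfrac{\lambda^2}{4}(k-j+C)\big)$ with an absolute $C$. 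Your emphasis on why the constant must be $\tfrac14$ rather than the Hoeffding $\tfrac12$, and on why the $p^{-1}\cos 2\theta$ term must be handled so as to feed only into $C$, is exactly the point of the lemma as it is used downstream (e.g.\ in \eqref{eqn: 4th D bound} and \eqref{eqn: Y moment}).
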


\begin{lemma}[Gaussian moments of Dirichlet polynomials, Lemma 16 in \cite{ArgBouRad2020}]
\label{lem: Gaussian moments}
For any $h \in [-1,1]$ and integers $k,j,q$ satisfying $n_0\leq j\leq k$,  $2q \leq e^{n-k} $, we have
\begin{align}\label{eqn:mom2}
\mathbb{E} [ |S_k(h) - S_{j}(h)|^{2q} ] &\ll \frac{(2q)!}{2^q q!} \, \Big(\frac{k - j}{2}\Big)^{q}.
\end{align}
Moreover, there exists $C>0$ such that for any $0 \leq j \leq k$,  $2q \leq e^{n-k} $, we have
\begin{equation}\label{eqn:mom3}
\mathbb{E} [ |S_k(h) - S_{j}(h)|^{2q} ] \ll q^{1/2} \frac{(2q)!}{2^q q!} \, \Big(\frac{k - j+C}{2}\Big)^{q}. 
\end{equation}
\end{lemma}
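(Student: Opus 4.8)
The plan is to transfer the problem to the Steinhaus random model of \eqref{eqn: steinhaus walk} and then estimate a moment of a sum of independent bounded centered random variables. First write $S_k(h)-S_j(h)=\re\,\mathcal D(\tfrac12+\ii\tau+\ii h)$ with $\mathcal D(s)=\sum_{e^j<\log p\le e^k}(p^{-s}+\tfrac12 p^{-2s})$, and expand $(S_k(h)-S_j(h))^{2q}=2^{-2q}\sum_{a+b=2q}\binom{2q}{a}\mathcal D^a\overline{\mathcal D}^{\,b}$. After multiplying out, every term here is a Dirichlet polynomial in $\tau$ supported on integers composed of at most $2q$ primes, each $\le\exp(e^k)$, hence of length $\le\exp(2q\,e^k)\le\exp(e^n)\le T$ by the hypothesis $2q\le e^{n-k}$. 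Applying the mean value theorem for Dirichlet polynomials (Lemma~\ref{lem: Transition}) term by term, and using $\E[Z_nZ_m^{-1}]=\mathbf 1_{n=m}$ to identify the diagonal contribution, one gets $\E[|S_k(h)-S_j(h)|^{2q}]\ll \E[|\mathcal S_k(h)-\mathcal S_j(h)|^{2q}]$ (the $\OO(N/T)$ error from Lemma~\ref{lem: Transition} costs at worst a constant factor, which is harmless for a $\ll$-bound). It remains to bound the $2q$-th moment of $\mathcal S_k(h)-\mathcal S_j(h)=\sum_{e^j<\log p\le e^k}X_p$, where the $X_p$ are independent, centered, with $\E[X_p^2]=\tfrac1{2p}+\tfrac1{8p^2}$ and $|X_p|\le 3p^{-1/2}$.

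\textbf{The bound \eqref{eqn:mom3}.} Here a constant in the variance and a factor $q^{1/2}$ are permitted, so no delicate combinatorics is needed. By the sub-Gaussian estimate of Lemma~\ref{lem:basicFour}, $\E[\exp(\lambda(\mathcal S_k(h)-\mathcal S_j(h)))]\le\exp((k-j+C)\lambda^2/4)$ for all real $\lambda$, i.e.\ $\mathcal S_k(h)-\mathcal S_j(h)$ is sub-Gaussian with parameter $\sigma^2=(k-j+C)/2$. The standard conversion of such a bound into even-moment bounds — optimize $\lambda$ in the exponential Markov inequality and integrate the resulting Gaussian tail, or extract the $2q$-th Taylor coefficient of the moment generating function by a Cauchy estimate — yields $\E[(\mathcal S_k(h)-\mathcal S_j(h))^{2q}]\ll q^{1/2}\tfrac{(2q)!}{2^qq!}\sigma^{2q}$, which is \eqref{eqn:mom3} after transferring back through the first step.

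\textbf{The sharp bound \eqref{eqn:mom2}.} Now expand $\E[(\sum_p X_p)^{2q}]$ over set partitions of $\{1,\dots,2q\}$; partitions with a singleton vanish since $\E[X_p]=0$. The all-pairs partitions give $\le (2q-1)!!\,\big(\sum_p\E[X_p^2]\big)^q$, and because $j\ge n_0$ is large the relevant primes satisfy $\log p\ge e^{n_0}$, so by the quantitative Prime Number Theorem (equivalently \eqref{eqn:skasymp}) $\sum_{e^j<\log p\le e^k}(\tfrac1{2p}+\tfrac1{8p^2})=\tfrac{k-j}{2}+\OO(e^{-c e^{j/2}})\le\tfrac{k-j}{2}$; this is exactly the point where $n_0\le j$ is used, and it produces the clean constant $\tfrac{k-j}{2}$ with \emph{no} additive error, hence the main term $\tfrac{(2q)!}{2^qq!}(\tfrac{k-j}{2})^q$. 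One must then show the partitions containing a block of size $m\ge3$ are negligible: such a block contributes a factor $\sum_p\E[|X_p|^m]\ll\sum_{p\le\exp(e^k)}p^{-m/2}\ll1$, in place of the factor $\asymp k-j$ carried by a size-$2$ block, so each extra merge loses a power of $k-j$; one checks, summing over all block-size profiles, that the gain in the number of partitions is dominated thanks to $2q\le e^{n-k}$, and the total is $\ll (2q-1)!!\,(\tfrac{k-j}{2})^q$. This Rosenthal-type step — confirming that the Gaussian pairing term with the \emph{exact} variance dominates uniformly — is the main obstacle; everything else (the transfer step and \eqref{eqn:mom3}) is routine.
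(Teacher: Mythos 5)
Your argument for \eqref{eqn:mom3} is sound: transfer to the Steinhaus model via the mean-value theorem, invoke the sub-Gaussian bound of Lemma~\ref{lem:basicFour}, and integrate the Gaussian tail; this produces exactly the $q^{1/2}$ and the $+C$.

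For \eqref{eqn:mom2} the sketch has two real gaps. First, you assert $\sum_{e^j<\log p\le e^k}(\tfrac1{2p}+\tfrac1{8p^2})\le\tfrac{k-j}{2}$, but the Prime Number Theorem gives only a two-sided error $\tfrac{k-j}{2}+\OO(e^{-ce^{j/2}})$, whose sign is not controlled; since this quantity is raised to the $q$-th power with $q$ allowed as large as $\tfrac12 e^{n-k}$, the multiplicative factor $(1+\OO(\epsilon_j/(k-j)))^q=\exp(\OO(q\epsilon_j/(k-j)))$ has to be \emph{verified} to be $\OO(1)$ using $j\ge n_0$, not assumed. Second, the Rosenthal step is only asserted. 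If you carry it out, replacing $m$ size-$2$ blocks by a single size-$2m$ block gains a combinatorial factor of order $q^{2m-2}$ relative to the all-pairs partition, while the moment contribution drops from $\asymp(k-j)^{m}$ to $\sum_p\E[X_p^{2m}]\ll e^{-e^j}$; controlling the sum over all block profiles therefore requires an inequality of the shape $q^{2m-2}e^{-e^j}\ll(k-j)^{m-1}$ for every $m\ge2$, which at the extreme end $q\asymp e^{n-k}$ is not automatic from $j\ge n_0$. Finally, the transfer itself is stated too casually: individual cross terms $\E_\tau[\mathcal D^a\overline{\mathcal D}^b]$ with $a\ne b$ can be negative, so one cannot bound them ``term by term''; one must either polarize Lemma~\ref{lem: Transition} or work directly with $\E_\tau[\,|(\mathcal D+\overline{\mathcal D})^q|^2\,]$.

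All of this can be sidestepped. After the transfer, each increment satisfies (to leading order) $X_p=|a_p|\cos\psi_p$ with $\psi_p$ uniform, hence $\E[X_p^{2m}]=\binom{2m}{m}4^{-m}|a_p|^{2m}=(2m-1)!!\,\E[X_p^2]^m/m!$, which is \emph{dominated} by the Gaussian moment for every $m$. By the multinomial theorem (odd profiles vanish up to the negligible prime-square correction) this gives at once $\E[(\mathcal S_k-\mathcal S_j)^{2q}]\le(2q-1)!!\,(\sum_p\E[X_p^2])^q$, with no Rosenthal analysis. Equivalently, expand $(\mathcal D+\overline{\mathcal D})^{2q}$ binomially, note that Steinhaus orthogonality kills $a\ne b$, and apply Soundararajan's moment lemma (Lemma~\ref{lem:Sound}) to the central term $2^{-2q}\binom{2q}{q}\E[|\mathcal D|^{2q}]\ll 2^{-2q}\binom{2q}{q}\,q!\,(\sum 1/p)^q=\tfrac{(2q)!}{2^q q!}\big(\tfrac{\sum 1/p}{2}\big)^q$, which is exactly the target combinatorics. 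Either way the only thing left is the PNT estimate of $\sum 1/p$, and its $q$-th-power error still has to be checked — the step you skipped.
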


\begin{lemma}[Gaussian moments of Dirichlet polynomials,  Lemma 3 of \cite{Sou2009}]\label{lem:Sound}
Let $2\leq x\leq T$ and $q\in \mathbb N$ with $x^q\leq T/\log T$. For any complex numbers $a(p)$, we have
$$
\E\Big[\Big| \sum_{p\leq x} \frac{a(p)}{p^{1/2+\ii \tau}}\Big|^{2q} \Big]\ll q! \Big(\sum_{p\leq x} \frac{|a(p)|^2}{p}\Big)^q.
$$
\end{lemma}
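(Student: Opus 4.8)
The plan is to recognize this as the classical moment estimate of Soundararajan and reprove it by expanding the $2q$-th power into the square of a single longer Dirichlet polynomial, applying the mean-value theorem of Lemma \ref{lem: Transition}, and then controlling the resulting $\ell^2$ mass of the coefficients by an elementary multinomial inequality.

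First I would write, pointwise in $\tau$,
$$
\Big|\sum_{p\leq x}\frac{a(p)}{p^{1/2+\ii\tau}}\Big|^{2q}=\Big|\Big(\sum_{p\leq x}a(p)\,p^{-1/2}\cdot p^{-\ii\tau}\Big)^q\Big|^2=\Big|\sum_{n\leq x^q}b_q(n)\,n^{-\ii\tau}\Big|^2,
$$
where $n$ runs over products of exactly $q$ (not necessarily distinct) primes $\leq x$, so that the length is at most $x^q$, and where the multinomial theorem gives, for $n=\prod_p p^{e_p}$ with $\sum_p e_p=q$,
$$
b_q(n)=\frac{q!}{\prod_p e_p!}\prod_p\big(a(p)\,p^{-1/2}\big)^{e_p}.
$$
Since by hypothesis $x^q\leq T/\log T$, Lemma \ref{lem: Transition} applies with $N=x^q$ and yields
$$
\E\Big[\Big|\sum_{p\leq x}\frac{a(p)}{p^{1/2+\ii\tau}}\Big|^{2q}\Big]=\Big(1+\OO\big(\tfrac{1}{\log T}\big)\Big)\sum_{n\leq x^q}|b_q(n)|^2\ll\sum_n|b_q(n)|^2.
$$

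It then remains to bound the last sum. Setting $c_p:=|a(p)|^2/p$, the sum over $n$ is a sum over tuples $(e_p)_{p\leq x}$ of nonnegative integers with $\sum_p e_p=q$, and since each $e_p!\geq 1$ we have $\big(q!/\prod_p e_p!\big)^2\leq q!\cdot\big(q!/\prod_p e_p!\big)$, whence
\begin{align*}
\sum_n|b_q(n)|^2&=\sum_{\substack{(e_p)_{p\leq x}\\ \sum_p e_p=q}}\Big(\frac{q!}{\prod_p e_p!}\Big)^2\prod_p c_p^{e_p}\\
&\leq q!\sum_{\substack{(e_p)_{p\leq x}\\ \sum_p e_p=q}}\frac{q!}{\prod_p e_p!}\prod_p c_p^{e_p}=q!\Big(\sum_{p\leq x}\frac{|a(p)|^2}{p}\Big)^q,
\end{align*}
the last equality being the multinomial theorem once more. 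Combining with the previous display gives the claim.

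There is no substantial obstacle here; the only points needing care are the bookkeeping of the coefficients $b_q(n)$ via the multinomial theorem and observing that the hypothesis $x^q\leq T/\log T$ is precisely what makes the error term in Lemma \ref{lem: Transition} negligible. (Alternatively one may simply invoke \cite[Lemma 3]{Sou2009} verbatim.)
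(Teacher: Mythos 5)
Your proof is correct. The paper does not actually prove this statement; it imports it verbatim as Lemma 3 of Soundararajan's paper \cite{Sou2009}, so there is no internal argument to compare against. Your reconstruction is, however, precisely Soundararajan's original proof: expand the $q$-th power into a Dirichlet polynomial of length $x^q$ supported on integers $n$ with $\Omega(n)=q$ and all prime factors $\leq x$, invoke the Montgomery--Vaughan mean-value theorem (the paper's Lemma \ref{lem: Transition}) under the hypothesis $x^q\leq T/\log T$ to pass to the diagonal $\sum_n |b_q(n)|^2$, and then use $\bigl(q!/\prod_p e_p!\bigr)^2\leq q!\cdot q!/\prod_p e_p!$ together with the multinomial theorem to collapse that sum to $q!\,\bigl(\sum_{p\leq x}|a(p)|^2/p\bigr)^q$. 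All the bookkeeping is sound; the only thing worth flagging is that strictly speaking $n$ ranges over integers with exactly $q$ prime factors counted with multiplicity (not all $n\leq x^q$), but since the coefficients $b_q(n)$ vanish off that set this is harmless.
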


\begin{lemma}
\label{lem:compProb}
Let $h,h'\in[-1,1]$. Consider the increments $(\mathcal Y_k(h),  \mathcal Y_k(h'))$ for $1\leq k\leq n_\mathcal L$, and the corresponding Gaussian vector $(\mathcal N_k(h),  \mathcal N_k(h'))$,
of mean $0$ and with the covariance given by (\ref{eqn:sk}), (\ref{eqn:rhok}).
There exists a constant $c>0$ such that, for any intervals $A, B$ and $k\geq 1$,
$$
{\mathbb P}\Big((\mathcal Y_k(h),  \mathcal Y_k(h')) \in A\times B\Big)
=\mathbb P\Big((\mathcal N_k(h),  \mathcal N_k(h'))\in A\times B\Big)+\OO(e^{-c e^{k/2}}).
$$
\end{lemma}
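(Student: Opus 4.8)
The plan is to prove this as a quantitative bivariate central limit theorem (Berry–Esseen-type estimate) for the single increment $(\mathcal{Y}_k(h), \mathcal{Y}_k(h'))$. Recall that each $\mathcal{Y}_k(h)$ is a finite sum over primes $p$ with $e^{k-1} < \log p \le e^k$ of the bounded random variables $\re(e^{\ii\theta_p} p^{-(1/2+\ii h)} + \tfrac12 e^{2\ii\theta_p} p^{-(1+2\ii h)})$, with the $\theta_p$ independent and uniform on $[0,2\pi]$. So $(\mathcal{Y}_k(h), \mathcal{Y}_k(h'))$ is a sum of independent (across $p$) $\mathbb{R}^2$-valued summands, the $p$-th one of magnitude $\OO(p^{-1/2}) = \OO(e^{-e^{k-1}/2})$, and its covariance matrix is exactly that of $(\mathcal{N}_k(h), \mathcal{N}_k(h'))$ by \eqref{eqn:sk} and \eqref{eqn:rhok}.

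First I would record the three quantitative inputs. (i) The number of primes in the dyadic-in-$\log$ window is $\OO(e^{e^k})$ by the Prime Number Theorem, and each summand has sup-norm $\OO(e^{-e^{k-1}/2})$, so the total third absolute moment is $\sum_p \OO(p^{-3/2}) \ll e^{-c e^{k/2}}$ for a suitable $c>0$ (the window dominates the tail and $e^{e^k} \cdot e^{-\tfrac32 e^{k-1}} \le e^{-c e^{k/2}}$ comfortably, using $e^k \gg e^{k/2}$). (ii) The covariance matrix $\Sigma_k = \begin{pmatrix}\mathfrak{s}_k^2 & \rho_k \\ \rho_k & \mathfrak{s}_k^2\end{pmatrix}$ is uniformly nondegenerate: $\mathfrak{s}_k^2 \in [\kappa,\kappa^{-1}]$ by \eqref{eqn:skasymp}, and off-diagonal entry $|\rho_k| \le \mathfrak{s}_k^2$ with the relevant lower eigenvalue $\mathfrak{s}_k^2 - |\rho_k|$ bounded below — here one must be slightly careful when $h = h'$ or $h,h'$ are very close, in which case the matrix degenerates toward rank one; but then the estimate follows from the one-dimensional Berry–Esseen theorem for $\mathcal{Y}_k(h)$ alone plus the trivial bound $\PP((\mathcal{N}_k,\mathcal{N}_k') \in A\times B) \le \PP(\mathcal{N}_k \in A)$, so one can split into the cases $|\rho_k| \le \tfrac12 \mathfrak{s}_k^2$ (genuinely $2$D, nondegenerate) and $|\rho_k| > \tfrac12\mathfrak{s}_k^2$ (reduce to $1$D along the near-degenerate direction). (iii) For rectangles $A\times B$, the multivariate Berry–Esseen theorem (Esseen's inequality, or the version in Bhattacharya–Rao) gives
$$
\Big| \PP\big((\mathcal{Y}_k, \mathcal{Y}_k') \in A\times B\big) - \PP\big((\mathcal{N}_k, \mathcal{N}_k') \in A\times B\big) \Big| \ll \lambda_{\min}(\Sigma_k)^{-3/2} \cdot \beta_k,
$$
where $\beta_k$ is the sum of third absolute moments of the summands; the convex-set (here rectangle) discrepancy is controlled by this quantity. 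Combining (i)–(iii) gives the $\OO(e^{-c e^{k/2}})$ bound after absorbing the bounded factor $\lambda_{\min}(\Sigma_k)^{-3/2}$.

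Concretely the proof has three steps: normalize the summands (the $p$-th summand has mean zero; record its covariance contribution and third-moment contribution); invoke the two-dimensional Berry–Esseen bound for the class of rectangles, citing a standard reference, being careful to state it with the explicit dependence on the inverse covariance; and finally sum the third-moment contributions using the Prime Number Theorem to get the exponentially small error, treating the (near-)degenerate case by projecting to one dimension. I expect the main obstacle to be purely bookkeeping rather than conceptual: handling the degeneracy of $\Sigma_k$ uniformly in $h,h'$ (in particular the case $h=h'$, where the "two intervals $A,B$" really constrain the same random variable and one needs $\PP(\mathcal{Y}_k \in A\cap B) = \PP(\mathcal{N}_k \in A\cap B) + \OO(e^{-ce^{k/2}})$, which is the one-dimensional statement), and making sure the constant $c$ in the final bound does not deteriorate across the split. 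Since a nearly identical Berry–Esseen comparison already underlies \cite[Lemma 2.1]{ArgBelHar2017} and the covariance asymptotics \eqref{eqn: cov estimate}, I would phrase the argument to reuse those computations wherever possible.
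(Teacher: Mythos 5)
Your overall plan --- a bivariate Berry--Esseen estimate exploiting that the covariances of $(\mathcal Y_k(h),\mathcal Y_k(h'))$ and $(\mathcal N_k(h),\mathcal N_k(h'))$ coincide exactly while the prime-indexed summands have exponentially small third moments --- is precisely what the paper does (it defers to \cite[Lemmas 19 and 20]{ArgBouRad2020}, remarking that the exact covariance match makes the argument ``more immediate''). However, your step (i) contains a false inequality. You assert that $e^{e^k}\cdot e^{-\frac{3}{2}e^{k-1}}\le e^{-ce^{k/2}}$ ``comfortably,'' but $e^k-\tfrac{3}{2}e^{k-1}=e^{k-1}\bigl(e-\tfrac{3}{2}\bigr)>0$, so this product is in fact \emph{exponentially large}. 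Bounding $\sum_p p^{-3/2}$ by the number of primes times the maximum summand cubed simply does not work here, since the prime count in the window is about $e^{e^k}/e^k$, vastly dwarfing the reciprocal of the largest term. The claim $\sum_p p^{-3/2}\ll e^{-ce^{k/2}}$ is nonetheless correct, but for the right reason: the sum ranges over primes $p>e^{e^{k-1}}$ and is a tail of the convergent series $\sum n^{-3/2}$, hence $\ll e^{-e^{k-1}/2}$ by the integral test; and $e^{k-1}/2\ge e^{k/2}/(2e)$ for $k\ge 1$.

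Your treatment of the degenerate regime $h\approx h'$ is the right instinct but stays vague where it matters. If you split on whether $|\rho_k|\le\tfrac12\mathfrak s_k^2$ and, in the nearly degenerate case, rotate to the direction of the small eigenvector (essentially $\mathcal Y_k(h)-\mathcal Y_k(h')$), the point you must supply is that the Lyapunov ratio for this marginal remains $\ll e^{-ce^{k/2}}$ \emph{uniformly} in $|h-h'|$: each prime contributes to $\mathcal Y_k(h)-\mathcal Y_k(h')$ a term of size $\ll p^{-1/2}|h-h'|\log p$, so the sum of third absolute moments is $\ll |h-h'|^3e^{3k}e^{-e^{k-1}/2}$, while the variance is $2(\mathfrak s_k^2-\rho_k)\asymp(e^k|h-h'|)^2$ by \eqref{eqn: cov estimate}; the factors of $|h-h'|$ and $e^k$ cancel exactly in the ratio. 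Without this observation, the naive $\lambda_{\min}(\Sigma_k)^{-3/2}$ prefactor in the rectangle Berry--Esseen bound is unbounded and the argument would not close.
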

\begin{proof}
This follows similaly to \cite[Lemma 20]{ArgBouRad2020}, based on the Berry-Esseen estimate as stated in \cite[Lemma 19]{ArgBouRad2020}. The proof is actually more immediate because the covariances of $(\mathcal Y,\mathcal Y')$ and $(\mathcal N,\mathcal N')$ exactly coincide.
\end{proof}

  \begin{lemma} \label{lem: discretization}
  Let $D$ be a Dirichlet polynomial of length $\leq N$.
  Then, for any $1\leq k\leq \log\log N$, we have
   \begin{equation}
 \begin{aligned}
  \label{eqn: discretize}
  \max_{|h| \leq e^{-k}} |D(\tfrac 12 + \ii  t + \ii h)|^2\ll
 & \sum_{|j| \leq 16 e^{-k}\log N}  \Big | D \Big (\frac 12 +\ii t + \frac{2\pi \ii j}{8 \log N} \Big ) \Big |^{2}  \\ & + \sum_{|j| > 16 e^{-k} \log N} \frac{1}{1 + |j|^{100}} \Big |D \Big ( \frac 12 + \ii t + \frac{2\pi \ii j}{8 \log N} \Big ) \Big |^{2}.
\end{aligned}
\end{equation}
\end{lemma}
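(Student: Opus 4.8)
The plan is to exploit that $h\mapsto D(\tfrac12+\ii t+\ii h)$ is band‑limited, reconstruct it from an oversampled family of its values via the Shannon sampling formula, and then apply Cauchy--Schwarz using the rapid decay of the interpolation kernel. First I would write $D(s)=\sum_{n\le N}a(n)n^{-s}$ and set $g(h):=D(\tfrac12+\ii t+\ii h)=\sum_{n\le N}b(n)e^{-\ii h\log n}$ with $b(n)=a(n)n^{-\ii t}n^{-1/2}$. In the convention $g(h)=\int_{\R}\widehat g(\xi)e^{2\pi\ii h\xi}\rd\xi$, the transform $\widehat g$ is a finite sum of Dirac masses at the points $-\tfrac{\log n}{2\pi}$, hence supported in the interval $[-\tfrac{\log N}{2\pi},0]$ of length $\le\tfrac{\log N}{2\pi}$.

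Next I would fix once and for all a Schwartz function $K_0$ with $\widehat{K_0}$ supported in $[-\tfrac14,\tfrac14]$ and $\widehat{K_0}\equiv1$ on $[-\tfrac18,\tfrac18]$; the margin between $\tfrac18$ and $\tfrac14$ allows $\widehat{K_0}$ to be smooth, so $|K_0(x)|\ll_A(1+|x|)^{-A}$ for every $A$. Put $\Delta:=\tfrac{2\pi}{8\log N}$, so the sampling abscissae in the statement are exactly $h_j:=j\Delta=\tfrac{2\pi j}{8\log N}$, and set $\Phi(h):=K_0(h/\Delta)$, which has $\widehat\Phi(\xi)=\Delta\,\widehat{K_0}(\Delta\xi)$; thus $\widehat\Phi\equiv\Delta$ on $[-\tfrac{\log N}{2\pi},\tfrac{\log N}{2\pi}]\supseteq\mathrm{supp}\,\widehat g$ and $\mathrm{supp}\,\widehat\Phi\subseteq[-\tfrac{\log N}{\pi},\tfrac{\log N}{\pi}]$. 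Poisson summation gives, as distributions, $\sum_j g(h_j)e^{-2\pi\ii\xi h_j}=\Delta^{-1}\sum_{m\in\mathbb Z}\widehat g(\xi+m/\Delta)$; since $\mathrm{supp}\,\widehat g$ has length $<\Delta^{-1}$ and $\mathrm{supp}\,\widehat\Phi$ is disjoint from $\mathrm{supp}\,\widehat g(\cdot+m/\Delta)$ for all $m\ne0$, multiplying by $\widehat\Phi$ and inverting the Fourier transform yields the exact reconstruction
\begin{equation*}
g(h)=\sum_{j\in\mathbb Z}g(h_j)\,\Phi(h-h_j)=\sum_{j\in\mathbb Z}g(h_j)\,K_0\!\big(h/\Delta-j\big),
\end{equation*}
the sums converging absolutely because $|g(h_j)|\le\sum_n|b(n)|<\infty$ and $K_0$ is Schwartz.

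Finally I would apply Cauchy--Schwarz together with the uniform bound $\sum_j|K_0(x-j)|\ll1$ to get, for every $h$,
\begin{equation*}
|g(h)|^2\le\Big(\sum_j|K_0(h/\Delta-j)|\Big)\Big(\sum_j|K_0(h/\Delta-j)|\,|g(h_j)|^2\Big)\ll\sum_j|K_0(h/\Delta-j)|\,|g(h_j)|^2 .
\end{equation*}
Restricting to $|h|\le e^{-k}$ gives $|h/\Delta|=|h|\cdot\tfrac{4\log N}{\pi}\le 2e^{-k}\log N$; for $|j|\le16e^{-k}\log N$ one bounds $|K_0(h/\Delta-j)|\ll1$, while for $|j|>16e^{-k}\log N$ one has $|h/\Delta-j|\ge|j|-|h/\Delta|\ge\tfrac78|j|$, hence $|K_0(h/\Delta-j)|\ll(1+|j|)^{-100}\ll(1+|j|^{100})^{-1}$. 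Substituting these two estimates and recalling $h_j=\tfrac{2\pi j}{8\log N}$ produces the claimed bound, uniformly in $|h|\le e^{-k}$ and hence for the maximum. The only point needing care is the bookkeeping that the oversampling factor (here $8$, comfortably above the Nyquist factor $2$) leaves room for a smooth Fourier cutoff, so that $K_0$ decays faster than any polynomial and the tail weight $(1+|j|^{100})^{-1}$ is genuinely available; beyond that there is no real obstacle.
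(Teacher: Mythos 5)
Your argument is correct, and it rests on the same core idea as the paper's: both proofs treat $h\mapsto D(\tfrac12+\ii t+\ii h)$ as a band-limited function, reconstruct it from its values on the oversampled lattice $\{2\pi j/(8\log N)\}$, and exploit the rapid decay of the interpolation kernel. The paper, however, applies the sampling identity from \cite[Lemma 25]{ArgBouRad2020} directly to the (still band-limited) Dirichlet polynomial $D^2$, obtaining $D(\tfrac12+\ii t+\ii h_0)^2$ as a weighted sum of the sampled values of $D^2$, and then just takes absolute values via the triangle inequality; you instead interpolate $D$ itself and pay for the squaring with a Cauchy--Schwarz step using $\sum_j|K_0(x-j)|\ll 1$. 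The two executions differ in that your version avoids the factor-of-two bandwidth growth from squaring (hence needs only a factor-$8$ oversampling of $D$ rather than a near-Nyquist sampling of $D^2$) and is self-contained (your $K_0$ plays the role of the kernel $\widehat V$ from the earlier paper), while the paper's version makes the quadratic form structure explicit without introducing Cauchy--Schwarz. Both are correct; the differences are stylistic, not substantive. One small thing worth saying explicitly (you do, implicitly) is that $k\le \log\log N$ forces $16e^{-k}\log N\ge 16$, so in the tail range $|j|>16e^{-k}\log N$ the index $j$ is bounded away from $0$ and the estimate $(1+|j|)^{-100}\ll(1+|j|^{100})^{-1}$ is harmless.
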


\begin{proof}
We proceed similarly to the proof of \cite[Lemma 27]{ArgBouRad2020}, but now with maxima on intervals of general length $e^{-k}$.  With the notations from \cite[Lemma 25]{ArgBouRad2020},  we have
$$
D(\tfrac 12 + \ii t + \ii h_0)^2=\frac{1}{2+\varepsilon}\sum_{h\in\frac{2\pi\mathbb{Z}}{(2+\varepsilon)\log N}}D(\tfrac 12 +\ii t+\ii h)^2 \widehat{V}\big(\frac{(h-h_0)\log N}{2\pi}\big).
$$
Using the triangle inequality and the decay $\widehat{V}(x)\ll_A(1+|x|)^{-A}$ we obtain the result.
\end{proof}

\section{Ballot Theorem} \label{se:GaussianRW}

\subsection{Results}\ Most ideas for the results in this section are due to Bramson.  As we could not find the exact barrier estimates needed in our setting,  this section gives a self-contained and quantitative analogues of some technical results in  \cite{Bra1978,Bra1983} in the setting of Gaussian random walk with arbitrary, comparable,  variance of the increments.\\

\noindent Let $\kappa>0$ be fixed in all this section,  and $(X_i)_{i\geq 1}$ be independent,  real, centered Gaussian randon variables such that $\kappa<\E[|X_i|^2]<\kappa^{-1}$ for all $i$.  For $k\in\mathbb{N}$ we denote $S_k=\sum_{i\leq k} X_i$.

We denote $\PP_{(s,x)}$ for the distribution of the process $(S_k)_{k}$ starting at time $s$ from $x$,  $\PP_{x}=\PP_{(0,x)}$,  $\PP=\PP_0$, and $\PP_{(s,x)}^{(t,y)}$ for the distribution for $(S_k)_{k}$ starting at time $s$ from $x$, and conditioned to end at time $t$ at point $y$. \\

\begin{prop}\label{prop:barrier}
Let $\delta>1/2>\alpha>0$.    Then there exists $c=c(\alpha,\delta,\kappa)$ such that uniformly in the time
$t\geq 1$, $10\leq y\leq t^{1/10}$, 
$a,b\in [1,y-1]$ and uniformly in the functions 
$v_s\geq y+\min(s,t-s)^{\delta}$,  $|u_s|\leq \min(s,t-s)^{\alpha}$,
 we have
$$
\PP_{(0,a)}^{(t,b)}\left( \cap_{0\leq k\leq t}\{u_k\leq S_k\leq v_k\}\right)=
\frac{2ab}{\sigma}\cdot \left(1+\OO_{\alpha,\delta,\kappa}(d^{-c})\right)
$$
where $d=\min(|y-a|,|y-b|,|a|,|b|)$  and $\sigma=\sum_{k\leq t}\E[X_k^2]$.
\end{prop}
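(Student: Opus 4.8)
The plan is to reduce the statement to the classical ballot estimate for a Brownian bridge and then upgrade it, first to a Gaussian random walk with constant variance via the KMT-type coupling, and finally to the variable-variance setting by a deterministic time change. First I would record the Brownian analogue: if $(W_s)_{0\le s\le \sigma}$ is a standard Brownian bridge from $a$ at time $0$ to $b$ at time $\sigma$, then $\PP(W_s > 0 \text{ for all } s)=1-e^{-2ab/\sigma}= \tfrac{2ab}{\sigma}(1+\OO(ab/\sigma))$, and more generally the probability of staying in a slowly-varying corridor $[u_s,v_s]$ with the stated growth $|u_s|\le \min(s,\sigma-s)^\alpha$, $v_s\ge y+\min(s,\sigma-s)^\delta$ differs from the one-sided probability by a relative error that is polynomially small in $d$. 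This last fact is itself proved by a union bound: the probability that $W$ exits above $v_s$ or below $u_s$ somewhere is controlled, via the reflection principle and a dyadic decomposition of the time interval near the endpoints, by $\sum_k e^{-c\,k^{2\delta}/k} + \sum_k e^{-c\,k^{2\alpha}/k}\cdot(\text{bridge density factor})$; since $\delta>1/2$ the first sum is summable and small, while the lower-barrier sum contributes the genuine polynomial loss $\OO(d^{-c})$ once one accounts for the requirement $a,b\ge 1$ and the bridge normalization. The parameter ranges $10\le y\le t^{1/10}$ and $a,b\in[1,y-1]$ are exactly what makes these error terms a negative power of $d=\min(|y-a|,|y-b|,|a|,|b|)$ rather than $O(1)$.

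Second, I would transfer this to the constant-variance Gaussian walk. Here the clean route is a strong embedding: construct $(S_k)$ and a Brownian motion (hence, after conditioning, a Brownian bridge) on a common probability space so that $\max_{k\le t}|S_k - W_k|\ll \log t$ with overwhelming probability (Komlós--Major--Tusnády, or for Gaussian increments simply the partial-sum embedding). Then staying in $[u_k+C\log t, v_k-C\log t]$ for the bridge implies staying in $[u_k,v_k]$ for the walk and conversely; since shrinking/enlarging the corridor by $C\log t = \OO(t^{1/10})=\oo(\text{anything polynomial in }d)$ changes the barrier probability only by a relative $\OO(d^{-c})$ (again by the endpoint union bound above, using $d\le y\le t^{1/10}$ carefully — this is the place where the exponent $1/10$ is used to absorb $\log t$ against powers of $d$), we get the random-walk ballot estimate with the same main term $\tfrac{2ab}{\sigma}$ and a polynomial error. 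One must also check the discrete endpoints: conditioning the walk to be exactly at $b$ at integer time $t$ versus conditioning the bridge — this is handled by a local CLT for the endpoint density, which is exact here since the increments are Gaussian.

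Third, the variable-variance case follows by the deterministic time change $k\mapsto \sigma_k:=\sum_{i\le k}\E[X_i^2]$, which turns $(S_k)$ into (the skeleton of) a Brownian motion sampled at the non-uniform times $\sigma_k$; because $\kappa<\E[X_i^2]<\kappa^{-1}$, the grid $\{\sigma_k\}$ has spacing comparable to $1$, so all the corridor and endpoint estimates go through with constants depending only on $\kappa$ (and $\alpha,\delta$), and the total variance $\sigma=\sigma_t$ is exactly the normalization appearing in the main term $\tfrac{2ab}{\sigma}$. The main obstacle, and the step deserving the most care, is the uniformity of the polynomial error $\OO(d^{-c})$ over the full admissible family of barriers $u_\cdot,v_\cdot$ and over all $a,b\in[1,y-1]$ simultaneously: one needs the union-bound exit estimates near the two endpoints to be genuinely quantitative and to interact correctly with the bridge density $\asymp ab/\sigma$ near the boundary (so that, e.g., starting at $a=1$ does not destroy the estimate), and one must track how the $\OO(\log t)$ coupling error feeds into $d^{-c}$ rather than into an additive constant. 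This is precisely the kind of bookkeeping that Bramson's original arguments leave implicit, and making it explicit with the stated ranges is the crux of the section.
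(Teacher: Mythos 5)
The first and third parts of your plan (the continuous Brownian bridge ballot estimate, and the observation that the total variance $\sigma=\sum_{k\le t}\E[X_k^2]$ supplies the normalization) are consistent with what the paper does. The gap is in the middle step, the discrete-to-continuous transfer. The paper does \emph{not} use a KMT-type coupling, and the $\OO(\log t)$ corridor shift you propose does not give the stated error: enlarging the corridor by $C\log t$ multiplies the one-sided ballot probability $\frac{2ab}{\sigma}$ by roughly $(1+C\log t/a)(1+C\log t/b)$, which is $\asymp(\log t)^2$ when $a,b\asymp 1$. Since $d\le\min(a,b)$ can be of order $1$ while $t$ is unbounded, this is not $1+\OO(d^{-c})$ with a uniform implicit constant; the $1/10$ in $y\le t^{1/10}$ does not rescue this because it relates $y$ to $t$, not $d$ to $t$. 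This is a genuine obstacle, not bookkeeping, and it is why the paper avoids KMT entirely. Because the increments are exactly Gaussian, the walk is realized exactly as a time-changed Brownian motion $S_k=B_{\sigma_k}$ (no coupling error). The lower bound is then immediate, and the upper bound requires estimating the probability that the bridge dips below the barrier strictly between consecutive sample times; Lemma~\ref{lem:constantBar} bounds this by a union bound over the unit intervals $[k,k+1]$, weighted by the bridge transition density, yielding a \emph{polynomial-in-$d$} relative error $\frac{ab}{t}\,d^{-1/4+\varepsilon}$ rather than anything involving $\log t$. The reduction to $a=b$ and the handling of general $a,b$ then rests on the monotone coupling observation~(\ref{eqn:monotone}) and the one-sided estimates of Lemmas~\ref{lem:freeEndBallot}--\ref{lem:upperBallot}, the former of which uses an FKG-type correlation inequality.

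The second place your sketch is too thin is the lower barrier. A conditioned walk near its starting point is at height $\asymp\sqrt{s}$, so the probability of dipping below a positive barrier $u_s\asymp s^\alpha$ at a specific small time $s$ is \emph{not} small; a naive union bound over times does not produce a small relative correction. The paper's Lemma~\ref{lem:BM2} instead performs a Cameron--Martin (Girsanov) shift that straightens the lower barrier to the constant level $-d^{1-\varepsilon}$, and bounds the Radon--Nikodym density \emph{deterministically} on the event of staying in the corridor; the exponent is $\OO(d^{-c})$, hence a multiplicative factor $1+\OO(d^{-c})$. The same idea appears in the upper-bound argument for the random walk, where the tilt by $\bar u$ is followed by a union bound over large excursions to control the exponential weight. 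Without the Girsanov tilt (or an equivalent mechanism) your plan does not give a uniform polynomial error in $d$ for the lower barrier.
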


\subsection{Preliminaries on Brownian motion}
We denote $\PP_{(s,x)}$ for the distribution of the Brownian motion starting at time $s$ from $x$,  $\PP_{x}=\PP_{(0,x)}$,  $\PP=\PP_0$, and $\PP_{(s,x)}^{(t,y)}$ for the distribution for the Brownian bridge starting at time $s$ from $x$, ending at time $t$ at point $y$.  Context will avoid confusion with the notation $\PP$ from Proposition \ref{prop:barrier} as the Gaussian random walk will always be denoted $S$, and the Brownian motion $B$.

For such a trajectory $B$, let  $M_t=\max_{0\leq s\leq t}B_s$,  $m_t=\min_{0\leq s\leq t}B_s$.

\begin{lemma}\label{lem:explicit}
Let $x,y>0$. Then 
$$
\PP_{(0,x)}^{(t,y)}\left(m_t\geq 0\right)=1-e^{-\frac{2xy}{t}}.
$$
\end{lemma}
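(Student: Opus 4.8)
The plan is to reduce this bridge computation to the classical reflection principle for Brownian motion killed at the origin. First I would record the killed transition density: writing $T_0=\inf\{s\ge 0:B_s=0\}$, for $x,y>0$ the reflection principle gives
$$
\PP_{(0,x)}\big(B_t\in\dd y,\ T_0>t\big)=\frac{1}{\sqrt{2\pi t}}\Big(e^{-(x-y)^2/(2t)}-e^{-(x+y)^2/(2t)}\Big)\,\dd y .
$$
Indeed, $\PP_{(0,x)}(B_t\in\dd y,\ T_0\le t)=\PP_{(0,x)}(B_t\in\dd(-y))$, since reflecting a path across $0$ after time $T_0$ is a measure-preserving involution on the set of paths hitting $0$, and every path started at $x>0$ and ending near $-y<0$ must hit $0$; evaluating $\PP_{(0,x)}(B_t\in\dd(-y))=(2\pi t)^{-1/2}e^{-(x+y)^2/(2t)}\dd y$ and subtracting from the free Gaussian density yields the displayed formula.

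Next I would pass to the bridge. Under $\PP_{(0,x)}^{(t,y)}$ with $x,y>0$ the events $\{m_t\ge 0\}$, $\{m_t>0\}$ and $\{T_0>t\}$ agree up to a null set, so it suffices to compute $\PP_{(0,x)}^{(t,y)}(T_0>t)$. By the defining disintegration of the Brownian bridge over its endpoint, for every event $A$ measurable with respect to the path on $[0,t]$ one has $\PP_{(0,x)}(A,\ B_t\in\dd y)=\PP_{(0,x)}^{(t,y)}(A)\,\PP_{(0,x)}(B_t\in\dd y)$; taking $A=\{T_0>t\}$ and dividing by the free transition density gives
$$
\PP_{(0,x)}^{(t,y)}(T_0>t)=\frac{e^{-(x-y)^2/(2t)}-e^{-(x+y)^2/(2t)}}{e^{-(x-y)^2/(2t)}}=1-e^{-\big((x+y)^2-(x-y)^2\big)/(2t)}=1-e^{-2xy/t},
$$
which is the claim.

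I do not expect a genuine obstacle here; the reflection argument is entirely standard. The only point meriting a line of care is the measure-theoretic equality $\{m_t\ge 0\}=\{T_0>t\}$ modulo bridge-null sets, i.e. that a bridge path started and ended strictly inside $(0,\infty)$ almost surely does not merely graze $0$. This follows from the fact that standard Brownian motion started at $0$ satisfies $\inf_{0\le s\le\varepsilon}B_s<0$ almost surely for every $\varepsilon>0$, applied via the strong Markov property at $T_0$ together with the absolute continuity of the bridge law with respect to Brownian motion on any interval $[0,t-\varepsilon]$.
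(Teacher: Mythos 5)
Your proof is correct and is essentially the same as the paper's: both rest on the reflection principle for Brownian motion, with the passage to the bridge being a ratio of transition densities (the paper phrases this as a limit of a ratio of small-interval probabilities, you phrase it via the disintegration of Wiener measure over the endpoint, but these are the same computation). The extra remark on the null-set equality $\{m_t\ge 0\}=\{T_0>t\}$ is a harmless refinement the paper leaves implicit.
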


\begin{proof}
From the reflection principle,  for any measurable $A\subset(0,\infty)$,
$$
\PP_x\left(m_t\geq 0,B_t\in A\right)=\PP_x\left(B_t\in A\right)-\PP_x\left(B_t\in -A\right).
$$
This implies
$$
\PP_{(0,x)}^{(t,y)}\left(m_t\geq 0\right)=1-\lim_{\varepsilon\to 0}\frac{\PP_x\left(B_t\in -[y,y+\e]\right)}{\PP_x\left(B_t\in [y,y+\e]\right)}=1-e^{-\frac{2xy}{t}},
$$
concluding the proof.
\end{proof}

\begin{lemma}\label{lem:reflection}
Let $a,c>0$ and $A\subset [-c,a]$ be measurable. Then
$$
\PP\left(M_t\leq a,m_t\geq -c,B_t\in A\right)\geq \PP\left(m_t\geq -c,B_t\in A\right)-\PP\left(B_t\in A-2a\right).
$$
\end{lemma}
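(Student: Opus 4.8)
The plan is to deduce Lemma~\ref{lem:reflection} from the reflection principle applied at the first hitting time of the level $a$, mirroring the classical argument that one uses to compute two-sided barrier probabilities for Brownian motion. Write $\tau_a=\inf\{s\le t: B_s=a\}$ for the first time the Brownian motion reaches the upper barrier. On the event $\{M_t>a\}$ we have $\tau_a\le t$, and by the strong Markov property and the reflection principle the path $(B_s)_{s\ge\tau_a}$ reflected about the line $a$ has the same law as the unreflected path. Concretely, I would write
\begin{align*}
\PP\left(M_t\le a,\, m_t\ge -c,\, B_t\in A\right)
&= \PP\left(m_t\ge -c,\, B_t\in A\right) - \PP\left(M_t> a,\, m_t\ge -c,\, B_t\in A\right),
\end{align*}
so the task reduces to bounding the subtracted term from above by $\PP(B_t\in A-2a)$.

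For that upper bound, on the event $\{M_t>a\}$ consider the reflected path $\widetilde B_s=B_s$ for $s\le\tau_a$ and $\widetilde B_s=2a-B_s$ for $s\ge\tau_a$; by the reflection principle $\widetilde B$ is again a standard Brownian motion started at $0$, and $\widetilde B_t=2a-B_t$. Since $A\subset[-c,a]$, if $B_t\in A$ then $\widetilde B_t=2a-B_t\in 2a-A=\{2a-x:x\in A\}$. Dropping the constraints $m_t\ge -c$ and the requirement $\tau_a\le t$ (equivalently, that $\widetilde B$ ever reaches $a$), and noting that $2a-A$ is obtained from $A-2a$ by the symmetry $x\mapsto -x$ which preserves Brownian law at a fixed time, we get
\[
\PP\left(M_t> a,\, m_t\ge -c,\, B_t\in A\right) \le \PP\left(B_t\in 2a-A\right) = \PP\left(-B_t\in A-2a\right)=\PP\left(B_t\in A-2a\right).
\]
Combining this with the identity of the previous paragraph gives exactly the claimed inequality.

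The only mildly delicate point is handling the measure-theoretic bookkeeping at the stopping time $\tau_a$ and making sure the reflection principle is applied to the correct event (including the possibility that $B_t\in A$ already forces $M_t\le a$ except on a set where $\tau_a\le t$), but this is entirely standard and can be carried out either via the strong Markov property directly or by a limiting/discretization argument. I do not anticipate any real obstacle here; the lemma is a one-sided (inequality) version of the classical double-barrier formula, and the inequality direction is precisely what lets us discard the lower-barrier constraint $m_t\ge -c$ and the reachability of $a$ for free. I would also remark that the same computation with an exact double reflection would give a matching lower bound, but only the stated inequality is needed in the sequel.
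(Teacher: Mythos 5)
Your proof is correct and takes essentially the same route as the paper: you complement the event $\{M_t\leq a\}$, drop the lower-barrier constraint $m_t\geq -c$ to get the inequality, and apply the reflection principle at the level $a$ (followed by the $x\mapsto -x$ symmetry to pass from $2a-A$ to $A-2a$). The paper's own proof is the same three-line argument, written without the explicit $\tau_a$ / reflected-path notation.
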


\begin{proof}
The above left-hand side is
$$
\PP\left(m_t\geq -c,B_t\in A\right)-\PP\left(M_t\geq a,m_t\geq -c,B_t\in A\right)
\geq \PP\left(m_t\geq -c,B_t\in A\right)-\PP\left(M_t\geq a,B_t\in A\right).
$$
From the reflection principle,  this last probability  is $\PP\left(B_t\in 2a-A\right)$.
\end{proof}

\begin{lemma}\label{lem:BM1}
Let $\delta>1/2$,  $v_s\geq y+\min(s,t-s)^{\delta}$.  Let $c\in(0,2-\frac{1}{\delta})$.  Then uniformly in
$t\geq 0$, $2\leq y\leq t^{1/10}$,  $a,b\in [1,y-1]$ we have
$$
\PP_{(0,a)}^{(t,b)}\left( \cap_{0\leq s\leq t}\{0\leq B_s\leq v_s\}\right)=\PP_{(0,a)}^{(t,b)}\left(m_t\geq 0\right)\cdot \left(1+\OO(e^{-\min(|y-a|,|y-b|)^c})\right).
$$
\end{lemma}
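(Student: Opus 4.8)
\textbf{Proof plan for Lemma \ref{lem:BM1}.}

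The plan is to compare the conditioned event $\cap_{0\le s\le t}\{0\le B_s\le v_s\}$ with the simpler event $\{m_t\ge 0\}$, which carries the whole exponential order $\asymp 2ab/t$ via Lemma \ref{lem:explicit}; the upper barrier $v_s$ should cost only a multiplicative factor $1+\OO(e^{-\min(|y-a|,|y-b|)^c})$, since $v_s$ sits at height $\ge y$ while the bridge, conditioned to stay positive, typically lives at height $\OO(\sqrt{\min(s,t-s)})$, far below $y\ge 2$ (and $a,b\le y-1$). The upper bound $\PP_{(0,a)}^{(t,b)}(\cap_s\{0\le B_s\le v_s\})\le \PP_{(0,a)}^{(t,b)}(m_t\ge 0)$ is trivial, so the content is the matching lower bound: I must show $\PP_{(0,a)}^{(t,b)}(m_t\ge 0,\ \exists s:\ B_s>v_s)$ is smaller than the main term by a factor $e^{-\min(|y-a|,|y-b|)^c}$.

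First I would split the time interval $[0,t]$ at the midpoint (or at $t/2$) and, by symmetry of the Brownian bridge, reduce to controlling an excursion above $v_s$ on $[0,t/2]$, where $v_s\ge y+s^{\delta}$. The key step is a union bound over dyadic scales: on the block $s\in[2^{k-1},2^k]$ the barrier is at height $\ge y+2^{(k-1)\delta}$, while the bridge pinned at $a$ at time $0$ and at $b$ at time $t$ has, at any point of that block, Gaussian fluctuations of standard deviation $\ll 2^{k/2}$ around a mean that is a convex combination of $a$ and $b$, hence $\le y-1$. So reaching height $y+2^{(k-1)\delta}$ from a typical height costs, by a Gaussian tail bound (and the reflection principle / Lemma \ref{lem:reflection} to handle the running max on the block), a probability $\ll \exp(-c'(2^{(k-1)\delta}+ (y-a))^2/2^k)$. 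Because $\delta>1/2$, the exponent $2^{2(k-1)\delta}/2^k=2^{(2\delta-1)k}\cdot 2^{-2\delta}$ grows in $k$, so the sum over $k$ converges and is dominated by the smallest scales; at the smallest scale the relevant deviation is from height $a$ up to height $\ge y$, i.e.\ a gap of $|y-a|$ over time $\OO(1)$, giving the bound $\exp(-c|y-a|^{2})$ — more than enough, in fact, since the claim only asks for $\exp(-|y-a|^{c})$ with $c<2-\tfrac1\delta<2$. The condition $c<2-\tfrac1\delta$ arises precisely from balancing the Gaussian cost $2^{2\delta k}/2^{k}$ at scale $2^k$ against the requirement that the worst scale still beats $e^{-d^{c}}$ with $d=\min(|y-a|,|y-b|)$; optimizing over which dyadic block $k$ is cheapest to violate the barrier in yields the exponent $2-1/\delta$.

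To make the dyadic estimate rigorous I would condition on the bridge value at the two endpoints $2^{k-1},2^k$ of each block (which are themselves Gaussian of the right mean and variance under $\PP_{(0,a)}^{(t,b)}$), integrate the block-level excursion probability against those endpoint densities, and use Lemma \ref{lem:reflection} to pass from ``$B_s>v_s$ somewhere in the block'' to an event expressible via $M$ of a Brownian bridge on the block, whose law is explicit. Summing the resulting geometric-type series in $k$ (using $\delta>1/2$ for convergence, and $y\le t^{1/10}$ so that the number of dyadic blocks $\OO(\log t)$ is negligible against the exponential gain) gives the total error $\OO(e^{-d^{c}})$ relative to the main term $\PP_{(0,a)}^{(t,b)}(m_t\ge 0)\asymp 2ab/t$, and dividing through yields the claimed multiplicative form. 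The main obstacle I anticipate is bookkeeping the two-sided conditioning cleanly: one must verify that conditioning on $\{m_t\ge 0\}$ does not itself push the bridge up enough to make the upper barrier non-negligible — this is why it is important that the barrier is at height $y$ with a \emph{growing} margin $\min(s,t-s)^{\delta}$, $\delta>1/2$, rather than merely $\sqrt{s}$, and why one gets to choose $c$ strictly below $2-1/\delta$ rather than up to it.
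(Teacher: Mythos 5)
Your proposal is essentially the paper's argument: since the two-sided event is contained in $\{m_t\ge 0\}$, the only content is the lower bound, i.e.\ showing $\PP_{(0,a)}^{(t,b)}(m_t\ge 0,\ \exists s:\ B_s>v_s)$ is smaller than $\PP_{(0,a)}^{(t,b)}(m_t\ge 0)\asymp ab/t$ by a factor $e^{-d^c}$, via a union bound over time blocks, Lemma~\ref{lem:reflection} (reflection) on each block, Gaussian tails, and summability from $\delta>1/2$. The paper uses unit-length blocks $[k,k+1]$ and -- a small but useful simplification -- applies Lemma~\ref{lem:reflection} on the full initial interval $[0,k+1]$ rather than on the block alone, so that only the single intermediate value $B_{k+1}=x$ needs to be integrated over (the other endpoint of the interval being the fixed start $a$); the one-sided cost $e^{-2(v_k-a)(v_k-x)/(k+1)}$ is then paired with $\PP_{(k+1,x)}^{(t,b)}(m\ge 0)\ll xb/t$ and the explicit bridge density of $B_{k+1}$. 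Your dyadic version, which conditions on both endpoints of each block $[2^{k-1},2^k]$, can be made to work but carries somewhat heavier bookkeeping without changing the argument in substance.

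One statement in your heuristic should be corrected. You write that the sum over blocks ``is dominated by the smallest scales'' and gives $\exp(-c|y-a|^2)$. That is not so. Writing $d=y-a$ and $u$ for the time-scale of a block, the block cost is $\approx\exp(-\tilde c\,(d+u^\delta)^2/u)$. Minimizing the exponent over $u$ places the optimum at $u\sim d^{1/\delta}$ (where the barrier excess $u^\delta$ becomes comparable to $d$), where the cost equals $\exp(-\tilde c'\,d^{2-1/\delta})$, which is strictly \emph{larger} than the unit-scale cost $\exp(-\tilde c\,d^2)$ since $2-1/\delta<2$. So the sum is dominated by an intermediate scale, not the smallest one, and the achievable exponent is exactly $2-1/\delta$; your next sentence (``optimizing over which dyadic block $k$ is cheapest to violate the barrier in yields the exponent $2-1/\delta$'') already has this right. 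The paper makes this balance explicit by splitting the $k$-sum at $k^\delta\lessgtr|y-a|$ and taking $k\approx|y-a|^{1/\delta}$ as the worst term of the first range. Drop the ``smallest scale dominates, $e^{-cd^2}$'' claim, as it advertises a bound sharper than what this argument actually yields.
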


\begin{proof}
In this proof we abbreviate $B\geq 0$ for $m_t\geq 0$ and start with
\begin{align*} 
&\PP_{(0,a)}^{(t,b)}\left(B\geq 0,\exists s:B_s>v_s\right)\leq \\
& \sum_{k=0}^{t/2}\left(\PP_{(0,a)}^{(t,b)}\left(B\geq 0,\exists s\in[k,k+1]:B_s>v_k\right)
+\PP_{(0,b)}^{(t,a)}\left(B\geq 0,\exists s\in[k,k+1]:B_s>v_k\right).
\right)
\end{align*}
The first probability above is smaller than
$$
\PP_{(0,a)}^{(t,b)}\left(B\geq 0,\exists s\in[0,k+1]:B_s>v_k\right)
=\PP_{(0,a)}^{(t,b)}\left(B\geq 0\right)-\PP_{(0,a)}^{(t,b)}\left(B\geq 0,\max_{[0,k+1]}B<v_k\right).
$$

We write
\begin{multline*}
\PP_{(0,a)}^{(t,b)}\left(B\geq 0,\max_{[0,k+1]}B<v_k\right)\\=
\int_0^{v_k}\PP_{(0,a)}^{(k+1,x)}\left(B\geq 0,\max_{[0,k+1]}B<v_k\right)
\PP_{(k+1,x)}^{(t,b)}\left(B\geq 0\right) \PP_{(0,a)}^{(t,b)}(B_{k+1}\in\rd x).
\end{multline*}
The first probability in this integral is estimated with Lemma \ref{lem:reflection}:
$$
\PP_{(0,a)}^{(k+1,x)}\left(B\geq 0,\max_{[0,k+1]}B<v_k\right)\geq  \PP_{(0,a)}^{(k+1,x)}\left(B\geq 0\right)-e^{-\frac{2(v_k-a)(v_k-x)}{k+1}}.
$$
This gives 
\begin{multline*}
\PP_{(0,a)}^{(t,b)}\left(B\geq 0,\exists s\in[k,k+1]:B_s>v_k\right)\\ \leq
\int_0^{v_k}e^{-\frac{2(v_k-a)(v_k-x)}{k+1}}
\PP_{(k+1,x)}^{(t,b)}\left(B\geq 0\right) \PP_{(0,a)}^{(t,b)}(B_{k+1}\in\rd x).
\end{multline*}
From Lemma \ref{lem:explicit}, we have $\PP_{(k+1,x)}^{(t,b)}\left(B\geq 0\right)\leq 5\frac{xb}{t}$.
Moreover,   $\PP_{(0,a)}^{(t,b)}(B_{s}\in\rd x)=\PP_{(0,0)}^{(t,0)}(B_{s}\in\rd x+x_s)=\frac{e^{-(x-x_s)^2/(2w_s)}}{\sqrt{2\pi w_s}}\rd x$ where $w_s=s(t-s)/t$, $x_s=(1-\frac{s}{t})a+\frac{s}{t}b$.
This gives
$$
\PP_{(0,a)}^{(t,b)}\left(B\geq 0,\exists s\in[k,k+1]:B_s>v_k\right)\leq C
\frac{b}{t}\int_0^{v_k}e^{-\frac{2(v_k-a)(v_k-x)}{k+1}}
x \frac{e^{-\frac{(x-x_{k+1})^2}{2w_{k+1}}}}{\sqrt{w_{k+1}}}
\rd x.
$$
In the above integral,  the contribution from $|x-x_{k+1}|>v_k/3$ is bounded with
\begin{equation}\label{eqn:error1}
\int_{|x-x_{k+1}|>v_k/3}(x_{k+1}+|x-x_{k+1}|)\frac{e^{-\frac{(x-x_{k+1})^2}{2w_{k+1}}}}{\sqrt{w_{k+1}}}\rd x\leq C x_{k+1} e^{-\frac{v_k^2}{100w_{k+1}}}.
\end{equation}
The regime $|x-x_{k+1}|<v_k/3$ gives the error 
\begin{equation}\label{eqn:error2}
\int_{|x-x_{k+1}|<v_k/3}e^{-\frac{(k^\delta+|y-a|)k^\delta}{k+1}}x\frac{e^{-\frac{(x-x_{k+1})^2}{2w_{k+1}}}}{\sqrt{w_{k+1}}}\rd x\leq C x_{k+1} e^{-\frac{(k^\delta+|y-a|)k^\delta}{k+1}}.
\end{equation}
We first bound the sum of the error terms from (\ref{eqn:error2}) as $0\leq k\leq t/2$. 
For $k^\delta<|y-a|$,  from the hypothesis $y<t^{1/10}$ we have $x_{k+1}<a+1<2a$, so that
$$
\sum_{k^\delta<|y-a|} x_{k+1} e^{-\frac{(k^\delta+|y-a|)k^\delta}{k+1}}\leq 2a \sum_{k^\delta<|y-a|}  e^{-\frac{|y-a|k^\delta}{k+1}}\leq 2a |y-a|^{1/\delta}e^{-|y-a|^{2-\frac{1}{\delta}}}=a\,\OO(e^{-|y-a|^c}).
$$
For $k^\delta>|y-a|$, we obtain
\begin{multline*}
\sum_{k\geq |y-a|^{1/\delta}}x_k e^{-k^{2\delta-1}}\leq \sum_{k\geq |y-a|^{1/\delta}}(a+\frac{kb}{t})e^{-k^{2\delta-1}}
\leq (a+1)\sum_{k\geq |y-a|^{1/\delta}}k e^{-k^{2\delta-1}}\\
\leq C_\delta(a+1)\int_{v>|y-a|^{2-\frac{1}{\delta}}}v^{\frac{3-2\delta}{2\delta-1}} e^{-v}\rd v=a\,\OO(e^{-|y-a|^c}).
\end{multline*}
The same estimate can be obtained for the sum over $k$ from (\ref{eqn:error1}).  We have thus obtained
$$
\PP_{(0,a)}^{(t,b)}\left( \cap_{0\leq s\leq t}\{0\leq B_s\leq v_s\}\right)=\PP_{(0,a)}^{(t,b)}\left(m_t\geq 0\right)+\OO\left(\frac{ab}{t}e^{-\min(|y-a|,|y-b|)^c}\right).
$$
The result follows from the above estimate and Lemma \ref{lem:explicit}.
\end{proof}

\begin{lemma}\label{lem:BM2}
Let $\delta>1/2>\alpha>0$.  Then there exists $c=c(\alpha,\delta)$, such that, uniformly in
$t\geq 0$, $y\geq 10$,  $a,b\in [1,y-1]$,  and uniformly in the functions $v_s\geq y+\min(s,t-s)^{\delta}$,  $|u_s|\leq \min(s,t-s)^{\alpha}$, we have
$$
\PP_{(0,a)}^{(t,b)}\left( \cap_{0\leq s\leq t}\{u_s\leq B_s\leq v_s\}\right)\geq \PP_{(0,a)}^{(t,b)}\left(m_t\geq 0\right)\cdot 
\left(1+\OO(d^{-c})\right),
$$
where $d=\min(|y-a|,|y-b|,|a|,|b|)$.
\end{lemma}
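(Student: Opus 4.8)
The plan is to reduce the lower barrier $u_s$ estimate (Lemma~\ref{lem:BM2}) to the already-proven one-sided estimate of Lemma~\ref{lem:BM1} by a union-bound over the times at which the path could violate the lower barrier, following the same structure as the proof of Lemma~\ref{lem:BM1} but now sweeping the barrier from below. First I would fix the bridge $\PP_{(0,a)}^{(t,b)}$ and note that since $a,b\geq 1>0$, the event $\{m_t\geq 0\}$ already enforces $B_s\geq 0\geq -|u_s|$ fails only slightly — more precisely, $\{u_s\leq B_s\}$ can only fail where $u_s>0$, i.e.\ on the at most $t$ unit intervals $[k,k+1]$ with $\min(k,t-k)^\alpha\geq B_s$ for some $s$ in that interval. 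So it suffices to bound, for each such $k\leq t/2$ (and symmetrically from the other end),
$$
\PP_{(0,a)}^{(t,b)}\bigl(m_t\geq 0,\ \exists s\in[k,k+1]:B_s<u_k\bigr),
$$
where I use monotonicity $u_s\le u_k+1$ on $[k,k+1]$ to replace the moving barrier by the constant $u_k$ (up to harmless additive constants absorbed into $\alpha<1/2$).

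The key step is to estimate this probability the same way the overshoot term was estimated in Lemma~\ref{lem:BM1}: condition on $B_{k+1}=x$, split $\PP_{(0,a)}^{(k+1,x)}(m_{k+1}\geq 0,\min_{[0,k+1]}B<u_k)$ using a reflection-type bound (Lemma~\ref{lem:reflection} applied to $-B$, or directly the explicit formula of Lemma~\ref{lem:explicit}) to produce a factor $e^{-2(a-u_k)(x-u_k)/(k+1)}$ — note $a-u_k\geq 1$ and for the relevant $x$ this is bounded below by a positive quantity once $k^\alpha$ is small relative to $a$ — and then use $\PP_{(k+1,x)}^{(t,b)}(m_t\geq 0)\leq 5xb/t$ and the explicit Gaussian bridge density $\PP_{(0,a)}^{(t,b)}(B_{k+1}\in\rd x)=e^{-(x-x_{k+1})^2/(2w_{k+1})}/\sqrt{2\pi w_{k+1}}\,\rd x$ with $w_s=s(t-s)/t$, $x_s=(1-s/t)a+(s/t)b$, exactly as in Lemma~\ref{lem:BM1}. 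The resulting integral in $x$ splits into a tail piece $|x-x_{k+1}|$ large, giving $\OO(x_{k+1}e^{-c v_k^2/w_{k+1}})$, and a main piece contributing $\OO(x_{k+1}e^{-c(a^2\vee k^{2\alpha})/(k+1)})$-type terms; summing these over $k\leq t/2$ and using $a\geq 1$, $|y-a|,|y-b|,|a|,|b|\geq d$, together with the constraint $\alpha<1/2$ to ensure the exponents $k^{2\alpha}/(k+1)$ do not blow up, produces a total error $\OO(d^{-c})$ after dividing by $\PP_{(0,a)}^{(t,b)}(m_t\geq 0)\asymp ab/t$ via Lemma~\ref{lem:explicit}. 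Combined with Lemma~\ref{lem:BM1} for the upper barrier $v_s$, this yields
$$
\PP_{(0,a)}^{(t,b)}\bigl(\cap_{0\leq s\leq t}\{u_s\leq B_s\leq v_s\}\bigr)\geq \PP_{(0,a)}^{(t,b)}(m_t\geq 0)\bigl(1+\OO(d^{-c})\bigr).
$$

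The main obstacle I anticipate is handling the regime where $a$ (or $b$) is small, of order $d$ itself: there the lower barrier at times $s$ of size $\asymp d^{1/\alpha}$ can actually touch or exceed the starting height $a$, so the factor $a-u_k\geq 1$ is all I can say and the decay from $e^{-2(a-u_k)(x-u_k)/(k+1)}$ becomes weak precisely when the prefactor $x_{k+1}\asymp a$ is also small — I need to verify the product still decays fast enough in $d$. This is exactly the same tension resolved in Lemma~\ref{lem:BM1} (there the roles of $|y-a|$ vs.\ $a$ are swapped), so I expect the two separate sums — over $k$ with $k^{2\alpha}<a$ and over $k$ with $k^{2\alpha}\geq a$ — to both be $a\cdot\OO(e^{-d^c})$ by the same Gamma-integral comparison used there, but writing this carefully while keeping the error uniform in $t$ and in the allowed barrier functions is the delicate point. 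The constant $c=c(\alpha,\delta)$ emerges from the exponents $2-1/\delta>0$ (upper barrier) and $2-1/(2\alpha)$ or similar (lower barrier), both strictly positive by the hypotheses $\delta>1/2$ and $\alpha<1/2$.
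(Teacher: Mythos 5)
Your approach is genuinely different from the paper's, and I believe it has a fatal gap.

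The paper does \emph{not} handle the lower barrier by a union bound over time intervals. Instead it performs a Cameron--Martin (Girsanov) change of measure: it builds a smooth function $\bar u_s$ that dominates the lower barrier, coincides with $\min(s,t-s)^\alpha$ away from the endpoints, and is linearly continued to the value $(1-\alpha)d^{1-\varepsilon}$ at $s=0,t$. Shifting $B_s\mapsto B_s-(\bar u_s-\bar u_0)$ replaces the sloping lower barrier by the flat constant $d^{1-\varepsilon}$, at the cost of a Radon--Nikodym factor $\exp(-\int\dot{\bar u}\,\rd B-\tfrac12\int\dot{\bar u}^2\,\rd s)$. The deterministic bound on $\int\dot{\bar u}^2\,\rd s$ gives a cost $\OO(d^{-(1/\alpha-2)(1-\varepsilon)})$, and the stochastic integral is bounded \emph{deterministically after an integration by parts using the upper barrier constraint}; then one invokes Lemma~\ref{lem:BM1} for the remaining flat-bottom corridor. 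That last step — exploiting the upper barrier to bound the drift integral pathwise — has no counterpart in your sketch, and is the reason the two barriers must be treated jointly rather than one at a time.

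The reason your union-bound-plus-reflection plan fails is a summability obstruction which is exactly opposite to the one you cite as reassuring. In the proof of Lemma~\ref{lem:BM1}, the reflection error at step $k$ is governed by $e^{-c(v_k-a)(v_k-x)/(k+1)}$ with $v_k-a\gtrsim k^\delta$, so the exponent behaves like $k^{2\delta-1}$; because $\delta>1/2$ this tends to $+\infty$ and the sum over $k$ converges with room to spare. For the lower barrier the analogous factor is $e^{-c(a-u_k)(x-u_k)/(k+1)}$ (or, using Lemma~\ref{lem:explicit}, the exact difference $e^{-2(a-u_k)(x-u_k)/(k+1)}-e^{-2ax/(k+1)}$). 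But $a-u_k\leq a$ is \emph{bounded}, $u_k=k^\alpha$ rises toward and past $a$, and $x-u_k$ over the bulk of the $x$-integral is of order $\sqrt{k}$ at most, so the exponent behaves at best like $k^{2\alpha-1}$, which \emph{tends to $0$} because $\alpha<1/2$. Your per-$k$ bound therefore does not decay in $k$, and the union bound over $k$ diverges (one can check that a more careful localization of the dip to the window $[k,k+1]$ only improves the per-$k$ contribution to order $k^{2\alpha-3/2}$, which is still not summable once $\alpha\geq 1/4$ — strictly inside the allowed range). Relatedly, your predicted error $a\cdot\OO(e^{-d^c})$ is stronger than what is true: the correct error is only polynomial in $d$ (this is visible in the statement itself, $1+\OO(d^{-c})$), reflecting the polynomial Girsanov cost $d^{-(1/\alpha-2)(1-\varepsilon)}$; and your proposed exponent ``$2-1/(2\alpha)$'' is in fact negative for $\alpha<1/4$, so it cannot be the right constant. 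The missing idea is precisely the Girsanov shift: it removes the slope of the lower barrier globally, rather than trying to beat it interval by interval.
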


\begin{proof}
Without loss of generality we can assume $\alpha+\delta<1$. We also pick $\varepsilon\in(0,1)$ and write $d_0=d^{1-\varepsilon}$. Let $s_1,s_2$ be the solutions of $s_1^\alpha=d_0$, $(t-s_2)^\alpha=d_0$.
Let 
\begin{equation}\label{eqn:ubar}
\bar u_s=(d_0+(s-s_1)\alpha s_1^{\alpha-1})\mathbf{1}_{s^\alpha<d_0}+(d_0-(s-s_2)\alpha (t-s_2)^{\alpha-1})\mathbf{1}_{(t-s)^\alpha<d_0}+\min(s,t-s)^\alpha\mathbf{1}_{s_1<s<s_2}.
\end{equation}
In other words,  $\bar u$ is the function coinciding with $\min(s,t-s)^\alpha$ on $(s_0,s_1)$ and linearly expanded on the complement, with continuous derivative. Note that $\bar u_0=\bar u_t=(1-\alpha) d_0$. We also denote
$\bar v_s=(1-\frac{s}{t})a+\frac{s}{t}b+\min(s,t-s)^{\delta}+d$.  We have
$$
\PP_{(0,a)}^{(t,b)}\left( \cap_{0\leq s\leq t}\{u_s\leq B_s\leq v_s\}\right)\geq \PP_{(0,a)}^{(t,b)}\left( \cap_{0\leq s\leq t}\{\bar u_s\leq B_s\leq \bar v_s\}\right).
$$
The Cameron-Martin formula gives
$$
\PP_{(0,a)}^{(t,b)}\left( \cap_{0\leq s\leq t}\{\bar u_s\leq B_s\leq \bar v_s\}\right)
=
\E_{(0,a)}^{(t,b)}\left[
e^{-\int_0^t \dot {\bar u}_s\rd B_s-\frac{1}{2}\int_0^t\dot {\bar u}_s^2\rd s}\mathbf{1}_{\cap_{0\leq s\leq t}\{d^{1-\varepsilon}\leq B_s\leq \bar v_s-\int_0^u \dot {\bar u}_u\rd u\}}
\right],
$$
where we denote the derivative in $s$ of $f$ by $\dot f$.
We now bound both terms in the measure bias,  deterministically.   First, 
$$
\int_0^t \dot {\bar u}_s^2\rd s\leq C \int_{s>s_1} s^{2\alpha-2}\rd s+C\int_{s<s_1}s_1^{2\alpha-2}\rd s\leq C d^{-(\frac{1}{\alpha}-2)(1-\varepsilon)}.
$$
Moreover,  by integration by parts  we have (using the fact that $\bar u$ has continuous derivative)
$$
-\int_0^t \dot {\bar u}_s\rd B_s=\int_0^t B_s \ddot {\bar u}_s\rd s-B_t \dot {\bar u}_t+B_0 \dot {\bar u}_0=\int_0^t \big(B_s-((1-\frac{s}{t})B_0+\frac{s}{t}B_t)\big) \ddot {\bar u}_s\rd s.
$$
On the set $\cap_{0\leq s\leq t}\{B_s\leq {\bar v}_s\}$, we therefore have the deterministic bound
$$
-\int_0^t \dot {\bar u}_s\rd B_s\geq- \int_{s^\alpha>d^{1-\varepsilon}}(d^{1-\varepsilon}+s^\delta)s^{\alpha-2}\rd s\geq -C (d^{-(\frac{1}{\alpha}-2)(1-\varepsilon)}+d^{(\frac{\delta}{\alpha}+1-\frac{1}{\alpha})(1-\varepsilon)}).
$$
We have therefore proved
\begin{multline*}
\PP_{(0,a)}^{(t,b)}\left( \cap_{0\leq s\leq t}\{\bar u_s\leq B_s\leq \bar v_s\}\right)\\\geq 
\exp\left(\OO(d^{-(\frac{1}{\alpha}-2)(1-\varepsilon)}+d^{-\frac{1-\alpha-\delta}{\alpha}(1-\varepsilon)})\right)
\PP_{(0,a)}^{(t,b)}\left( \cap_{0\leq s\leq t}\{d^{1-\varepsilon}\leq B_s\leq v_s-\int_0^s\dot {\bar u}_u\rd u\}\right).
\end{multline*}
The desired lower bound follows by using Lemma \ref{lem:BM1}, noting that $\frac{(a-d^{1-\varepsilon})(b-d^{1-\varepsilon})}{t}=\frac{ab}{t}(1+\OO(d^{-\varepsilon}))$. 
\end{proof}

\subsection{Proofs of barrier estimates for the random walk, Proposition \ref{prop:barrier}.}
The lower bound is a direct consequence of Lemma \ref{lem:BM2} and Lemma \ref{lem:explicit}.

For the upper bound,  we only need to bound
$\PP_{(0,a)}^{(t,b)}\left( \cap_{0\leq k\leq t}\{-\bar u_k\leq S_k\}\right)$,
where $\bar u$ is defined in \eqref{eqn:ubar}.
By the change of variables $S_k=\tilde S_k-\sum_{0\leq i\leq k-1}(\bar u_{i+1}-\bar u_i)$, and denoting $d_1=(1-\alpha)d^{1-\varepsilon}$, we obtain 
\begin{multline*}
 \PP_{(0,a)}^{(t,b)}\left( \cap_{0\leq k\leq t}\{-\bar u_k\leq S_k\}\right)
 =
  \E_{(0,a)}^{(t,b)}\left[ e^{-\frac{1}{2}\sum_k(\bar u_{k+1}-\bar u_k))^2+\sum_k( S_{k+1}- S_{k})(\bar u_{k+1}-\bar u_k)}\mathbf{1}_{\cap_{0\leq k\leq t}\{-d_1\leq  S_k\}}\right]\\
\leq
  \E_{(0,a)}^{(t,b)}\left[ e^{\sum_k( S_{k+1}- S_{k})(\bar u_{k+1}-\bar u_k)}\mathbf{1}_{\cap_{0\leq k\leq t}\{-d_1\leq  S_k\}}\right].
\end{multline*}
Let $\bar S_s=S_s-(a\frac{t-s}{t}+b\frac{s}{t})$. We have
$$
\sum_k( S_{k+1}- S_{k})(\bar u_{k+1}-\bar u_k)=
\sum_k(\bar S_{k+1}-\bar S_{k})(\bar u_{k+1}-\bar u_k)=\sum_k a_k \bar S_k,
$$
where 
the constants $a_k$ satisfy $ 0\leq a_k\leq 10 \min(k,t-k+1)^{\alpha-2}$ and vanish outside $[d_2,t-d_2]$ where we define $d_2=d^{\frac{1-\varepsilon}{\alpha}}$.
As $ab\leq \max(a^2,b^2)$, we have obtained
$$
\PP_{(0,a)}^{(t,b)}\left( \cap_{0\leq k\leq t}\{-\bar u_k\leq S_k\}\right)\leq
\E_{(0,a)}^{(t,b)}\left[e^{2\sum_{k\leq t/2} a_k\bar S_k}
\mathbf{1}_{\cap_{0\leq k\leq t}\{-d_1\leq  S_k\}}\right].
$$
Let $\varepsilon_0\in(0,\frac{1}{2}-\alpha)$ and define $\kappa=\frac{1-\varepsilon}{2\alpha}(\frac{1}{2}-\alpha-\varepsilon_0)$. Note that for any integer $v\geq 1$, $\sum_{k\leq t/2} a_k\bar S_k>vd^{-\kappa}$ implies that there exists $k$ such that $d_2\leq k\leq t/2$ such that 
$\bar S_k> v  k^{\frac{1}{2}+\varepsilon_0}$. This observation together with the union bound gives
\begin{align}
&\E_{(0,a)}^{(t,b)}\left[e^{2\sum_{k\leq t/2} a_k\bar S_k}
\mathbf{1}_{\cap_{0\leq k\leq t}\{-d_1\leq  S_k\}}\right]-\PP_{(0,a)}^{(t,b)}\left( \cap_{0\leq k\leq t}\{-d_1\leq S_k\}\right)(1+d^{-\kappa})
\notag\\
\leq& \sum_{v\geq 1,d_0\leq k\leq t/2,w\geq v  k^{1/2+\varepsilon_0}}e^{v d^{-\kappa}}\,\PP_{(0,a)}^{(t,b)}\left(\{\bar S_k\in[w,w+1]\}\cap_{j\leq k}\{S_j \geq -d_1)\}\right)\notag\\
\ll&
\sum_{v\geq 1,d_0\leq k\leq t/2,w\geq v  k^{1/2+\varepsilon_0}}e^{v d^{-\kappa}}\,\PP_{(0,a)}^{(t,b)}
\left(\bar S_k\in[w,w+1]\right)\times\\
&\hspace{5cm}
\Big(\sup_{c\in[w,w+1]+a\frac{t-k}{t}+b\frac{k}{t}}\PP_{(0,c)}^{(t-k,b)}\left(\cap_{1\leq j\leq t-k}\{S_j \geq -d_1\}\right)\Big)\label{eqn:interm3}
\end{align}
where we used the Markov property for the second inequality. To bound the first probability above, note that under $\PP_{(0,a)}^{(t,b)}$, the random variable $\bar S_k$ is centered, Gaussian with variance $k-\frac{k^2}{t}\asymp k$. For the second probability, 
from Lemma \ref{lem:upperBallot}, we have  uniformly in all parameters
\begin{equation}
\label{eqn: ballot webb}
\PP_{(0,c)}^{(t-k,b)}\left(\cap_{1\leq j\leq t-k}\{S_j \geq -d_1\}\right)\ll \frac{(w+a\frac{t-k}{t}+b\frac{k}{t})b}{t}.
\end{equation}
This allows to bound the left-hand side of (\ref{eqn:interm3}) with
$$
\sum_{v\geq 1,d_0\leq k\leq t/2,w\geq v  k^{1/2+\varepsilon_0}}
e^{v d^{-\kappa}-c\frac{w^2}{k}}\cdot 
 \frac{(w+a\frac{t-k}{t}+b\frac{k}{t})b}{t}
$$
for some absolute $c>0$. 
The above sum over $w$ and then $v$ is $\ll e^{-c'k^{2\varepsilon_0}}$ for some absolute $c'>0$.
We conclude that uniformly in our parameters,  the left-hand side of (\ref{eqn:interm3})  is bounded with
$$ 
\frac{b}{t}\sum_{d_0\leq k\leq t/2}
e^{-c'k^{2\varepsilon_0}}(1+a\frac{t-k}{t}+b\frac{k}{t})\ll \frac{ab}{t}e^{-c' d_0^{2\varepsilon_0}}\ll d^{-\kappa}\frac{ab}{t}.
$$
Finally, Lemma  \ref{lem:constantBar} yields
$$
\PP_{(0,a)}^{(t,b)}\left( \cap_{0\leq k\leq t}\{-d_1\leq S_k\}\right)=2\frac{ab}{\sigma}(1+\OO(d^{-c})).
$$
This concludes the proof.

\begin{lemma}\label{lem:freeEndBallot} There exists $C=C(\kappa)>0$ such that for any $t\geq 10$,  $a\geq 1$, we have
\begin{equation}\label{eqn:oneSided}
\PP_{(0,a)}\left(\cap_{0\leq k\leq t}\{S_k\geq 0\}\right)\leq C\frac{a}{\sqrt{t}}.
\end{equation}
\end{lemma}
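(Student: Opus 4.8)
\textbf{The plan is} to realise the Gaussian walk inside a Brownian motion and then ``fill in the gaps'' between the sampled times. Since the right-hand side of \eqref{eqn:oneSided} exceeds $1$ once $a\ge\sqrt t$, I may assume $1\le a\le\sqrt t$. Write $\sigma_i^2=\E[X_i^2]\in(\kappa,\kappa^{-1})$, set $T_k=\sum_{i=1}^k\sigma_i^2$ (so that $\kappa t\le T_t\le\kappa^{-1}t$), and couple the walk to a standard Brownian motion $B$ via $S_k=a+B_{T_k}$. With $E:=\{S_k\ge 0,\ 0\le k\le t\}$ the goal is $\PP(E)\ll_\kappa a/\sqrt t$. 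Note that $E$ is strictly larger than the continuous-survival event $\{B_s\ge -a\ \forall s\in[0,T_t]\}$, whose probability equals $\PP(|B_{T_t}|\le a)\le 2a/\sqrt{2\pi\kappa t}$ by the reflection principle; so the continuous estimate alone is in the wrong direction, and the excursions on the intervals $[T_{k-1},T_k]$ must be controlled.

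Conditionally on $(B_{T_k})_{0\le k\le t}$ these excursions are independent Brownian bridges, and by Lemma \ref{lem:explicit} (translating the level $-a-1$ to $0$, and using $B_{T_k}+a+1=S_k+1$) a bridge with both endpoints $\ge -a$ stays above $-a-1$ with conditional probability
$$1-\exp\!\Big(-\tfrac{2(S_{k-1}+1)(S_k+1)}{\sigma_k^2}\Big)\ \ge\ 1-\exp\big(-2\kappa(S_{k-1}+1)(S_k+1)\big).$$
Introduce the event $G:=\{S_k\ge C_0^{-1}\sqrt{k\wedge(t-k)},\ 1\le k\le t-1\}$, with $C_0=C_0(\kappa)$ a large constant. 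On $E\cap G$ one has $(S_{k-1}+1)(S_k+1)\gg C_0^{-2}(k\wedge(t-k))$ for $2\le k\le t-1$ and $\ge 1$ for $k\in\{1,t\}$, hence $\sum_{k=1}^{t}\exp(-2\kappa(S_{k-1}+1)(S_k+1))\le c_1(\kappa,C_0)<\infty$ uniformly in $t$, and therefore the product over $k$ of the bridge probabilities above is bounded below by a positive constant $\rho_0=\rho_0(\kappa)$. Conditioning on $(B_{T_k})_k$ and restricting to $E\cap G\subseteq\{B_{T_k}\ge -a-1\ \forall k\}$ then gives $\PP(\min_{[0,T_t]}B\ge -a-1)\ge \rho_0\,\PP(E\cap G)$, so that
$$\PP(E\cap G)\ \le\ \rho_0^{-1}\,\PP(|B_{T_t}|\le a+1)\ \ll_\kappa\ \frac{a}{\sqrt t}.$$

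It remains to bound $\PP(E\cap G^c)$, the probability of surviving while dropping below the square-root envelope at some time. By splitting according to whether the first such time lies in $[1,t/2]$ or in $[t/2,t-1]$, and using that the reversed walk is again a Gaussian walk with increment variances in $(\kappa,\kappa^{-1})$, it is enough to treat the first half; conditioning at the first violation time $k\le t/2$, using the Markov property and the inductive hypothesis $\PP_{(0,z)}(\cap_{0\le j\le s}\{S_j\ge 0\})\le C\max(z,1)/\sqrt s$ for $s<t$, one is reduced to estimating the probability of surviving on $[0,k]$, staying above the envelope before $k$, and landing in $[0,C_0^{-1}\sqrt k\,]$ at time $k$. \emph{This last estimate is the main obstacle:} a crude union bound over $k$ loses a factor $\asymp t$, because survival alone already costs $\asymp a/\sqrt k$, and one must extract the additional gain forced by $S_k$ being atypically small. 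To do so one needs the precise barrier asymptotics --- that the bridge survival probability and the probability of remaining in the full corridor $[u_s,v_s]$ differ only by an $\OO(d^{-c})$ factor, as in Lemma \ref{lem:BM2}, integrated over the terminal value, plus a further iteration to handle terminal values beyond the range $t^{1/10}$ covered by that lemma. Granting $\PP(E\cap G^c)=\OO_\kappa(a/\sqrt t)$, combining with the bound on $\PP(E\cap G)$ and taking $C_0$ large closes the induction and proves $\PP_{(0,a)}(\cap_{0\le k\le t}\{S_k\ge 0\})\le Ca/\sqrt t$.
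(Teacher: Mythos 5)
Your proposal contains a genuine gap that you yourself flag. The estimate for $\PP(E\cap G)$ is correct and cleanly argued: conditioning on the sampled values $(B_{T_k})$, using the explicit bridge survival probability from Lemma \ref{lem:explicit}, and noting that on $G$ the product of bridge probabilities is bounded below uniformly, so that $\PP(E\cap G)\le \rho_0^{-1}\PP(|B_{T_t}|\le a+1)\ll a/\sqrt t$. But you explicitly defer the bound $\PP(E\cap G^c)=\OO(a/\sqrt t)$, calling it ``the main obstacle,'' and the route you sketch to close it does not work as stated. Lemma \ref{lem:BM2} is a \emph{lower} bound on corridor survival, so it cannot by itself show that dipping below the envelope $\sqrt{k\wedge(t-k)}$ while surviving is unlikely; what you actually need is an upper-bound entropic-repulsion estimate, and the upper-bound barrier results in the paper (Lemma \ref{lem:upperBallot}, Proposition \ref{prop:barrier}) are themselves proved \emph{using} Lemma \ref{lem:freeEndBallot}, so invoking them here would be circular. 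Absent a self-contained argument for $\PP(E\cap G^c)$, the proposal does not establish the lemma.

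For contrast, the paper's proof is much more elementary and entirely self-contained, and bears no resemblance to yours. It first reduces to $a=1$ by a scaling trick: since the event only becomes less likely when one looks at a sub-sequence, $\PP_{(0,a)}(\cap_{k\le t}\{S_k\ge 0\})\le\PP_{(0,a)}(\cap_{k\le t/a^2}\{S_{ka^2}\ge 0\})$, and $\tilde S_k:=S_{ka^2}/a$ is again a Gaussian walk with increment variances in $[\kappa,\kappa^{-1}]$ started at $1$, giving immediately the desired factor $a/\sqrt t$. With $a=1$ it sets $T=\min(t,\min\{k\le t:S_k\le 0\})$, applies the optional stopping theorem to get $\E[S_T]=1$, uses a Harris/FKG correlation inequality $\E[|S_t|\mathbf{1}_{T>t}]\ge\E[\max(0,S_t)]\,\PP[T>t]$ (proved by approximating the Gaussian increments by Bernoulli sums and monotonicity), and finally shows the overshoot bound $\E[-S_{T_0}]\le C(\kappa)$ via a geometric control on the expected time the walk spends in each unit interval $[n,n+1)$ before hitting zero. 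Combining these three ingredients gives $\PP[T>t]\ll 1/\sqrt t$ with no envelopes, couplings to Brownian motion, or induction.
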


\begin{proof}
By monotonicity in $a$, we can consider $a\in\mathbb{N}$ without loss of generality . Moreover,  we have
$$
\PP_{(0,a)}\left(\cap_{0\leq k\leq t}\{S_k\geq 0\}\right)\leq
\PP_{(0,a)}\left(\cap_{0\leq k\leq \frac{t}{a^2}}\{\frac{S_{k a^2}}{a}\geq 0\}\right)=\PP_{(0,1)}\left(\cap_{0\leq k\leq \frac{t}{a^2}}\{\tilde S_k\geq 0\}\right),
$$
where $\tilde S_k:=S_{ka^2}/a$ has independent Gaussian increments with variance in $[\kappa,\kappa^{-1}]$, like $S$. This proves that (\ref{eqn:oneSided}) only needs to be proved for $a=1$.

Consider $T=\min(t,\min\{k\leq t\mid S_k\leq 0\})$. By the stopping time theorem,
$
\E[S_{T}]=1,
$
so that
$$
\E[|S_t|\mathbf{1}_{T\geq t}]=1+\E[-S_{T}\mathbf{1}_{T<t}].
$$Moreover,  we have the correlation inequality 
\begin{equation}\label{eqn:FKG}
\E[|S_t|\mathbf{1}_{T>t}]\geq \E[\max(0,S_t)]\PP[T>t],
\end{equation}
which is a simple consequence of the Harris inequality. Indeed, consider the random walk $Z^{(n)}_k=1+\frac{1}{\sqrt{n}}\sum_{1\leq j\leq kn}\varepsilon_j$ with independent Bernoulli random variables $\varepsilon_k$, and $I^{(n)}=\{\lfloor n{\rm Var} S_k\rfloor,k\geq 1\}$; denote $U^{(n)}=\min\{k\in I^{(n)}:Z^{(n)}_k\leq 0\}$. Then the functions 
$\mathbf{1}_{U^{(n)}>t}$ and $\max(0,Z^{(n)}_t)$ are non-decreasing functions of $(\varepsilon_k)_{k\leq nt}$, so that $\E[\max(0,Z^{(n)}_t)\mathbf{1}_{U^{(n)}>t}]\geq \E[\max(0,Z^{(n)}_t)]\cdot \PP[U^{(n)}>t]$. This implies \eqref{eqn:FKG}
by taking $n\to\infty$.

We have obtained
\begin{equation}\label{eqn:inter}
\PP[T>t]\leq C\frac{1+\E[-S_{T}\mathbf{1}_{T<t}]}{\sqrt{t}},
\end{equation}
and we will now prove that
\begin{equation}\label{eqn:shift}
\E[-S_{T}\mathbf{1}_{T<t}]\leq C_1,
\end{equation}
for some $C_1>0$ uniform in $t$,  which together with  (\ref{eqn:inter}) will conclude the proof of (\ref{eqn:oneSided}).

To prove (\ref{eqn:shift}),  first note that $\E[-S_{T}\mathbf{1}_{T<t}]\leq \E[-S_{T_0}]$ where $T_0=\min\{k\geq 0\mid S_k\leq 0\}$.  We now consider 
$$Z_n=\sum_{k\geq 0}\mathbf{1}_{S_k\in[n,n+1),k<T_0},\ n\geq 0,$$
the time spent by $S$ in $[n,n+1)$ before it hits $0$. Define $U_{0}=0$ and $U_{i+1}=\min\{u\geq U_i+n^2:S_u\in[n,n+1)\}$. For any $\lambda\geq n^2$ we have the inclusion
$$
\{Z_n\geq \lambda\}\subset\cap_{i\leq \lambda/n^2}\left\{S_{U_i+n^2}-S_{U_i}\geq -(n+1)\right\}.
$$
By the strong Markov property  the events on the right-hand side are independent, and there exists $\alpha=\alpha(\kappa)$ such that each such event has probability at most $1-\alpha$, uniformly in $n$.
This implies
$
\E(Z_n)\leq C n^2
$
for some $C=C(\kappa)$, a key estimate in the last inequality below: for any $\ell\geq 1$ we have
\begin{multline*}
\PP[|S_{T_0}|\geq \ell]\leq\sum_{k,n\geq 0}\PP[S_k\in[n,n+1),|S_{k+1}-S_k|\geq \ell+(n+1),k<T_0]\\
=\sum_{k,n\geq 0}\PP[S_k\in[n,n+1),k<T_0]\cdot\PP[|S_{k+1}-S_k|\geq \ell+(n+1)]
\leq \sum_{n\geq 0}e^{-c(\ell+n)^2}\E[Z_n]\leq Ce^{-c\ell^2},
\end{multline*}
which immediately implies $\E[-S_{T_0}]\leq C_1$ and concludes the proof.
\end{proof}

\begin{lemma}\label{lem:upperBallot} With the same notations as Proposition \ref{prop:barrier},  there exists $C=C(\kappa)>0$ such that for any $t\geq 10$,  $a,b\geq 1$ we have
\begin{equation}\label{eqn:upperBallot}
\PP_{(0,a)}^{(t,b)}\left( \cap_{0\leq k\leq t}\{S_k\geq 0\}\right)\leq C\frac{ab}{t}.
\end{equation}
\end{lemma}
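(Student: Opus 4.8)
The plan is to reduce the bridge estimate \eqref{eqn:upperBallot} to the free-endpoint bound of Lemma \ref{lem:freeEndBallot} by cutting the time interval $[0,t]$ at its midpoint and using the Markov property. First I would write, for the Brownian-bridge-like Gaussian walk,
$$
\PP_{(0,a)}^{(t,b)}\left(\cap_{0\leq k\leq t}\{S_k\geq 0\}\right)=\frac{1}{Z}\,\E_{(0,a)}\left[\mathbf 1_{\cap_{0\leq k\leq t}\{S_k\geq 0\}}\,\varphi_{t-\lfloor t/2\rfloor}(b-S_{\lfloor t/2\rfloor})\right],
$$
where $Z$ is the density of $S_t$ at $b$ started from $a$ (so $Z\asymp t^{-1/2}$ since the increments have comparable variances) and $\varphi_m$ is the density of $S_m$, which is bounded by $C m^{-1/2}\asymp C t^{-1/2}$. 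Dropping the Gaussian weight $\varphi$ by its uniform upper bound and then splitting the event at the midpoint $s=\lfloor t/2\rfloor$ via the Markov property gives
$$
\PP_{(0,a)}^{(t,b)}\left(\cap_{0\leq k\leq t}\{S_k\geq 0\}\right)\ll t^{1/2}\cdot t^{-1/2}\,\E_{(0,a)}\left[\mathbf 1_{\cap_{0\leq k\leq s}\{S_k\geq 0\}}\;\PP_{(s,S_s)}\left(\cap_{s\leq k\leq t}\{S_k\geq 0\}\right)\right].
$$

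Next I would estimate the two halves separately using Lemma \ref{lem:freeEndBallot} applied once forward in time and once backward (the time-reversal of a Gaussian walk with comparable increment variances is again such a walk, starting from $b$ and running for $t-s\asymp t$ steps). For the inner conditional probability, Lemma \ref{lem:freeEndBallot} gives $\PP_{(s,x)}(\cap_{s\leq k\leq t}\{S_k\geq 0\})\ll x(t-s)^{-1/2}\ll x\,t^{-1/2}$ for $x\geq 0$. Substituting, we are left with
$$
\PP_{(0,a)}^{(t,b)}\left(\cap_{0\leq k\leq t}\{S_k\geq 0\}\right)\ll t^{-1/2}\,\E_{(0,a)}\left[S_s\,\mathbf 1_{\cap_{0\leq k\leq s}\{S_k\geq 0\}}\right].
$$
The remaining first moment $\E_{(0,a)}[S_s\,\mathbf 1_{\cap_{k\leq s}\{S_k\geq 0\}}]$ should be $\ll a\, s^{1/2}\ll a\,t^{1/2}$; combined with the prefactor this yields the desired $ab\,t^{-1}$, after also carrying the $b$ through. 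To be careful about where the factor $b$ enters, I would in fact not bound $\varphi$ crudely but keep the endpoint at $b$: conditioning instead on $S_s$ and writing the conditional law from $S_s$ to $b$ as a bridge of length $t-s$, one iterates the same argument symmetrically, so that both the $a$ from the left half and the $b$ from the right half appear linearly while the two time factors $(t-s)^{-1/2}s^{-1/2}$ against the normalizer $Z^{-1}\asymp t^{1/2}$ collapse to $t^{-1}$.

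For the auxiliary first-moment bound $\E_{(0,a)}[S_s\,\mathbf 1_{\cap_{k\leq s}\{S_k\geq 0\}}]\ll a\,s^{1/2}$, I would use the optional stopping / martingale argument already appearing in the proof of Lemma \ref{lem:freeEndBallot}: with $T$ the first hitting time of $(-\infty,0)$ capped at $s$, one has $\E[S_{T\wedge s}]=a$, hence $\E[S_s\,\mathbf 1_{T\geq s}]=a+\E[-S_T\,\mathbf 1_{T<s}]\leq a+C_1\ll a$ by \eqref{eqn:shift}, and then $\E[S_s\,\mathbf 1_{T\geq s}]\leq \E[S_s^2]^{1/2}\PP[T\geq s]^{1/2}$ combined with $\PP[T\geq s]\ll a\,s^{-1/2}$ and $\E[S_s^2]\asymp s$ upgrades this to the $a\,s^{1/2}$ form if needed; in practice the direct identity already suffices. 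The main obstacle is bookkeeping: making sure the endpoint factor $b$ genuinely comes out linearly rather than as $b^2$ or $1$, which forces the symmetric midpoint split (left half contributes $a$, right half contributes $b$) rather than a one-sided reduction, and keeping track that all the $t^{\pm 1/2}$ factors from the two halves and the bridge normalization combine correctly to $ab/t$. The comparability of the increment variances, $\kappa\leq \E[X_k^2]\leq \kappa^{-1}$, is what makes the normalizer $Z$ and the half-walk densities all of order $t^{-1/2}$ with constants depending only on $\kappa$.
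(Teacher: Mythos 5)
Your midpoint split does not deliver the factor $ab/t$, and your claim $Z\asymp t^{-1/2}$ is not correct in general; both gaps are genuine and are exactly what forces the paper to structure the proof differently.

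On the first point: once you bound the transition density $\varphi_{t-\lfloor t/2\rfloor}(b-S_{\lfloor t/2\rfloor})$ by its uniform supremum $Ct^{-1/2}$, all information about $b$ is gone, and following your own accounting one lands on $\ll a/\sqrt{t}$, not $\ll ab/t$ (the former is much weaker when $a,b\ll\sqrt{t}$). Your attempted repair---``keep the endpoint at $b$ and iterate symmetrically''---is circular: the right half is then a bridge of the same type as the object being estimated, so you cannot apply the free-endpoint Lemma~\ref{lem:freeEndBallot} to it. Trying to split the $L^1/L^\infty$ bookkeeping differently (e.g.\ bounding the left density $p_s(x-a)\PP_{(0,a)}^{(s,x)}$ by its sup and integrating the right) gives $\ll b/\sqrt{t}$, and Cauchy--Schwarz gives $\ll\sqrt{ab/t}$; none reach $ab/t$. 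What the paper does instead is a \emph{three-way} split at $\lfloor t/3\rfloor$ and $\lfloor 2t/3\rfloor$: the middle bridge probability is bounded trivially by $1$, its transition density supplies a uniform $t^{-1/2}$ that decouples the two ends, and each of the two free-endpoint pieces is integrated in $L^1$ via Lemma~\ref{lem:freeEndBallot} to give $a/\sqrt{t}$ and $b/\sqrt{t}$ respectively. With $p_t(a-b)^{-1}\ll\sqrt{t}$ this yields $ab/t$. The decoupling via a sacrificed middle segment is precisely the mechanism you are missing.

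On the second point: $Z=p_t(b-a)\asymp t^{-1/2}e^{-c(b-a)^2/t}$, so $Z^{-1}\ll\sqrt{t}$ only when $|a-b|\ll\sqrt{t}$. The lemma is claimed for all $a,b\geq 1$, and the only nontrivial case outside the square $a,b\leq\sqrt{t}$ is (up to symmetry) $a<\sqrt{t}<b$ with $ab<t$. The paper treats this case by a separate monotonicity/coupling argument in $b$ for the Brownian-bridge conditioning (\eqref{eqn:monotone}, following Bramson), reducing to the endpoint $b=\sqrt{t}$; your proposal does not address this regime at all.

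The martingale/optional-stopping bound $\E_{(0,a)}[S_s\,\mathbf 1_{\cap_{k\le s}\{S_k\ge 0\}}]\ll a$ you cite is correct and is indeed part of the proof of Lemma~\ref{lem:freeEndBallot}, but by itself it cannot compensate for the loss of the $b$ factor.
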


\begin{proof}
We first assume that $a,b\leq \sqrt{t}$.  Let $n_1=\lfloor t/3\rfloor$ and $n_2=\lfloor 2t/3\rfloor$, and $p_t(x)=e^{-x^2/(2t)}/\sqrt{2\pi t}$, and abbreviate $\PP_{(0,a)}^{(t,b)}=\PP_{(0,a)}^{(t,b)}(S>0)$.  Then $ \PP_{(0,a)}^{(t,b)}(S>0)$ is equal to
\begin{multline*}
\frac{1}{p_t(a-b)}\iint_{x_1,x_2>0}p_{n_1}(x_1-a)\PP_{(0,a)}^{(n_1,x_1)}p_{n_2-n_1}(x_2-x_1)\PP_{(n_1,x_1)}^{(n_2,x_2)}p_{t-n_2}(b-x_2)\PP_{(n_2,x_2)}^{(t,b)}\rd x_1\rd x_2\\
\leq C\int_{x_1}p_{n_1}(x_1-a)\PP_{(0,a)}^{(n_1,x_1)} \rd x_1\int_{x_2>0}p_{t-n_2}(b-x_2)\PP_{(n_2,x_2)}^{(t,b)}\rd x_2\leq C\frac{ab}{t},
\end{multline*}
where we have used the trivial bounds $\PP_{(n_1,x_1)}^{(n_2,x_2)}\leq 1$,  $p_{n_2-n_1}(x_2-x_1)\leq Ct^{-1/2}$,  the estimate (valid for $a,b\leq \sqrt{t}$) $(p_t(a-b))^{-1}\leq C\sqrt{t}$, and Lemma \ref{lem:freeEndBallot}.

For the general case,  we can assume $a<\sqrt{t}<b$ and $ab<t$.  Let $B$ be a Brownian bridge from $a$ ($s=0$) to $b$ ($s=\sigma$).  There exists $s_1<\dots<s_t=\sigma$ such that $(S_k)_{k\leq t}$ and $(B_{s_k})_{k\leq t}$ have the same distribution.  
Moreover,  from \cite[pages 21, 22]{Bra1983}, by a simple coupling argument the function 
\begin{equation}\label{eqn:monotone}b\mapsto
\PP_{(a,0)}^{(b,t)}(B_s>0,0\leq s\leq \sigma\mid B_{s_i}>0,0\leq i\leq t)\ 
\mbox{ is non-decreasing}.
\end{equation}
This implies
\begin{multline*}
\PP_{(a,0)}^{(b,t)}(B_{s_i}\geq 0,1\leq i\leq t)=\frac{\PP_{(a,0)}^{(b,t)}(B_{s_i}\geq 0,1\leq i\leq t)}{\PP_{(a,0)}^{(b,t)}(B_{s}\geq 0,0\leq s\leq \sigma)}\PP_{(a,0)}^{(b,t)}(B_{s}\geq 0,0\leq s\leq \sigma)\\
\leq\frac{\PP_{(a,0)}^{(\sqrt{t},t)}(B_{s_i}\geq 0,1\leq i\leq t)}{\PP_{(a,0)}^{(\sqrt{t},t)}(B_{s}\geq 0,0\leq s\leq \sigma)}\PP_{(a,0)}^{(b,t)}(B_{s}\geq 0,0\leq s\leq \sigma).
\end{multline*}
From Lemma \ref{lem:explicit}, the denominator is lower-bounded with $c\frac{a\sqrt{t}}{t}$ and the last probability is upper-bounded with $\frac{ab}{t}$. And from the previously discussed case, the numerator is at most $\frac{a\sqrt{t}}{t}$. This concludes the proof.
\end{proof}

\begin{lemma}\label{lem:constantBar} With the same notations as Proposition \ref{prop:barrier},  there exists $C=C(\kappa)>0$ such that for any $t\geq 10$,   $y\leq t^{1/10}$, $1\leq a,b\leq y-1$ we have
$$
\PP_{(0,a)}^{(t,b)}\left( \cap_{0\leq k\leq t}\{S_k\geq 0\}\right)=
\frac{2ab}{\sigma}\cdot \left(1+\OO_{\alpha,\delta,\kappa}(d^{-c})\right).
$$
\end{lemma}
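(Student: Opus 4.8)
\textbf{Plan for the proof of Lemma \ref{lem:constantBar}.} The statement is the ``flat barrier'' version of the Ballot theorem for a Gaussian random walk conditioned to stay positive, which should be deduced by comparison with the corresponding statement for Brownian motion (Lemma \ref{lem:explicit}, which gives $\PP_{(0,a)}^{(t,b)}(m_t\geq 0)=1-e^{-2ab/t}=\frac{2ab}{t}(1+\OO(ab/t))$, and under the hypotheses $1\le a,b\le y-1\le t^{1/10}$ the ratio $ab/t$ is $\OO(y^2/t)=\OO(t^{-4/5})$, which is absorbed in the error). The plan is therefore: (1) realize the bridge increments as a time-changed Brownian bridge, (2) show that the discrete-time event and the continuous-time event differ by a multiplicative factor $1+\OO(d^{-c})$, and (3) read off the constant from Lemma \ref{lem:explicit}, together with the variance identity $\sigma=\sum_{k\le t}\E[X_k^2]$ which replaces $t$ in the Brownian formula.

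\textbf{Upper bound.} This is already essentially contained in Lemma \ref{lem:upperBallot}, which gives $\PP_{(0,a)}^{(t,b)}(\cap_k\{S_k\ge0\})\ll ab/t\ll ab/\sigma$ (the variances being comparable). To upgrade $\ll$ to $2ab/\sigma\,(1+\OO(d^{-c}))$ from above, I would couple $(S_k)_{k\le t}$ with a Brownian bridge $B$ on $[0,\sigma]$ via times $s_1<\dots<s_t=\sigma$ with $s_k=\sum_{i\le k}\E[X_i^2]$, so that $(S_k)_k$ and $(B_{s_k})_k$ have the same law. The discrete event $\{B_{s_k}\ge0,\ k\le t\}$ contains the continuous event $\{B_s\ge0,\ 0\le s\le\sigma\}$, so the discrete probability is at least the continuous one, $1-e^{-2ab/\sigma}$; for the reverse inequality one controls the probability that $B$ dips below $0$ between consecutive times $s_k$ while staying nonnegative at all $s_k$. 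Since the mesh $\max_k(s_{k+1}-s_k)\le\kappa^{-1}$ is bounded, a standard excursion/reflection estimate (of the same flavor as Lemma \ref{lem:reflection} and the computation in Lemma \ref{lem:BM1}) shows this ``overshoot'' probability contributes only a multiplicative $1+\OO(d^{-c})$: the dangerous contributions come from times near $0$ or near $\sigma$, where $B$ is within $d^{1-\varepsilon}$ of the barrier, and these are summable with the claimed power saving exactly as in the proof of Lemma \ref{lem:BM1}. Combined with the lower bound of Lemma \ref{lem:BM2} (specialized to the constant barrier $v_s\equiv+\infty$ on the relevant scale, or rather to a large enough constant barrier, and $u_s\equiv0$), which already yields $\PP_{(0,a)}^{(t,b)}(\cap_s\{0\le B_s\})\,(1+\OO(d^{-c}))$ as a lower bound for the discrete probability, one gets matching bounds.

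\textbf{Lower bound and assembling.} For the lower bound I would simply restrict the discrete event to the continuous one: $\{B_{s_k}\ge0,\ k\le t\}\supset\{B_s\ge0,\ 0\le s\le\sigma\}$, so $\PP_{(0,a)}^{(t,b)}(\cap_k\{S_k\ge0\})\ge\PP_{(0,a)}^{(t,b)}(m_\sigma\ge0)=1-e^{-2ab/\sigma}=\frac{2ab}{\sigma}(1+\OO(ab/\sigma))=\frac{2ab}{\sigma}(1+\OO(d^{-c}))$, where the last step uses $ab/\sigma\asymp ab/t=\OO(t^{-4/5})$ and $d\le y\le t^{1/10}$ so that $t^{-4/5}\ll d^{-c}$ for, say, $c<8$. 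Putting the two bounds together gives the claimed two-sided estimate with the constant $2ab/\sigma$.

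\textbf{Main obstacle.} The only delicate point is the upper bound's ``discrete-to-continuous'' step: one must show that conditioning a Brownian bridge to be nonnegative at the discrete times $s_k$ rather than on all of $[0,\sigma]$ inflates the probability only by $1+\OO(d^{-c})$, uniformly in $t$ and in $a,b\in[1,y-1]$. The subtlety is that near the endpoints the bridge typically comes within $\OO(1)$ of the barrier, so on a bounded-length interval it can easily go negative; the gain $d^{1-\varepsilon}$ has to be inserted as in Lemma \ref{lem:BM2} (shrinking the positivity window to $[d^{1-\varepsilon},\,\infty)$, at multiplicative cost $1+\OO(d^{-\varepsilon})$ via $\frac{(a-d^{1-\varepsilon})(b-d^{1-\varepsilon})}{\sigma}=\frac{ab}{\sigma}(1+\OO(d^{-\varepsilon}))$) before the excursion estimate can be applied with a power-saving error. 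Once that reduction is in place, the excursion sum is geometric and the bound follows, so I expect the proof to be short given the lemmas already established.
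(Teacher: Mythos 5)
Your high-level plan — realize $(S_k)_k$ as a time-changed Brownian bridge $(B_{s_k})_k$, get the lower bound by inclusion of the continuous event in the discrete one, and control the upper bound by bounding the probability of an overshoot below $0$ between consecutive $s_k$'s — is the same strategy as the paper, and your lower bound paragraph is correct. The gap is in how you propose to get the power-saving error in the upper bound, which is the whole content of the lemma.

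You cite Lemmas \ref{lem:reflection} and \ref{lem:BM1} and ``the same flavor'' as the mechanism for the power saving, and you also invoke the barrier shift to $[d^{1-\varepsilon},\infty)$ as in Lemma \ref{lem:BM2}. Neither of these is the right tool here. Lemma \ref{lem:BM1} controls the probability of exceeding the \emph{upper} barrier $v_s\geq y+\min(s,t-s)^{\delta}$, where the exponentially growing barrier gives exponential gains in $k$ via $e^{-ck^{2\delta-1}}$; for the \emph{lower} barrier at $0$ no such gain exists, since a walk conditioned to stay positive typically comes within $\OO(1)$ of $0$ near the endpoints, and the contribution of each unit time interval decays only polynomially. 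The Cameron--Martin barrier shift in Lemma \ref{lem:BM2} serves a different purpose (transforming the sloped lower barrier $u_s$ into a flat one) and does not reduce the overshoot estimate to an exponential bound for general $a,b$. What the paper actually does is a direct union bound over intervals $[k,k+1]$: the overshoot in $[k,k+1]$ is bounded by the bridge ballot probability up to time $k$ (from Lemma \ref{lem:upperBallot}, giving $\ll du/k$ for the walk to reach height $\approx u$ at time $k$ while staying positive), times the probability that a bridge over a unit interval with endpoints near $u,v$ dips below $0$ (which forces $\min(u,v)=\OO(1)$, contributing $e^{-c\min(u,v)^2}$), times the transition density $e^{-c(u-d)^2/k}/\sqrt k$, times the ballot probability from $k+1$ to $t$. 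Summing over $u,v,k$ gives the power saving $\frac{d^2}{t}\cdot d^{-1/4+\e}$ — a polynomial rate, not exponential, and with an exponent tied to the Gaussian decay $e^{-cd^2/k}$, not to anything resembling Lemma \ref{lem:BM1}. The paper also first reduces to $a=b=d$ and then extends to general $a,b$ via the monotonicity observation (\ref{eqn:monotone}) (a coupling argument that the discrete/continuous ratio is decreasing in the far endpoint); your sketch handles general $a,b$ directly, which might work with a more careful union bound, but you have not supplied it, and your sketch does not replace it. As written, the upper bound step — the crux of the lemma — is not a proof.
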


\begin{proof}
The lower bound follows directly from Lemma \ref{lem:explicit}.  

For the upper bound,  consider first the case $a=b=d$.
Note that for any $k\leq t-1$ and $u,v>0$,  under $\PP_{(k,u)}^{(k+1,v)}$ we can decompose
$$
B_s=(k+1-s)v+(s-k)u+\tilde B_{s-k}-(s-k)\tilde B_{1},
$$
where $\tilde B$ is a standard Brownian motion.
Therefore, if there exists $s\in[k,k+1]$ such that $B_s<0$, we have $\max_{0\leq u\leq 1}|\tilde B_{u}|+|\tilde B_{1}|>\min(u,v)$.
As $|\tilde B_1|+\max_{0\leq u\leq 1}|\tilde B_u|$ is clearly dominated by $|\mathcal{N}|$ with $\mathcal{N}$ a Gaussian random variable with variance $\OO(1)$,
by  a union bound, we obtain 
\begin{align*}
&\PP_{(0,d)}^{(t,d)}\left( \cap_{0\leq k\leq t}\{S_k\geq 0\}\right)-\PP_{(0,d)}^{(t,d)}\left( \cap_{0\leq s\leq t}\{B_s\geq 0\}\right)\\
\leq&
\sum_{u.v\geq 0,k\leq t-1}\max_{x\in[u,u+1]}\PP_{(0,d)}^{(k,x)}(S_i>0,1\leq i\leq k)\cdot \PP(|\mathcal{N}|>\min(u,v))\\
&\cdot\max_{y\in[v,v+1]}\PP_{(k+1,y)}^{(t,d)}(S_i>0,k+1\leq i\leq t)\cdot\PP(B_k\in[u,u+1],B_{k+1}\in[v,v+1]).
\end{align*}
All terms above can be bounded,  giving  the estimate
\begin{multline*}
\sum_{0\leq k\leq t/2,u,v\geq 0}\frac{du}{k+1}\frac{dv}{t-k}e^{-c\min(u,v)^2}\frac{e^{-c\frac{(u-d)^2}{k+1}}}{\sqrt{k+1}}e^{-c(v-u)^2}\\
\ll\frac{d^2}{t}\sum_{0\leq k\leq t/2,u\geq 0}\frac{u^2}{(k+1)^{3/2}}e^{-cu^2-c\frac{(u-d)^2}{k+1}}\\
\ll\frac{d^2}{t}\sum_{0\leq k\leq t/2}\frac{1}{(k+1)^{3/2}}e^{-c\frac{d^2}{k+1}}\leq C\frac{d^2}{t}d^{-1/4+\e},
\end{multline*}
for any arbitrary $\varepsilon>0$, concluding the proof in the case $a=b=d$. 

For the general case,  from (\ref{eqn:monotone}) assuming $b>a$ without loss of generality,  we have
$$
\begin{aligned}
\PP_{(0,a)}^{(t,b)}\left( \cap_{0\leq k\leq t}\{S_k\geq 0\}\right)&\leq \PP_{(0,a)}^{(t,b)}\left( \cap_{0\leq s\leq \sigma}\{B_s\geq 0\}\right)\cdot\frac{\PP_{(0,a)}^{(t,a)}\left( \cap_{0\leq k\leq t}\{S_k\geq 0\}\right)}{\PP_{(0,a)}^{(t,a)}\left( \cap_{0\leq s\leq \sigma}\{B_s\geq 0\}\right)}\\
&\leq \frac{2ab}{\sigma}(1+\OO(d^{-c}))
\end{aligned}
$$
from the previous discussion, concluding the proof.
\end{proof}

\begin{bibdiv}
\begin{biblist}

\bib{ArgBelBou2017}{article}{
   author={Arguin, L.-P.},
   author={Belius, D.},
   author={Bourgade, P.},
   title={Maximum of the characteristic polynomial of random unitary
   matrices},
   journal={Comm. Math. Phys.},
   volume={349},
   date={2017},
   number={2},
   pages={703--751}
}

\bib{ArgBelHar2017}{article}{
   author={Arguin, L.-P.},
   author={Belius, D.},
   author={Harper, A. J.},
   title={Maxima of a randomized Riemann zeta function, and branching random
   walks},
   journal={Ann. Appl. Probab.},
   volume={27},
   date={2017},
   number={1},
   pages={178--215}
}

\bib{ArgBelBouRadSou2019}{article}{
   author={Arguin, L.-P.},
   author={Belius, D.},
   author={Bourgade, P.},
   author={Radziwi\l \l , M.},
   author={Soundararajan, Kannan},
   title={Maximum of the Riemann zeta function on a short interval of the
   critical line},
   journal={Comm. Pure Appl. Math.},
   volume={72},
   date={2019},
   number={3},
   pages={500--535}
}

\bib{ArgBouRad2020}{article}{
  title={The Fyodorov-Hiary-Keating Conjecture. I},
   author={Arguin, L.-P.},
   author={Bourgade, P.},
   author={ Radziwi{\l}{\l}, M.},
  journal={preprint arXiv:2007.00988},
  year={2020}
}

\bib{Bra1983}{article}{
   author={Bramson, M.},
   title={Convergence of solutions of the Kolmogorov equation to travelling
   waves},
   journal={Mem. Amer. Math. Soc.},
   volume={44},
   date={1983},
   number={285},
   pages={iv+190}
}

\bib{Bra1978}{article}{
   author={Bramson, M.},
   title={Maximal displacement of branching Brownian motion},
   journal={Comm. Pure Appl. Math.},
   volume={31},
   date={1978},
   number={5},
   pages={531--581}
}

\bib{BraDinZei2016}{article}{
   author={Bramson, M.},
   author={Ding, J.},
   author={Zeitouni, O.},
   title={Convergence in law of the maximum of the two-dimensional discrete
   Gaussian free field},
   journal={Comm. Pure Appl. Math.},
   volume={69},
   date={2016},
   number={1},
   pages={62--123}
}

\bib{BraDinZei2016bis}{article}{
   author={Bramson, M.},
   author={Ding, J.},
   author={Zeitouni, O.},
   title={Convergence in law of the maximum of nonlattice branching random
   walk},
   journal={Ann. Inst. Henri Poincar\'{e} Probab. Stat.},
   volume={52},
   date={2016},
   number={4},
   pages={1897--1924}
}

\bib{CarLed01}{article}{
   author={Carpentier,  L.},
   author={Le Doussal, P.},
   title={Glass transition of a particle in a random potential, front selection in nonlinear renormalization group, and entropic phenomena in liouville and sinh-gordon models},
   journal={Physical review. E, Statistical, nonlinear, and soft matter physics},
   volume={63},
   date={2001}
}

\bib{ChhMadNaj2018}{article}{
   author={Chhaibi, R.},
   author={Madaule, T.},
   author={Najnudel, J.},
   title={On the maximum of the ${\rm C}\beta {\rm E}$ field},
   journal={Duke Math. J.},
   volume={167},
   date={2018},
   number={12},
   pages={2243--2345}
}

\bib{Din2013}{article}{
   author={Ding, J.},
   title={Exponential and double exponential tails for maximum of
   two-dimensional discrete Gaussian free field},
   journal={Probab. Theory Related Fields},
   volume={157},
   date={2013},
   number={1-2},
   pages={285--299}
}

\bib{DinZei2014}{article}{
   author={Ding, J.},
   author={Zeitouni, O.},
   title={Extreme values for two-dimensional discrete Gaussian free field},
   journal={Ann. Probab.},
   volume={42},
   date={2014},
   number={4},
   pages={1480--1515}
}

\bib{FyoHiaKea2012}{article}{
   author={Fyodorov, Y.},
   author={Hiary, G.},
   author={Keating, J.},
   title={Freezing Transition, Characteristic Polynomials of Random Matrices, and the Riemann Zeta Function},
   journal={Physical Review Letters},
   volume={108},
   date={2012}
}

\bib{FyoKea2014}{article}{
  title={Freezing transitions and extreme values: random matrix theory, and disordered landscapes},
  author={Fyodorov, Y.}
  author={Keating, J.},
  journal={Philosophical Transactions of the Royal Society A},
  volume={372},
  number={2007},
  pages={20120503},
  year={2014},
  publisher={The Royal Society Publishing.}
}

\bib{Har2019}{article}{
   author={Harper, A. J.},
   title={On the partition function of the Riemann zeta function, and the Fyodorov–Hiary– Keating conjecture},
   journal={preprint arXiv:1906.05783},
   date={2019}
}

\bib{KatSar1999}{book}{
   author={Katz, N. M.},
   author={Sarnak, P.},
   title={Random matrices, Frobenius eigenvalues, and monodromy},
   series={American Mathematical Society Colloquium Publications},
   volume={45},
   publisher={American Mathematical Society, Providence, RI},
   date={1999},
   pages={xii+419}
}

\bib{KeaSna2000}{article}{
   author={Keating, J. P.},
   author={Snaith, N. C.},
   title={Random matrix theory and $\zeta(1/2+it)$},
   journal={Comm. Math. Phys.},
   volume={214},
   date={2000},
   number={1},
   pages={57--89}
}

\bib{Mon1972}{article}{
   author={Montgomery, H. L.},
   title={The pair correlation of zeros of the zeta function},
   conference={
      title={Analytic number theory},
      address={Proc. Sympos. Pure Math., Vol. XXIV, St. Louis Univ., St.
      Louis, Mo.},
      date={1972},
   },
   book={
      publisher={Amer. Math. Soc., Providence, R.I.},
   },
   date={1973},
   pages={181--193}
}

\bib{MonVau1974}{article}{
   author={Montgomery, H. L.},
   author={Vaughan, R. C.},
   title={Hilbert's inequality},
   journal={J. London Math. Soc. (2)},
   volume={8},
   date={1974},
   pages={73--82}
}

\bib{Naj2018}{article}{
   author={Najnudel, J.},
   title={On the extreme values of the Riemann zeta function on random
   intervals of the critical line},
   journal={Probab. Theory Related Fields},
   volume={172},
   date={2018},
   number={1-2},
   pages={387--452}
}

\bib{PaqZei2018}{article}{
   author={Paquette, E.},
   author={Zeitouni, O.},
   title={The maximum of the CUE field},
   journal={Int. Math. Res. Not. IMRN},
   date={2018},
   number={16},
   pages={5028--5119}
}

\bib{PaqZei2022}{article}{
   author={Paquette, E.},
   author={Zeitouni, O.},
   title={The extremal landscape for the C$\beta$E ensemble},
   journal={preprint arXiv:2209.06743},
   date={2022}
}

\bib{Sou2009}{article}{
   author={Soundararajan, K.},
   title={Moments of the Riemann zeta function},
   journal={Ann. of Math. (2)},
   volume={170},
   date={2009},
   number={2},
   pages={981--993}
}

\end{biblist}
\end{bibdiv}

\end{document}